\definecolor{grey1}{rgb}{0.5,0.5,0.5}
\definecolor{grau}{rgb}{0.8,0.8,0.8}
\newcommand{\chen}[1]{\color{orange}}
\numberwithin{equation}{section}
\numberwithin{equation}{section}
\newtheorem{theorem}{Theorem}[section]
\newtheorem{lemma}[theorem]{Lemma}
\theoremstyle{remark}
\newtheorem{assumption}{Assumption}
\DeclareMathOperator*{\argmin}{arg\,min}
\newcommand{\prob}{{\mathbb{P}}}
\newcommand{\var}{{\mathrm{var}}}
\newcommand{\expect}{\mathbb{E}}
\newcommand{\Optilde}{{\widetilde{O}_{p}}}
\newcommand{\optilde}{{\widetilde{o}_{p}}}
\newcommand{\transpose}{^{\mathrm{T}}}
\newcommand{\calC}{{\mathcal{C}}}
\newcommand{\calE}{{\mathcal{E}}}
\newcommand{\calF}{{\mathcal{F}}}
\newcommand{\calH}{{\mathcal{H}}}
\newcommand{\calI}{{\mathcal{I}}}
\newcommand{\calL}{{\mathcal{L}}}
\newcommand{\calP}{{\mathcal{P}}}
\newcommand{\calS}{{\mathcal{S}}}
\newcommand{\calT}{{\mathcal{T}}}
\newcommand{\bT}{{\mathbf{T}}}
\newcommand{\bU}{{\mathbf{U}}}
\newcommand{\bP}{{\mathbf{P}}}
\newcommand{\bY}{{\mathbf{Y}}}
\newcommand{\bv}{{\mathbf{v}}}
\newcommand{\bx}{{\mathbf{x}}}
\newcommand{\bX}{{\mathbf{X}}}
\newcommand{\bA}{{\mathbf{A}}}
\newcommand{\bB}{{\mathbf{B}}}
\newcommand{\bC}{{\mathbf{C}}}
\newcommand{\bE}{{\mathbf{E}}}
\newcommand{\bF}{{\mathbf{F}}}
\newcommand{\bG}{{\mathbf{G}}}
\newcommand{\bH}{{\mathbf{H}}}
\newcommand{\bM}{{\mathbf{M}}}
\newcommand{\bL}{{\mathbf{L}}}
\newcommand{\bR}{{\mathbf{R}}}
\newcommand{\bQ}{{\mathbf{Q}}}
\newcommand{\bS}{{\mathbf{S}}}
\newcommand{\bW}{{\mathbf{W}}}
\newcommand{\bZ}{{\mathbf{Z}}}
\newcommand{\bw}{{\mathbf{w}}}
\newcommand{\by}{{\mathbf{y}}}
\newcommand{\bz}{{\mathbf{z}}}
\newcommand{\be}{{\mathbf{e}}}
\newcommand{\bV}{{\mathbf{V}}}
\newcommand{\bGamma}{{\bm{\Gamma}}}
\newcommand{\eye}{{\mathbf{I}}}
\newcommand{\one}{{\mathbf{1}}}
\newcommand{\btheta}{{\bm{\theta}}}
\newcommand{\bmu}{{\bm{\mu}}}
\newcommand{\zero}{{\bm{0}}}
\newcommand{\eps}{\epsilon}
\newcommand{\keywords}[1]{\par\addvspace\baselineskip\noindent\enspace\ignorespaces \textbf{Keywords: }#1}
\author{Fangzheng Xie\thanks{Department of Statistics, Indiana University}
\footnotemark[1] \thanks{Correspondence should be addressed to Fangzheng Xie (fxie@iu.edu)}
}
\title{Bias-Corrected Joint Spectral Embedding for Multilayer Networks with Invariant Subspace: Entrywise Eigenvector Perturbation and Inference}
\begin{document}

\allowdisplaybreaks
\maketitle

\begin{abstract}
In this paper, we propose to estimate the invariant subspace across heterogeneous multiple networks using a novel bias-corrected joint spectral embedding algorithm. The proposed algorithm recursively calibrates the diagonal bias of the sum of squared network adjacency matrices by leveraging the closed-form bias formula and iteratively updates the subspace estimator using the most recent estimated bias. Correspondingly, we establish a complete recipe for the entrywise subspace estimation theory for the proposed algorithm, including a sharp entrywise subspace perturbation bound and the entrywise eigenvector central limit theorem. Leveraging these results, we settle two multiple network inference problems: the exact community detection in multilayer stochastic block models and the hypothesis testing of the equality of membership profiles in multilayer mixed membership models. Our proof relies on delicate leave-one-out and leave-two-out analyses that are specifically tailored to block-wise symmetric random matrices and a martingale argument that is of fundamental interest for the entrywise eigenvector central limit theorem. 
\end{abstract}

\keywords{Bias correction, entrywise subspace estimation, exact community detection, heterogeneous multilayer network}

\section{Introduction}
\subsection{Background}
\label{sub:background}

Network data are a convenient data form for representing relational structures among multiple entities. They are pervasive across a broad spectrum of application domains, including social science \cite{Girvan7821,wasserman1994social,young2007random}, biology \cite{Girvan7821,8570772}, and computer science \cite{6623779,7745482}, to name a selected few. Statistical network analysis has also been attracting attention recently, where random graph models serve as the underlying infrastructure. In a random graph, a network-valued random variable is generated by treating the vertices as deterministic and the edges connecting different vertices as random variables. Various popular random graph models have been proposed and studied in the literature, such as the renowned stochastic block models \cite{HOLLAND1983109} and their variants \cite{airoldi2008mixed,PhysRevE.83.016107,7769223}, the (generalized) random dot product graph model \cite{rubin2017statistical,young2007random}, the latent space model \cite{doi:10.1198/016214502388618906}, and exchangeable random graphs \cite{doi:10.1111/rssb.12233,10.1214/20-AOS1976,lovasz2012large}, among others. Correspondingly, there has also been substantial development regarding statistical inference for random graph models, including community detection \cite{abbe2017community,7298436,lei2015,sussman2012consistent}, vertex classification \cite{6565321,tang2013}, and network hypothesis testing \cite{10.1214/15-AOS1370,doi:10.1080/10618600.2016.1193505,tang2017}, to name a selected few. The readers are referred to survey papers \cite{abbe2017community,gao2021minimax,rdpg-survey} for the details on the recent advances in statistical network analysis.

Recently, there has been a growing interest in analyzing heterogeneous multiple networks, where the data consists of a collection of vertex-aligned networks, and each network is referred to as a \emph{layer}. These multiple networks, also known as \emph{multilayer networks} \cite{10.1093/comnet/cnu016}, arise naturally in a variety of application domains where network data are pervasive. For example, in trade networks, countries or districts are represented by vertices, and trade activities among them are modeled as edges. In these trade networks, vertices corresponding to countries or districts are aligned across layers, but different items and times lead to heterogeneous network patterns. Other examples include fMRI studies \cite{openscienceresourcefunctionalconnectomics}, protein networks \cite{10.1371/journal.pcbi.1000807}, traffic networks \cite{jacobini2020bureau}, gene coexpression networks \cite{bakken2016comprehensive}, social networks \cite{6265414}, and mobile phone networks \cite{Kiukkonen:150599}, among others. The heterogeneity of multiple network data brings extra challenges compared to their single network counterparts and requires substantially different methodologies and theories. 

\subsection{Overview}
\label{sub:overview}

Consider a collection of vertex-aligned network adjacency matrices $(\bA_t)_{t = 1}^m$ with low-rank edge probability matrices $(\bP_t)_{t = 1}^m$. Suppose $\mathrm{rank}(\bP_t) = d$ for all $t\in\{1,\ldots,m\}$ and they share the same leading eigenspace, \emph{i.e.}, the column spaces of $(\bP_t)_{t = 1}^m$ are identical. This multiple network model is called the common subspace independent edge model (COSIE) \cite{arroyo2021inference}, the formal definition of which is introduced in Section \ref{sub:COSIE}. The COSIE model is a quite flexible yet architecturally simple multiple network model that captures both the shared subspace structure across layers and the layer-wise heterogeneity pattern by allowing different eigenvalues. The COSIE model includes the popular multilayer stochastic block model and the multilayer mixed membership model as important special examples. 

The common subspace structure of the COSIE model allows us to write $\bP_t = \bU\bB_t\bU\transpose$ for some $n\times d$ matrix $\bU$ with orthonormal columns for each layer $t$, and the column space of $\bU$ remains invariant across layers, whereas the layer-specific pattern is captured by a score matrix $\bB_t$. Our goal is to estimate $\bU$ up to the right multiplication of a $d\times d$ orthogonal matrix, particularly under the challenging regime where the layer-wise signal-to-noise ratio is insufficient to consistently estimate $\bU$ using single network methods. Successful estimation of $\bU$ is of fundamental interest to several downstream network inference tasks, including community detection in multilayer stochastic block models and the hypothesis testing of membership profiles in multilayer mixed membership models. The major contribution of our work is threefold:
\begin{itemize}
        \item We design a novel bias-corrected joint spectral embedding algorithm (see Section \ref{sub:JSE} and Algorithm \ref{alg:BCJSE} there for details) for estimating $\bU$. The proposed algorithm carefully calibrates the bias due to $\expect(\sum_{t = 1}^m\bA_t)\neq \sum_{t= 1}^m\bP_t^2$ recursively by leveraging the closed form bias formula and updates the subspace estimator iteratively. Compared to some of the existing de-biasing algorithms, such as the heteroskedastic PCA \cite{zhang2018heteroskedastic}, the proposed algorithm is computationally efficient, numerically more stable, and only requires $O(1)$ number of iterations to achieve rate optimality under mild conditions.

        \item We establish the entrywise subspace estimation theory for the proposed bias-corrected joint spectral embedding estimator in the context of the COSIE model. Specifically, we establish a sharp two-to-infinity norm subspace perturbation bound and the entrywise central limit theorem for our estimator. Our proof relies on non-trivial and sharp leave-one-out and leave-two-out devices that are specifically designed for multilayer network models and a martingale argument. These technical tools may be of independent interest.

        \item Leveraging the entrywise subspace estimation theory, we further settle two multilayer network inference tasks, namely, the exact community detection in multilayer stochastic block models and the hypothesis testing of the equality of the membership profiles in multilayer mixed membership models. These problems are non-trivial extensions of their single network counterparts because we do not require the layer-wise average network expected degree to diverge, as long as the aggregated network signal-to-noise ratio is sufficient. 
\end{itemize} 

\subsection{Related work}
\label{sub:related_work}

The recent decade has witnessed substantial progress in the theoretical and methodological development for statistical analyses of single networks, where spectral methods have been playing a pivotal role not only thanks to their computational convenience but also because they directly provide insight into a variety of subsequent network inference algorithms. One of the most famous examples is the spectral clustering in stochastic block models \cite{sussman2012consistent,lei2015}, where the leading eigenvectors of the adjacency matrix or its normalized Laplacian matrix are used as the input for $K$-means clustering algorithm to recover the vertex community memberships. Also see \cite{xie2019optimal,xie2019efficient,xie2021entrywise,xie-wu-2022-ea-spn,athreya2016limit,athreya2018estimation,doi:10.1080/10618600.2022.2081576,levin2019bootstrapping,JMLR:v22:19-852,10.1093/biomet/asaa006,lyzinski2014,rubin2017statistical,https://doi.org/10.1111/rssb.12245,6565321,sussman2012consistent,doi:10.1080/10618600.2016.1193505,tang2017,tang2018,young2007random} and the survey paper \cite{MAL-079} for an incomplete list of reference. Entrywise eigenvector estimation theory for network models has been attracting growing interest recently because it provides fine-grained characterization of some downstream network inference tasks, such as the exact community detection in stochastic block models \cite{abbe-fan-wang-zhong-2020}, the fundamental limit of spectral clustering in stochastic block models \cite{zhang2023fundamental}, and the inference for membership profiles in mixed membership stochastic block model \cite{fan2019simple,xie2021entrywise}. A collection of entrywise eigenvector perturbation bounds under the notion of the two-to-infinity norm have been developed by \cite{doi:10.1137/1.9781611977912.34,cape2019signal,cape2017two,pmlr-v83-eldridge18a,doi:10.1080/01621459.2020.1751645,lyzinski2014,abbe-fan-wang-zhong-2020,xia2019sup} in the context of single network models and beyond, while the authors of \cite{athreya2016limit,tang2018,cape2019signal,xie2021entrywise} have further established the entrywise eigenvector central limit theorem beyond the perturbation error bounds. Although the above works are designed for the analysis of single networks and single symmetric random matrices, the ideas and the technical tools developed there are inspiring for the analysis of heterogeneous multiple networks, as will be made clear in Section \ref{sub:entrywise_eigenvector_CLT} below. 

The analysis of heterogeneous multiple networks has primarily focused on the context of multilayer stochastic block models and the corresponding community detection problem. Existing methodologies can be roughly classified into the following categories: spectral-based methods \cite{agterberg2022joint,bhattacharyya2018spectral,bhattacharyya2020consistent,doi:10.1080/10618600.2022.2134874,doi:10.1080/01621459.2022.2054817,zheng2022limit,arroyo2021inference,7990044,6265414,doi:10.1073/pnas.1718449115}, matrix and tensor factorization methods \cite{paul2020spectral,5360349,10.1214/21-AOS2079,10.1093/biomet/asz068}, likelihood and modularity-based methods \cite{PhysRevE.92.042807,PhysRevResearch.2.023100,PhysRevE.95.042317,chen2022global,PhysRevX.6.031005,10.1111/rssb.12200,paul2016consistent,6758385,doi:10.1080/01621459.2016.1260465}, and approximate message passing algorithms \cite{10023524}. A recently posted preprint \cite{lei2023computational} discussed the computational and statistical thresholds of community detection in multilayer stochastic block models under the so-called low-degree polynomial conjecture. Most of these works, however, did not provide the entrywise subspace estimation theory, and most of them only established the weak consistency of community detection in multilayer stochastic block models, \emph{i.e.}, a vanishing fraction of the vertices are incorrectly clustered, except for \cite{paul2016consistent,chen2022global,zheng2022limit}. The exact community detection in \cite{paul2016consistent} is achieved through the minimax rates of community detection, but their proof only guarantees the existence of such a minimax-optimal algorithm. In \cite{chen2022global}, the authors proposed a two-stage algorithm that achieves the exact community detection, but they require a strong condition on the positivity of the eigenvalues of the block probability matrices. In \cite{zheng2022limit}, the authors established the exact community detection of a multiple adjacency spectral embedding method under the much stronger regime where the layer-wise signal-to-noise ratio is sufficient. We defer the detailed comparison between our results with those from some of the above-cited work to Section \ref{sec:main_results}. For other multiple network models encompassing the COSIE model, see \cite{bhattacharyya2020consistent,8889404,draves2020bias,wang2023multilayer,pensky2021clustering,pensky2019spectral} for an incomplete list of reference. 

Our problem can be equivalently reformulated as estimating the left singular subspace of the low-rank concatenated matrix $[\bP_1,\ldots,\bP_m]$ using the noisy rectangular concatenated network adjacency matrix $[\bA_1,\ldots,\bA_m]$. There has also been growing interest in the entrywise singular subspace estimation theory for rectangular random matrices with low expected ranks \cite{agterberg2021entrywise,cai2021subspace,yan2021inference,10.1214/22-AOS2196}. From the technical perspective, the papers \cite{agterberg2021entrywise,yan2021inference} are the most similar ones to our work, but they considered rectangular random matrices with completely independent entries and studied the so-called heteroskedastic PCA estimator for the singular subspace. In contrast, the rectangular matrix of interest in our work has a block-wise symmetric structure, and we consider a different bias-corrected joint spectral embedding algorithm. Consequently, neither their results nor their proof techniques can be applied to the COSIE model context. We also defer the detailed comparison between our work and those obtained in \cite{agterberg2021entrywise,cai2021subspace,yan2021inference,10.1214/22-AOS2196} in Section \ref{sub:entrywise_eigenvector_CLT}. 

\subsection{Paper organization}
\label{sub:organization}

The rest of this paper is organized as follows. Section \ref{sec:COSIE_BCJSE} sets the stage of the COSIE model and introduces the proposed bias-corrected joint spectral embedding algorithm. The main results are elaborated in Section \ref{sec:main_results}, including the two-to-infinity norm subspace perturbation bound and the entrywise eigenvector central limit theorem of the bias-corrected joint spectral embedding estimator. In this section, we also settle the exact community detection in multilayer stochastic block models and the hypothesis testing of the equality of membership profiles for any two given vertices in multilayer mixed membership models. Section \ref{sec:numerical} illustrates the practical performance of the proposed bias-corrected joint spectral embedding algorithm and validates the entrywise subspace estimation theory via numerical experiments empirically. We conclude the paper with a discussion in Section \ref{sec:discussion}. The technical proofs of our main results are deferred to the appendices.

\subsection{Notations}
\label{sub:notation}

The symbol $:=$ is used to assign mathematical definitions throughout. Given a positive integer $n$, we let $[n]$ denote $\{1,\ldots,n\}$. For any two real numbers $a, b > 0$, let $a\vee b := \max(a, b)$ and $a\wedge b:=\min(a, b)$. For any two nonnegative sequences $(a_n)_{n = 1}^\infty, (b_n)_{n = 1}^\infty$, we write $a_n\lesssim b_n$ or $b_n\gtrsim a_n$ or $a_n = O(b_n)$ or $b_n = \Omega(a_n)$, if there exists a constant $C > 0$, such that $a_n\leq Cb_n$ for all $n$. We write $a_n\asymp b_n$ or $a_n = \Theta(b_n)$ if $a_n = O(b_n)$ and $a_n = \Omega(b_n)$. We use $C, C_1, C_2, c, c_1, c_2$, etc to denote positive constants that may vary from line to line but do not change with $n$ throughout. A sequence of events $(\calE_n)_{n = 1}^\infty$ indexed by $n$ is said to occur with high probability (w.h.p.), if for any constant $c > 0$, there exists a $c$-dependent constant $N_c > 0$, such that $\prob(\calE_n)\geq 1 - O((m + n)^{-c})$ for any $n\geq N_c$, where we assume that the number of layers $m$ depends on the number of vertices $n$ in this work. Given two sequences of nonnegative random variables $(X_n)_{n = 1}^\infty$, $(Y_n)_{n = 1}^\infty$, we say that $X_n$ is bounded by $Y_n$ w.h.p., denoted by $X_n = \Optilde(Y_n)$, if for any constant $c > 0$, there exist $c$-dependent constants $N_c, C_c > 0$, such that $\prob(X_n\leq C_cY_n)\geq 1 - O((m + n)^{-c})$ for any $n\geq N_c$, and we write $X_n = \optilde(Y_n)$ if $X_n = \Optilde(\eps_nY_n)$ for some nonnegative sequence $(\eps_n)_{n = 1}^\infty$ converging to $0$. Note that our $\Optilde(\cdot)$ and $\optilde(\cdot)$ notations are stronger than the conventional $O_p$ and $o_p$ notations in the literature of probability and statistics. 

Given positive integers $n,d$ with $n\geq d$, let $\eye_d$ denote the $d\times d$ identity matrix, $\zero_d$ denote the $d$-dimensional zero vector, $\one_d$ denote the $d$-dimensional vector of all ones, $\zero_{n\times d}$ the $n\times d$ zero matrix, $\mathbb{O}(n, d) := \{\bU\in\mathbb{R}^{n\times d}:\bU\transpose\bU\}$, and $\mathbb{O}(d):=\mathbb{O}(d, d)$. Let $\mathrm{Span}(\bX)$ denote the column space of $\bX$ for any rectangular matrix $\bX$.
For any positive integer $i$, we let $\be_i$ denote the $i$th standard basis vector whose $i$th entry is $1$ and the remaining entries are $0$ if the dimension of $\be_i$ is clear from the context. Given $a_1,\ldots,a_n\in\mathbb{R}$, we denote by $\mathrm{diag}(a_1,\ldots,a_n)$ the diagonal matrix whose diagonal entries are $a_1,\ldots,a_n$. Given a square matrix $\bM\in\mathbb{R}^{n\times n}$, let $\mathrm{diag}(\bM)$ be the diagonal matrix obtained by extracting the diagonal entries of $\bM$. If $\bM\in\mathbb{R}^{n\times n}$ is symmetric, we then denote by $\lambda_k(\bM)$ the $k$th largest eigenvalue of $\bM$, namely, $\lambda_1(\bM)\geq\ldots\geq\lambda_n(\bM)$. If $\bM$ is a positive semidefinite matrix, we follow the usual linear algebra notation and denote by $\bM^{1/2}$ as applying the square root operation to the eigenvalues of $\bM$. Let $\bM^{-1/2}:=(\bM^{-1})^{1/2}$ if $\bM$ is positive definite. Given a $p_1\times p_2$ rectangular matrix $\bA = [A_{ij}]_{p_1\times p_2}$, we let $\sigma_k(\bA)$ denote its $k$th largest singular value, namely, $\sigma_1(\bA)\geq\ldots\geq\sigma_{p_1\wedge p_2}(\bA)$, denote its spectral norm by $\|\bA\|_2:=\lambda_1^{1/2}(\bA\transpose\bA)$, its Frobenius norm by $\|\bA\|_{\mathrm{F}} = (\sum_{i = 1}^{p_1}\sum_{j = 1}^{p_2}A_{ij}^2)^{1/2}$, its infinity norm by $\|\bA\|_\infty = \max_{i\in[p_1]}\sum_{j = 1}^{p_2}|A_{ij}|$, its two-to-infinity norm by $\|\bA\|_{2\to\infty} = \max_{i\in[p_1]}(\sum_{j = 1}^{p_2}A_{ij}^2)^{1/2}$, and its entrywise maximum norm by $\|\bA\|_{\max} = \max_{i\in[p_1],j\in[p_2]}|A_{ij}|$. 
For any $\bU,\bV\in\mathbb{O}(n, d)$, let $\sin\Theta(\bU, \bV) := \mathrm{diag}\{\sigma_1(\bU\transpose\bV),\ldots,\sigma_d(\bU\transpose\bV)\}$. 
For any matrix $\bH\in\mathbb{O}(d)$, we define its matrix sign $\mathrm{sgn}(\bH)$ as follows: Suppose it has the singular value decomposition (SVD) 
$\bH = \bW_1\mathrm{diag}\{\sigma_1(\bH),\ldots,\sigma_d(\bH)\}\bW_2\transpose$,
where $\bW_1,\bW_2\in\mathbb{O}(d)$. The matrix sign of $\bH$ is defined as $\mathrm{sgn}(\bH) = \bW_1\bW_2\transpose$. Let $\calH:\mathbb{R}^{n\times n}\to\mathbb{R}^{n\times n}$ denote the hollowing operator defined by $\calH(\bA) = \bA - \mathrm{diag}(\bA) = \bA - \sum_{i = 1}^n\be_i\be_i\transpose\bA\be_i\be_i\transpose$ for any $\bA\in\mathbb{R}^{n\times n}$ \cite{10.1214/22-AOS2196}. Lastly, given a symmetric matrix $\bA\in\mathbb{R}^{n\times n}$ and a positive integer $d\leq n$, we let $\texttt{eigs}(\bA; d)$ denote the $n\times d$ eigenvector matrix of $\bA$ with orthonormal columns associated eigenvalues $\lambda_1(\bA)\geq\ldots\geq\lambda_d(\bA)$. 

\section{Model Setup and Methodology}
\label{sec:COSIE_BCJSE}

\subsection{Multilayer networks with invariant subspace}
\label{sub:COSIE}

Consider $n\times n$ symmetric random matrices $\bA_1,\ldots,\bA_m$, where $(\bA_t)_{t = 1}^m$ have independent upper triangular entries. Each network $\bA_t$ is referred to as a layer, and we model the multilayer networks $(\bA_t)_{t = 1}^m$ through the common subspace independent edge (COSIE) model \cite{arroyo2021inference,zheng2022limit} defined as follows: There exists an $n\times d$ orthonormal matrix $\bU\in\mathbb{O}(n, d)$, where $d\leq n$, and a collection of $d\times d$ symmetric score matrices $(\bB_t)_{t = 1}^m\subset\mathbb{R}^{d\times d}$, such that $\bP_t = \expect\bA_t = \bU\bB_t\bU\transpose$, $t\in[m]$, $(A_{tij} - P_{tij}:t\in[m],i,j\in[n],i\leq j)$ are independent centered Bernoulli random variables, and $A_{tij} = A_{tji}$ if $i > j$, where $A_{tij}$ and $P_{tij}$ denote the $(i, j)$th entry of $\bA_t$ and $\bP_t$, respectively. In the COSIE model, different network layers share the same principal subspace but the heterogeneity across layers is captured by $(\bB_t)_{t = 1}^m$. 

The COSIE model is flexible enough to encompass several popular heterogeneous multiple network models yet enjoys a simple architecture. Two important examples are the multilayer stochastic block model (MLSBM) and the multilayer mixed membership (MLMM) model. In MLSBM, the vertices share the same set of community structures across layers, but the layer-wise block probabilities can be different. Formally, given $n$ vertices labeled as $[n]$ and number of communities $K$, we say that random adjacency matrices $\bA_1,\ldots,\bA_m$ follow MLSBM with community assignment function $\tau:[n]\to[K]$ and symmetric block probability matrices $(\bC_t)_{t = 1}^m\in[0, 1]^{K\times K}$, denoted by $\bA_1,\ldots,\bA_m\sim\mathrm{MLSBM}(\tau;\bC_1,\ldots,\bC_m)$, if $A_{tij}\sim\mathrm{Bernoulli}(C_{t\tau(i)\tau(j)})$ independently for all $t\in[m]$, $i,j\in[n]$, $i\leq j$, where $C_{tab}$ is the $(a, b)$th entries of $\bC_t$. The community assignment function can be equivalently represented by a matrix $\bZ\in\mathbb{R}^{n\times K}$ with its $(i, k)$th entry being one if $\tau(i) = k$ and the remaining entries being zero. The MLMM models generalize the MLSBM by allowing the matrix $\bZ$ to have entries between $0$ and $1$, provided that $\bZ\one_d = \one_n$, and its $i$th row is a probability vector describing how the membership weights of the $i$th vertex are assigned to $K$ communities. 
In \cite{arroyo2021inference}, the authors showed that both the MLSBM and the MLMM models can be represented by $\mathrm{COSIE}(\bU; \bB_1,\ldots,\bB_m)$ with $\bU = \bZ(\bZ\transpose\bZ)^{-1/2}$ and $\bB_t = (\bZ\transpose\bZ)^{1/2}\bC_t(\bZ\transpose\bZ)^{1/2}$ for all $t\in[m]$, where $\bZ\in\mathbb{R}^{n\times K}$ is the community assignment or membership profile matrix, provided that $\bZ\transpose\bZ$ is invertible. 

\subsection{Joint Spectral Embedding and Bias Correction}
\label{sub:JSE}
Our goal is to estimate the invariant subspace spanned by the columns of $\bU$ by aggregating the network information across layers. Note that $\bU$ is only identifiable up to a $d\times d$ orthogonal transformation. Given any generic estimator $\widehat{\bU}$, it is necessary to consider an orthogonal matrix $\mathrm{sgn}(\widehat{\bU}\transpose\bU)$ such that $\widehat{\bU}\mathrm{sgn}(\widehat{\bU}\transpose\bU)$ is aligned with $\bU$. The matrix sign is often used as the orthogonal alignment matrix between subspaces that are comparable (see, for example, \cite{MAL-079,5714248,cape2020orthogonal}). 

Let $\bE_t = [E_{tij}]_{n\times n} = \bA_t - \bP_t$, $\bA = [\bA_1,\ldots,\bA_m]\in\mathbb{R}^{n\times mn}$, $\bP = [\bP_1,\ldots,\bP_m]\in\mathbb{R}^{n\times mn}$, $\bE = \bA - \bP$, and $\bB = [\bB_1,\ldots,\bB_m]\in\mathbb{R}^{d\times md}$. A naive approach is to use $\texttt{eigs}((1/n)\sum_{t = 1}^m\bA_t; d)$ as an estimator for $\bU$, but this method can lead to the cancellation of signals when $(\bP_t)_{t = 1}^m$ has negative eigenvalues \cite{doi:10.1080/01621459.2022.2054817}. Alternatively, observe that $\mathrm{Span}(\bU)$ can be equivalently viewed as the leading eigen-space of the sum-of-squared matrix $\bP\bP\transpose$ corresponding to its $d$-largest eigenvalues. It is thus reasonable to consider the eigenvector matrix of $\bA\bA\transpose$ corresponding to its $d$-largest eigenvalues as the sample counterpart of $\bU$. This naive approach, nonetheless, can generate bias when $m$ is large, as well observed in \cite{doi:10.1080/01621459.2022.2054817} because $\expect \bA\bA\transpose = \bP\bP\transpose + \expect \bE\bE\transpose$ and $\expect \bE\bE\transpose$ is a nonzero diagonal matrix. There are two existing approaches that attempt to account for this bias term. One strategy is computing the leading eigenvector matrix associated with the $d$-largest eigenvalues of $\calH(\bA\bA\transpose)$. This approach has been studied in \cite{doi:10.1080/01621459.2022.2054817,cai2021subspace,10.1214/22-AOS2196}, but we show in Theorem \ref{thm:entrywise_eigenvector_perturbation_bound} below that the estimation error bound is sub-optimal. The fundamental reason for such sub-optimality is because $\expect\calH(\bA\bA\transpose)\neq\bP\bP\transpose$. Another method is to iteratively correct the bias in $\expect\bE\bE\transpose$ using the diagonal entries of a low-rank approximation to $\bA\bA\transpose$. This approach is referred to as the HeteroPCA algorithm and has been studied in \cite{10.1214/21-AOS2074,yan2021inference,agterberg2021entrywise}. Neither approach is the focus of this work, although our proposed approach below shares some similarities with HeteroPCA in spirit. 

The starting point of our proposed bias correction procedure is the observation that $\expect\calH(\bA\bA\transpose) = \bP\bP\transpose + \bM$, where 
\begin{align}
\label{eqn:M_matrix}
\bM = -\sum_{i = 1}^n\be_i\be_i\transpose\sum_{t = 1}^m\sum_{j = 1}^nP_{tij}^2.
\end{align}
Formally, the bias-corrected joint spectral embedding (BCJSE) can be described in Algorithm \ref{alg:BCJSE} below. 
\begin{algorithm}[H]
\caption{Bias-corrected joint spectral embedding.}\label{alg:BCJSE}
\begin{algorithmic}
\STATE 
\STATE \textbf{Require:} Matrix $\bA\bA\transpose$, rank $d$, number of steps for bias calibration $S$, number of iterations $R$.
\STATE \textbf{Initialize:} Set $\widehat{\bU}_{0S} = \zero_{n\times d}$. 
\STATE \hspace{0.5cm} \textbf{For} $r = 1,2,\ldots,R$:
\STATE \hspace{1cm} Set $\widehat{\bM}_{r0} = \zero_{n\times n}$.
\STATE \hspace{1cm} \textbf{For} $s = 1,2,\ldots,S$:
\STATE \hspace{1.5cm} Set
    \[
    \widehat{\bM}_{rs} = -\sum_{i = 1}^n\be_i\be_i\transpose\widehat{\bU}_{(r - 1)S}\widehat{\bU}_{(r - 1)S}\transpose\left\{\calH(\bA\bA\transpose) - \widehat{\bM}_{r(s - 1)}\right\}\widehat{\bU}_{(r - 1)S}\widehat{\bU}_{(r - 1)S}\transpose\be_i\be_i\transpose.
    \]
\STATE \hspace{1cm} \textbf{End For}
\STATE \hspace{1cm} Set $\widehat{\bU}_{rS} = \texttt{eigs}(\calH(\bA\bA\transpose) - \widehat{\bM}_{rS};d)$. 
\STATE \hspace{0.5cm} \textbf{End For}
\STATE \textbf{return}  $\widehat{\bU} = \widehat{\bU}_{RS}$.
\end{algorithmic}
\label{alg1}
\end{algorithm}
The BCJSE algorithm consists of two-level iterations. At the $r$th step of the outer iteration, the inner iteration calibrates the bias $\bM$ recursively since $\widehat{\bM}_{rs}$ can be viewed as a plug-in estimator of $\bM$ using the most updated value of $\widehat{\bU}_{(r - 1)S}$ and $\widehat{\bM}_{r(s - 1)}$. Then, the outer iteration updates the estimator for $\bU$ iteratively using the most updated estimated bias $\widehat{\bM}_{rS}$. The BCJSE algorithm also generalizes the hollowed spectral decomposition method discussed in \cite{doi:10.1080/01621459.2022.2054817,10.1214/22-AOS2196,cai2021subspace} since when $R = 1$, Algorithm \ref{alg:BCJSE} returns the eigenvector matrix corresponding to the $d$-largest eigenvalues of $\calH(\bA\bA\transpose)$. Compared to HeteroPCA, BCJSE can be viewed as a ``parametric'' version of HeteroPCA because it takes advantage of the parametric form of $\bM$. As will be seen in Section \ref{sec:main_results}, the BCJSE algorithm requires only $O(1)$ number of iterations to achieve sharp estimation error bounds under mild conditions. 

\section{Main Results}
\label{sec:main_results}

\subsection{Entrywise Eigenvector Perturbation Bound}
\label{sub:entrywise_eigenvector_perturbation_bound}

We first introduce several necessary assumptions for our main results and discuss their implications. 

\begin{assumption}[Eigenvector delocalization]
\label{assumption:eigenvector_delocalization}
Define the incoherence parameter $\mu = (n/d)\|\bU\|_{2\to\infty}^2$. Then $\mu = O(1)$. 
\end{assumption}

\begin{assumption}[Signal-to-noise ratio]
\label{assumption:condition_number}
Let $\Delta_n = \min_{t\in[m]}\sigma_d(\bB_t)$ and $\kappa = \max_{t\in[m]}\sigma_1(\bB_t)/\Delta_n$. Then $\kappa,d = O(1)$, and there exists an $n$-dependent quantity $\rho_n\in(0, 1]$, such that $\max_{t\in[m],i,j\in[n]}\expect(A_{tij} - P_{tij})^2\leq \rho_n$, $\|\bP\|_{\max} = \Theta(\rho_n)$, $\Delta_n = \Theta(n\rho_n)$, $m^{1/2}n\rho_n = \omega(\log n)$, and $m = O(n^\alpha)$ for some constant $\alpha > 0$.  
\end{assumption}

Assumption \ref{assumption:eigenvector_delocalization} requires the invariant subspace basis matrix $\bU$ to be delocalized. It is also known as the incoherence condition in the literature of matrix completion \cite{candes2009exact,5466511,jain2013low} and random matrix theory \cite{erdos2013}. It requires that the entries of the invariant subspace basis matrix $\bU$  cannot be too ``spiky'', namely, the columns of $\bU$ are significantly different from the standard basis vectors. It is also a common condition from the perspective of network models. For example, when the underlying COSIE model is an MLSBM, Assumption \ref{assumption:eigenvector_delocalization} requires that the community sizes are balanced, namely, the number of vertices in each community is $\Theta(n/K)$. When each network layer is generated from a random dot product graph, Assumption \ref{assumption:eigenvector_delocalization} is also satisfied with probability approaching one if the underlying latent positions are independently generated from an underlying distribution. 

Assumption \ref{assumption:condition_number} characterizes the signal-to-noise ratio of the COSIE model through the smallest nonzero singular values of the score matrices and the so-called network sparsity factor $\rho_n$. In Assumption \ref{assumption:condition_number}, the requirement $\kappa, d = O(1)$ is a mild condition. In the context of MLSBM, this amounts to requiring that the number of communities is bounded and the condition numbers of the block probability matrices are also bounded. The sparsity factor $\rho_n$ fundamentally controls the average network expected degree through $n\rho_n$. When $m = 1$, the COSIE model reduces to the generalized random dot product graph (GRDPG) and Assumption \ref{assumption:condition_number} requires that $n\rho_n = \omega(\log n)$, which almost agrees with the standard condition for single network problems (see, for example, \cite{abbe-fan-wang-zhong-2020,xie2021entrywise}). Notably, when $m$ is allowed to increase with $n$, Assumption \ref{assumption:condition_number} only requires that $m^{1/2}n\rho_n = \omega(\log n)$ and $n\rho_n$ does not need to diverge as $n$ increases. This is because, in multilayer networks, the overall network signal can be obtained by aggregating the information from each layer, so that the requirement for the signal strength from each layer is substantially weaker than that for single network problems. A similar phenomenon has also been observed in the literature of multilayer network analysis (see, for example, \cite{10023524,10.1093/biomet/asz068,10.5555/3045118.3045279,bhattacharyya2018spectral,paul2020spectral,chen2022global,10.1214/21-AOS2079,paul2016consistent,lei2023computational})

Below, we present our first main result regarding the row-wise perturbation bound of the BCJSE through a two-to-infinity norm error estimate. 
\begin{theorem}\label{thm:entrywise_eigenvector_perturbation_bound}
Suppose Assumptions \ref{assumption:eigenvector_delocalization}--\ref{assumption:condition_number} hold. Let $\widehat{\bU}$ be the output of Algorithm \ref{alg:BCJSE} with $R, S = O(1)$. Then
\begin{align*}
\|\widehat{\bU}\mathrm{sgn}(\widehat{\bU}\transpose\bU) - \bU\|_{2\to\infty}
 = \Optilde\bigg(
 \frac{\varepsilon_n^{(\mathsf{op})} + \varepsilon_n^{(\mathsf{bc})}}{\sqrt{n}}
 \bigg),\quad
\|\widehat{\bU}\mathrm{sgn}(\widehat{\bU}\transpose\bU) - \bU\|_2
 = \Optilde(\varepsilon_n^{(\mathsf{op})} + \varepsilon_n^{(\mathsf{bc})}),
\end{align*}
where
\begin{align*}
\varepsilon_n^{(\mathsf{op})} = \frac{\log n}{m^{1/2}n\rho_n} + \frac{(\log n)^{1/2}}{(mn\rho_n)^{1/2}}\quad\text{and}\quad
\varepsilon_n^{(\mathsf{bc})} &= \frac{1}{n^{S + 1}} + \frac{1}{n^R}.
\end{align*}
\end{theorem}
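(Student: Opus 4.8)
The plan is to regard $\widehat{\bU} = \widehat{\bU}_{RS}$ as the leading-$d$ eigenvectors of the calibrated matrix $\widehat{\bG} := \calH(\bA\bA\transpose) - \widehat{\bM}_{RS}$ and to analyze it as a perturbation of the rank-$d$ signal $\bP\bP\transpose = \bU\bLambda\bU\transpose$, where $\bLambda$ collects the eigenvalues of $\sum_{t}\bB_t^2$. Using $\expect\calH(\bA\bA\transpose) = \bP\bP\transpose + \bM$ together with $\calH(\sum_t\bE_t^2)$ being mean-zero (since $\expect\bE_t^2$ is diagonal and is annihilated by $\calH$), I would first record the exact decomposition
\[
\widehat{\bG} - \bP\bP\transpose = \bN - (\widehat{\bM}_{RS} - \bM),\qquad
\bN := \calH\Big(\sum_t(\bP_t\bE_t + \bE_t\bP_t)\Big) + \calH\Big(\sum_t\bE_t^2\Big),
\]
so that the error splits into a purely stochastic part $\bN$ with $\expect\bN = \zero_{n\times n}$ and a bias-calibration residual $\widehat{\bM}_{RS}-\bM$. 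Under Assumptions \ref{assumption:eigenvector_delocalization}--\ref{assumption:condition_number} the eigengap is $\lambda_d(\bP\bP\transpose) = \Theta(m n^2\rho_n^2)$, which is the normalizing scale throughout. The spectral-norm conclusion then follows from a Davis--Kahan bound once $\|\bN\|_2$ and $\|\widehat{\bM}_{RS}-\bM\|_2$ are controlled: matrix concentration applied separately to the two summands of $\bN$ shows that the layer-summed linear term has variance proxy $\Theta(mn^3\rho_n^3)$ and contributes $\Optilde((mn^3\rho_n^3\log n)^{1/2})$, while the centered quadratic term $\sum_t\calH(\bE_t^2)$, handled by a moment bound tailored to sums of squared sparse symmetric matrices, contributes $\Optilde(m^{1/2}n\rho_n\log n)$; dividing by $\lambda_d$ reproduces exactly the two summands of $\varepsilon_n^{(\mathsf{op})}$.

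Next I would control the calibration residual by exploiting the fixed-point structure of the inner loop. For a fixed outer index $r$, the update $\widehat{\bM}_{r(s-1)}\mapsto\widehat{\bM}_{rs}$ is affine, and on diagonal matrices its linear part acts through the Hadamard-squared projection $\bQ$ with $Q_{ij} = (\widehat{\bU}_{(r-1)S}\widehat{\bU}_{(r-1)S}\transpose)_{ij}^2$. Eigenvector delocalization forces $\|\bQ\|_2 \le \|\bQ\|_\infty = \max_i(\widehat{\bU}_{(r-1)S}\widehat{\bU}_{(r-1)S}\transpose)_{ii} = O(1/n)$, so the inner loop contracts geometrically at rate $O(1/n)$; after $S$ steps the distance to the inner fixed point is $O(n^{-S})\|\bM\|_2$, and since $\|\bM\|_2/\lambda_d = \Theta(1/n)$ this produces the $n^{-(S+1)}$ term of $\varepsilon_n^{(\mathsf{bc})}$. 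The residual gap between the inner fixed point and $\bM$ is driven by the error of $\widehat{\bU}_{(r-1)S}$, and each outer iteration feeds this back and contracts it by a further factor $O(1/n)$, producing the $n^{-R}$ term. I would formalize both effects through a single induction on $r$ that simultaneously propagates the spectral error, the two-to-infinity error, and the entrywise bias error $\max_i|(\widehat{\bM}_{rS})_{ii}-(\bM)_{ii}|$.

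The heart of the argument is the row-wise bound. A first-order perturbation expansion (in the spirit of Abbe--Fan--Wang--Zhong) reduces $\|\widehat{\bU}\,\mathrm{sgn}(\widehat{\bU}\transpose\bU) - \bU\|_{2\to\infty}$ to bounding $\lambda_d^{-1}\be_i\transpose(\bN - (\widehat{\bM}_{RS}-\bM))\widehat{\bU}$ up to higher-order terms. The diagonal bias contribution is trivial, since $\be_i\transpose(\widehat{\bM}_{RS}-\bM)\widehat{\bU} = ((\widehat{\bM}_{RS})_{ii}-(\bM)_{ii})\,\be_i\transpose\widehat{\bU}$ is controlled by the entrywise bias error from the induction times $\|\widehat{\bU}\|_{2\to\infty}\approx\|\bU\|_{2\to\infty}$. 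The genuine difficulty is $\|\bN\widehat{\bU}\|_{2\to\infty}$: the quadratic part $\calH(\sum_t\bE_t^2)$ has $(i,j)$ entry $\sum_t\sum_k E_{tik}E_{tkj}$, so that row $i$ of $\bN$ stays correlated with $\widehat{\bU}$ even after deleting the randomness in row/column block $i$, because the factor $E_{tkj}$ lives in a different row $k$ that still influences $\widehat{\bU}$. This is precisely where a single leave-one-out fails and a leave-two-out becomes necessary. The plan is to introduce the leave-one-out estimator $\widehat{\bU}^{(i)}$ (block $i$ of every $\bA_t$ frozen at its mean) to decouple the row-$i$ factors $E_{tik}$, and then a leave-two-out estimator $\widehat{\bU}^{(ik)}$ to further decouple the row-$k$ factor $E_{tkj}$; after both substitutions the summand $E_{tik}E_{tkj}\,\widehat{\bU}^{(ik)}_{j:}$ has its randomness independent of the eigenvector weight, and a martingale/Bernstein concentration over the triple index $(j,k,t)$ delivers the sharp row bound.

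The step I expect to dominate the work is controlling the leave-two-out perturbations $\|\widehat{\bU} - \widehat{\bU}^{(i)}\,\mathrm{sgn}(\cdot)\|_2$ and $\|\widehat{\bU}^{(i)} - \widehat{\bU}^{(ik)}\,\mathrm{sgn}(\cdot)\|_2$ sharply enough, and uniformly over all $i$ and all pairs $(i,k)$ (the union bounds over $n$ and $n^2$ indices being the source of the $\log n$ factors), so that the residual substitution errors are of strictly lower order than $\varepsilon_n^{(\mathsf{op})}/\sqrt{n}$. The block-wise symmetric structure of $\bA = [\bA_1,\ldots,\bA_m]$ is what makes these perturbation estimates more delicate than in the independent-entry rectangular case, because freezing block $i$ perturbs both the rows and the columns of each $\bA_t$ simultaneously and the squared matrix re-entangles them. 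Once these are in place, the extra factor $1/\sqrt{n}$ separating the two-to-infinity bound from the spectral bound emerges from the incoherence scale $\|\bU\|_{2\to\infty}\asymp n^{-1/2}$, completing the proof with $R,S = O(1)$.
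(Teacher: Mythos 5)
Your proposal follows essentially the same route as the paper: the same decomposition of $\calH(\bA\bA\transpose) - \widehat{\bM}_{RS}$ into the mean-zero noise $\bN$ and the calibration residual $\widehat{\bM}_{RS}-\bM$, a Davis--Kahan spectral bound with the same two contributions to $\varepsilon_n^{(\mathsf{op})}$, the same $O(1/n)$-per-step contraction of the inner and outer loops (your Hadamard-squared projection bound $Q_{ij}=(\widehat{\bU}\widehat{\bU}\transpose)_{ij}^2$, $\|\bQ\|_\infty=\max_i(\widehat{\bU}\widehat{\bU}\transpose)_{ii}=O(1/n)$, is a compact restatement of the paper's use of $\|\widehat{\bU}_{(r-1)S}\|_{2\to\infty}^2$ inside the joint induction of Lemmas \ref{lemma:recursive_two_to_infinity_error_bound} and \ref{lemma:Utilde_two_to_infinity_norm_II}), and the same leave-one-out/leave-two-out decoupling for the row-wise bound, introduced for exactly the reason you identify (the surviving factor $E_{tkj}$ after freezing row $i$ still correlates with $\widehat{\bU}^{(i)}$, cf.\ Lemma \ref{lemma:LOO_Rowwise_concentration}). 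The plan is correct, including your anticipation that the uniform control of the leave-one-out and leave-two-out perturbations under the block-symmetric structure is where the bulk of the technical work lies (the paper's higher-order moment bounds in Lemmas \ref{lemma:LOO_preliminary} and \ref{lemma:LOO_preliminary_II}).
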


In Theorem \ref{thm:entrywise_eigenvector_perturbation_bound}, each of the error bounds consists of two terms. The noise effect term $\varepsilon_n^{(\mathsf{op})}$ is determined by the inverse signal-to-noise ratio, whereas the bias correction term $\varepsilon_n^{(\mathsf{bc})}$ depends on the number of iterations $R$ and the number of bias calibration steps $S$ of Algorithm \ref{alg:BCJSE}. The two-to-infinity norm perturbation bound can be $n^{-1/2}$ smaller than the spectral norm perturbation bound when the invariant subspace basis matrix $\bU$ is delocalized under Assumption \ref{assumption:eigenvector_delocalization}, which is a well-observed phenomenon in a broad collection of low-rank random matrix models (see \cite{abbe-fan-wang-zhong-2020,cape2019signal,cape2017two,pmlr-v83-eldridge18a,10.1214/22-AOS2196,fan2016eigenvector,cai2021subspace,agterberg2021entrywise}, to name a selected few). The bias term also suggests the following practical approach to select $(R, S)$ in Algorithm \ref{alg:BCJSE} when $m = O(n^\alpha)$ for some $\alpha > 0$. Since $\rho_n\leq1$, then $\varepsilon_n^{(\mathsf{bc})} \leq \varepsilon_n^{(\mathsf{op})}$ if $m^{1/2}n \leq \max(n^{S + 1}, n^{R})$, and hence, the bias term is dominated by the noise effect term as long as $R\geq (1/2)(\log m)/(\log n) + 1$ and $S\geq (1/2)(\log m)/(\log n)$. 

One immediate consequence of Theorem \ref{thm:entrywise_eigenvector_perturbation_bound} is the exact community detection in MLSBM. We formally state this result in the theorem below.  
\begin{theorem}\label{thm:spectral_clustering_MLSBM}
Let $\bA = [\bA_1,\ldots,\bA_m]\sim\mathrm{MLSBM}(\tau;\bC_1,\ldots,\bC_m)$. Suppose that the number of communities $K$ is fixed and there exists an $n$-dependent sparsity factor $\rho_n\in(0, 1]$, such that the following conditions hold:
\begin{enumerate}
\item[(i)] $\mathrm{rank}(\bC_t) = K$ and $\sigma_k(\bC_t) = \Theta(\rho_n)$ for all $k\in[K]$, $t\in[m]$. 
\item[(ii)] $\max_{t\in[m]}\|\bC_t\|_{\max}\leq \rho_n$ and $\sum_{i = 1}^n\mathbbm{1}\{\tau(i) = k\} = \Theta(n)$ for all $k\in [K]$.
\item[(iii)] $m^{1/2}n\rho_n = \omega(\log n)$ and $m = O(n^\alpha)$ for some constant $\alpha > 0$.
\end{enumerate}
Denote by $\calL(n, K)$ the collection of all $n\times K$ matrices with $K$ unique rows and suppose $\widehat{\bL}$ solves the $K$-means clustering problem, \emph{i.e.}, $\widehat{\bL}  = \argmin_{\bL\in\calL(n, K)}\|\widehat{\bU}_{RS} - \bL\|_{\mathrm{F}}^2$. Let $\widehat{\tau}:[n]\to [K]$ be the community assignment estimator based on $\widehat{\bL}$ and $\calP_K$ be the set of all permutations over $[K]$. Then $\min_{\sigma\in\calP_K}\sum_{i = 1}^n\mathbbm{1}\{\widehat{\tau}(i) = \sigma(\tau(i))\} = 0$ w.h.p.. 
\end{theorem}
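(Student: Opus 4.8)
The plan is to derive Theorem~\ref{thm:spectral_clustering_MLSBM} as a corollary of the entrywise bound in Theorem~\ref{thm:entrywise_eigenvector_perturbation_bound}. First I would translate the MLSBM hypotheses into the COSIE assumptions. Writing $\bU = \bZ(\bZ\transpose\bZ)^{-1/2}$ and $\bB_t = (\bZ\transpose\bZ)^{1/2}\bC_t(\bZ\transpose\bZ)^{1/2}$, the balanced-community condition (ii) gives $\bZ\transpose\bZ = \mathrm{diag}(n_1,\dots,n_K)$ with every $n_k = \Theta(n)$; hence $\|\bU\|_{2\to\infty}^2 = (\min_k n_k)^{-1} = \Theta(n^{-1})$ and the incoherence parameter $\mu = (n/K)\|\bU\|_{2\to\infty}^2 = O(1)$, verifying Assumption~\ref{assumption:eigenvector_delocalization}. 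Since the singular values of $(\bZ\transpose\bZ)^{1/2}$ are all $\Theta(n^{1/2})$, condition (i) yields $\sigma_k(\bB_t) = \Theta(n\rho_n)$ for every $k$ and $t$, so $\Delta_n = \Theta(n\rho_n)$ and $\kappa = \Theta(1)$; together with $P_{tij} = C_{t\tau(i)\tau(j)} \le \rho_n$, $\|\bP\|_{\max} = \Theta(\rho_n)$, and condition (iii), this verifies Assumption~\ref{assumption:condition_number}. For fixed $R,S = O(1)$, so that $\varepsilon_n^{(\mathsf{bc})} = o(1)$, Theorem~\ref{thm:entrywise_eigenvector_perturbation_bound} then gives, with $\bW := \mathrm{sgn}(\widehat{\bU}_{RS}\transpose\bU)$,
\[
\|\widehat{\bU}_{RS}\bW - \bU\|_{2\to\infty} = \Optilde\Big(\frac{\varepsilon_n^{(\mathsf{op})} + \varepsilon_n^{(\mathsf{bc})}}{\sqrt n}\Big) = \optilde(n^{-1/2}),
\]
because condition (iii), $m^{1/2}n\rho_n = \omega(\log n)$, forces $\varepsilon_n^{(\mathsf{op})} = o(1)$.

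Next I would set up the geometry of the $K$-means problem. The rows of $\bU$ take exactly $K$ distinct values $\bv_k = n_k^{-1/2}\be_k$, one per community, with minimum separation $\gamma := \min_{k\ne l}\|\bv_k - \bv_l\| = \min_{k\ne l}(n_k^{-1} + n_l^{-1})^{1/2} = \Theta(n^{-1/2})$. Writing $\widetilde{\bU} := \widehat{\bU}_{RS}\bW$ and $\eta := \|\widetilde{\bU} - \bU\|_{2\to\infty} = \optilde(n^{-1/2})$, every embedded row lies within $\eta$ of its true centroid, with $\eta/\gamma = \optilde(1)$. Since the $K$-means assignment is invariant under the orthogonal transformation $\bW$, clustering $\widehat{\bU}_{RS}$ is equivalent to clustering $\widetilde{\bU}$, so I may analyze $\widetilde{\bU}$ against $\bU$ directly. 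The naive attempt---bounding $\|\widetilde{\bU} - \widehat{\bL}\|_{\mathrm{F}}$ by optimality of $\widehat{\bL}$ and converting it into a misclustering count---only yields $\Optilde(n(\varepsilon_n^{(\mathsf{op})}+\varepsilon_n^{(\mathsf{bc})})^2) = o(n)$ errors, i.e.\ weak consistency: one misclustered vertex inflates the $K$-means cost by merely $\Theta(\gamma^2) = \Theta(n^{-1})$, which the Frobenius budget $n\eta^2 = \optilde(1)$ can absorb. This is precisely why the entrywise, rather than the average, bound is indispensable here.

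To upgrade to exact recovery I would run a two-stage argument. Stage one uses the Frobenius bound $\|\widetilde{\bU} - \bU\|_{\mathrm{F}} = \Optilde(\varepsilon_n^{(\mathsf{op})}+\varepsilon_n^{(\mathsf{bc})})$ (from either norm estimate in Theorem~\ref{thm:entrywise_eigenvector_perturbation_bound} and $K = O(1)$) together with a standard $K$-means perturbation lemma (e.g.\ \cite{lei2015}) to conclude that the number $N_e$ of misclustered vertices is $\Optilde(n(\varepsilon_n^{(\mathsf{op})}+\varepsilon_n^{(\mathsf{bc})})^2) = o(n)$; since $n_k = \Theta(n)$, each true community $C_k$ supplies a strict majority of exactly one estimated cluster, defining a permutation $\sigma \in \calP_K$ matching estimated clusters to communities. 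Stage two establishes centroid stability: writing the $K$-means centroid $\widehat{\bv}_{\sigma(k)}$ as a mean of rows of $\widetilde{\bU}$, splitting the sum over correctly and incorrectly assigned vertices, using $\|\widetilde{\bU}_i - \bv_k\| \le \eta$ on the $\Theta(n)$-size majority while bounding the $o(n)$ stragglers via $\|\widetilde{\bU}_i\| = \Theta(n^{-1/2})$, I obtain $\|\widehat{\bv}_{\sigma(k)} - \bv_k\| = \Optilde(\eta) + \Optilde(N_e n^{-3/2}) = \optilde(\gamma)$. Finally, because $K$-means assigns each vertex to its nearest estimated centroid, a vertex $i \in C_k$ obeys $\|\widetilde{\bU}_i - \widehat{\bv}_{\sigma(k)}\| \le \eta + \optilde(\gamma)$ while $\|\widetilde{\bU}_i - \widehat{\bv}_{\sigma(l)}\| \ge \gamma - \eta - \optilde(\gamma)$ for $l \ne k$; since $\eta = \optilde(\gamma)$, the former is strictly smaller w.h.p., so $i$ is assigned to $\sigma(k)$. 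A union bound over the $O(1)$ high-probability events yields $\widehat{\tau} = \sigma\circ\tau$, i.e.\ zero misclassifications w.h.p.

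The main obstacle, as flagged above, is the exact-recovery upgrade in stage two: the $K$-means objective is global, and a handful of misclustered vertices are essentially invisible to the Frobenius budget, so weak consistency alone cannot be pushed to zero errors. The resolution hinges on feeding the sharp $2\to\infty$ bound of Theorem~\ref{thm:entrywise_eigenvector_perturbation_bound} into the per-vertex nearest-centroid comparison, which first requires certifying that the data-driven centroids are $\optilde(\gamma)$-accurate; controlling the contribution of the misclustered minority to those centroids, and verifying that $\sigma$ is a genuine bijection, is the delicate bookkeeping. Everything else---the assumption verification and the orthogonal invariance of $K$-means---is routine.
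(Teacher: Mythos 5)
Your proposal is correct and follows essentially the same route as the paper's proof: verify Assumptions \ref{assumption:eigenvector_delocalization}--\ref{assumption:condition_number} for the MLSBM parametrization, invoke Theorem \ref{thm:entrywise_eigenvector_perturbation_bound}, obtain weak consistency and the permutation $\sigma$ via the $K$-means perturbation lemma (Lemma 5.3 of \cite{lei2015}, exactly the tool the paper uses), prove $\optilde(n^{-1/2})$ centroid stability by splitting each estimated cluster into the correctly assigned majority and the $o(n)$ stragglers, and finish with a per-vertex nearest-centroid comparison powered by the two-to-infinity bound. The only difference is cosmetic: where you invoke the nearest-centroid property of a $K$-means optimum, the paper spells it out as an exchange/contradiction argument on the optimality of $\widehat{\bL}$, checking feasibility via $|\widehat{\calC}_l|\geq 2$ so that reassigning a single vertex keeps exactly $K$ unique rows.
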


Since the concatenated adjacency matrix $\bA = [\bA_1,\ldots,\bA_m]$ can be viewed as a rectangular low-rank signal-plus-noise matrix, it is natural to draw some comparison between Theorem \ref{thm:entrywise_eigenvector_perturbation_bound} and prior works in this regard. First, we remark that the error bounds in Theorem \ref{thm:entrywise_eigenvector_perturbation_bound} are similar to those in Theorem 3.1 in \cite{cai2021subspace} in some flavor. Similar results can also be found in \cite{yan2021inference,agterberg2021entrywise}. The first major difference is that in \cite{cai2021subspace}, the authors focus on singular subspace estimation for general rectangular matrices with completely independent entries. Although the COSIE model generates a rectangular matrix after concatenating the adjacency matrices in a layer-by-layer fashion, matrix $\bA$ also exhibits a unique block-wise symmetric pattern. As will be made clear in the proofs, such a block-wise symmetric structure introduces additional technical challenges and requires slightly different technical tools. The second and perhaps most important difference between Theorem \ref{thm:entrywise_eigenvector_perturbation_bound} and Theorem 3.1 in \cite{cai2021subspace} is the bias correction term. Indeed, the BCJSE algorithm degenerates to the hallowed spectral embedding introduced in \cite{10.1214/22-AOS2196,cai2021subspace} when $S = R = 1$, in which case $\varepsilon_n^{(\mathsf{bc})}$ coincides with the diagonal deletion effect in \cite{cai2021subspace}. Nevertheless, the BCJSE algorithm is substantially more general than the hallowed spectral embedding and the bias correction term decreases when $R$ and $S$ increase. 

Considering that both the HeteroPCA algorithm \cite{zhang2018heteroskedastic} and the BCJSE algorithm are designed to completely remove the bias effect, we also draw some comparisons here. The entrywise subspace estimation theory of HeteroPCA is well studied in \cite{agterberg2021entrywise,yan2021inference}. The theory there works when the number of iterations grows with $n$. In contrast, our BCJSE algorithm takes advantage of the closed-form formula of the bias term $\bM$ and only requires $R, S = O(1)$ to remove the bias effect (\emph{i.e.}, $\varepsilon_n^{(\mathsf{bc})}$ is dominated by the noise effect term) under the mild condition that $m = O(n^\alpha)$ for some $\alpha > 0$. We also found in numerical studies that HeteroPCA occasionally requires large numbers of iterations to achieve the same level of accuracy as BCJSE, whereas the numbers of iterations $(R, S)$ of BCJSE are always small and can be determined before starting the algorithm. See Section \ref{sec:numerical} for further details.

We next compare our results with related literature in COSIE model analyses. 
In \cite{arroyo2021inference,zheng2022limit}, the authors developed a multiple adjacency spectral embedding (MASE) by computing 
\[\widehat{\bU}^{(\mathsf{MASE})} = \texttt{eigs}(\sum_{t = 1}^m\texttt{eigs}(\bA_t;d)\texttt{eigs}(\bA_t;d)\transpose; d).
\] In particular, the authors of \cite{zheng2022limit} established the two-to-infinity norm perturbation bound 
\[
\|\widehat{\bU}^{(\mathsf{MASE})}\mathrm{sgn}(\widehat{\bU}^{(\mathsf{MASE})\mathrm{T}}\bU) - \bU\|_{2\to\infty} = \Optilde\bigg\{\frac{(\log n)^{1/2}}{(n\rho_n^{1/2})}\bigg\}
\]
and the exact community detection result in MLSBM based on a MASE-based clustering under the condition that $n\rho_n = \omega(\log n)$ and $m$ is fixed. In contrast, Assumption \ref{assumption:condition_number} only requires that $m^{1/2}n\rho_n = \omega(\log n)$ and allows $n\rho_n = O(1)$. The MASE technique is unable to handle the regime where $n\rho_n = O(1)$, and their two-to-infinity norm perturbation bound is not sharp compared to our error bound in Theorem \ref{thm:entrywise_eigenvector_perturbation_bound}. In \cite{agterberg2022joint}, the authors studied a degree-corrected version of MASE (DC-MASE) for the multilayer degree-corrected stochastic block model and obtained a similar two-to-infinity norm perturbation bound for DC-MASE, but they still require that $n\rho_n\to\infty$ and fails to handle the regime where $n\rho_n = O (1)$. 

Given the close connection between MLSBM and the COSIE model, we also briefly compare the above results with the existing literature in spectral clustering for MLSBM. We focus on a selection of closely relevant results in \cite{doi:10.1080/01621459.2022.2054817,10.1214/21-AOS2079,paul2020spectral,chen2022global} that connect to Theorem \ref{thm:entrywise_eigenvector_perturbation_bound} and Theorem \ref{thm:spectral_clustering_MLSBM}. In \cite{doi:10.1080/01621459.2022.2054817}, the authors proposed to compute $\widehat{\bU}^{(\mathsf{BASE})} = \texttt{eigs}(\calH(\bA\bA\transpose); d)$ (\emph{i.e.}, running Algorithm \ref{alg:BCJSE} with $S = R = 1$) and used it for community detection in MLSBM. Note that the assumptions in \cite{doi:10.1080/01621459.2022.2054817} are that $n\rho_n = O(1)$ and $m^{1/2}n\rho_n = \Omega(\log^{1/2}(m + n))$, which are slightly different than Assumption \ref{assumption:condition_number}. The proof of Theorem 1 there implies the spectral norm perturbation bound 
\[
\min_{\bW\in\mathbb{O}(d)}\|\widehat{\bU}^{(\mathsf{BASE})}\bW - \bU\|_2 = \Optilde\bigg\{\frac{1}{n} + \frac{\log^{1/2}(m + n)}{m^{1/2}n\rho_n}\bigg\},
\] where the $n^{-1}$ term is the remaining bias effect due to diagonal deletion of $\bA\bA\transpose$ and coincides with $\varepsilon_n^{(\mathsf{bc})}$ when $R = 1$, and the $\log^{1/2}(m + n)/(m^{1/2}n\rho_n)$ term is similar to $\varepsilon_n^{(\mathsf{op})}$. In \cite{paul2020spectral}, under the condition that $mn\rho_n \geq C\log n$ for some large constant $C > 0$, the authors studied the co-regularized spectral clustering (co-reg) method \cite{NeurIPS2011_31839b03} and established a sub-optimal Frobenius norm perturbation bound 
\[
\min_{\bW\in\mathbb{O}(d)}\|\widehat{\bU}^{(\mathsf{coreg})}\bW - \bU\|_{\mathrm{F}} = \Optilde\bigg\{\frac{(\log n)^{1/4}}{(mn\rho_n)^{1/4}}\bigg\}.
\] 
See the proof of Theorem 2 there for details. In \cite{10.1214/21-AOS2079,chen2022global}, the authors studied a slightly more general inhomogeneous MLSBM where the layer-wise community assignments can be viewed as noisy versions of a global community assignment. The authors of \cite{10.1214/21-AOS2079} designed a regularized tensor decomposition approach called TWIST and showed that 
\[
\min_{\bW\in\mathbb{O}(d)}\|\widehat{\bU}^{(\mathsf{TWIST})}\bW - \bU\|_2 = \Optilde\bigg\{\frac{(\log n)^{1/2}}{(mn\rho_n)^{1/2}}\bigg\}
\] 
under the condition that $mn\rho_n\geq C(\log n)^4$ for some large constant $C > 0$, which is slightly stronger than our Assumption \ref{assumption:condition_number}. See Corollary 1 there for details. The authors of \cite{chen2022global} focused on the inhomogeneous MLSBM with two communities and balanced community sizes and showed that 
\[
\min_{\bW\in\mathbb{O}(d)}\|\texttt{eigs}(\bar{\bA}; d)\bW - \bU\|_{\mathrm{F}} = \Optilde\bigg\{\frac{1}{(mn\rho_n)^{1/2}}\bigg\},
\] where $\bar{\bA} = \sum_{t = 1}^m\omega_t\bA_t$ is a weighted average network adjacency matrix and $(\omega_t)_{t = 1}^m$ are the weights. Nevertheless, they require $\lambda_2(\expect\bar{\bA}) > 0$ and it is not entirely clear how to choose the weights when layer-wise block probability matrices contain negative eigenvalues. For the ease of readers' reference, we summarize the comparison of the above results in Table \ref{tab:comparison_perturbation_clustering} under Assumption \ref{assumption:eigenvector_delocalization}. In Table \ref{tab:comparison_perturbation_clustering}, the term ``rectangular model'' means rectangular random matrices with completely independent entries and low expected ranks, $\widehat{\bU}$ is a generic estimator depending on the context, and $\bW$ is the matrix sign $\mathrm{sgn}(\widehat{\bU}\transpose\bU)$. Note that the setups in \cite{yan2021inference,agterberg2021entrywise,10.1214/22-AOS2196} are designed for rectangular matrices with independent entries so that their results do not directly apply to the COSIE model. Therefore, we only list their generic error bounds by viewing $\bA$ as a rectangular low-rank random matrix in Table \ref{tab:comparison_perturbation_clustering}. A similar comment also applies to \cite{10.1214/21-AOS2079,chen2022global}.

It is also worth remarking that most of the aforementioned existing results did not provide entrywise eigenvector perturbation bounds and only addressed the weak recovery of community membership, \emph{i.e.}, the proportion of misclustered vertices goes to zero with probability approaching one. In contrast, Theorem \ref{thm:spectral_clustering_MLSBM} establishes the exact community detection. The only exceptions in Table \ref{tab:comparison_perturbation_clustering} are \cite{chen2022global} and \cite{zheng2022limit}. In \cite{chen2022global}, the authors designed a two-stage algorithm that achieves the information limit of community detection. However, their strong theoretical results require that the nonzero eigenvalues of layer-wise edge probability matrices are positive, whereas our theory drops such a restrictive assumption. In \cite{zheng2022limit}, the authors require that $m = O(1)$ and $n\rho_n = \omega(\log n)$ for the exact recovery of MASE-based spectral clustering, and their conditions are substantially stronger than Assumption \ref{assumption:condition_number}.

\begin{table}[!t]
\tiny
\caption{Comparison of perturbation bound results under Assumption \ref{assumption:eigenvector_delocalization}, where $\widehat{\bU}$ is a generic estimator for $\bU$ and $\bW = \mathrm{sgn}(\widehat{\bU}\transpose\bU)$ \label{tab:comparison_perturbation_clustering}}
\centering
\begin{tabular}{|c|c|c|c|}
\hline
 Method & Model & Assumption & Perturbation bound\\
\hline
BCJSE & COSIE & $\frac{\log n}{m^{1/2}n\rho_n}$, $\log m = O(\log n)$ & $\|\widehat{\bU}\bW - \bU\|_{2\to\infty}\lesssim \frac{\log n}{m^{1/2}n^{3/2}\rho_n} + \frac{(\log n)^{1/2}}{(mn^2\rho_n)^{1/2}}$\\
\hline
$\texttt{eigs}(\calH(\bA\bA\transpose);d)$ \cite{doi:10.1080/01621459.2022.2054817} & COSIE & $\frac{\log^{1/2}(m + n)}{m^{1/2}n\rho_n}\to0$, $n\rho_n = O(1)$ & $\|\widehat{\bU}\bW - \bU\|_{\mathrm{F}}\lesssim \frac{\log^{1/2}(m + n)}{(mn\rho_n)^{1/2}} + \frac{1}{n} $ \\
\hline
$\texttt{eigs}(\calH(\bA\bA\transpose);d)$ \cite{cai2021subspace} & Rectangular & $\frac{\log(m + n)}{m^{1/2}n\rho_n}\to0$ & $\|\widehat{\bU}\bW - \bU\|_{2\to\infty}\lesssim \frac{\log(mn)}{m^{1/2}n^{3/2}\rho_n} + \frac{\log^{1/2}(mn)}{(mn^2\rho_n)^{1/2}} + \frac{1}{n^{3/2}} $ \\
\hline
HeteroPCA \cite{yan2021inference} & Rectangular & $\frac{\log^2(mn)}{m^{1/2}n\rho_n} = O(1)$ & $\|\widehat{\bU}\bW - \bU\|_{2\to\infty}\lesssim \frac{\log(mn)}{m^{1/2}n\rho_n} + \frac{\log^{1/2}(mn)}{(mn\rho_n)^{1/2}} $ \\
\hline
HeteroPCA \cite{agterberg2021entrywise} & Rectangular & $\frac{\log(mn)}{n\rho_n} = O(1)$ & $\|\widehat{\bU}\bW - \bU\|_{2\to\infty}\lesssim \frac{\log(mn)}{m^{1/2}n\rho_n} + \frac{\log^{1/2}(mn)}{(mn\rho_n)^{1/2}} $ \\
\hline
MASE \cite{zheng2022limit} & COSIE & $\frac{\log n}{n\rho_n}\to0$, $m = O(1)$ & $\|\widehat{\bU}\bW - \bU\|_{2\to\infty}\lesssim \frac{\log^{1/2}(m + n)}{n\rho_n^{1/2}}$ \\
\hline
Co-reg \cite{paul2020spectral} & COSIE & $\frac{\log(m + n)}{mn\rho_n}\to0$ & $\|\widehat{\bU}\bW - \bU\|_{\mathrm{F}}\lesssim \frac{\log^{1/4}(m + n)}{(mn\rho_n)^{1/4}}$ \\
\hline
TWIST \cite{jin2022improvements} & IMLSBM & $\frac{(\log n)^4}{mn\rho_n}\to0$ & $\|\widehat{\bU}\bW - \bU\|_2\lesssim \frac{\log^{1/2}(m + n)}{(mn\rho_n)^{1/2}}$ \\
\hline
$\texttt{eigs}(\bar{\bA}; d)$ \cite{chen2022global}& IMLSBM & Balanced two-community, $\lambda_2(\bB_t) > 0$ & $\|\widehat{\bU}\bW - \bU\|_2\lesssim \frac{1}{(mn\rho_n)^{1/2}}$ \\
\hline
\end{tabular}
\end{table}

\subsection{Entrywise Eigenvector Limit Theorem}
\label{sub:entrywise_eigenvector_CLT}
In this subsection, we present the entrywise eigenvector limit theorem for BCJSE, which is our second main result. 
\begin{theorem}\label{thm:Rowwise_CLT}
Suppose Assumptions \ref{assumption:eigenvector_delocalization}--\ref{assumption:condition_number} hold. 
For each $i\in[n]$, define
\begin{align*}
\bF_i & = \sum_{t = 1}^m\bB_t\bU\transpose\mathrm{diag}(\sigma_{ti1}^2,\ldots,\sigma_{tin}^2)\bU\bB_t,\;
\bG_{i} = \bU\transpose\mathrm{diag}\bigg(\sum_{t = 1}^m\sum_{j = 1}^n\sigma_{tij}^2\sigma_{tj1}^2,\ldots,\sum_{t = 1}^m\sum_{j = 1}^n\sigma_{tij}^2\sigma_{tjn}^2\bigg)\bU,
\end{align*}
and $\bGamma_i = (\bB\bB\transpose)^{-1}(\bF_i + \bG_i)(\bB\bB\transpose)^{-1}$, where $\sigma_{tij}^2:=\var(E_{tij})$. Let $\theta_n = (n\rho_n\wedge 1)^{1/2}$ and further, assume the following conditions hold:
\begin{enumerate}
\item[(i)] $m^{1/2}n\rho_n = \omega(\theta_n(\log n)^{3/2})$, $mn\rho_n = \omega((\theta_n\log n)^2)$, and $m^{1/2}(n\rho_n)^{3/2} = \omega(\theta_n(\log n)^2)$.
\item[(ii)] $\lambda_d(\bF_i) = \Omega(mn^2\rho_n^3)$ and $\lambda_d(\bG_i) = \Omega(mn\rho_n^2)$.
\end{enumerate}
If $R > (\alpha + 1)/2$, $S > (\alpha - 1)/2$, and $R,S = O(1)$, then for any fixed vector $\bz\in\mathbb{R}^d$, 
\begin{align}
\label{eqn:rowwise_CLT_expansion}
\bz\transpose\bGamma_i^{-1/2}(\widehat{\bU}\mathrm{sgn}(\widehat{\bU}\transpose\bU) - \bU)\transpose\be_i
& = \be_i\transpose\calH(\bE\bE\transpose)
\bU(\bB\bB\transpose)^{-1}
\bGamma_i^{-1/2}\bz
\\&\quad
 + \sum_{t = 1}^m\sum_{j = 1}^nE_{tij}\be_j\transpose\bP_t
 \bU(\bB\bB\transpose)^{-1}
 \bGamma_i^{-1/2}\bz + \optilde(1)\nonumber.
\end{align}
Furthermore, $\bGamma_i^{-1/2}(\widehat{\bU}\mathrm{sgn}(\widehat{\bU}\transpose\bU) - \bU)\transpose\be_i\overset{\calL}{\to}\mathrm{N}_d(\zero_d, \eye_d)$. 
\end{theorem}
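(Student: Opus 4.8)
The plan is to derive a first-order expansion of $\widehat{\bU}\mathrm{sgn}(\widehat{\bU}\transpose\bU) - \bU$ whose leading row is an explicit linear-plus-quadratic functional of the noise, to identify that functional with the two summands on the right-hand side of \eqref{eqn:rowwise_CLT_expansion}, and then to prove asymptotic normality by a martingale argument. First I would write the de-biased matrix as $\widehat{\bH} := \calH(\bA\bA\transpose) - \widehat{\bM}_{RS} = \bH_0 + \bN + \bDelta$, where $\bH_0 := \bP\bP\transpose = \bU(\bB\bB\transpose)\bU\transpose$ is the rank-$d$ signal, $\bN := \calH(\bP\bE\transpose + \bE\bP\transpose + \bE\bE\transpose)$ is the centered, symmetric noise (recall $\expect\calH(\bA\bA\transpose) = \bH_0 + \bM$ and $\expect\bE\bE\transpose$ is diagonal, so $\bN$ is mean zero), and $\bDelta := -(\widehat{\bM}_{RS} - \bM)$ is the bias-correction residual. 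The estimates behind Theorem \ref{thm:entrywise_eigenvector_perturbation_bound} control $\bDelta$ at scale $\varepsilon_n^{(\mathsf{bc})}$; the role of the hypotheses $R > (\alpha + 1)/2$ and $S > (\alpha - 1)/2$ is precisely to guarantee $\varepsilon_n^{(\mathsf{bc})}/\sqrt{n} = \optilde(\lambda_d(\bGamma_i)^{1/2})$, so that $\bDelta$ is negligible at the fluctuation scale $\bGamma_i^{1/2}$ and may be dropped from the row-$i$ analysis.

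Next I would invoke the standard eigenvector linearization for the top-$d$ invariant subspace of $\bH_0 + \bN + \bDelta$, which yields $\widehat{\bU}\mathrm{sgn}(\widehat{\bU}\transpose\bU) - \bU = (\eye - \bU\bU\transpose)(\bN + \bDelta)\bU(\bB\bB\transpose)^{-1} + \bR$, the matrix sign being $\eye$ to leading order, and I would use the two-to-infinity bound of Theorem \ref{thm:entrywise_eigenvector_perturbation_bound} as a warm start to bound the remainder $\bR$ row-wise. Transposing the $i$th row and using $\bN\transpose = \bN$, the leading contribution is $(\bB\bB\transpose)^{-1}\bU\transpose\bN(\eye - \bU\bU\transpose)\be_i$; the in-subspace part $(\bB\bB\transpose)^{-1}\bU\transpose\bN\bU\bU\transpose\be_i$ is the same functional evaluated at $\bU\bU\transpose\be_i$ in place of $\be_i$, hence smaller by the delocalization factor $\|\bU\transpose\be_i\| = O(n^{-1/2})$ and negligible at scale $\bGamma_i^{1/2}$. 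It then remains to simplify $\bU\transpose\bN\be_i = \bU\transpose\calH(\bP\bE\transpose)\be_i + \bU\transpose\calH(\bE\bP\transpose)\be_i + \bU\transpose\calH(\bE\bE\transpose)\be_i$. I would show that the hollowing corrections $(\bP\bE\transpose)_{ii}\bU\transpose\be_i$ and $(\bE\bP\transpose)_{ii}\bU\transpose\be_i$ are negligible; that $\bU\transpose\bE\bP\transpose\be_i$ has variance of order $mn\rho_n^3$, a factor of $n$ below the variance $\lambda_d(\bF_i) = \Omega(mn^2\rho_n^3)$ of condition (ii), hence is $\optilde(\bGamma_i^{1/2})$; and that the surviving contributions are exactly $\bU\transpose\bP\bE\transpose\be_i = \sum_{t,j}E_{tij}\bU\transpose\bP_t\be_j$, with covariance $\bF_i$, and $\bU\transpose\calH(\bE\bE\transpose)\be_i$, with covariance $\bG_i$, which are the transposes of the two summands in \eqref{eqn:rowwise_CLT_expansion}. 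Here the leave-one-out and leave-two-out devices are what decouple $\widehat{\bU}$ from the $i$th row (and from pairs of rows, for the bilinear $\bE\bE\transpose$ term) at a precision finer than the uniform two-to-infinity bound.

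Finally I would prove the CLT for the normalized leading term $\bGamma_i^{-1/2}(\bB\bB\transpose)^{-1}\{\bU\transpose\calH(\bE\bE\transpose)\be_i + \bU\transpose\bP\bE\transpose\be_i\}$. A second-moment computation gives the bracket covariance $\bF_i + \bG_i$: the linear and quadratic pieces are asymptotically uncorrelated because their cross-covariance is a third moment that vanishes, no admissible pairing of the three noise factors existing once the summation index differs from $i$, so the two variances add and conjugation by $(\bB\bB\transpose)^{-1}$ returns $\bGamma_i$ exactly. By the Cram\'er--Wold device it suffices to show $\bz\transpose\bGamma_i^{-1/2}(\bB\bB\transpose)^{-1}\{\bU\transpose\calH(\bE\bE\transpose)\be_i + \bU\transpose\bP\bE\transpose\be_i\}\to\mathrm{N}(0, \bz\transpose\bz)$ for each fixed $\bz$. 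I would order the independent entries $\{E_{tij}: t\in[m], i\leq j\}$, form the associated Doob martingale, and apply a martingale CLT, verifying its two hypotheses, namely convergence of the sum of conditional variances to $\bz\transpose\bz$ in probability and a conditional Lindeberg negligibility bound, with conditions (i)--(ii) supplying exactly the moment ratios these require.

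The main obstacle is the martingale CLT for the quadratic form $\bU\transpose\calH(\bE\bE\transpose)\be_i$, jointly with the linear term, in the genuinely sparse regime $n\rho_n = O(1)$. The increments of a quadratic form couple the current entry with an evolving partial sum of past entries, so their conditional second moments are themselves random quadratic forms that must be shown to self-average; this is where conditions (i) on $m^{1/2}n\rho_n$, $mn\rho_n$, and $m^{1/2}(n\rho_n)^{3/2}$ relative to $\theta_n$ and $\log n$ are consumed. A coupled difficulty is propagating the needed accuracy through the expansion: since $\bGamma_i^{1/2}$ sits only a poly-logarithmic factor below the uniform two-to-infinity bound, the remainder $\bR$ and all discarded terms must be controlled to $\optilde(\bGamma_i^{1/2})$ rather than to $\optilde$ of the uniform bound, which forces the sharp leave-one-out and leave-two-out estimates rather than the cruder bounds that suffice for Theorem \ref{thm:entrywise_eigenvector_perturbation_bound}.
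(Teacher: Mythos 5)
Your proposal follows essentially the same route as the paper's proof: the first-order linearization with leading term the oracle noise $\{\calH(\bA\bA\transpose) - \bP\bP\transpose - \bM\}\bV\bS^{-1}$ (your projected form $(\eye - \bU\bU\transpose)\bN\bU(\bB\bB\transpose)^{-1}$ differs only by the in-subspace piece, which is indeed negligible at the $\bGamma_i^{1/2}$ scale), sharpened leave-one-out/leave-two-out control of the remainder at the fluctuation scale, discarding the $\sum_t\bU\transpose\bE_t\bP_t\be_i$ and hollowing-diagonal terms, and a CLT for the coupled linear-plus-quadratic term via an entry-ordered martingale with bracket $\bF_i + \bG_i$ — exactly the construction of Lemma \ref{lemma:martingale_construction} and the verification of conditions (a)--(b) via fourth-moment bounds. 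The only refinement you would need is to replace the variance (Chebyshev) arguments for the discarded terms by Bernstein-type concentration, since the stated expansion asserts $\optilde(1)$ rather than $o_p(1)$.
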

Theorem \ref{thm:Rowwise_CLT} establishes that the rows of the BCJSE $\widehat{\bU}$ are approximately Gaussian under slightly stronger conditions than those required in Theorem \ref{thm:entrywise_eigenvector_perturbation_bound}. In the asymptotic expansion \eqref{eqn:rowwise_CLT_expansion}, the leading term contains two parts: the second term is a sum of independent mean-zero random variables, and the first term is a more involved quadratic function of $\bE$. As will be made clear in the proof, the first term is asymptotically negligible when $n\rho_n\to\infty$ and the second term determines the asymptotic distribution of the $i$th row of $\widehat{\bU}$ through the Lyapunov central limit theorem (see, for example, \cite{chung2001course}), in which case $\bGamma_i\approx (\bB\bB\transpose)^{-1}\bF_i(\bB\bB\transpose)^{-1}$. The challenging regime occurs when $n\rho_n = O(1)$ and both the first term and the second term on the right-hand side of expansion \eqref{eqn:rowwise_CLT_expansion} jointly determine the asymptotic distribution of the $i$th row of $\widehat{\bU}$. Note that the sum of these two terms is no longer a sum of independent mean-zero random variables, and Lyapunov central limit theorem is not applicable. The key observation is that the leading term in \eqref{eqn:rowwise_CLT_expansion} can be written as a martingale, and we apply the martingale central limit theorem to establish the desired asymptotic normality by carefully calculating the related martingale moment bounds. See Lemma \ref{lemma:martingale_construction} for further details.

Prior work on the entrywise eigenvector limit results for multilayer networks is slightly narrower than those on the general perturbation bounds and community detection. Given that the COSIE model connects to both rectangular low-rank signal-plus-noise matrix models and multilayer networks, we draw some comparison between Theorem \ref{thm:Rowwise_CLT} and existing limit results along these two directions, particularly those in \cite{zheng2022limit,agterberg2021entrywise,yan2021inference}. In \cite{zheng2022limit}, the authors established the asymptotic normality for the rows of MASE under the stronger condition $n\rho_n = \omega(\log n)$ and $m = O(1)$. As mentioned earlier, MASE fails to handle the case where $m\to \infty$ and $n\rho_n = O(1)$, whereas the condition of Theorem \ref{thm:Rowwise_CLT} allows for $m\to\infty$ and $n\rho_n = O(1)$. In \cite{agterberg2021entrywise,yan2021inference}, the authors established the asymptotic normality for the rows of HeteroPCA for general rectangular low-rank signal-plus-noise matrices. The difference between  \cite{yan2021inference} and \cite{agterberg2021entrywise} is that the authors of \cite{agterberg2021entrywise} handled heteroskedasticity and dependence, whereas the authors of \cite{yan2021inference} allowed missing data. However, as mentioned earlier, the rectangular model considered in \cite{yan2021inference,agterberg2021entrywise} does not contain a block-wise symmetric structure, and the results there do not apply directly to our undirected multilayer network setup. Furthermore, the underlying proof techniques are fundamentally different: in \cite{zheng2022limit,agterberg2021entrywise,yan2021inference}, the leading terms in the entrywise eigenvector expansion are sums of independent mean-zero random variables, and their proofs are based on Lyapunov central limit theorem for sum of independent random variables. Nevertheless, as mentioned earlier, in the multilayer undirected network setup with $n\rho_n = O(1)$, the leading term in the entrywise eigenvector expansion formula \eqref{eqn:rowwise_CLT_expansion} is much more involved and requires the construction of a martingale sequence, together with the application of the martingale central limit theorem.  

Below, we provide an immediate application of Theorem \ref{thm:Rowwise_CLT} to the vertex membership inference in MLMM models. The authors of \cite{fan2019simple} proposed to test the equality of membership profiles of any given two vertices in a single-layer mixed membership model. This inference task for pairwise vertex comparison may be of interest in many practical applications, such as stock market investment studies and legislation. The multilayer version of this problem, which has not been explored before to our limited best knowledge, can be described as follows. Recall that, an MLMM model with a nonnegative membership profile matrix $\bZ$ satisfying $\bZ\one_n = \one_d$ and block probability matrices $(\bC_t)_{t = 1}^m$ can be equivalently represented by $\mathrm{COSIE}(\bU;\bB_1,\ldots,\bB_m)$, where $\bU = \bZ(\bZ\transpose\bZ)^{-1/2}$ and $\bB_t = (\bZ\transpose\bZ)^{1/2}\bC_t(\bZ\transpose\bZ)^{1/2}$, if $\bZ\transpose\bZ$ is invertible. Given any two vertices $i_1,i_2\in[n]$, $i_1\neq i_2$, we now consider the hypothesis testing problem $H_0:\bz_{i_1} = \bz_{i_2}$ versus $H_A:\bz_{i_1}\neq\bz_{i_2}$, where $\bz_i$ is the $i$th row of $\bZ$. It is straightforward to see that $\be_i\transpose\bU = \be_j\transpose\bU$ if $\bz_i = \bz_j$ (see, for example, \cite{fan2019simple}), so it is conceivable from Theorem \ref{thm:Rowwise_CLT} that under $H_0$, $(\bGamma_{i_1} + \bGamma_{i_2})^{-1/2}\mathrm{sgn}(\bU\transpose\widehat{\bU})(\widehat{\btheta}_{i_1} - \widehat{\btheta}_{i_2})$ approximately follows the $d$-dimensional standard normal distribution, where we use $\widehat{\btheta}_i$ to denote the $i$th row of $\widehat{\bU}$, and $\widehat{\bU}$ is the output of Algorithm \ref{alg:BCJSE}. Equivalently, this suggests that the quadratic form $(\widehat{\btheta}_{i_1} - \widehat{\btheta}_{i_2})\mathrm{sgn}(\bU\transpose\widehat{\bU})\transpose(\bGamma_{i_1} + \bGamma_{i_2})^{-1}\mathrm{sgn}(\bU\transpose\widehat{\bU})(\widehat{\btheta}_{i_1} - \widehat{\btheta}_{i_2})$ approximately follows the chi-squared distribution with $d$ degree of freedom. Because $\bGamma_{i_1}$, $\bGamma_{i_2}$ are not known and need to be estimated, we now leverage the above intuition and design a test statistic using the plug-in principle. For any $t\in[m]$, $i, j\in[n]$, we define the following plug-in estimators:
\begin{align*}
\widehat{\bB}_t& = \widehat{\bU}\transpose\bA_t\widehat{\bU},\;
\widehat{\bB}    = [\widehat{\bB}_1,\ldots,\widehat{\bB}_m],\;
\widehat{\bP}_t  = \widehat{\bU}\widehat{\bB}_t\widehat{\bU}\transpose,\;
\widehat{P}_{tij} = \be_i\transpose\widehat{\bP}_t\be_j,\;
\widehat{\sigma}_{tij}^2 = \widehat{P}_{tij}(1 - \widehat{P}_{tij}),\\
\widehat{\bF}_i  & = \sum_{t = 1}^m\widehat{\bB}_t\widehat{\bU}\transpose\mathrm{diag}(\widehat{\sigma}_{ti1}^2,\ldots,\widehat{\sigma}_{tin}^2)\widehat{\bU}\widehat{\bB}_t,\;
\widehat{\bG}_{i} = \widehat{\bU}\transpose\mathrm{diag}\bigg(\sum_{t = 1}^m\sum_{j = 1}^n\widehat{\sigma}_{tij}^2\widehat{\sigma}_{tj1}^2,\ldots,\sum_{t = 1}^m\sum_{j = 1}^n\widehat{\sigma}_{tij}^2\widehat{\sigma}_{tjn}^2\bigg)\widehat{\bU},\\
\widehat{\bGamma}_i &= (\widehat{\bB}\widehat{\bB}\transpose)^{-1}(\widehat{\bF}_i + \widehat{\bG}_i)(\widehat{\bB}\widehat{\bB}\transpose)^{-1},\;
T_{i_1i_2} = (\widehat{\btheta}_{i_1} - \widehat{\btheta}_{i_2})\transpose(\widehat{\bGamma}_{i_1} + \widehat{\bGamma}_{i_2})^{-1}(\widehat{\btheta}_{i_1} - \widehat{\btheta}_{i_2}).
\end{align*}
where $\widehat{\btheta}_i:=\widehat{\bU}\transpose\be_i$. Here, $T_{i_1i_2}$ is our test statistic for testing $H_0:\bz_{i_1} = \bz_{i_2}$ versus $H_A:\bz_{i_1}\neq \bz_{i_2}$. 

\begin{theorem}\label{thm:testing_membership_profiles}
Let $\bA = [\bA_1,\ldots,\bA_m]\sim\mathrm{MLMM}(\bZ; \bC_1,\ldots,\bC_m)$. Suppose that $d$ is fixed and there exists an $n$-dependent sparsity factor $\rho_n\in(0, 1]$, such that the following conditions hold:
\begin{enumerate}
    \item[(i)] Let $\theta_n = (n\rho_n\wedge 1)^{1/2}$. Then $n^2\rho_n = \omega(\log n)$, $m^{1/2}n\rho_n = \omega(\theta_n(\log n)^{3/2})$, $mn\rho_n = \omega((\theta_n\log n)^2)$, $m^{1/2}(n\rho_n)^{3/2} = \omega(\theta_n(\log n)^2 + \log n)$, and $m^{1/2}(n\rho_n)^2 = \omega((\log n)^{3/2})$.

    \item[(ii)] $R,S = O(1)$, $R > (\alpha + 1)/2$, and $S > (\alpha - 1)/2$. 

    \item[(iii)] $\lambda_k(\bZ\transpose\bZ) = \Theta(n)$ for all $k\in[d]$, $\min_{t\in[m]}\sigma_d(\bC_t) = \Omega(\rho_n)$, $\max_{t\in[m]}\sigma_1(\bC_t) = O(\rho_n)$, 

    $\min_{t\in[m],i,j\in[n]}\bz_i\transpose\bC_t\bz_j = \Omega(\rho_n)$, $\max_{t\in[m],i,j\in[n]}(1 - \bz_i\transpose\bC_t\bz_j) = \Theta(1)$, and $\min_{t\in[m],i,j\in[n]}(1 - \bz_i\transpose\bC_t\bz_j) = \Theta(1)$, where $\bz_i\in\mathbb{R}^d$ denotes the $i$th row of $\bZ$. 

\end{enumerate}
Then:
\begin{enumerate}
    \item[(a)] Under the null hypothesis $H_0:\bz_{i_1} = \bz_{i_2}$, where $i_1\neq i_2$, we have $T_{i_1i_2}\overset{\calL}{\to}\chi^2_d$ as $n\to\infty$.

    \item[(b)] Under the contiguous alternative hypothesis $H_A:\bz_{i_1}\neq\bz_{i_2}$ but $\|\bz_{i_1} - \bz_{i_2}\|_2 = \omega((mn\rho_n)^{-1/2}\theta_n^{-1})$, for any arbitrarily large constant $C > 0$, we have $\prob(T_{i_1i_2} > C)\to 1$ as $n\to\infty$.

    \item[(c)] Under the alternative hypothesis $H_A:\bz_{i_1}\neq\bz_{i_2}$ but $(\bGamma_{i_1} + \bGamma_{i_2})^{-1/2}(\bZ\transpose\bZ)^{-1/2}(\bz_{i_1} - \bz_{i_2})\to\bmu$, we have $T_{i_1i_2}\overset{\calL}{\to}\chi_d^2(\|\bmu\|_2^2)$, where $\chi_d^2(\|\bmu\|_2^2)$ denotes the non-central chi-squared distribution with $d$ degree of freedom and non-central parameter $\|\bmu\|_2^2$. 
\end{enumerate}
\end{theorem}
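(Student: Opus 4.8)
The plan is to rewrite the quadratic statistic $T_{i_1i_2}$ in terms of the aligned, centered eigenvector rows, reduce its randomness to the limit object of Theorem~\ref{thm:Rowwise_CLT}, and separately argue that the plug-in covariance estimators are consistent. Set $\bW = \mathrm{sgn}(\widehat{\bU}\transpose\bU)$, $\btheta_i = \bU\transpose\be_i$, and $\bxi_i = (\widehat{\bU}\bW - \bU)\transpose\be_i = \bW\transpose\widehat{\btheta}_i - \btheta_i$, so that $\widehat{\btheta}_{i_1} - \widehat{\btheta}_{i_2} = \bW\{(\btheta_{i_1} - \btheta_{i_2}) + (\bxi_{i_1} - \bxi_{i_2})\}$ and hence $T_{i_1i_2} = \{(\btheta_{i_1} - \btheta_{i_2}) + (\bxi_{i_1} - \bxi_{i_2})\}\transpose \bW\transpose(\widehat{\bGamma}_{i_1} + \widehat{\bGamma}_{i_2})^{-1}\bW\{(\btheta_{i_1} - \btheta_{i_2}) + (\bxi_{i_1} - \bxi_{i_2})\}$. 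Because $\bU = \bZ(\bZ\transpose\bZ)^{-1/2}$, the deterministic part is $\btheta_{i_1} - \btheta_{i_2} = (\bZ\transpose\bZ)^{-1/2}(\bz_{i_1} - \bz_{i_2})$, which vanishes exactly under $H_0$ and is precisely the signal appearing in parts (b) and (c).

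The conceptual core is a joint row-wise central limit theorem: for the two fixed indices $i_1 \neq i_2$,
\begin{align*}
\big(\bGamma_{i_1}^{-1/2}\bxi_{i_1},\, \bGamma_{i_2}^{-1/2}\bxi_{i_2}\big) \overset{\calL}{\to} \mathrm{N}_{2d}(\zero_{2d}, \eye_{2d}).
\end{align*}
By the Cram\'er--Wold device this reduces to the asymptotic normality of $\bv_1\transpose\bGamma_{i_1}^{-1/2}\bxi_{i_1} + \bv_2\transpose\bGamma_{i_2}^{-1/2}\bxi_{i_2}$ with limiting variance $\|\bv_1\|_2^2 + \|\bv_2\|_2^2$ for arbitrary fixed $\bv_1, \bv_2 \in \mathbb{R}^d$. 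Feeding the two expansions \eqref{eqn:rowwise_CLT_expansion} (for $i_1$ and $i_2$) into the martingale construction of Lemma~\ref{lemma:martingale_construction}, the combined leading term is again a martingale in the independent upper-triangular entries $(E_{tij})$, whose marginal pieces are already handled by Theorem~\ref{thm:Rowwise_CLT}. The new point is to show that the cross-covariance between the two rows' leading terms is negligible relative to the diagonal scales $\bGamma_{i_1}, \bGamma_{i_2}$: the linear parts share only the symmetric entries $E_{t i_1 i_2}$ (one term per layer against $\Theta(mn)$ terms in each marginal variance), while the quadratic parts $\be_{i_1}\transpose\calH(\bE\bE\transpose)(\cdot)$ and $\be_{i_2}\transpose\calH(\bE\bE\transpose)(\cdot)$ overlap only through a lower-order set of common summands. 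Re-verifying the martingale moment bounds for the combined linear combination then yields $(\bGamma_{i_1} + \bGamma_{i_2})^{-1/2}(\bxi_{i_1} - \bxi_{i_2}) \overset{\calL}{\to} \mathrm{N}_d(\zero_d, \eye_d)$.

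The main technical obstacle is the consistency of the plug-in covariance estimators up to the alignment rotation, i.e. $\bGamma_i^{-1/2}\bW\transpose\widehat{\bGamma}_i\bW\bGamma_i^{-1/2} = \eye_d + \optilde(1)$ for $i \in \{i_1, i_2\}$. I would propagate the two-to-infinity and spectral bounds of Theorem~\ref{thm:entrywise_eigenvector_perturbation_bound} through each defining quantity: $\widehat{\bB}_t = \widehat{\bU}\transpose\bA_t\widehat{\bU} = \bW\bB_t\bW\transpose + \optilde(\cdot)$ (using $\widehat{\bU}\bW \approx \bU$, $\bU\transpose\bP_t\bU = \bB_t$, and the smallness of $\widehat{\bU}\transpose\bE_t\widehat{\bU}$), whence $\widehat{\bB}\widehat{\bB}\transpose = \bW(\bB\bB\transpose)\bW\transpose(\eye_d + \optilde(1))$; and similarly control $\widehat{P}_{tij} = \be_i\transpose\widehat{\bU}\widehat{\bU}\transpose\bA_t\widehat{\bU}\widehat{\bU}\transpose\be_j$ and $\widehat{\sigma}_{tij}^2 = \widehat{P}_{tij}(1 - \widehat{P}_{tij})$. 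The delicate issue arises in the sparse regime $n\rho_n = O(1)$, where the individual entrywise estimates $\widehat{P}_{tij}$ are not even relatively consistent; instead one must show that the aggregated sums defining $\widehat{\bF}_i$ and $\widehat{\bG}_i$ concentrate around $\bW\bF_i\bW\transpose$ and $\bW\bG_i\bW\transpose$ after averaging over $t$ and $j$. This is exactly where the strengthened rate conditions in (i), such as $m^{1/2}(n\rho_n)^2 = \omega((\log n)^{3/2})$ and $m^{1/2}(n\rho_n)^{3/2} = \omega(\theta_n(\log n)^2 + \log n)$, are consumed to secure the needed concentration. Although largely a matter of careful bookkeeping with the perturbation bounds, this step is the most laborious part of the proof.

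The three conclusions then follow by Slutsky-type arguments. For part (a), the signal vanishes under $H_0$, so $T_{i_1i_2} = (\bxi_{i_1} - \bxi_{i_2})\transpose\bW\transpose(\widehat{\bGamma}_{i_1} + \widehat{\bGamma}_{i_2})^{-1}\bW(\bxi_{i_1} - \bxi_{i_2})$; combining the covariance consistency with the joint CLT gives $(\bGamma_{i_1} + \bGamma_{i_2})^{-1/2}(\bxi_{i_1} - \bxi_{i_2}) \overset{\calL}{\to} \mathrm{N}_d(\zero_d, \eye_d)$ and therefore $T_{i_1i_2} \overset{\calL}{\to} \chi_d^2$. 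For part (c), the deterministic shift satisfies $(\bGamma_{i_1} + \bGamma_{i_2})^{-1/2}(\btheta_{i_1} - \btheta_{i_2}) = (\bGamma_{i_1} + \bGamma_{i_2})^{-1/2}(\bZ\transpose\bZ)^{-1/2}(\bz_{i_1} - \bz_{i_2}) \to \bmu$, which adds to the same Gaussian limit to produce $\mathrm{N}_d(\bmu, \eye_d)$, whose squared norm is $\chi_d^2(\|\bmu\|_2^2)$. For part (b), I would lower-bound $T_{i_1i_2}$ by its signal component and use the scalings $\|\bGamma_i\|_2 \asymp (mn^2\rho_n\theta_n^2)^{-1}$ and $\|\btheta_{i_1} - \btheta_{i_2}\|_2 \asymp n^{-1/2}\|\bz_{i_1} - \bz_{i_2}\|_2$ implied by condition (iii); the separation $\|\bz_{i_1} - \bz_{i_2}\|_2 = \omega((mn\rho_n)^{-1/2}\theta_n^{-1})$ then forces the signal-to-noise ratio $mn\rho_n\theta_n^2\|\bz_{i_1} - \bz_{i_2}\|_2^2 \to \infty$, so $T_{i_1i_2} \to \infty$ in probability and the test attains asymptotic power one.
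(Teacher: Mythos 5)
Your proposal follows essentially the same route as the paper's proof: a joint two-row CLT obtained by feeding the combined linear functional $X_{i_1n}(\bz_1) + X_{i_2n}(\bz_2)$ into the martingale construction of Lemma \ref{lemma:martingale_construction} and verifying via the moment bounds (Lemmas \ref{lemma:btj1j2_fourth_moment} and \ref{lemma:btj1j2_cross_fourth_moment}, exploiting $e_{i_1j}e_{i_2j} = 0$ for $i_1\neq i_2$) that the cross-covariance between the two rows' leading terms is negligible, followed by Cram\'er--Wold; consistency of the plug-in $\widehat{\bGamma}_i$ up to the alignment rotation; and Slutsky arguments for (a) and (c) plus a signal-size lower bound for (b) using $\lambda_d(\bGamma_i) = \Theta((mn^2\rho_n\theta_n^2)^{-1})$ and $\btheta_{i_1} - \btheta_{i_2} = (\bZ\transpose\bZ)^{-1/2}(\bz_{i_1} - \bz_{i_2})$. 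The architecture, the reduction to Theorem \ref{thm:Rowwise_CLT}, and the way conditions (i) and (iii) are consumed all match the paper.

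One correction on the covariance-consistency step, which is the only place you diverge: your premise that the entrywise estimates $\widehat{P}_{tij}$ fail to be relatively consistent when $n\rho_n = O(1)$, so that one \emph{must} instead prove aggregated concentration of $\widehat{\bF}_i$ and $\widehat{\bG}_i$ over $(t, j)$, is not correct, and the paper does the opposite. The paper establishes $\max_{t\in[m]}\|\widehat{\bP}_t - \bP_t\|_{\max} = \optilde(\rho_n)$ directly: the dominant error contributions, such as $\|\bU\|_{2\to\infty}\|\bU\transpose\bE_t\bU\|_2\|\widehat{\bU}\|_{2\to\infty} = \Optilde\{\rho_n^{1/2}(\log n)^{1/2}/n\}$, are $o(\rho_n)$ precisely under the condition $n^2\rho_n = \omega(\log n)$ in (i), with the remaining conditions (e.g.\ $m^{1/2}(n\rho_n)^2 = \omega((\log n)^{3/2})$) controlling the terms driven by $\varepsilon_n^{(\mathsf{op})}$ and $\varepsilon_n^{(\mathsf{bc})}$. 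The uniform entrywise bound then propagates mechanically through $\widehat{\sigma}_{tij}^2$, $\widehat{\bB}_t$, $\widehat{\bB}\widehat{\bB}\transpose$, $\widehat{\bF}_i$, and $\widehat{\bG}_i$ to give $\|\widehat{\bW}\transpose\widehat{\bGamma}_i\widehat{\bW} - \bGamma_i\|_2 = \optilde((mn^2\rho_n\theta_n^2)^{-1})$. Your substitute aggregation argument, by contrast, is asserted rather than proved, and it would be genuinely harder than you suggest: the errors $\widehat{P}_{tij} - P_{tij}$ are strongly dependent across $(t, j)$ through the common estimator $\widehat{\bU}$, so there is no independent averaging to exploit. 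This does not sink your proof --- the direct entrywise route is available under exactly the hypotheses stated --- but as written, the most laborious step of your plan rests on an unproven claim that the paper shows to be unnecessary.
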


\subsection{Proof architecture}
\label{sub:proof_architecture}

We now briefly discuss the proof architecture of the main results. Let $\widehat{\bU}_{RS}$ be the output of the BCJSE algorithm. By definition, $\widehat{\bU}_{RS} = \texttt{eigs}(\calH(\bA\bA\transpose) - \widehat{\bM}_{RS}; d)$, and let $\widehat{\bS}_{RS}$ be the diagonal matrix of the associated eigenvalues of $\calH(\bA\bA\transpose) - \widehat{\bM}_{RS}$. Denote by $\bW_{RS} = \mathrm{sgn}(\widehat{\bU}_{RS}\transpose\bV)$. Let $\bQ_1 = \texttt{eigs}(\bB\bB\transpose; d)$ and $\bV = \bU\bQ_1$. Then $\bV$ is the eigenvector matrix of $\bP\bP\transpose$ corresponding to the nonzero eigenvalues, and we take $\bS$ as the diagonal matrix of these eigenvalues, such that $\bP\bP\transpose = \bV\bS\bV\transpose$. The proof is centered around the following keystone decomposition motivated by \cite{xie2021entrywise}: 
\begin{align}\label{eqn:eigenvector_decomposition_main}
\widehat{\bU}_{RS}\bW_{RS} - \bV = 
\{\calH(\bA\bA\transpose) - \bP\bP\transpose - \bM\}\bV\bS^{-1} + \bT^{(RS)},
\end{align}
where $\bT^{(RS)} = \bR_1^{(RS)} + \bR_2^{(RS)} + \bR_3^{(RS)} + \bR_4^{(RS)} + \bR_5^{(RS)} + \bR_6^{(RS)} + \bR_7^{(RS)}$,
\begin{equation}\label{eqn:eigenvector_decomposition}
\begin{aligned}
\bR_1^{(RS)} & = 
\{\calH(\bA\bA\transpose) - \bP\bP\transpose - \bM\}
(\widehat{\bU}_{RS} - \bV\bW_{RS}\transpose)\widehat{\bS}_{RS}^{-1}\bW_{RS},\quad
\bR_2^{(RS)} =  
(\bM - \widehat{\bM}_{RS})\bV\bS^{-1},\\
\bR_3^{(RS)} &= (\bM - \widehat{\bM}_{RS})(\widehat{\bU}_{RS} - \bV\bW_{RS}\transpose)\widehat{\bS}^{-1}_{RS}\bW_{RS},\quad
\bR_4^{(RS)} = \bV(\bS\bV\transpose\widehat{\bU}_{RS}  - \bV\transpose\widehat{\bU}_{RS}\widehat{\bS}_{RS})\widehat{\bS}^{-1}_{RS}\bW_{RS},\\
\bR_5^{(RS)} & = \bV(\bV\transpose\widehat{\bU}_{RS} - \bW_{RS}\transpose)\bW_{RS},\quad
\bR_6^{(RS)} = 
\{\calH(\bA\bA\transpose) - \bP\bP\transpose - \bM\}
\bV(\bW_{RS}\transpose\widehat{\bS}_{RS}^{-1} - \bS^{-1}\bW_{RS}\transpose)\bW_{RS},\\
\bR_7^{(RS)} & = (\bM - \widehat{\bM}_{RS})\bV(\bW_{RS}\transpose\widehat{\bS}_{RS}^{-1} - \bS^{-1}\bW_{RS}\transpose)\bW_{RS}.
\end{aligned}
\end{equation}
The above decomposition can be easily verified by invoking the definitions of $\bR_1^{(RS)}$ through $\bR_7^{(RS)}$.
The proof of Theorem \ref{thm:entrywise_eigenvector_perturbation_bound} is based on a recursive relation between $\|\widehat{\bU}_{RS}\bW_{RS} - \bV\|_{2\to\infty}$ and $\|\widehat{\bU}_{(R - 1)S}\bW_{(R - 1)S} - \bV\|_{2\to\infty}$, which is formally stated in terms of Lemma \ref{lemma:recursive_two_to_infinity_error_bound} below. Theorem \ref{thm:entrywise_eigenvector_perturbation_bound} then follows from an induction over $R$.
\begin{lemma}\label{lemma:recursive_two_to_infinity_error_bound}
Suppose Assumptions \ref{assumption:eigenvector_delocalization} and \ref{assumption:condition_number} hold and $R, S = O(1)$. Then 
\begin{align*}
\|\widehat{\bU}_{RS}\bW_{RS} - \bV\|_{2\to\infty} &= \Optilde\bigg(\frac{\varepsilon_n^{(\mathsf{op})}}{\sqrt{n}}\bigg) + \Optilde\bigg(\frac{1}{mn^{5/2}\rho_n^2}\bigg)\|\bM - \widehat{\bM}_{RS}\|_2,\\
\frac{1}{mn^{5/2}\rho_n^2}\|\bM - \widehat{\bM}_{RS}\|_2 &= \Optilde\bigg(\frac{1}{n}\bigg)\|\widehat{\bU}_{(R - 1)S}\bW_{(R - 1)S} - \bV\|_{2\to\infty} + \Optilde\bigg(\frac{1}{n^{S + 3/2}} + \frac{\varepsilon_n^{(\mathsf{op})}}{n^2}\bigg).
\end{align*}
\end{lemma}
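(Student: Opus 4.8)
The plan is to argue from the keystone decomposition \eqref{eqn:eigenvector_decomposition_main}--\eqref{eqn:eigenvector_decomposition}, treating the two displayed inequalities in turn: the first isolates the dependence of the row-wise error on the bias error $\|\bM - \widehat{\bM}_{RS}\|_2$, and the second bounds that bias error by unfolding the inner loop. Throughout I write $\bN := \calH(\bA\bA\transpose) - \bP\bP\transpose - \bM$, which by \eqref{eqn:M_matrix} equals the centered matrix $\calH(\bA\bA\transpose) - \expect\calH(\bA\bA\transpose)$, and I set $\widehat{\bPi}_R := \widehat{\bU}_{(R-1)S}\widehat{\bU}_{(R-1)S}\transpose$, $\bPi := \bV\bV\transpose$, and $\bDelta := \widehat{\bU}_{(R-1)S}\bW_{(R-1)S} - \bV$. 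I use the scalings $\|\bV\|_{2\to\infty} \asymp n^{-1/2}$, $\|\bS\|_2 \asymp \|\bS^{-1}\|_2^{-1} \asymp mn^2\rho_n^2$, and $\|\bM\|_2 \asymp mn\rho_n^2$ from Assumptions \ref{assumption:eigenvector_delocalization}--\ref{assumption:condition_number}, together with the coarser preliminary facts $\|\bN\|_2\|\bS^{-1}\|_2 = \Optilde(\varepsilon_n^{(\mathsf{op})})$, $\|\widehat{\bU}_{RS}\bW_{RS} - \bV\|_2 = \Optilde(\varepsilon_n^{(\mathsf{op})} + \varepsilon_n^{(\mathsf{bc})})$, and $\widehat{\bS}_{RS}^{-1} = (1 + \optilde(1))\bS^{-1}$, all of which follow from Weyl's inequality and the Davis--Kahan theorem applied to the perturbation of $\bP\bP\transpose$ by $\bN + (\bM - \widehat{\bM}_{RS})$.

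For the first inequality I take $\be_i\transpose(\cdot)$ of \eqref{eqn:eigenvector_decomposition_main}. The leading term $\bN\bV\bS^{-1}$ is a fixed linear image of the centered noise $\bN$, so I split $\bN = \calH(\bP\bE\transpose + \bE\bP\transpose) + \calH(\bE\bE\transpose)$ and bound the linear-in-$\bE$ part by Bernstein's inequality and the quadratic part $\calH(\bE\bE\transpose)\bV\bS^{-1}$ by a Hanson--Wright bound; direct variance computations give row norms of order $(mn\rho_n^3)^{1/2}(\log n)^{1/2}\|\bS^{-1}\|_2$ and $(mn\rho_n^2)^{1/2}(\log n)^{1/2}\|\bS^{-1}\|_2$, both $\Optilde(\varepsilon_n^{(\mathsf{op})}/\sqrt{n})$. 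Among the remainders, $\bR_2^{(RS)} = (\bM - \widehat{\bM}_{RS})\bV\bS^{-1}$ is the one producing the stated coefficient: because $\bM - \widehat{\bM}_{RS}$ is diagonal, its $i$th row is $(\bM - \widehat{\bM}_{RS})_{ii}\be_i\transpose\bV\bS^{-1}$, whence $\|\bR_2^{(RS)}\|_{2\to\infty} \leq \|\bM - \widehat{\bM}_{RS}\|_2\|\bV\|_{2\to\infty}\|\bS^{-1}\|_2 \asymp (mn^{5/2}\rho_n^2)^{-1}\|\bM - \widehat{\bM}_{RS}\|_2$. The terms $\bR_4^{(RS)}, \bR_5^{(RS)}, \bR_6^{(RS)}$ are shown to be $\Optilde(\varepsilon_n^{(\mathsf{op})}/\sqrt n)$ (indeed smaller) using the preliminary eigenvalue bounds, the second-order alignment identity $\|\bV\transpose\widehat{\bU}_{RS} - \bW_{RS}\transpose\|_2 = \Optilde(\|\widehat{\bU}_{RS}\bW_{RS} - \bV\|_2^2)$, and concentration of $\bN\bV$ (whose coefficient $\bV$ is deterministic); $\bR_3^{(RS)}, \bR_7^{(RS)}$ are of strictly higher order in $\|\bM - \widehat{\bM}_{RS}\|_2$; and $\bR_1^{(RS)} = \bN(\widehat{\bU}_{RS} - \bV\bW_{RS}\transpose)\widehat{\bS}_{RS}^{-1}\bW_{RS}$ requires the leave-one-out/leave-two-out argument described below.

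For the second inequality I unfold the inner recursion for $\widehat{\bM}_{Rs}$. Writing $\bD_{Rs} := \bM - \widehat{\bM}_{Rs}$ (diagonal) and substituting $\calH(\bA\bA\transpose) = \bP\bP\transpose + \bM + \bN$ into the update rule yields
\begin{align*}
\bD_{Rs} &= \mathrm{diag}(\widehat{\bPi}_R\bP\bP\transpose\widehat{\bPi}_R - \bPi\bP\bP\transpose\bPi) \\
&\quad + \mathrm{diag}(\widehat{\bPi}_R\bD_{R(s-1)}\widehat{\bPi}_R) + \mathrm{diag}(\widehat{\bPi}_R\bN\widehat{\bPi}_R).
\end{align*}
The middle term contracts: $\be_i\transpose\widehat{\bPi}_R\bD_{R(s-1)}\widehat{\bPi}_R\be_i = \sum_j(\widehat{\bPi}_R)_{ij}^2(\bD_{R(s-1)})_{jj}$ is bounded by $(\widehat{\bPi}_R)_{ii}\|\bD_{R(s-1)}\|_2 \leq \|\widehat{\bU}_{(R-1)S}\|_{2\to\infty}^2\|\bD_{R(s-1)}\|_2 = \Optilde(n^{-1})\|\bD_{R(s-1)}\|_2$, so that $S$ inner steps reduce the initial error $\bD_{R0} = \bM$ (since $\widehat{\bM}_{R0} = \zero_{n\times n}$) to $\Optilde(n^{-S})\|\bM\|_2 = \Optilde(mn^{1-S}\rho_n^2)$, i.e. the $n^{-(S+3/2)}$ term after multiplying by $(mn^{5/2}\rho_n^2)^{-1}$. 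For the first term I expand $\widehat{\bPi}_R - \bPi = \bDelta\bV\transpose + \bV\bDelta\transpose + \bDelta\bDelta\transpose$; using $\bP\bP\transpose = \bV\bS\bV\transpose$ and $\|\bV\transpose\widehat{\bPi}_R\be_i\|_2 = \Optilde(n^{-1/2})$, its $i$th diagonal entry is $\Optilde(mn^{3/2}\rho_n^2)(\|\bDelta\|_{2\to\infty} + n^{-1/2}\|\bDelta\|_2)$, and since $\|\bDelta\|_2 \leq \sqrt{n}\|\bDelta\|_{2\to\infty}$ this is $\Optilde(mn^{3/2}\rho_n^2)\|\bDelta\|_{2\to\infty}$, giving the $(1/n)\|\widehat{\bU}_{(R-1)S}\bW_{(R-1)S} - \bV\|_{2\to\infty}$ term. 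The last term is the delicate one: the crude bound $\Optilde(n^{-1})\|\bN\|_2$ is too lossy by a factor $\sqrt{n}$, so instead I view $\be_i\transpose\widehat{\bPi}_R\bN\widehat{\bPi}_R\be_i = (\widehat{\bPi}_R\be_i)\transpose\bN(\widehat{\bPi}_R\be_i)$ as a quadratic form in a delocalized vector of norm $\Optilde(n^{-1/2})$ lying in $\mathrm{Span}(\widehat{\bU}_{(R-1)S})$, which concentrates at the scale of the signal-subspace-projected noise $\|\bV\transpose\bN\bV\|_2 = \Optilde((mn\rho_n^2)^{1/2})$ rather than $\|\bN\|_2 \asymp m^{1/2}n\rho_n$, yielding $\Optilde(\rho_n(m/n)^{1/2})$ and hence the $\varepsilon_n^{(\mathsf{op})}/n^2$ term. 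Summing the geometric contraction over $s$ completes this inequality.

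The main obstacle is the sharp control of the quadratic, block-wise symmetric term $\calH(\bE\bE\transpose)$ whenever it is contracted against the \emph{data-dependent} subspace estimate: in $\bR_1^{(RS)}$ of the first inequality and in $(\widehat{\bPi}_R\be_i)\transpose\bN(\widehat{\bPi}_R\be_i)$ of the second. Since $\widehat{\bU}_{(R-1)S}$ depends on every entry of $\bE$, one cannot apply Hanson--Wright with a fixed vector. The plan is to replace $\widehat{\bU}_{(R-1)S}$ by a leave-one-out surrogate independent of the $i$th row and column of $\bE$; but because $(\calH(\bE\bE\transpose))_{ij} = \sum_{t,k}E_{tik}E_{tjk}$ still couples the free index $j$ to the surrogate through $E_{tjk}$, the symmetry $\bE_t = \bE_t\transpose$ forces a leave-two-out step that removes both indices $i$ and $j$ simultaneously, decoupling the coefficients so that the sum over $j$ becomes a sum of conditionally independent mean-zero terms amenable to a Bernstein bound. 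Controlling the replacement error through the leave-one-out stability of the BCJSE iterates, and driving these decoupling errors below $\varepsilon_n^{(\mathsf{op})}/\sqrt{n}$ and $\varepsilon_n^{(\mathsf{op})}/n^2$ respectively, is the technical heart of the argument and the reason the block-symmetric structure demands tools beyond the independent-entry rectangular-matrix analyses of \cite{cai2021subspace,yan2021inference,agterberg2021entrywise}.
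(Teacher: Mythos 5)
Your proposal is correct and takes essentially the paper's own route: the first inequality comes from the keystone decomposition \eqref{eqn:eigenvector_decomposition_main} with $\bR_2^{(RS)} = (\bM - \widehat{\bM}_{RS})\bV\bS^{-1}$ supplying the coefficient $(mn^{5/2}\rho_n^2)^{-1}$ and $\bR_1^{(RS)}$ handled by the leave-one-out/leave-two-out decoupling, while your exact recursion identity for $\bD_{Rs}$ is an equivalent (and arguably cleaner) reorganization of the seven-term triangle bound in Appendix C.3, built from the same three ingredients — the signal term $\Optilde(mn^{3/2}\rho_n^2)\|\bDelta\|_{2\to\infty}$, the $\Optilde(1/n)$ per-step contraction from $\|\widehat{\bU}_{(R-1)S}\|_{2\to\infty}^2$, and the projected noise, with the initial error $\|\bM\|_2 \lesssim mn\rho_n^2$ contracting to the $n^{-(S+3/2)}$ term. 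Three caveats, none fatal to the plan, are worth recording.

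First, your claim $\|\bV\transpose\bN\bV\|_2 = \Optilde\{(mn\rho_n^2)^{1/2}\}$ captures only the $\calH(\bE\bE\transpose)$ contribution; the linear part $\sum_{t=1}^m\bV\transpose(\bP_t\bE_t + \bE_t\bP_t)\bV$ contributes $\Optilde\{m^{1/2}n\rho_n^{3/2}\log^{1/2}(m+n)\}$ (the paper's Lemma \ref{lemma:concentration_noise_quadratic_form}), which dominates once $n\rho_n \gtrsim \log n$; consequently your claimed scale $\Optilde\{\rho_n(m/n)^{1/2}\}$ for $\be_i\transpose\widehat{\bPi}_R\bN\widehat{\bPi}_R\be_i$ is short by a factor $(n\rho_n)^{1/2}$ in the dense regime. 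This is harmless, because $n^{-1}\cdot m^{1/2}n\rho_n^{3/2}\log^{1/2}(m+n) = \zeta_{\mathsf{op}}/n^{3/2}$, which after dividing by $mn^{5/2}\rho_n^2$ is exactly the stated $\varepsilon_n^{(\mathsf{op})}/n^2$ term; a similar $\sqrt{n}$ slip occurs in your row norm for the linear part of $\bN\bV\bS^{-1}$, where the correct rate $m^{1/2}n\rho_n^{3/2}\log^{1/2}(m+n)$ divided by $mn^2\rho_n^2$ matches the second component of $\varepsilon_n^{(\mathsf{op})}$ exactly. Second, you over-engineer the term $\mathrm{diag}(\widehat{\bPi}_R\bN\widehat{\bPi}_R)$: no leave-one-out surrogate is needed there. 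Writing $\widehat{\bPi}_R = \bPi + (\widehat{\bPi}_R - \bPi)$, the cross terms are crudely bounded by $\|\bDelta\|_{2\to\infty}\|\bN\|_2 n^{-1/2} = \Optilde(\zeta_{\mathsf{op}} n^{-1/2})\|\bDelta\|_{2\to\infty}$, which is absorbed into the $\Optilde(1/n)\|\bDelta\|_{2\to\infty}$ term since $\zeta_{\mathsf{op}} = o(mn^2\rho_n^2)$, and the main term is the \emph{fixed}-$\bV$ quadratic form above; this is precisely what the paper does, and it is why the decoupling machinery is genuinely required only for $\bR_1^{(RS)}$. Third, the delocalization $\|\widehat{\bU}_{(R-1)S}\|_{2\to\infty} = \Optilde(n^{-1/2})$ that drives your contraction is itself a nontrivial output of the leave-one-out bootstrap and its own induction over $R$ (Lemma \ref{lemma:Utilde_two_to_infinity_norm_II}, which simultaneously certifies $\|\bM - \widehat{\bM}_{rs}\|_2 \leq \lambda_d(\bP\bP\transpose)/20$ for all iterates and their leave-one/two-out variants); in a complete write-up you must invoke that bootstrap explicitly rather than treat the bound as given, since otherwise the two displayed inequalities and the delocalization statement form a circular dependence.
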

The proof Lemma \ref{lemma:recursive_two_to_infinity_error_bound} breaks down into establishing error bounds for $\|\bR_1^{(RS)}\|_{2\to\infty}$ through $\|\bR^{(RS)}_7\|_{2\to\infty}$. Among these terms, the analyses of $\|\bR_2^{(RS)}\|_{2\to\infty}$ through $\|\bR^{(RS)}_7\|_{2\to\infty}$ are relatively straightforward based on the classical matrix perturbation tools \cite{doi:10.1137/0707001}. See Appendix \ref{sec:simple_remainder_analyses} for further details. 
The analysis of $\|\bR^{(RS)}_1\|_{2\to\infty}$, which is formally stated in Lemma \ref{lemma:R1_remainder} below and will be proved in Appendix \ref{sec:LOO_analysis}, is more involved and requires decoupling arguments based on the elegant leave-one-out and leave-two-out analyses (see \cite{xie2021entrywise,10.1214/19-AOS1854,javanmard2015biasing,10.1214/22-AOS2196,cai2021subspace,agterberg2021entrywise,agterberg2022joint,yan2021inference}). 
\begin{lemma}\label{lemma:R1_remainder}
Suppose Assumptions \ref{assumption:eigenvector_delocalization} and \ref{assumption:condition_number} hold, and $R,S = O(1)$. Then
\[
\|\bR_1^{(RS)}\|_{2\to\infty} = \Optilde\bigg(\frac{\varepsilon_n^{(\mathsf{op})}}{\sqrt{n}}
\bigg) + \Optilde\bigg(\frac{1}{mn^{5/2}\rho_n^2}\bigg)\|\bM - \widehat{\bM}_{RS}\|_2.
\]
\end{lemma}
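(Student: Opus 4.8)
The plan is to bound $\|\bR_1^{(RS)}\|_{2\to\infty} = \max_{i\in[n]}\|\be_i\transpose\bR_1^{(RS)}\|_2$ by first isolating the data-dependent factors. Since $\widehat{\bS}_{RS}$ is spectrally close to $\bS$ with $\|\widehat{\bS}_{RS}^{-1}\|_2 = \Optilde(1/(mn^2\rho_n^2))$ and $\bW_{RS}$ is orthogonal, it suffices to control $\max_i\|\be_i\transpose\bN(\widehat{\bU}_{RS}\bW_{RS} - \bV)\|_2$, where I abbreviate the centered noise $\bN := \calH(\bA\bA\transpose) - \bP\bP\transpose - \bM$. Writing $\bA_t = \bP_t + \bE_t$ and using the identity $\calH(\bP\bP\transpose) = \bP\bP\transpose + \bM$ (because $\bM = -\mathrm{diag}(\bP\bP\transpose)$), this noise splits into a part linear and a part quadratic in $\bE$,
\[
\bN = \calH\Big(\sum_{t=1}^m(\bP_t\bE_t + \bE_t\bP_t)\Big) + \calH\Big(\sum_{t=1}^m\bE_t^2\Big),
\]
and I would treat the two pieces separately. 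The essential difficulty, and the reason the crude bound $\|\bN\|_2\|\widehat{\bU}_{RS}\bW_{RS} - \bV\|_{2\to\infty}$ is too lossy, is that $\bN$ and $\widehat{\bU}_{RS}$ are strongly dependent; the $\sqrt n$ gain in the $2\to\infty$ bound must come from decoupling the $i$th row of $\bN$ from the estimator.

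For the decoupling I would use a leave-one-out device tailored to the block-symmetric structure. Fix $i$ and define $\bA_t^{(i)}$ by replacing the $i$th row \emph{and} the $i$th column of $\bE_t$ with zeros (symmetry forces us to remove both), and let $\widehat{\bU}_{RS}^{(i)}$, with alignment $\bW_{RS}^{(i)}$, be the BCJSE output driven by $\bA^{(i)}(\bA^{(i)})\transpose$; by construction $\widehat{\bU}_{RS}^{(i)}$ is independent of the row-$i$ noise $\{E_{tij}:t\in[m],j\in[n]\}$. The first step is the split
\[
\be_i\transpose\bN(\widehat{\bU}_{RS}\bW_{RS} - \bV)
= \be_i\transpose\bN(\widehat{\bU}_{RS}\bW_{RS} - \widehat{\bU}_{RS}^{(i)}\bW_{RS}^{(i)})
 + \be_i\transpose\bN(\widehat{\bU}_{RS}^{(i)}\bW_{RS}^{(i)} - \bV),
\]
where a leave-one-out stability estimate $\|\widehat{\bU}_{RS}\bW_{RS} - \widehat{\bU}_{RS}^{(i)}\bW_{RS}^{(i)}\|_2 = \optilde(\cdot)$, obtained by re-running the perturbation argument on the two coupled systems, controls the first term after pairing it with a crude bound on $\|\be_i\transpose\bN\|_2$.

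The delicate term is the second one. Even though $\widehat{\bU}_{RS}^{(i)}$ is now independent of row $i$, the $i$th row of $\bN$ still mixes row-$i$ entries with the rest: in $\sum_t\bE_t^2$ each entry $(\bE_t^2)_{ij} = \sum_k E_{tik}E_{tkj}$ pairs a row-$i$ variable $E_{tik}$ with a non-row-$i$ variable $E_{tkj}$, while the bilinear term $\bE_t\bP_t$ is wholly row-$i$-driven and $\bP_t\bE_t$ contributes a piece $\sum_{k\neq i}P_{tik}E_{tkj}$ that is measurable with respect to the complementary $\sigma$-field $\calF_{-i}$. I would therefore condition on $\calF_{-i}$ and organize $\be_i\transpose\bN(\widehat{\bU}_{RS}^{(i)}\bW_{RS}^{(i)} - \bV)$ into (a) a part that is \emph{linear} in the independent row-$i$ variables $\{E_{tik}\}$, which conditionally is a weighted sum of independent mean-zero terms amenable to a Bernstein/matrix-concentration bound whose conditional variance I would track carefully to produce the $\varepsilon_n^{(\mathsf{op})}/\sqrt n$ scale; and (b) a residual $\calF_{-i}$-measurable self-interaction of the form $\sum_{t,k}P_{tik}\be_k\transpose\bE_t^{(i)}(\widehat{\bU}_{RS}^{(i)}\bW_{RS}^{(i)} - \bV)$ in which $\bE_t^{(i)}$ and $\widehat{\bU}_{RS}^{(i)}$ remain coupled through $\calF_{-i}$. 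This residual is exactly what a single leave-one-out cannot kill, and I would introduce a leave-two-out estimator $\widehat{\bU}_{RS}^{(ik)}$ (removing rows and columns $i$ and $k$ simultaneously) to decouple the $k$th row of $\bE_t$ from the estimator, bounding the fully decoupled sum by concentration and the coupling remainder by a leave-two-out stability estimate.

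Finally, to surface the $\|\bM - \widehat{\bM}_{RS}\|_2$ contribution I would substitute the keystone decomposition \eqref{eqn:eigenvector_decomposition_main} for $\widehat{\bU}_{RS}^{(i)}\bW_{RS}^{(i)} - \bV$ (and its leave-one-out analog), so that the bias-estimation remainder $\bR_2^{(RS)} = (\bM - \widehat{\bM}_{RS})\bV\bS^{-1}$ propagates into a term proportional to $(mn^2\rho_n^2)^{-1}\|\bM - \widehat{\bM}_{RS}\|_2$, which after multiplying by $\|\widehat{\bS}_{RS}^{-1}\|_2$ yields the stated $\Optilde((mn^{5/2}\rho_n^2)^{-1})\|\bM - \widehat{\bM}_{RS}\|_2$. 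Collecting the linear, quadratic, leave-one-out and leave-two-out pieces, taking the maximum over $i$, and absorbing the resulting union bound into the $\log n$ factors of $\varepsilon_n^{(\mathsf{op})}$ would complete the argument. I expect the main obstacle to be the block-symmetric quadratic term $\calH(\sum_t\bE_t^2)$: because each noise entry $E_{tik}$ recurs across an entire row of $\bE_t^2$, the conditional-variance bookkeeping in step (a) and the leave-two-out stability estimate in step (b) must be executed with the correct $\rho_n$- and $m$-dependence, and it is precisely here that the block-symmetric structure forces tools genuinely different from the independent-entry rectangular analyses of \cite{cai2021subspace,yan2021inference,agterberg2021entrywise}.
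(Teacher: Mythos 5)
Your proposal is correct and follows essentially the same route as the paper's proof: the identical leave-one-out construction (replacing the $i$th row and column of each $\bA_t$ by its expectation, so the corresponding noise rows vanish), the split of $\be_i\transpose\{\calH(\bA\bA\transpose) - \bP\bP\transpose - \bM\}(\widehat{\bU}_{RS}\bW_{RS} - \bV)$ into a leave-one-out stability piece plus a decoupled piece, leave-two-out matrices $\widehat{\bU}_{RS}^{(i,j)}$ to break the residual coupling in $\max_{t,j}\|\be_j\transpose\bE_t^{(i)}(\widehat{\bU}_{RS}^{(i)}\bH_{RS}^{(i)} - \bV)\|_2$, conditional Bernstein and higher-order moment bounds for the block-symmetric quadratic term, and propagation of $\|\bM - \widehat{\bM}_{RS}\|_2$ at the scale $(mn^{5/2}\rho_n^2)^{-1}$ (the paper's Lemmas \ref{lemma:LOO_stage_I}--\ref{lemma:Utilde_two_to_infinity_norm_II} and the five-term split \eqref{eqn:r11_r12_r13_r14}). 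The only cosmetic differences are your alignment by the matrix sign $\bW_{RS}^{(i)}$ where the paper uses $\bH_{RS}^{(i)} = (\widehat{\bU}_{RS}^{(i)})\transpose\bV$ and isolates the $(\bW_{RS} - \bH_{RS})$ factors separately, and your surfacing of the bias term via re-substitution of the keystone decomposition where the paper gets it directly from Davis--Kahan.
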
 
Note that because of the unique block-wise symmetric structure of the concatenated network adjacency matrix $\bA = [\bA_1,\ldots,\bA_m]$, where $\bA_t$'s are symmetric, the leave-one-out and leave-two-out analyses cause extra complication compared to those appearing in the literature of rectangular random matrices with completely independent entries. In particular, our analyses show the emergence of complicated polynomials of $\bE$ that are unique to our setting, and the error control of these quadratic forms is established by delicate analyses of their higher-order moment bounds. See Section \ref{sec:LOO_analysis} for further details. 

Regarding the proof of Theorem \ref{thm:Rowwise_CLT}, the first step is to obtain the following sharpened error bound for $\|\bT^{(RS)}\|_{2\to\infty}$. 
\begin{lemma}\label{lemma:sharpened_remainder}
Suppose the conditions of Theorem \ref{thm:Rowwise_CLT} hold. Then 
\[
\|\bT^{(RS)}\|_{2\to\infty} = \optilde\bigg(\frac{1}{m^{1/2}n\rho_n^{1/2}\theta_n}\bigg) + \Optilde\bigg(\frac{1}{n^{S + 3/2}} + \frac{1}{n^{R + 1/2}}\bigg).
\]
\end{lemma}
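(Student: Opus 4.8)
The plan is to control the seven remainder matrices in $\bT^{(RS)} = \sum_{k = 1}^7\bR_k^{(RS)}$ one at a time, showing that each obeys $\|\bR_k^{(RS)}\|_{2\to\infty} = \optilde(1/(m^{1/2}n\rho_n^{1/2}\theta_n)) + \Optilde(n^{-S - 3/2} + n^{-R - 1/2})$. The crucial new leverage, relative to the proof of Theorem \ref{thm:entrywise_eigenvector_perturbation_bound}, is that I may now feed the \emph{already established} first-order bounds $\|\widehat{\bU}_{RS}\bW_{RS} - \bV\|_{2\to\infty} = \Optilde\{(\varepsilon_n^{(\mathsf{op})} + \varepsilon_n^{(\mathsf{bc})})/\sqrt{n}\}$ and $\|\widehat{\bU}_{RS}\bW_{RS} - \bV\|_2 = \Optilde(\varepsilon_n^{(\mathsf{op})} + \varepsilon_n^{(\mathsf{bc})})$ back into the definitions \eqref{eqn:eigenvector_decomposition}. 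Most of the $\bR_k^{(RS)}$ are \emph{second order} in the perturbation error, and substituting the proven first-order rates turns the crude quadratic estimates into the required $\optilde$ bounds.

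Concretely, I would first dispose of the bias-driven and higher-order terms $\bR_2^{(RS)},\ldots,\bR_7^{(RS)}$. For $\bR_2^{(RS)} = (\bM - \widehat{\bM}_{RS})\bV\bS^{-1}$ and $\bR_3^{(RS)}$, I use that $\bM - \widehat{\bM}_{RS}$ is diagonal, together with the incoherence estimate $\|\bV\|_{2\to\infty}\lesssim n^{-1/2}$ and $\|\bS^{-1}\|_2 \lesssim (mn^2\rho_n^2)^{-1}$, reducing everything to the recursive control of $(mn^{5/2}\rho_n^2)^{-1}\|\bM - \widehat{\bM}_{RS}\|_2$ from Lemma \ref{lemma:recursive_two_to_infinity_error_bound}; feeding the proven first-order bound for the $(R-1)$th iterate into that recursion produces exactly the $\Optilde(n^{-S-3/2} + n^{-R-1/2})$ residual plus a term of order $\varepsilon_n^{(\mathsf{op})}/n^{3/2}$, which is $\optilde$ of the target. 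The terms $\bR_4^{(RS)},\bR_5^{(RS)},\bR_6^{(RS)},\bR_7^{(RS)}$ are governed by the eigenvalue/eigenvector discrepancies $\bS\bV\transpose\widehat{\bU}_{RS} - \bV\transpose\widehat{\bU}_{RS}\widehat{\bS}_{RS}$, $\bV\transpose\widehat{\bU}_{RS} - \bW_{RS}\transpose$, and $\bW_{RS}\transpose\widehat{\bS}_{RS}^{-1} - \bS^{-1}\bW_{RS}\transpose$. Each is quadratically small in $\varepsilon_n^{(\mathsf{op})} + \varepsilon_n^{(\mathsf{bc})}$ by the classical Weyl and Davis--Kahan type estimates in \cite{doi:10.1137/0707001} (\emph{e.g.} $\|\bV\transpose\widehat{\bU}_{RS} - \bW_{RS}\transpose\|_2 \lesssim \|\widehat{\bU}_{RS}\bW_{RS} - \bV\|_2^2$), so the proven first-order rates place them well below the target scale.

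The genuine obstacle is the leading remainder $\bR_1^{(RS)} = \{\calH(\bA\bA\transpose) - \bP\bP\transpose - \bM\}(\widehat{\bU}_{RS} - \bV\bW_{RS}\transpose)\widehat{\bS}_{RS}^{-1}\bW_{RS}$, for which the estimate of Lemma \ref{lemma:R1_remainder} still carries a spurious $(\log n)^{1/2}$ factor through $\varepsilon_n^{(\mathsf{op})}/\sqrt{n}$ and is therefore \emph{not} $\optilde$ of $1/(m^{1/2}n\rho_n^{1/2}\theta_n)$. To shave this factor I would revisit the leave-one-out construction of Appendix \ref{sec:LOO_analysis}: writing $\bN := \calH(\bA\bA\transpose) - \bP\bP\transpose - \bM$ and introducing the leave-one-out estimators $\widehat{\bU}_{RS}^{(i)}$ (built from the data with all entries touching vertex $i$ deleted) together with their alignment signs $\bW_{RS}^{(i)}$, I split $\be_i\transpose\bR_1^{(RS)}$ into a piece in which $\be_i\transpose\bN$ is independent of the remaining factor and a leave-one-out stability piece. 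The independent piece is a linear form in the $i$th row of $\bN$ with fixed coefficients of size $\Optilde(\varepsilon_n^{(\mathsf{op})})$, whose fluctuation is dictated by the variance profile underlying $\bGamma_i$ and is strictly below the target once the spectral bound on $\|\widehat{\bU}_{RS}^{(i)}\bW_{RS}^{(i)} - \bV\bW_{RS}\transpose\|_2$ is used; the stability piece is second order.

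The technically delicate point, which I expect to be the main difficulty, is that the block-wise symmetric structure of $\bA = [\bA_1,\ldots,\bA_m]$ couples the $i$th row and $i$th column of each $\bA_t$, so a single leave-one-out step does not fully decouple $\be_i\transpose\bN$ from $\widehat{\bU}_{RS}^{(i)}$ and leaves quadratic forms in $\bE$ that are specific to this setting. Controlling these requires a leave-two-out refinement together with sharp higher-order moment computations for the surviving quadratic forms, exactly as in the analysis feeding Lemma \ref{lemma:R1_remainder}, but pushed to one higher order of precision so as to extract the sharp $\optilde(1/(m^{1/2}n\rho_n^{1/2}\theta_n))$ rate rather than the $\Optilde$ rate. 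Collecting the seven bounds then yields the stated estimate.
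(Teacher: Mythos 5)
Your proposal is correct in outline and follows essentially the same route as the paper's proof (Lemma \ref{lemma:Rowwise_CLT_remainder} in Appendix \ref{sec:proof_of_eigenvector_CLT}): control $\bR_2^{(RS)}$--$\bR_7^{(RS)}$ by feeding the rates of Theorem \ref{thm:entrywise_eigenvector_perturbation_bound} into Lemmas \ref{lemma:remainder_II}--\ref{lemma:remainder_III}, convert the residual $(mn^{5/2}\rho_n^2)^{-1}\|\bM - \widehat{\bM}_{RS}\|_2$ into $\Optilde(n^{-S-3/2} + n^{-R-1/2}) + \optilde(1/(m^{1/2}n\rho_n^{1/2}\theta_n))$ via Lemma \ref{lemma:recursive_two_to_infinity_error_bound}, and treat $\bR_1^{(RS)}$ through the same leave-one-out/leave-two-out splitting $r_1^{(1)},\ldots,r_1^{(5)}$ as in Lemma \ref{lemma:R1_remainder}, with the pieces that are linear in the perturbation sharpened by the proven spectral bound $\max_{i\in[n]}\|\widehat{\bU}_{RS}^{(i)}\bH_{RS}^{(i)} - \bV\|_2 = \Optilde(\varepsilon_n^{(\mathsf{op})} + \varepsilon_n^{(\mathsf{bc})})$. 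You also correctly diagnose that the crude bound $\varepsilon_n^{(\mathsf{op})}/\sqrt{n}$ from Lemma \ref{lemma:R1_remainder} is not $\optilde$ of the target, so a refinement of the $\bR_1^{(RS)}$ analysis is genuinely required.

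The one place where your plan diverges from what is actually needed is the expectation that the leave-one-out/leave-two-out moment computations must be ``pushed to one higher order of precision.'' They need not be, and attempting to remove the logarithmic factors probabilistically would likely stall: the $\log$ factors in $q_n$ and $\zeta_{\mathsf{op}}$ arise from Bernstein-type and decoupling bounds and are not spurious artifacts. The paper reuses the bounds of Lemma \ref{lemma:LOO_Rowwise_concentration} verbatim; the upgrade from $\Optilde$ to $\optilde$ comes entirely from the strengthened signal-to-noise conditions (i) of Theorem \ref{thm:Rowwise_CLT}, namely $m^{1/2}n\rho_n = \omega(\theta_n(\log n)^{3/2})$, $mn\rho_n = \omega((\theta_n\log n)^2)$, and $m^{1/2}(n\rho_n)^{3/2} = \omega(\theta_n(\log n)^2)$, under which one verifies directly that $q_n/(mn^{5/2}\rho_n^2) = o\{1/(m^{1/2}n\rho_n^{1/2}\theta_n)\}$ and $\zeta_{\mathsf{op}}\log n/(m^{3/2}n^{7/2}\rho_n^3) = o\{1/(m^{1/2}n\rho_n^{1/2}\theta_n)\}$; likewise $(\log n)^{1/2}(\varepsilon_n^{(\mathsf{op})} + \varepsilon_n^{(\mathsf{bc})})\theta_n = o(1)$ handles $q_2,q_3,q_4$. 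In other words, the target scale $1/(m^{1/2}n\rho_n^{1/2}\theta_n)$ is large enough to absorb the log-laden quantities, so the refinement is deterministic bookkeeping, not new probabilistic estimates. A minor imprecision in the same spirit: not all of $\bR_4^{(RS)}$--$\bR_7^{(RS)}$ are purely quadratic in the perturbation --- for instance $\bR_4^{(RS)}$ carries the linear term $\|\bV\transpose\{\calH(\bA\bA\transpose) - \bP\bP\transpose - \bM\}\bV\|_2 = \Optilde(\zeta_{\mathsf{op}}/\sqrt{n})$ --- but these come with enough polynomial damping to fall below the target under conditions (i), so your conclusion for those terms stands.
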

The proof of Lemma \ref{lemma:sharpened_remainder}, which is deferred to Appendix \ref{sub:proof_of_eigenvector_CLT}, is similar to that of Lemma \ref{lemma:R1_remainder}. The key difference is that we take advantage of the sharp error bounds in Theorem \ref{thm:entrywise_eigenvector_perturbation_bound} to obtain the necessary refinement for $\|\bR_1^{(RS)}\|_{2\to\infty}$. The remaining part of the proof of Theorem \ref{thm:Rowwise_CLT} is to show the asymptotic normality of $\be_i\transpose\{\calH(\bA\bA\transpose) - \bP\bP\transpose - \bM\}\bV\bS^{-1}$. As mentioned before, this is a nontrivial task because $\be_i\transpose\{\calH(\bA\bA\transpose) - \bP\bP\transpose - \bM\}\bV\bS^{-1}$ cannot be written as a sum of independent mean-zero random variables plus some negligible term. We borrow the idea from \cite{fan-fan-han-lv-2020} and rewrite it as a sum of  martingale difference sequence, so that the martingale central limit theorem can be applied. This is formally stated in Lemma \ref{lemma:martingale_construction} below.
\begin{lemma}\label{lemma:martingale_construction}
Let $e_{ij}$ denote the $j$th entry of the standard basis vector $\be_i$, \emph{i.e.}, $e_{ij} = \be_i\transpose\be_j = \mathbbm{1}(i = j)$. For any deterministic vector $\bz\in\mathbb{R}^d$, $t\in[m]$, $i,j,j_1,j_2\in[n]$, $j_1\leq j_2$, denote by
\begin{align}
\label{eqn:gamma_xi}
\gamma_{ij}(\bz)& = \be_j\transpose\bV\bS^{-1}\bQ_1\transpose\bGamma_i^{-1/2}\bz,\quad
\xi_{tij}(\bz) = \be_j\transpose\bP_t\bV\bS^{-1}\bQ_1\transpose\bGamma_i^{-1/2}\bz,\\
\label{eqn:btj1j2}
b_{tij_1j_2}(\bz)& = \iota_{j_1j_2}
\sum_{j_3 = 1}^{j_1 - 1}
E_{tj_2j_3}\left\{e_{ij_1}
\gamma_{ij_3}(\bz) + 
e_{ij_3}\gamma_{ij_1}(\bz)\right\}
 + 
\iota_{j_1j_2}\sum_{j_3 = 1}^{j_2 - 1}
E_{tj_1j_3}\left\{e_{ij_2}
\gamma_{ij_3}(\bz) + 
e_{ij_3}\gamma_{ij_2}(\bz)\right\},\\
\label{eqn:ctj1j2}
c_{tij_1j_2}(\bz)& = \iota_{j_1j_2}\{e_{ij_1}\xi_{tij_2}(\bz) + e_{ij_2}\xi_{tij_1}(\bz)\},
\end{align}
where $\iota_{j_1j_2} = \mathbbm{1}(j_1 < j_2) + \mathbbm{1}(j_1 = j_2)/2$. For any $(t, j_1, j_2)\in[m]\times [n]\times [n]$ with $j_1\leq j_2$, define the relabeling function 
\begin{align}
\label{eqn:relabeling_function}
\alpha(t, j_1, j_2) = \frac{1}{2}(t - 1)n(n + 1) + j_1 + \frac{1}{2}j_2(j_2 - 1).
\end{align}
Let $N_n = (1/2)mn(n + 1)$. For any $\alpha\in[N_n]$, define $\sigma$-field $\calF_{n\alpha} = \sigma(\{E_{tj_1j_2}:\alpha(t, j_1, j_2)\leq \alpha\})$ and random variable $Y_{n\alpha} = \sum_{t = 1}^m\sum_{j_1,j_2\in[n],j_1\leq j_2}\mathbbm{1}\{\alpha(t, j_1, j_2) = \alpha\}E_{tj_1j_2}(b_{tj_1j_2} + c_{tj_1j_2})$. Let $\calF_{n0} = \varnothing$. Then:
\begin{itemize}
\item[(a)] $\alpha(\cdot,\cdot,\cdot)$ is a one-to-one function from $[m]\times\{(j_1,j_2)\in[n]\times[n]:j_1\leq j_2\}$ to $[N_n]$.
\item [(b)] For any $t\in[m],j_1,j_2\in[n],j_1\leq j_2$, $E_{tj_1j_2}$ is $\calF_{n\alpha(t, j_1, j_2)}$-measurable and $b_{tj_1j_2}$ is $\calF_{n,\alpha(t, j_1, j_2) - 1}$-measurable.
\item [(c)] $(\sum_{\beta = 1}^\alpha Y_{n\beta})_{\alpha = 1}^{N_n}$ is a martingale with respect to the filtration $(\calF_{n\alpha})_{\alpha = 0}^{N_n - 1}$, and 
\[
\be_i\transpose\{\calH(\bA\bA\transpose) - \bP\bP\transpose - \bM\}\bV\bS^{-1}\bQ_1\transpose\bGamma_i^{-1/2} = \sum_{\alpha = 1}^{N_n}Y_{n\alpha} + \optilde(1).
\]
\end{itemize}
\end{lemma}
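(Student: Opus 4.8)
The plan is to handle parts (a)--(c) in order, with the combinatorial bookkeeping inside the expansion of (c) being the crux. For part (a) I would argue by triangular enumeration: fixing a layer $t$, the map $(j_1,j_2)\mapsto j_1 + \frac{1}{2}j_2(j_2-1)$ sends the slice $\{(j_1,j_2):1\le j_1\le j_2\}$ at fixed $j_2$ bijectively onto the integer block $\{\frac12 j_2(j_2-1)+1,\ldots,\frac12 j_2(j_2+1)\}$, and these blocks tile $\{1,\ldots,\frac12 n(n+1)\}$ as $j_2$ ranges over $[n]$; adding the layer offset $\frac12(t-1)n(n+1)$ then exhibits $\alpha$ as a bijection onto $[N_n]$. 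For part (b) the measurability of $E_{tj_1j_2}$ is immediate from the definition of $\calF_{n\alpha}$, and for $b_{tj_1j_2}$ I would check that every noise entry it contains carries a strictly smaller relabeling index. After sorting index pairs this reduces to monotonicity of $\alpha(t,\cdot,\cdot)$: the factors $E_{tj_2j_3}$ with $j_3<j_1\le j_2$ sort to $(j_3,j_2)$ with $\alpha(t,j_3,j_2)<\alpha(t,j_1,j_2)$ since $j_3<j_1$, while the factors $E_{tj_1j_3}$ with $j_3<j_2$ sort to a pair whose larger index is at most $j_2$, strictly below $\alpha(t,j_1,j_2)$ either through a smaller $\frac12(\cdot)(\cdot-1)$ offset or, when $j_1=j_2$, through the smaller leading index.

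For part (c), the martingale property is the easy half: by (a) each $Y_{n\alpha}$ reduces to the single summand $E_{tj_1j_2}(b_{tj_1j_2}+c_{tj_1j_2})$ for the unique triple with $\alpha(t,j_1,j_2)=\alpha$, where $b_{tj_1j_2}$ is $\calF_{n,\alpha-1}$-measurable by (b), $c_{tj_1j_2}$ is deterministic, and $E_{tj_1j_2}$ is independent of $\calF_{n,\alpha-1}$ with mean zero, so $\expect[Y_{n\alpha}\mid\calF_{n,\alpha-1}]=0$. The substance is the expansion. Writing $\bw:=\bV\bS^{-1}\bQ_1\transpose\bGamma_i^{-1/2}\bz$ and $w_j:=\be_j\transpose\bw$, so that $\gamma_{ij}(\bz)=w_j$ and $\xi_{tij}(\bz)=\be_j\transpose\bP_t\bw$, I would first record the exact identity, obtained from $\expect\calH(\bA\bA\transpose)=\bP\bP\transpose+\bM$ and $\bA_t=\bP_t+\bE_t$,
\[
\calH(\bA\bA\transpose)-\bP\bP\transpose-\bM=\calH\Big(\sum_{t=1}^m\big[\bP_t\bE_t+\bE_t\bP_t+\bE_t^2-\expect\bE_t^2\big]\Big),
\]
and then analyze the quadratic and linear contributions to $\be_i\transpose\{\cdots\}\bw$ separately.

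The quadratic part is where the martingale is assembled: since $\expect\bE_t^2$ is diagonal, $\be_i\transpose\calH(\sum_t[\bE_t^2-\expect\bE_t^2])\bw=\be_i\transpose\calH(\bE\bE\transpose)\bw=\sum_t\sum_{k\ne i}\sum_l E_{til}E_{tlk}\,w_k$, and each summand is a product of two entries living on distinct index pairs (the exclusion $k\ne i$ rules out coincident pairs), exactly one of which is later in the relabeling order. I would verify that assigning each product to its later pair reproduces $\sum_t\sum_{j_1\le j_2}E_{tj_1j_2}b_{tj_1j_2}$ term by term, the two inner sums of $b$ corresponding to the shared index being $j_2$ respectively $j_1$, the indicators $e_{ij_1},e_{ij_3}$ pinning one outer index to the fixed row $i$, and $\iota_{j_1j_2}$ supplying the half-weight on diagonal pairs. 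For the linear part I would use $\be_i\transpose\calH(\bX)\bw=(\bX\bw)_i-X_{ii}w_i$ to isolate the diagonal, showing by a direct rearrangement that $\sum_t\sum_{j_1\le j_2}E_{tj_1j_2}c_{tj_1j_2}=\be_i\transpose\sum_t\bE_t\bP_t\bw$, which matches the second leading term of \eqref{eqn:rowwise_CLT_expansion}. The residual is then $\be_i\transpose\sum_t\bP_t\bE_t\bw$ together with diagonal-deletion corrections of the form $w_i\sum_t(\bP_t\bE_t)_{ii}$ (using $(\bE_t\bP_t)_{ii}=(\bP_t\bE_t)_{ii}$), and I would bound each as $\optilde(1)$ so that only the $b$- and $c$-sums survive.

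The main obstacle is twofold. First, the exact combinatorial identity for the quadratic term demands meticulous accounting of shared indices, diagonal $E_{tll}$ factors, and the $\iota$/indicator weights so that no product is double counted or dropped; this is the delicate step, and it is precisely what the construction of $b_{tj_1j_2}$ is engineered to make exact. Second, showing $\be_i\transpose\sum_t\bP_t\bE_t\bw=\optilde(1)$ is not automatic: unlike the surviving $\bE_t\bP_t$ term, here $\bE_t$ is sandwiched between the delocalized vectors $\bP_t\be_i$ and $\bw$, and I would exploit this through a higher-moment (or Bernstein-type) bound on the bilinear form to capture the extra factor $n^{-1/2}$. Under Assumption \ref{assumption:condition_number} and condition (ii) of Theorem \ref{thm:Rowwise_CLT} one has $\|\bGamma_i^{-1/2}\|_2\lesssim(mn^2\rho_n)^{1/2}$ and hence $\|\bw\|_2^2=\Optilde(1/(mn^2\rho_n^3))$, so this bilinear form has variance $\Theta(mn\rho_n^3\|\bw\|_2^2)=\Theta(1/n)$; together with the analogous $1/n$ estimates for the diagonal corrections, this furnishes the $\optilde(1)$ remainder and completes the expansion.
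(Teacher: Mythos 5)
Your proposal is correct and follows essentially the same route as the paper: the same algebraic decomposition $\calH(\bA\bA\transpose)-\bP\bP\transpose-\bM=\calH(\sum_{t}(\bP_t\bE_t+\bE_t\bP_t))+\calH(\bE\bE\transpose)$, the same assignment of each quadratic cross term $E_{til}E_{tlk}$ to the later of its two index pairs under $\alpha$ (with $\iota_{j_1j_2}$ supplying the half-weight on diagonal pairs) to reproduce the $b$- and $c$-sums exactly, the same measurability and conditional-mean-zero checks for the martingale property, and the same $\optilde(1)$ treatment of the residuals $\be_i\transpose\sum_t\bP_t\bE_t\bV\bS^{-1}\bQ_1\transpose\bGamma_i^{-1/2}\bz$ and $2w_i\sum_t\sum_jP_{tij}E_{tij}$, which the paper bounds as $\Optilde\{(\log n)^{1/2}n^{-1/2}\}$ and $\optilde\{(\log n)^{1/2}n^{-1}\}$ via Lemma \ref{lemma:quadratic_form_Et} and Bernstein's inequality, in agreement with your variance computation. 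Your constructive tiling argument for the bijectivity of $\alpha$ in part (a) is only a cosmetic variation on the paper's injectivity-by-contradiction argument.
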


\section{Numerical Experiments}
\label{sec:numerical}

This section illustrates the practical performance of the proposed BCJSE algorithm via numerical experiments. Specifically, Section \ref{sub:numerical_subspace_estimation} focuses on the invariant subspace estimation performance, and Section \ref{sub:numerical_testing_MLMM} complements the theory of the hypothesis testing procedure for comparing vertex membership profiles in MLMM established in Theorem \ref{thm:testing_membership_profiles}. 

\subsection{Subspace estimation performance}
\label{sub:numerical_subspace_estimation}

Let $(\bC_t)_{t = 1}^m$ be a collection of $2\times 2$ matrices defined as follows:
\begin{align}\label{eqn:numerical_block_probability_matrix}
\bC_1 = \ldots = \bC_{m/2} = \rho_n\begin{bmatrix}
a & b\\b & a
\end{bmatrix},\;
\bC_{m/2 + 1} = \ldots = \bC_{m} = \rho_n\begin{bmatrix}
b & a\\a & b
\end{bmatrix},
\end{align}
where $a > b > 0$, $m$ is a positive even integer, and $\rho_n\in(0, 1]$ is the sparsity factor. Let $n$ be a positive even integer, $0 = t_1 < \ldots < t_{n/2} = 1$ be equidistant points over $[0, 1]$, and for any $i\in[n/2]$, define $x_{i1} = \sin(\pi t_i/2)$, $x_{(n/2 + i)1} = \cos(\pi t_i/2)$, $x_{i2} = \cos(\pi t_i/2)$, $x_{(n/2 + i)2} = \sin(\pi t_i/2)$, $\bx_1 = [x_{11},\ldots,x_{n1}]\transpose$, $\bx_2 = [x_{12},\ldots,x_{n2}]\transpose$, and $\bX = [\bx_1,\bx_2]\in\mathbb{R}^{n\times 2}$. Define $\bU = \bX(\bX\transpose\bX)^{-1/2}$, $\bB_t = (\bX\transpose\bX)^{-1/2}\bC_t(\bX\transpose\bX)^{-1/2}$, and consider a collection of vertex-aligned network adjacency matrices generated as $\bA_1,\ldots,\bA_m\sim\mathrm{COSIE}(\bU; \bB_1,\ldots,\bB_m)$. Clearly, $\lambda_2(\sum_{t = 1}^m\expect\bA_t) = 0$ but $\lambda_2(\expect\bA_t) < 0$ for all $t\in[m]$, so that directly taking $\texttt{eigs}(\sum_{t = 1}^m\bA_t; 2)$ leads to the cancellation of signals. In this experiment, we set $n = 80$, $m = 6400$, $a = 0.8$, $b = 0.6$, and let $\rho_n$ varies over $\{0.1, 0.2, 0.3, 0.4, 0.5, 0.6\}$. Given $(\bA_t)_{t = 1}^m$ generated from the above COSIE model, we compute $\widehat{\bU}^{(\mathsf{BCJSE})}$ using Algorithm \ref{alg:BCJSE} with $R = 2$ and $S = 1$. For comparison, we also consider the following competitors: Sum-of-squared (SoS) spectral embedding defined by $\texttt{eigs}(\bA\bA\transpose; d)$, the multiple adjacency spectral embedding (MASE) \cite{arroyo2021inference,zheng2022limit}, the bias-adjusted spectral embedding (BASE) \cite{doi:10.1080/01621459.2022.2054817,10.1214/22-AOS2196,cai2021subspace}, and the HeteroPCA (HPCA) \cite{zhang2018heteroskedastic,yan2021inference,agterberg2021entrywise}. The same experiment is repeated for $500$ independent Monte Carlo replicates. Given a generic embedding estimate $\widehat{\bU}$ for $\bU$, we take $\|\widehat{\bU}\mathrm{sgn}(\widehat{\bU}\transpose\bU) - \bU\|_{2\to\infty}$ as the criterion for measuring the accuracy of subspace estimation. 
\begin{figure}[!t]
\centering
\includegraphics[width=1\textwidth]{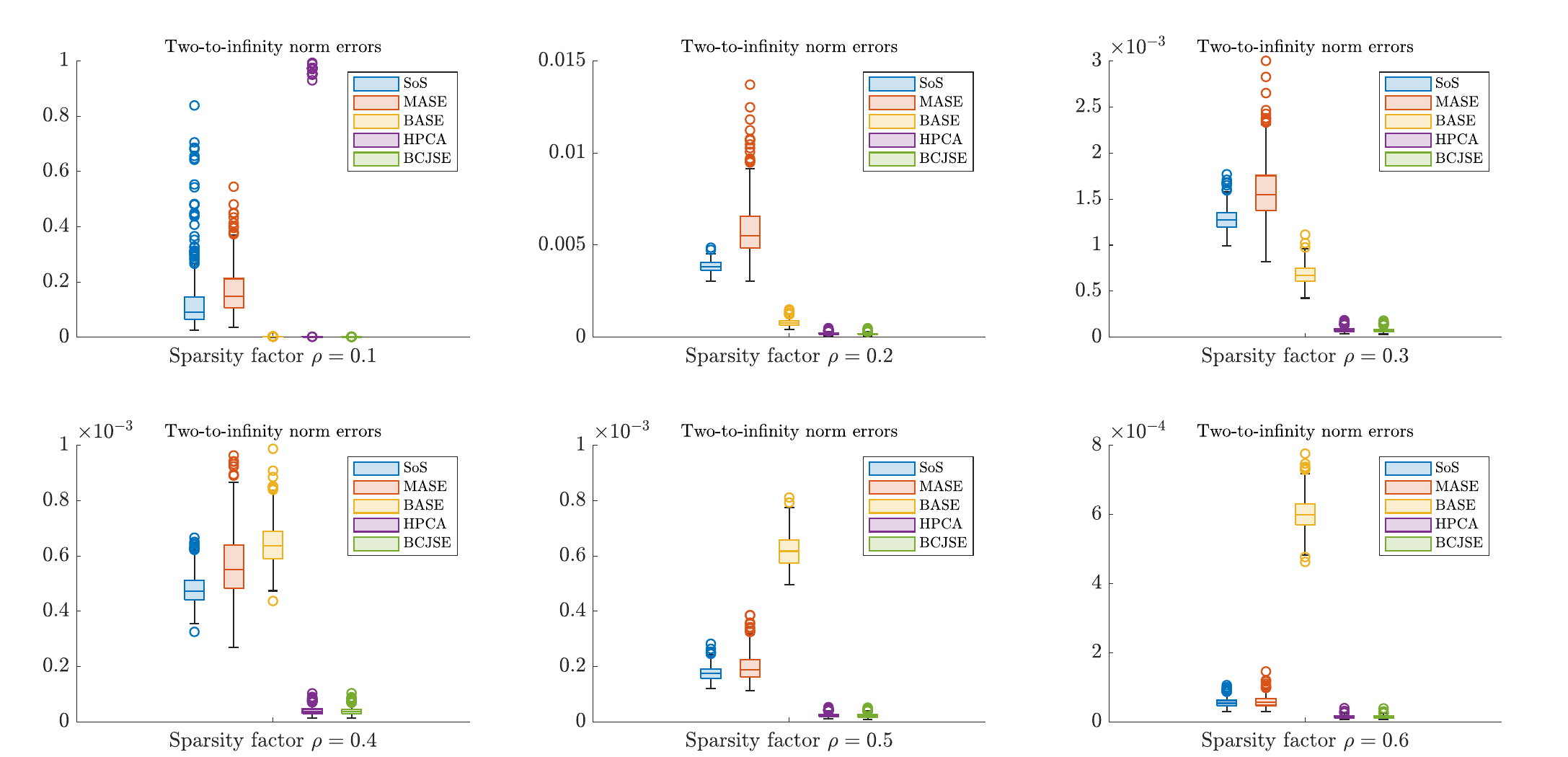}
\caption{Numerical experiment results for Section \ref{sub:numerical_subspace_estimation}: Boxplots of the two-to-infinity norm subspace estimation errors of different estimates (SoS, MASE, BASE, HPCA, and BCJSE) under $\rho_n\in\{0.1, 0.2, 0.3, 0.4, 0.5, 0.6\}$ across $500$ independent Monte Carlo replicates. }
\label{fig:COSIE_twotoinf_error_boxplot}
\end{figure}

Figure \ref{fig:COSIE_twotoinf_error_boxplot} visualizes the boxplots of the two-to-infinity norm subspace estimation errors of the aforementioned estimates under different sparsity regimes ($\rho_n\in \{0.1,0.2,0.3,0.4,0.5,0.6\}$) across $500$ independent Monte Carlo replicates. 
It is clear that when $\rho_n \in\{0.2,\ldots,0.6\}$, HPCA and BCJSE have significantly smaller two-to-infinity norm subspace estimation errors compared to the SoS spectral embedding, MASE, and BASE. When $\rho_n = 0.1$, the average performance of HPCA, BASE, and BCJSE are similar and they outperform the SoS spectral embedding and MASE, but HPCA becomes numerically less stable and causes large estimation errors occasionally. When $\rho_n\in\{0.5,0.6\}$, the bias effect caused by the diagonal deletion operation of BASE becomes transparent and leads to significantly larger subspace estimation errors compared to the remaining competitors. Also, HPCA and BCJSE have quite comparable performance in terms of the subspace estimation error across different sparsity regimes. Nevertheless, HPCA is slightly computationally more costly than BCJSE and is less stable with larger standard deviations when $\rho_n = 0.1$. Indeed, we found that in such a comparatively low signal-to-noise ratio regime, HPCA occasionally requires significantly large numbers of iterations across repeated experiments. The average number of iterations required by HPCA is $88.76$ and the corresponding standard deviation is $301.77$ across $500$ independent Monte Carlo replicates, whereas the number of iterations required by BCJSE is always $R = 2$ and $S = 1$ in this experiment.

\subsection{Hypothesis testing in MLMM}
\label{sub:numerical_testing_MLMM}

Consider a membership profile matrix $\bZ = [\bz_1,\ldots,\bz_n]\transpose\in\mathbb{R}^{n\times 2}$ defined as follows: $\bz_i = [1, 0]\transpose$ if $i\in[n_0]$, $\bz_i = [0, 1]\transpose$ if $i\in[2n_0]\backslash\{n_0\}$, $0.1 = t_0 < \ldots < t_{n - 2n_0} = 0.9$ are equidistant points over $[0.1, 0.9]$, and $\bz_{2n_0 + i} = [t_i, 1 - t_i]\transpose$ for all $i\in[n - 2n_0]$, where $n_0, n$ are positive integers satisfying $2n_0 < n$. Here, $n_0$ is the number of the so-called pure nodes (\emph{i.e.}, the membership profile vector assigns $1$ to one of the communities) in each community for the sake of identifiability of mixed membership models \cite{doi:10.1080/01621459.2020.1751645}. The same membership profile matrix has also been considered in \cite{xie2021entrywise}. Let $(\bC_t)_{t = 1}^m$ be matrices defined in \eqref{eqn:numerical_block_probability_matrix}, where $m$ is a positive even integer. Define $\bU = \bZ(\bZ\transpose\bZ)^{-1/2}$, $\bB_t = (\bZ\transpose\bZ)^{1/2}\bC_t(\bZ\transpose\bZ)^{1/2}$ for all $t\in[m]$, and we consider vertex-aligned multilayer network adjacency matrices $\bA_1,\ldots,\bA_m\sim\mathrm{COSIE}(\bU; \bB_1,\ldots,\bB_m)$. Finally, we set $a = 0.9$, $b = 0.1$, $n = 500$, $n_0 = 100$, $m = 200$, and let $\rho_n$ vary in $\{0.01, 0.02, 0.03, 0.04\}$. 

We generate $1000$ independent Monte Carlo replicates of $(\bA_t)_{t = 1}^m$ from the above MLMM and investigate the performance of the vertex membership profile hypothesis testing procedure described in Theorem \ref{thm:testing_membership_profiles}. Specifically, we let $i_1 = 1$, $i_2$ take values in $\{100, 400, 410, \ldots, 500\}$, and consider the hypothesis testing problem $H_0:\bz_{i_1} = \bz_{i_2}$ versus $H_A:\bz_{i_1}\neq\bz_{i_2}$ using the test statistic $T_{i_1i_2}$ defined in Section \ref{sub:entrywise_eigenvector_CLT}. We compute the empirical power of the proposed hypothesis testing procedure across the aforementioned $1000$ independent Monte Carlo experiments and tabulate them in Table \ref{tab:MLMM_testing_power} below. It is clear that the empirical power increases when $\|\bz_{i_1} - \bz_{i_2}\|_2$ increases under different sparsity regimes, and the empirical size is approximately $0.05$ (corresponding to the case where $\|\bz_{i_1} - \bz_{i_2}\|_2 = 0$). In addition, Figure \ref{fig:BCJSE_MLMM_inference} visualizes the null distribution of $T_{i_1i_2}$ (with $i_1 = 1$ and $i_2 = n_0$) and the distribution approximation to a randomly selected row of $\widehat{\bU}\bW - \bU$ across $1000$ repeated Monte Carlo replicates, where $\widehat{\bU}$ is the BCJSE given by Algorithm \ref{alg:BCJSE} with $R = 2$, $S = 1$, and $\bW = \mathrm{sgn}(\widehat{\bU}\transpose\bU)$. The first row of Figure \ref{fig:BCJSE_MLMM_inference} presents the histograms of $T_{i_1i_2}$ across $1000$ independent Monte Carlo replicates and they closely align with the asymptotic null distribution of $T_{i_1i_2}$ ($\chi_2^2$ distribution) when $\bz_{i_1} = \bz_{i_2}$ under different values of $\rho_n$. The second row of Figure \ref{fig:BCJSE_MLMM_inference} visualizes the scatter plots of a randomly selected row of $\widehat{\bU}\bW - \bU$ across $1000$ independent Monte Carlo replicates, where the solid and dashed curves correspond to the $95\%$ empirical and theoretical confidence ellipses. These visualizations well justify the theory established in Theorem \ref{thm:Rowwise_CLT} and Theorem \ref{thm:testing_membership_profiles}.
\begin{table}[!t]
\tiny
\caption{Empirical powers versus different $\|\bz_{i_1} - \bz_{i_2}\|_2$ and different $\rho_n$ for Section \ref{sub:numerical_testing_MLMM} \label{tab:MLMM_testing_power}}
\centering
\begin{tabular}{|c||c|c|c|c|c|c|c|c|c|c|c|c|}
\hline
\diagbox{Sparsity factor}{$\|\bz_{i_1} - \bz_{i_2}\|_2$} & $0$ & 0.071 & 0.113 & 0.156 & 0.198 & 0.241 & 0.284 & 0.326 & 0.369 & 0.411 & 0.454 & 0.496\\
\hline
$\rho_n = 0.01$ & 0.058 & 0.074 & 0.110 & 0.190 & 0.283 & 0.369 & 0.517 & 0.627 & 0.737 & 0.841 & 0.902 & 0.968\\
\hline
$\rho_n = 0.02$ & 0.049 & 0.144 & 0.288 & 0.520 & 0.739 & 0.902 & 0.966 & 0.987 & 0.999 & 1.000 & 1.000 & 1.000\\
\hline            
$\rho_n = 0.03$ & 0.048 & 0.231 & 0.532 & 0.802 & 0.958 & 0.997 & 1.000 & 1.000 & 1.000 & 1.000 & 1.000 & 1.000\\
\hline
$\rho_n = 0.04$ & 0.056 & 0.355 & 0.740 & 0.941 & 0.994 & 0.999 & 1.000 & 1.000 & 1.000 & 1.000 & 1.000 & 1.000\\
\hline                
\end{tabular}
\end{table}

\begin{figure}[!t]
\centering
\includegraphics[width=1\textwidth]{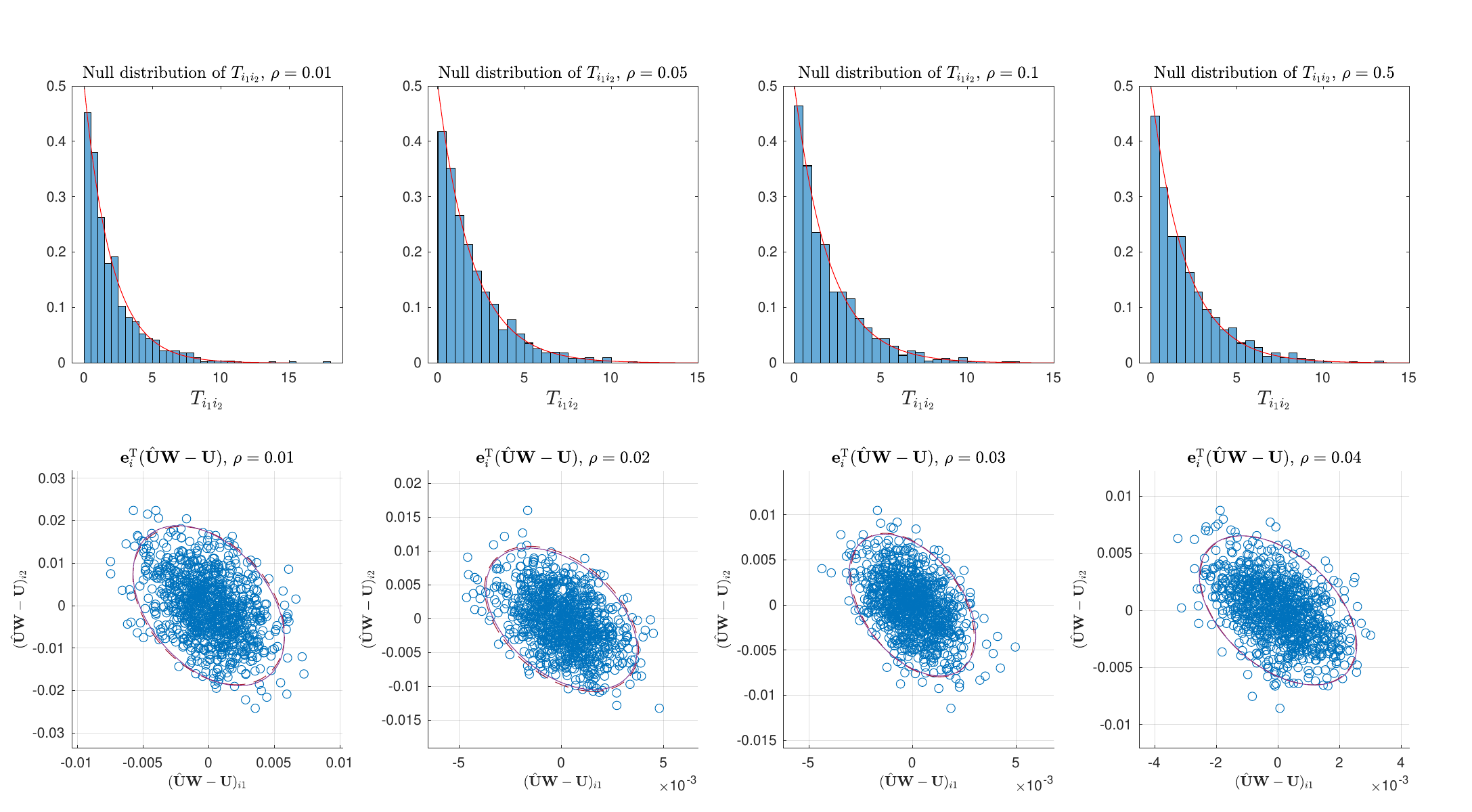}
\caption{Numerical experiment results for Section \ref{sub:numerical_testing_MLMM}: The top row visualizes the histograms of $T_{i_1i_2}$ (with $i_1 = 1$ and $i_2 = n_0$) across $1000$ repeated experiments under $\rho_n\in\{0.01, 0.02, 0.03, 0.04\}$, where the red curves represent the probability density function of $\chi_2^2$ distribution; The bottom row visualizes the scatter points of $\be_i\transpose(\widehat{\bU}\bW - \bU)$ for a randomly selected $i$ across $1000$ repeated experiments under  $\rho_n\in\{0.01, 0.02, 0.03, 0.04\}$, where the dashed curves are the $95\%$ theoretical ellipses and the solid curves are the $95\%$ empirical ellipses. }
\label{fig:BCJSE_MLMM_inference}
\end{figure}

\section{Discussion}
\label{sec:discussion}

In this paper, we design a bias-corrected joint spectral embedding algorithm for estimating the invariant subspace in the COSIE model and establish the accompanying entrywise subspace estimation theory. Our theory does not require the layer-wise average network expected degree to diverge, as long as the aggregate signal strength is sufficient. We settle the exact community detection in MLSBM and the hypothesis testing of the equality of membership profiles of any two given vertices in MLMM by leveraging the entrywise subspace estimation theory.  

In this work, we require that $m^{1/2}n\rho_n = \omega(\log n)$ for the exact community detection of the BCJSE algorithm, but it is not immediately clear whether this corresponds to the computational or information threshold of the exact community detection in MLSBM. In the case of balanced two-block MLSBM, the authors of \cite{paul2016consistent} have established the minimax rates of community detection, but later, the authors of \cite{lei2023computational} have established that there is a gap between the computational threshold and the information threshold for community detection in MLSBM under the low-degree polynomial conjecture. This gap comes from the fact that in an MLSBM, some layers may be assortative mixing (\emph{i.e.}, the nonzero eigenvalues of $\bP_t$ are positive) and some other layers may be disassortative mixing (\emph{i.e.}, $\bP_t$ contains negative eigenvalues), and identifying which layers are assortative mixing and which layers are disassortative mixing requires extra computational cost. In particular, Theorem 2.2 in \cite{lei2023computational} roughly asserts that if $m^{1/2}n\rho_n = \Omega(n^a)$ for some constant $a > 0$, then there exists a polynomial-time algorithm to achieve weak consistency community detection, and if $m^{1/2}n\rho_n\to 0$, no polynomial-time algorithm can achieve weak consistency. Theorem \ref{thm:spectral_clustering_MLSBM} sharpens the recoverable regime to $m^{1/2}n\rho_n = \omega(\log n)$ and achieves strong consistency, but it is still unclear whether this corresponds to the computational threshold of strong consistency. Meanwhile, the fundamental limit of the BCJSE-based clustering is unknown when the exact consistency is not achievable but the weak consistency is achievable. We defer these interesting research directions to future work. 

\section*{Acknowledgments}
This research was supported in part by Lilly Endowment, Inc., through its support for the Indiana University Pervasive Technology Institute.

{\appendices 
\section{Preliminary Results}
\label{sec:preliminary_results}

We warm up the technical proofs of our main results by introducing a collection of basic concentration inequalities regarding matrices $(\bE_t)_{t = 1}^m$. These concentration results are based on Bernstein's inequality and  matrix Bernstein's inequality in the form of Theorem 3 in \cite{doi:10.1080/01621459.2022.2054817}. 
\begin{lemma}
\label{lemma:norm_concentration_Et}
Let $(\bE_t)_{t = 1}^m$ be the matrices described in Section \ref{sub:JSE} and $\max_{t\in[m],i,j\in[n]}\expect E_{tij}^2\leq \rho_n$. Then
\begin{align*}
\max_{i\in[n]}\sum_{t = 1}^m\sum_{j = 1}^nE_{tij}^2 = \Optilde(mn\rho_n)\quad\mbox{and}\quad
\max_{t\in[m]}\|\bE_t\|_2 = \Optilde\left\{\log^{1/2}(m + n) + (n\rho_n)^{1/2}\right\}.
\end{align*}
\end{lemma}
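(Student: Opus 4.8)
The plan is to treat the two estimates separately: the first follows from the scalar Bernstein inequality combined with a union bound over the row index $i$, while the second is a direct application of the matrix Bernstein inequality (in the Bandeira--van Handel form of Theorem 3 in \cite{doi:10.1080/01621459.2022.2054817}) combined with a union bound over the layer index $t$.

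For the first bound, fix $i\in[n]$ and consider the sum $\sum_{t=1}^m\sum_{j=1}^n E_{tij}^2$. Since $(\bA_t)_{t=1}^m$ have independent upper-triangular entries and $A_{tij}=A_{tji}$, for this \emph{fixed} row $i$ the collection $\{E_{tij}:t\in[m],j\in[n]\}$ consists of distinct upper-triangular entries across distinct layers and is therefore mutually independent; this is the only piece of model-specific bookkeeping needed. Each summand $E_{tij}^2$ lies in $[0,1]$, has mean $\expect E_{tij}^2\le\rho_n$, and variance at most $\expect E_{tij}^4\le\expect E_{tij}^2\le\rho_n$ because $|E_{tij}|\le 1$. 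Hence the centered sum $\sum_{t,j}(E_{tij}^2-\expect E_{tij}^2)$ has total variance proxy at most $mn\rho_n$ and uniformly bounded summands, and Bernstein's inequality yields, for a deviation $s>0$, a tail of order $\exp\{-c\,s^2/(mn\rho_n+s)\}$. Choosing $s\asymp\sqrt{mn\rho_n\log(m+n)}+\log(m+n)$ drives this tail below $(m+n)^{-c}$ for any prescribed $c$, and a union bound over $i\in[n]$ (a factor $n\le m+n$, absorbed by enlarging $c$) gives $\max_i\sum_{t,j}E_{tij}^2\le mn\rho_n+\Optilde(s)$. Finally, under Assumption \ref{assumption:condition_number} we have $m=O(n^\alpha)$, so $\log(m+n)=O(\log n)$, and $m^{1/2}n\rho_n=\omega(\log n)$ forces $mn\rho_n=\omega(m^{1/2}\log n)=\omega(\log(m+n))$; consequently $s=o(mn\rho_n)$ and the leading mean term dominates, establishing $\max_i\sum_{t,j}E_{tij}^2=\Optilde(mn\rho_n)$.

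For the second bound, fix $t\in[m]$. The matrix $\bE_t$ is symmetric with independent, centered upper-triangular entries satisfying $|E_{tij}|\le1$ almost surely and $\max_{i\in[n]}\sum_{j=1}^n\var(E_{tij})\le n\rho_n$. These are exactly the hypotheses of the matrix Bernstein bound of Theorem 3 in \cite{doi:10.1080/01621459.2022.2054817}, whose form produces a leading term of size $(\max_i\sum_j\var(E_{tij}))^{1/2}\lesssim(n\rho_n)^{1/2}$ together with a bounded-entry term of size $\lesssim\log^{1/2}(m+n)$ at the prescribed probability level. This gives, for each fixed $t$, $\|\bE_t\|_2=\Optilde\{\log^{1/2}(m+n)+(n\rho_n)^{1/2}\}$ with probability at least $1-O((m+n)^{-c})$ for every $c$. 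A union bound over $t\in[m]$ inflates the failure probability by a factor $m\le m+n$, which is again absorbed by increasing $c$, yielding the claimed uniform control of $\max_{t\in[m]}\|\bE_t\|_2$.

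The mathematical content here is light; the care lies in the bookkeeping, and I expect no serious obstacle. The two points to get right are (i) the independence claim for the fixed-row sum, which relies on the symmetry and independent-upper-triangular-entry structure of the COSIE model rather than on full independence of all entries, and (ii) the matching of the exponential tail bounds with the $\Optilde(\cdot)$ convention, under which a statement must hold with probability $1-O((m+n)^{-c})$ simultaneously for every constant $c$; since both union bounds cost at most a factor $m+n$ in probability, they are harmless after adjusting the constant in the exponent. The one place where Assumption \ref{assumption:condition_number} is genuinely invoked is in showing that the Bernstein fluctuation $s$ is of smaller order than $mn\rho_n$, which is what collapses the first estimate to the clean rate $\Optilde(mn\rho_n)$.
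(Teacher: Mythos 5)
Your first bound is correct and is exactly the paper's argument: write $\max_i\sum_{t,j}E_{tij}^2\le \max_i\sum_{t,j}\expect E_{tij}^2+\max_i|\sum_{t,j}(E_{tij}^2-\expect E_{tij}^2)|$, apply scalar Bernstein to the centered sum (your independence bookkeeping for a fixed row $i$ is right: the entries $\{E_{tij}:t\in[m],j\in[n]\}$ correspond to distinct unordered pairs in distinct layers), and union bound over $i$. Your observation that Assumption \ref{assumption:condition_number} is what makes the fluctuation $\sqrt{mn\rho_n\log(m+n)}+\log(m+n)$ of smaller order than the mean $mn\rho_n$ is a point the paper leaves implicit, and it is correctly resolved.

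The second bound has a genuine gap: the tool you cite cannot deliver the stated rate. Theorem 3 of \cite{doi:10.1080/01621459.2022.2054817} is a standard matrix Bernstein inequality, with tail of the form $\exp\{-cs^2/(\sigma^2+Rs)\}$; applied to $\bE_t=\sum_{i\le j}E_{tij}(\be_i\be_j\transpose+\be_j\be_i\transpose\mathbbm{1}(i\neq j))$ with $\sigma^2\le n\rho_n$ and $R=O(1)$, it yields $\|\bE_t\|_2=\Optilde\{(n\rho_n\log(m+n))^{1/2}+\log(m+n)\}$, i.e.\ the variance term is necessarily inflated by $\log^{1/2}(m+n)$. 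This is strictly weaker than the claimed $\Optilde\{(n\rho_n)^{1/2}+\log^{1/2}(m+n)\}$: for instance when $n\rho_n\asymp\log(m+n)$ the claimed bound is $\log^{1/2}(m+n)$ while matrix Bernstein only gives $\log(m+n)$. (You can confirm Theorem 3 is of this classical form from how the paper uses it elsewhere, e.g.\ $\|\sum_t\bE_t\bP_t\|_2=\Optilde\{m^{1/2}(n\rho_n)^{3/2}\log^{1/2}(m+n)+n^{1/2}\rho_n\log(m+n)\}$, where the $\log^{1/2}$ multiplies the variance term.) Your stated conclusion — variance term $(n\rho_n)^{1/2}$ with \emph{no} logarithmic factor, plus a bounded-entry term $\log^{1/2}(m+n)$ — is the Bandeira--van Handel form, and that is what the paper actually invokes: Remark 3.13 of \cite{10.1214/15-AOP1025} gives, for a symmetric matrix with independent centered bounded entries, $\|\bE_t\|_2\le C\{\sigma+\sigma_*(\log n)^{1/2}\}+s$ with probability $1-e^{-s^2/C\sigma_*^2}$, where $\sigma=\max_i(\sum_j\var E_{tij})^{1/2}\le(n\rho_n)^{1/2}$ and $\sigma_*\le 1$; taking $s\asymp\log^{1/2}(m+n)$ and union bounding over $t$ (which is fine, exactly as you handle it) gives the lemma. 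The sharp, log-free variance term is not cosmetic: it is used downstream, e.g.\ in the $(n\rho_n\vee\log n)^{1/2}$ factors of Lemma \ref{lemma:LOO_Rowwise_concentration}, so substituting the matrix-Bernstein rate would leak extra logarithms into later bounds. Replacing your citation and the corresponding one-line deduction with the Bandeira--van Handel bound repairs the proof; everything else stands.
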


\begin{proof}
For the first assertion, by Bernstein's inequality and a union bound over $i\in[n]$, we have
\begin{align*}
\max_{i\in[n]}\sum_{t = 1}^m\sum_{j = 1}^n{E}_{tij}^2&
\leq\max_{i\in[n]}\sum_{t = 1}^m\sum_{j = 1}^n\expect{E}_{tij}^2 + \max_{i\in[n]}\bigg|\sum_{t = 1}^m\sum_{j = 1}^n({E}_{tij}^2 - \expect{E}_{tij}^2)\bigg| = \Optilde(mn\rho_n).
\end{align*}
The second assertion from Remark 3.13 in \cite{10.1214/15-AOP1025} and a union bound over $t\in[m]$.
\end{proof}

\begin{lemma}\label{lemma:quadratic_form_Et}
Let $(\bE_t)_{t = 1}^m$ be the matrices described in Section \ref{sub:JSE} and suppose Assumption \ref{assumption:condition_number} hold. Then there exists a constant $C > 0$, such that for fixed matrices $(\bX_t)_{t = 1}^m,(\bY_t)_{t = 1}^n\in\mathbb{R}^{n\times d}$, for any $\tau > 0$, 
\begin{align*}
\bigg\|\sum_{t = 1}^m\bX_t\transpose\bE_t\bY_t\bigg\|_2
& \leq \max_{t\in[m]}\|\bX_t\|_{2\to\infty}\|\bY_t\|_{2\to\infty}\tau^2 + C_c\rho_n^{1/2}\tau\bigg(\sum_{t = 1}^m\|\bX_t\|_2^2\|\bY_t\|_2^2\bigg)^{1/2}
\end{align*}
with probability at least $1 - O(e^{-\tau^2})$.
\end{lemma}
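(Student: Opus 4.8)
The plan is to recognize $\bZ := \sum_{t=1}^m\bX_t\transpose\bE_t\bY_t$ as a sum of independent, mean-zero $d\times d$ random matrices and to apply the matrix Bernstein inequality in the form of Theorem 3 of \cite{doi:10.1080/01621459.2022.2054817}. The first step is to expose the independent summands while respecting the block-wise symmetry $E_{tij}=E_{tji}$. Writing $\bx_{ti}=\bX_t\transpose\be_i$ and $\by_{tj}=\bY_t\transpose\be_j$ for the ($d$-dimensional) rows of $\bX_t$ and $\bY_t$, I would collect the terms attached to each free entry $E_{tij}$ with $i\le j$, giving
\begin{align*}
\bZ = \sum_{t=1}^m\sum_{1\le i<j\le n}E_{tij}\big(\bx_{ti}\by_{tj}\transpose+\bx_{tj}\by_{ti}\transpose\big) + \sum_{t=1}^m\sum_{i=1}^nE_{tii}\bx_{ti}\by_{ti}\transpose =: \sum_{t=1}^m\sum_{i\le j}\bM_{tij}.
\end{align*}
Because $(t,i,j)$ with $i\le j$ ranges over distinct upper-triangular entries across layers, the $\bM_{tij}$ are independent and mean-zero, so all cross terms vanish and the two matrix-variance statistics collapse to $\sum_{t,i\le j}\expect\bM_{tij}\bM_{tij}\transpose=\expect\bZ\bZ\transpose$ and $\sum_{t,i\le j}\expect\bM_{tij}\transpose\bM_{tij}=\expect\bZ\transpose\bZ$.

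Next I would supply the two inputs to matrix Bernstein. For the uniform bound, each summand has rank at most two, so $\|\bM_{tij}\|_2\le|E_{tij}|(\|\bx_{ti}\|_2\|\by_{tj}\|_2+\|\bx_{tj}\|_2\|\by_{ti}\|_2)\le 2\|\bX_t\|_{2\to\infty}\|\bY_t\|_{2\to\infty}$, using $|E_{tij}|\le1$; this produces the $\tau^2$ term. For the variance I would use $\expect\bZ\bZ\transpose=\sum_t\bX_t\transpose\,\expect(\bE_t\bY_t\bY_t\transpose\bE_t)\,\bX_t$, note that $\expect(\bE_t\bY_t\bY_t\transpose\bE_t)\succeq0$, and bound the quadratic form $\bu\transpose\expect(\bE_t\bY_t\bY_t\transpose\bE_t)\bu$ for a unit vector $\bu$. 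A direct second-moment computation, using that $\expect E_{tpk}E_{tql}\ne0$ only when $\{p,k\}=\{q,l\}$, yields a diagonal piece $\sum_pW_{tpp}\sum_ku_k^2\sigma_{tpk}^2$ and an off-diagonal Hadamard piece $\sum_{p\ne q}W_{tpq}u_pu_q\sigma_{tpq}^2$, where $W_{tpq}=\by_{tp}\transpose\by_{tq}$ and $\sigma_{tpq}^2\le\rho_n$ by Assumption \ref{assumption:condition_number}.

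The main obstacle is the off-diagonal Hadamard piece: a naive operator-norm Hadamard bound loses a factor of $n$ through $\|[\sigma_{tpq}^2]\|_2\asymp n\rho_n$. The remedy I would use is to keep the absolute values and apply Cauchy--Schwarz entrywise: since $|W_{tpq}|\le\|\by_{tp}\|_2\|\by_{tq}\|_2$,
\begin{align*}
\Big|\sum_{p\ne q}W_{tpq}u_pu_q\sigma_{tpq}^2\Big|\le\rho_n\Big(\sum_p|u_p|\,\|\by_{tp}\|_2\Big)^2\le\rho_n\|\bu\|_2^2\|\bY_t\|_{\mathrm{F}}^2,
\end{align*}
and the diagonal piece is controlled the same way by $\rho_n\|\bu\|_2^2\|\bY_t\|_{\mathrm{F}}^2$. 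Hence $\expect(\bE_t\bY_t\bY_t\transpose\bE_t)\preceq2\rho_n\|\bY_t\|_{\mathrm{F}}^2\eye$, and with $\|\bY_t\|_{\mathrm{F}}^2\le d\|\bY_t\|_2^2$ I obtain $\|\expect\bZ\bZ\transpose\|_2\le2d\rho_n\sum_t\|\bX_t\|_2^2\|\bY_t\|_2^2$; the bound for $\|\expect\bZ\transpose\bZ\|_2$ follows by swapping the roles of $\bX_t$ and $\bY_t$. Feeding $L=2\max_t\|\bX_t\|_{2\to\infty}\|\bY_t\|_{2\to\infty}$ and $v=2d\rho_n\sum_t\|\bX_t\|_2^2\|\bY_t\|_2^2$ into matrix Bernstein at deviation level $s\asymp\sqrt{v}\,\tau+L\tau^2$, and absorbing the $O(1)$ dimension factor (as $d=O(1)$) together with the numerical constants into $C$, yields the claimed tail $1-O(e^{-\tau^2})$. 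The delicate part throughout is the variance cross-term; everything else is a routine assembly.
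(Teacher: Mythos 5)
Your proof is correct, but it takes a genuinely different route from the paper. The paper's proof uses the classical discretization trick: it reduces $\|\sum_t\bX_t\transpose\bE_t\bY_t\|_2$ to a maximum of scalar bilinear forms $\bw_1\transpose\sum_t\bX_t\transpose\bE_t\bY_t\bw_2$ over a $1/3$-net of $\calS^{d-1}$ of cardinality at most $18^d$, expands each bilinear form as a sum $\sum_t\sum_{i\leq j}c_{tij}E_{tij}$ of independent scalar variables (with $c_{tij} = x_{ti}y_{tj}+x_{tj}y_{ti}$ absorbing the symmetric pairing, exactly as your $\bM_{tij}$ does at the matrix level), applies scalar Bernstein with variance proxy $2\rho_n\sum_t\|\bX_t\|_2^2\|\bY_t\|_2^2$, and finishes with a union bound over the net. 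You instead keep the problem in matrix form and apply matrix Bernstein directly to the independent rank-two summands $\bM_{tij}$. The trade-off is instructive: the net argument makes the variance computation trivial (it is a scalar second moment, so the awkward cross-correlation structure of $\bE_t\bY_t\bY_t\transpose\bE_t$ never appears) at the cost of a $18^{2d}$ union-bound factor, while your approach pays its toll precisely in the matrix variance statistic, where a naive Hadamard-type bound would lose a factor of $n$; your entrywise Cauchy--Schwarz step giving $\expect(\bE_t\bY_t\bY_t\transpose\bE_t)\preceq 2\rho_n\|\bY_t\|_{\mathrm{F}}^2\eye$ is the essential observation that rescues the argument, and the residual losses (the factor $d$ from $\|\bY_t\|_{\mathrm{F}}^2\leq d\|\bY_t\|_2^2$ and the dimensional factor $d$ in the matrix Bernstein tail, versus the $18^{2d}$ net cardinality in the paper) are harmless in both approaches since Assumption \ref{assumption:condition_number} imposes $d = O(1)$. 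Both proofs hinge on the same structural point---collecting the symmetric pairs $E_{tij}=E_{tji}$ into a single independent summand per $(t,i,j)$ with $i\leq j$---so neither is more general in substance; yours yields a slightly cleaner one-shot matrix statement, the paper's is more elementary in its probabilistic input.
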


\begin{proof}
We apply a classical discretization trick and follow the roadmap paved in the proof of Lemma S2.3 in \cite{xie2021entrywise}. Let $\calS^{d - 1} = \{\bv\in\mathbb{R}^d:\|\bv\|_2 = 1\}$ be the unit sphere in $\mathbb{R}^d$. For any $\eps > 0$, let $\calS_\eps^{d - 1}$ be an $\eps$-net of $\calS^{d - 1}$. Clearly, for any $\bv_1,\bv_2\in\calS^{d - 1}$, there exists vectors $\bw_1(\bv_1), \bw_2(\bv_2)\in\calS_\eps^{d - 1}$, such that $\|\bv_1 - \bw_1(\bv_1)\|_2\vee\|\bv_2 - \bw_2(\bv_2)\|_2 < \eps$, so that
\begin{align*}
\bigg\|\sum_{t = 1}^m\bX_t\transpose\bE_t\bY_t\bigg\|_2
&
 = \max_{\bv_1,\bv_2\in\calS^{d - 1}}\bigg|\{\bv_1 - \bw_1(\bv_1) + \bw_1(\bv_1)\}\transpose\sum_{t = 1}^m\bX_t\transpose\bE_t\bY_t\{\bv_2 - \bw_2(\bv_2) + \bw_2(\bv_2)\}\bigg|\\
&\leq (\eps^2 + 2\eps)\bigg\|\sum_{t = 1}^m\bX_t\transpose\bE_t\bY_t\bigg\|_2 + 
\max_{\bw_1,\bw_2\in\calS_\eps^{d - 1}}\bigg|\bw_1\transpose\sum_{t = 1}^m\bX_t\transpose\bE_t\bY_t\bw_2\bigg|_2.
\end{align*}
Setting $\eps = 1/3$ yields $\|\sum_{t = 1}^m\bX_t\transpose\bE_t\bY_t\|_2\leq (9/2)
\max_{\bw_1,\bw_2\in\calS_\eps^{d - 1}}|\bw_1\transpose\sum_{t = 1}^m\bX_t\transpose\bE_t\bY_t\bw_2|$. Furthermore, we can select $\calS_{1/3}^{d - 1}$ in such a way that $|\calS_{1/3}^{d - 1}|\leq 18^d$ \cite{pollard1990empirical}, where $|\calS_{1/3}^{d - 1}|$ denotes the cardinality of $\calS_{1/3}^{d - 1}$. Now for fixed vectors $\bw_1,\bw_2\in\calS_{1/3}^{d - 1}$, let $\bx_t = \bX_t\bw_1 = [x_{t1},\ldots,x_{tn}]\transpose$, $\by_t = \bY_t\bw_2 = [y_{t1},\ldots,y_{tn}]\transpose$. Clearly, 
\begin{align*}
\bigg|\bw_1\transpose\sum_{t = 1}^m\bX_t\transpose\bE_t\bY_t\bw_2\bigg|_2
& = \bigg|\sum_{t = 1}^m\bx_t\transpose\bE_t\by_t\bigg|_2
 = \bigg|\sum_{t = 1}^m\sum_{i < j}(x_{ti}y_{tj} + x_{tj}y_{ti})E_{tij} + \sum_{t = 1}^m\sum_{j = 1}^nx_{tj}y_{tj}E_{tjj}\bigg|_2
 \\&
 = \bigg|\sum_{t = 1}^m\sum_{i\leq j}c_{tij}E_{tij}\bigg|_2,
\end{align*}
where $c_{tij} = x_{ti}y_{tj} + x_{tj}y_{ti}$ if $i < j$, and $c_{tjj} = x_{tj}y_{tj}$. Observe that
\begin{align*}
\sum_{t = 1}^m\sum_{i\leq j}c_{tij}^2&\leq \sum_{t = 1}^m\sum_{i < j}(2x_{ti}^2y_{tj}^2 + 2x_{tj}^2y_{ti}^2) + \sum_{t = 1}^m\sum_{j = 1}^nx_{tj}^2y_{tj}^2 \leq 2\sum_{t = 1}^m\sum_{i = 1}^n\sum_{j = 1}^nx_{ti}^2y_{tj}^2 = 2\sum_{t = 1}^m\|\bx_t\|_2^2\|\by_t\|_2^2
\\&
\leq 2\sum_{t = 1}^m\|\bX_t\|_2^2\|\bY_t\|_2^2
\end{align*}
and $\max_{t\in[m],i,j\in[n]}|c_{tij}|\leq 2\max_{t\in[m]}\|\bx_t\|_\infty\|\by_t\|_\infty$.
Then by Bernstein's inequality, there exists an absolute constant $C > 0$, such that for any $\tau > 0$, 
$
|\bw_1\transpose\sum_{t = 1}^m\bX_t\transpose\bE_t\bY_t\bw_2|_2\leq C
\max_{t\in[m]}\|\bX_t\|_{2\to\infty}\|\bY_t\|_{2\to\infty}\tau^2 + C\rho_n^{1/2}\tau(\sum_{t = 1}^m\|\bX_t\|_2^2\|\bY_t\|_2^2)^{1/2}
$
with probability at least $1 - 2e^{-\tau^2}$. The proof is completed by a union bound over $\bw_1,\bw_2\in\calS_{1/3}^{d - 1}$.
\end{proof}

\begin{lemma}\label{lemma:rowwise_concentration_linear_term}
Let $(\bE_t)_{t = 1}^m$ be the matrices described in Section \ref{sub:JSE}, $\max_{t\in[m],i,j\in[n]}\expect E_{tij}^2\leq \rho_n$, and suppose $mn\rho_n = \Omega(\log(m + n))$. Then  
\begin{align*}
\bigg\|\sum_{t = 1}^m\bE_t\bU\bB_t\bigg\|_{2\to\infty}
 &= \Optilde\{m^{1/2}n\rho_n^{3/2}\log^{1/2} (m + n)\}.
\end{align*}
\end{lemma}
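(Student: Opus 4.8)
The plan is to reduce the two-to-infinity norm to a row-by-row analysis followed by a union bound over the $n$ rows. Fix an index $i\in[n]$ and write the transposed $i$th row of $\sum_{t=1}^m\bE_t\bU\bB_t$ as $\bv_i := \sum_{t=1}^m\bB_t\bU\transpose\bE_t\be_i = \sum_{t=1}^m\sum_{j=1}^n E_{tij}\bB_t\bU\transpose\be_j$, where the last equality uses the symmetry $\bE_t\be_i = \sum_{j=1}^n E_{tij}\be_j$. The key structural observation is that, for a \emph{fixed} row index $i$, the variables $\{E_{tij}:t\in[m],j\in[n]\}$ are mutually independent: within each symmetric $\bE_t$ the entries of a single row correspond to distinct upper-triangular positions $\{(\min(i,j),\max(i,j)):j\in[n]\}$, and the layers are independent by the COSIE model. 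Hence $\bv_i$ is a sum of independent mean-zero random vectors with deterministic coefficient vectors $\bB_t\bU\transpose\be_j$, and I would control it coordinatewise via the scalar Bernstein inequality, which is permissible since $d = O(1)$.

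First I would compute the Bernstein parameters for the $k$th coordinate $\be_k\transpose\bv_i = \sum_{t,j}E_{tij}\be_k\transpose\bB_t\bU\transpose\be_j$. The variance proxy collapses neatly using the orthonormality of $\bU$:
\[
\sum_{t=1}^m\sum_{j=1}^n\expect E_{tij}^2(\be_k\transpose\bB_t\bU\transpose\be_j)^2
\leq\rho_n\sum_{t=1}^m\be_k\transpose\bB_t\bU\transpose\bU\bB_t\be_k
=\rho_n\sum_{t=1}^m\be_k\transpose\bB_t^2\be_k
\leq\rho_n\sum_{t=1}^m\|\bB_t\|_2^2 = O(mn^2\rho_n^3),
\]
where I used $\sum_j\be_j\be_j\transpose = \eye_n$, $\bU\transpose\bU = \eye_d$, and $\|\bB_t\|_2\leq\kappa\Delta_n = O(n\rho_n)$ from Assumption \ref{assumption:condition_number}. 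The almost-sure bound on each summand is $|E_{tij}\be_k\transpose\bB_t\bU\transpose\be_j|\leq\|\bB_t\|_2\|\bU\|_{2\to\infty} = O(\rho_n(nd)^{1/2})$ by the delocalization Assumption \ref{assumption:eigenvector_delocalization}.

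Next I would apply Bernstein's inequality at deviation level $\tau\asymp(mn^2\rho_n^3\log(m+n))^{1/2}$, which gives $|\be_k\transpose\bv_i| = \Optilde(m^{1/2}n\rho_n^{3/2}\log^{1/2}(m+n))$ \emph{provided the variance term dominates the boundedness term}. The main point of the argument — and the only genuinely delicate step — is verifying that this sub-Gaussian regime is exactly what the hypothesis $mn\rho_n = \Omega(\log(m+n))$ guarantees. Writing $V = O(mn^2\rho_n^3)$ for the variance and $L = O(\rho_n(nd)^{1/2})$ for the envelope, the crossover $\tau\lesssim V/L$ at $\tau\asymp(V\log(m+n))^{1/2}$ holds iff $L^2\log(m+n)\lesssim V$, i.e.\ iff $\log(m+n)\lesssim mn\rho_n$. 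Thus the assumed lower bound on $mn\rho_n$ is precisely the threshold at which the target rate is attained, and I would flag this computation as the crux of the proof.

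Finally I would assemble the estimate. Using $\|\bv_i\|_2\leq d^{1/2}\max_{k\in[d]}|\be_k\transpose\bv_i|$ with $d = O(1)$, a union bound over the $d$ coordinates and the $n$ rows — at most $O(m+n)$ events, each controlled with probability $1 - O((m+n)^{-c})$ for any prescribed $c$ by choosing the Bernstein constant large enough — upgrades the coordinatewise control to $\max_{i\in[n]}\|\bv_i\|_2 = \Optilde(m^{1/2}n\rho_n^{3/2}\log^{1/2}(m+n))$, which is exactly the claimed bound. Everything beyond the regime check in the third step is routine: the independence structure of a single row and the cancellation $\bU\transpose\bU = \eye_d$ in the variance computation do all the remaining work.
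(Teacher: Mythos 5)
Your proposal is correct and takes essentially the same route as the paper: both proofs fix a row $i$, exploit the mutual independence of $\{E_{tij}:t\in[m],j\in[n]\}$ within that row, apply Bernstein's inequality with variance proxy $O(mn^2\rho_n^3)$ and envelope $O(\rho_n n^{1/2})$, and finish with a union bound over $i\in[n]$ at deviation level $\tau\asymp\log^{1/2}(m+n)$. The regime check you flag as the crux, $\log(m+n)\lesssim mn\rho_n$, is precisely how the paper's bound $C\{\tau^2+(mn\rho_n)^{1/2}\tau\}\|\bU\|_{2\to\infty}\max_{t\in[m]}\|\bB_t\|_2$ collapses to the claimed rate.
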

\begin{proof}
By Bernstein's inequality, there exists a constant $C > 0$, such that for any $i\in[n]$ and any $\tau > 0$,
\begin{align*}
\bigg\|\sum_{t = 1}^m\be_i\transpose\bE_t\bU\bB_t\bigg\|_2 
&\leq C\{\tau^2 + (mn\rho_n)^{1/2}\tau\}
\|\bU\|_{2\to\infty}\max_{t\in[m]}\|\bB_t\|_2 
\lesssim {n^{1/2}\rho_n}\{\tau^2 + (mn\rho_n)^{1/2}\tau\}.
\end{align*}
with probability at least $1 - O(e^{-\tau^2})$. The proof is completed by a union bound over $i\in [n]$ and setting $\tau \asymp \sqrt{\log (m + n)}$.
\end{proof}
Next, we present a row-wise concentration bound of $\calH(\bE\bE\transpose)\bY$, where $\bY\in\mathbb{O}(n, d)$ is a deterministic $n\times d$ matrix with orthonormal columns. Its proof relies on the decoupling inequality from \cite{10.1214/aop/1176988291} and a conditioning argument. 
\begin{lemma}\label{lemma:decoupling_inequality}
Let $\bE_1,\ldots,\bE_m$ be the matrices described in Section \ref{sub:JSE}. Then, there exists a numerical constant $C > 0$, such that for any matrices $\bX\in\mathbb{R}^{n\times d_1},\bY\in\mathbb{R}^{n\times d_2}$, and any $\tau > 0$,
$
\prob\{\|\bX\transpose\calH(\bE\bE\transpose)\bY\|_2 > \tau\}\leq C
\prob\{\|\bX\transpose\calH(\bE\bar{\bE}\transpose)\bY\|_2 > 
\tau/C\}
$
where $\bar{\bE} = [\bar{\bE}_1,\ldots,\bar{\bE}_m]$ is an independent copy of $\bE$. 
Furthermore, if $\bY\in\mathbb{O}(n, d)$ and $m^{1/2}n\rho_n = \Omega(\log(m + n))$, then
\begin{align*}
\|\be_i\transpose\calH(\bE\bE\transpose)\bY\|_2 = \Optilde\{\|\bY\|_{2\to\infty}m^{1/2}n\rho_n\log(m + n)\}.
\end{align*}
\end{lemma}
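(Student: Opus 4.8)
The plan is to prove the two assertions separately: the decoupling comparison first, and then the row-wise bound as a consequence of decoupling together with a conditional Bernstein estimate.

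\emph{Decoupling.} First I would record the chaos representation
\[
\bX\transpose\calH(\bE\bE\transpose)\bY
= \sum_{i\neq k}\sum_{t=1}^m\sum_{j=1}^n E_{tij}E_{tkj}\,(\bX\transpose\be_i)(\be_k\transpose\bY),
\]
and make the key structural observation that deleting the matrix diagonal $i=k$ through $\calH$ turns this into a genuine \emph{off-diagonal} second-order chaos in the independent entries $\{E_{tij}:t\in[m],\,i\le j\}$: since $E_{tij}$ and $E_{tkj}$ coincide only when the unordered pairs $\{i,j\}$ and $\{k,j\}$ agree, i.e.\ only when $i=k$, no monomial $E_{tij}E_{tkj}$ with $i\neq k$ is a square of a fundamental entry. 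With this representation in hand, I would invoke the decoupling inequality of de la Pe\~{n}a and Montgomery--Smith \cite{10.1214/aop/1176988291}, which replaces the entries appearing in the ``second'' factor by those of an independent copy $\bar{\bE}$ and thereby produces $\bX\transpose\calH(\bE\bar{\bE}\transpose)\bY$ up to a universal multiplicative and probability-level constant $C$.

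\emph{Row-wise bound.} For the second assertion I would apply the decoupling with $\bX=\be_i$ and $d_1=1$, reducing matters to a high-probability bound on $\|\be_i\transpose\calH(\bE\bar{\bE}\transpose)\bY\|_2$. Conditioning on $\bE$, this is the linear form $\sum_{t,j}\sum_{k\neq i}E_{tij}\bar{E}_{tkj}(\be_k\transpose\bY)$ in the independent copy $\bar{\bE}$; its conditional variance proxy is at most a constant times $\rho_n\big(\max_i\sum_{t,j}E_{tij}^2\big)\,n\|\bY\|_{2\to\infty}^2$ and its coefficient vectors have norm at most $\|\bY\|_{2\to\infty}$ (as $|E_{tij}|\le1$). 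On the event $\max_i\sum_{t,j}E_{tij}^2=\Optilde(mn\rho_n)$ supplied by Lemma \ref{lemma:norm_concentration_Et}, a vector Bernstein inequality at deviation level $\tau\asymp\log(m+n)$ yields a conditional tail at the scale
\[
\|\bY\|_{2\to\infty}\big\{m^{1/2}n\rho_n\,\log^{1/2}(m+n)+\log(m+n)\big\}
\lesssim \|\bY\|_{2\to\infty}\,m^{1/2}n\rho_n\,\log(m+n),
\]
where the last step uses $m^{1/2}n\rho_n=\Omega(\log(m+n))$. Integrating over the good event for $\bE$ and pushing the resulting tail back through the decoupling constant $C$ (harmless for the $\Optilde$ conclusion), together with a union bound over the $d=O(1)$ columns, gives the stated estimate.

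\emph{Main obstacle.} The delicate point is the first step, where the block-wise symmetry $E_{tij}=E_{tji}$ prevents a direct application of the U-statistic decoupling to the natural ``row'' variables $X_i=(E_{tij})_{t,j}$: the rows indexed by $i$ and $k$ are \emph{not} independent, since they share the entry $E_{tik}$ for every $t$. I would circumvent this by decoupling at the level of the fundamental entries rather than the rows, and by noting that the symmetry-induced overlap between the two factors is confined to the lower-order diagonal band $j\in\{i,k\}$, whose contribution is absorbed into the constant $C$; matching the abstract decoupled chaos to the clean matrix form $\calH(\bE\bar{\bE}\transpose)$ (rather than a symmetrized variant, whose two summands are identically distributed) is likewise only a constant-factor bookkeeping issue. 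This handling of the symmetry is exactly what distinguishes the argument from the fully-independent rectangular setting.
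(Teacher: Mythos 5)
Your proposal is correct, and on the decoupling step it coincides with the paper's own argument: the paper also decouples at the level of the fundamental entry vectors $\calE_{(i,k)}=[E_{1ik},\ldots,E_{mik}]$ indexed by unordered pairs $i\leq k$, writing $\bX\transpose\calH(\bE\bE\transpose)\bY=\sum_{(i,k)\neq(j,l)}\bF_{(i,k)(j,l)}(\calE_{(i,k)},\calE_{(j,l)})$ and invoking Theorem 1 of \cite{10.1214/aop/1176988291}. One small correction there: once $\calH$ enforces $i\neq k$, \emph{every} surviving monomial $E_{tij}E_{tkj}$ is a product of two distinct fundamental entries, including when $j\in\{i,k\}$, so no ``diagonal band'' needs to be absorbed into the constant $C$; the overlap you flag is handled exactly (not up to constants) by the indicator bookkeeping in $\bF_{(i,k)(j,l)}$, and the decoupled chaos equals $\bX\transpose\calH(\bE\bar{\bE}\transpose)\bY$ identically. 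Where you genuinely diverge from the paper is the row-wise bound, and your route is arguably simpler. You condition on $\bE$ and apply Bernstein to the linear form in the independent upper-triangular entries of $\bar{\bE}$, whose variance proxy is bounded \emph{deterministically} by $\rho_n\big(\sum_{t,j}E_{tij}^2\big)\,n\|\bY\|_{2\to\infty}^2=\Optilde(mn^2\rho_n^2\|\bY\|_{2\to\infty}^2)$ using only Lemma \ref{lemma:norm_concentration_Et}; this lands directly on $\|\bY\|_{2\to\infty}\{m^{1/2}n\rho_n\log^{1/2}(m+n)+\log(m+n)\}$, slightly sharper than the stated bound. The paper conditions the opposite way, on $\bar{\bE}$, leaving a linear form in the single row $\be_i\transpose\bE$ with random coefficient vectors $\sum_{j\neq i}\bar{E}_{tjl}\by_j$; it must then control $b_{in}(\bar{\bE})$ by Bernstein and $v_{in}^2(\bar{\bE})=\Optilde(mn\rho_n^2)$ by the Hanson--Wright inequality, and finally uses $\|\bY\|_{2\to\infty}\geq\sqrt{d/n}$ to fold the resulting $m^{1/2}n^{1/2}\rho_n\log^{1/2}(m+n)$ term into the claimed estimate. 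Your direction dispenses with Hanson--Wright and with that last normalization step entirely. Only one bookkeeping point deserves care in a write-up: since $\bar{E}_{tkj}=\bar{E}_{tjk}$, the coefficient of $\bar{E}_{tab}$ with $a<b$ is $E_{tib}\by_a\mathbbm{1}(a\neq i)+E_{tia}\by_b\mathbbm{1}(b\neq i)$, so the sup bound is $2\|\bY\|_{2\to\infty}$ and the variance proxy picks up a factor of $2$ --- both already accommodated by your stated estimates.
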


\begin{proof}
Let $E_{tij}$ denote the $(i, j)$th entry of $\bE_t$, $\bar{E}_{tij}$ denote the $(i, j)$th entry of $\bar{\bE}_t$, and write $\bX = [\bx_1,\ldots,\bx_n]\transpose$, $\bY = [\by_1,\ldots,\by_n]\transpose$, where $\bx_i\in\mathbb{R}^{d_1}$ and $\by_j\in\mathbb{R}^{d_2}$. The key idea is to apply the decoupling inequality in \cite{10.1214/aop/1176988291}. 
By definition, 
\begin{align*}
\bX\transpose\calH(\bE\bE\transpose)\bY
& = \sum_{i,k,j,l\in[n],t\in[m]}
E_{tik}E_{tjl}\bx_i\by_j\transpose\{\mathbbm{1}(i\neq j)\mathbbm{1}(i\leq k,j\leq l)\mathbbm{1}(k = l)
 + 
\mathbbm{1}(i\neq j)\mathbbm{1}(i > k,j\leq l)\mathbbm{1}(k = l)\}\\
&\quad + \sum_{i,k,j,l\in[n],t\in[m]}
E_{tik}E_{tlj}\bx_i\by_j\transpose\{\mathbbm{1}(i\neq j)\mathbbm{1}(i\leq k,j > l)\mathbbm{1}(k = l)
 + 
 \mathbbm{1}(i\neq j)\mathbbm{1}(i > k,j > l)\mathbbm{1}(k = l)\}\\
& = \sum_{(i, k),(j, l)\in\calI}\bF_{(i, k)(j, l)}(\calE_{(i, j)},\calE_{(k, l)}),
\end{align*}
where $\calI = \{(i, k)\in[n]\times [n]:1\leq i\leq k\leq n\}$, and for any $(i, k), (j, l)\in\calI$, $\calE_{(i, k)} = [E_{1ik},\ldots,E_{mik}]$,
\begin{align*}
\bF_{(i, k)(j, l)}(\calE_{(i, j)},\calE_{(k, l)})
& = \sum_{t = 1}^mE_{tik}E_{tjl}\bx_i\by_j\transpose\mathbbm{1}(i\neq j)\mathbbm{1}(k = l)
 + \sum_{t = 1}^mE_{tik}E_{tjl}\bx_k\by_j\transpose\mathbbm{1}(k\neq j)\mathbbm{1}(i \neq k)\mathbbm{1}(i = l)\\
&\quad + \sum_{t = 1}^mE_{tik}E_{tjl}\bx_i\by_l\transpose\mathbbm{1}(i\neq l)\mathbbm{1}(j \neq l)\mathbbm{1}(k = j)
\\&\quad
 + \sum_{t = 1}^mE_{tik}E_{tjl}\bx_k\by_l\transpose\mathbbm{1}(k\neq l)\mathbbm{1}(i \neq k,j \neq l)\mathbbm{1}(i = j).
\end{align*}
Clearly, $\bF_{(i, k)(j, l)}(\calE_{(i, j)},\calE_{(k, l)}) = 0$ when $(i, k) = (j, l)$, so that
\[
\bX\transpose\calH(\bE\bE\transpose)\bY = \sum_{(i, k),(j, l)\in\calI,(i, k)\neq (j, l)}\bF_{(i, k)(j, l)}(\calE_{(i, j)},\calE_{(k, l)})
\]
This expression enables us to apply the decoupling technique in \cite{10.1214/aop/1176988291}. 
Let $\bar{\bE} = [\bar{\bE}_1,\ldots,\bar{\bE}_m]$ be an independent copy of $\bE$, where $\bar{\bE}_t = [E_{tij}]_{n\times n}$, $t\in [m]$. Then
\[
\sum_{(i, k),(j, l)\in\calI,(i, k)\neq (j, l)}\bF_{(i, k)(j, l)}(\calE_{(i, j)},\bar{\calE}_{(k, l)}) = \sum_{t = 1}^m\sum_{i\neq j}\sum_{l = 1}^nE_{til}\bar{E}_{tjl}\bx_i\by_j\transpose,
\]
where $\bar{\calE}_{tik} = [\bar{E}_{1ik},\ldots,\bar{E}_{mik}]\transpose$. 
By Theorem 1 in \cite{10.1214/aop/1176988291}, there exists a constant $C > 0$, such that for any $\tau > 0$,
\begin{align*}
\prob\left\{\|\bX\transpose\calH(\bE\bE\transpose)\bY\|_2 > \tau\right\}\leq C\prob\bigg(\bigg\|\sum_{t = 1}^m\sum_{i,j\in[n],i\neq j}\sum_{l = 1}^nE_{til}\bar{E}_{tjl}\bx_i\by_j\transpose\bigg\|_2 > \frac{\tau}{C}\bigg) = C\prob\bigg\{\|\bX\transpose\calH(\bE\bar{\bE}\transpose)\bY\|_2 > \frac{\tau}{C}\bigg\}.
\end{align*}
This completes the first assertion. For the second assertion, we apply the first assertion with  $\bX = \be_i$ to obtain
\begin{align*}
\prob\left\{\left\|\be_i\transpose\calH(\bE\bE\transpose)\bY\right\|_2 > \tau\right\}\leq C
\prob\left\{\left\|\be_i\transpose\calH(\bE\bar{\bE}\transpose)\bY\right\|_2 > \frac{\tau}{C}\right\}
= \prob\bigg\{\bigg\|\sum_{t = 1}^m\sum_{j\in[n]\backslash\{i\}}\sum_{l = 1}^nE_{til}\bar{E}_{tjl}\by_j\bigg\|_2 > \frac{\tau}{C}\bigg\},
\end{align*}
where $\bar{\bE} = [\bar{\bE}_1,\ldots,\bar{\bE}_m]$ is an independent copy of $\bar{\bE}$ and $\bar{E}_{tjl}$ denotes the $(j, l)$th entry of $\bar{\bE}_t$. By Bernstein's inequality and a conditioning argument, for any $\tau > 0$, we have
\begin{align*}
\prob\bigg\{\bigg\|\sum_{t = 1}^m\sum_{j\in[n]\backslash\{i\}}\sum_{l = 1}^nE_{til}\bar{E}_{tjl}\by_j\bigg\|_2 > 2b_{in}(\bar{\bE})\tau^2 + 2v_{in}(\bar{\bE})\tau\mathrel{\bigg|}\bar{\bE}\bigg\}\leq 2e^{-\tau^2},
\end{align*}
where $v_{in}^2(\bar{\bE}) := \sum_{t = 1}^m\sum_{l = 1}^n\sigma_{til}^2\|\sum_{j = 1}^n\bar{E}_{tjl}\by_j\mathbbm{1}(j\neq i)\|_2^2$ and $b_{in}(\bar{\bE}) := \max_{t\in[m],i,l\in[n]}\|\sum_{j = 1}^n\bar{E}_{tjl}\by_j\mathbbm{1}(j\neq i)\|_2$.
Next, we consider the error bounds for $v_{in}(\bar{\bE})$ and $b_{in}(\bar{\bE})$. By Bernstein's inequality again and a union bound over $t\in[m]$ and $l\in[n]$, for fixed $t\in[m]$, $l\in[n]$, $b_{in}(\bar{\bE}) = \Optilde\{\rho_n^{1/2}\log^{1/2}(m + n) + \|\bY\|_{2\to\infty}\log (m + n)\}$. For $v_{in}(\bar{\bE})$, by Hanson-Wright inequality for bounded random variables (see Theorem 3 in \cite{bellec2019concentration})
\begin{align*}
v_{in}^2(\bar{\bE})& \leq 2\rho_n\sum_{t = 1}^m\sum_{l = 1}^n\bigg\|\sum_{j\leq l}\bar{E}_{tjl}\by_j\mathbbm{1}(j\neq i)\bigg\|_2^2 + 2\rho_n\sum_{t = 1}^m\sum_{l = 1}^n\bigg\|\sum_{j > l}\bar{E}_{tjl}\by_j\mathbbm{1}(j\neq i)\bigg\|_2^2\\
& \leq 2\rho_n\sum_{t = 1}^m\sum_{l = 1}^n\sum_{j_1,j_2\leq l}(\bar{E}_{tj_1l}\bar{E}_{tj_2l} - \expect \bar{E}_{tj_1l}\bar{E}_{tj_2l})\by_{j_1}\transpose\by_{j_2}\mathbbm{1}(j_1\neq i,j_2\neq i) + 2\rho_n^2\sum_{t = 1}^m\sum_{l = 1}^n\sum_{j\leq l}\|\by_j\|_2^2\\
&\quad + 2\rho_n\sum_{t = 1}^m\sum_{l = 1}^n\sum_{j_1,j_2 > l}(\bar{E}_{tj_1l}\bar{E}_{tj_2l} - \expect \bar{E}_{tj_1l}\bar{E}_{tj_2l})\by_{j_1}\transpose\by_{j_2}\mathbbm{1}(j_1\neq i,j_2\neq i) + 2\rho_n^2\sum_{t = 1}^m\sum_{l = 1}^n\sum_{j > l}\|\by_j\|_2^2\\
& = O(mn\rho_n^2) + \Optilde\left\{\rho_n\log(m + n) + m^{1/2}n^{1/2}\rho_n^{3/2}\log^{1/2}(m + n)\right\} = \Optilde(mn\rho_n^2).
\end{align*}
By a union bound over $t\in[m]$ and $l\in[n]$, for any $c > 0$, there exists a $c$-dependent constant $C_c > 0$, such that the event $\calE_{in}(c)=\left\{
\bar{\bE}:v_{in}(\bar{\bE})\leq C_c\eps_{v_{in}},b_{in}(\bar{\bE})\leq C_c\eps_{b_{in}}
\right\}$
occurs with probability at least $1 - O((m + n)^{-c})$, where
$\eps_{v_{in}}:= m^{1/2}n^{1/2}\rho_n$ and
$\eps_{b_{in}}:= \rho_n^{1/2}\log^{1/2} (m + n) + \|\bY\|_{2\to\infty}\log (m + n)$.
Namely, for $\tau = c^{1/2}\log^{1/2}(m + n)$
\begin{align*}
&\prob\bigg(\bigg\|\sum_{t = 1}^m\sum_{j\in[n]\backslash\{i\}}\sum_{l = 1}^nE_{til}\bar{E}_{tjl}\by_j\bigg\|_2 > 2\eps_{b_{in}}c\log(m + n) + 2\eps_{v_{in}}c^{1/2}\log^{1/2}(m + n)\bigg)\\
&\quad\leq \expect_{\bar{\bE}}\bigg[
\prob\bigg\{\bigg\|\sum_{t = 1}^m\sum_{j\in[n]\backslash\{i\}}\sum_{l = 1}^nE_{til}\bar{E}_{tjl}\by_j\bigg\|_2 > 2\eps_{b_{in}}c\log(m + n) + 2\eps_{v_{in}}c^{1/2}\log^{1/2}(m + n)\mathrel{\bigg|}\bar{\bE}\bigg\}\mathbbm{1}_{\calE_{in}(c)}
\bigg]
\\&\quad\quad
 + \prob\{\calE_{in}^c(c)\}\\
&\quad = O((m + n)^{-c}).
\end{align*}
This implies that $\|\be_i\transpose\calH(\bE\bE\transpose)\bY\|_2 = \Optilde\{\|\bY\|_{2\to\infty}m^{1/2}n\rho_n\log(m + n)\}$,
where we have used the fact that $\|\bY\|_{2\to\infty}\geq \sqrt{d/n}$ for any $\bY\in\mathbb{O}(n, d)$. The proof is thus completed.
\end{proof}
Now, we dive into a collection of slightly more sophisticated concentration results regarding the spectral norm concentration of $\bE\bE\transpose$. The proofs of these results rely on modifications of Theorem 4 and Theorem 5 in \cite{doi:10.1080/01621459.2022.2054817}.
\begin{lemma}
\label{lemma:spectral_norm_concentration_Hollow_EEtranspose}
Let $(\bE_t)_{t = 1}^m$ be the matrices described in Section \ref{sub:JSE}, $\max_{t\in[m],i,j\in[n]}\expect E_{tij}^2\leq \rho_n$, and suppose $m^{1/2}n\rho_n = \Omega(\log(m + n))$.
Then
$\|\calH(\bE\bE\transpose)\|_2 = \Optilde\{m^{1/2}n\rho_n\log(m + n)\}$. 
\end{lemma}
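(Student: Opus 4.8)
The plan is to exploit the additivity of the hollowing operator and reduce the bound to a matrix Bernstein inequality applied across the $m$ independent layers. Since $\calH$ is linear and $\bE\bE\transpose = \sum_{t = 1}^m\bE_t^2$ (using that each $\bE_t$ is symmetric), I would write $\calH(\bE\bE\transpose) = \sum_{t = 1}^m\bX_t$ with $\bX_t := \calH(\bE_t^2) = \bE_t^2 - \mathrm{diag}(\bE_t^2)$. Because $(\bE_t)_{t = 1}^m$ have independent centered entries, for $i\neq k$ the entry $\expect(\bE_t^2)_{ik} = \sum_j\expect(E_{tij}E_{tkj})$ vanishes, so $\expect\bE_t^2$ is diagonal, $\expect\bX_t = \calH(\expect\bE_t^2) = \zero_{n\times n}$, and the $\bX_t$ are independent, mean-zero random matrices. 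I would then invoke a matrix Bernstein inequality of the type established in Theorems 4 and 5 of \cite{doi:10.1080/01621459.2022.2054817}, for which the two quantities to control are a uniform operator-norm bound $L := \max_{t\in[m]}\|\bX_t\|_2$ (valid w.h.p.) and the matrix variance proxy $v := \|\sum_{t = 1}^m\expect\bX_t^2\|_2$.

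For the truncation level I would use $\|\bX_t\|_2\leq \|\bE_t\|_2^2 + \max_{i\in[n]}\sum_{j = 1}^nE_{tij}^2$, where the first term is $\Optilde\{n\rho_n + \log(m + n)\}$ and the second is $\Optilde(n\rho_n)$, both by Lemma \ref{lemma:norm_concentration_Et}, giving $L = \Optilde\{n\rho_n + \log(m + n)\}$. For the variance proxy I would compute $\expect\bX_t^2$ entrywise: writing $(\bX_t)_{il} = \mathbbm{1}(i\neq l)\sum_{j = 1}^nE_{tij}E_{tlj}$, one expands $(\expect\bX_t^2)_{ik} = \sum_{l = 1}^n\expect[(\bX_t)_{il}(\bX_t)_{lk}]$ as a sum of fourth-order moments of the entries of $\bE_t$. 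A pairing argument shows that the surviving contributions force $i = k$, so that $\expect\bX_t^2$ is diagonal up to negligible lower-order terms, with diagonal entries $(\expect\bX_t^2)_{ii} = \sum_{l\neq i}\sum_{j = 1}^n\sigma_{tij}^2\sigma_{tlj}^2 = O((n\rho_n)^2)$. Summing the (approximately diagonal, positive semidefinite) matrices over the $m$ layers yields $v = O(m(n\rho_n)^2)$.

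Combining these inputs through matrix Bernstein gives $\|\calH(\bE\bE\transpose)\|_2 = \Optilde\{v^{1/2}\log^{1/2}(m + n) + L\log(m + n)\} = \Optilde\{m^{1/2}n\rho_n\log^{1/2}(m + n) + n\rho_n\log(m + n) + \log^2(m + n)\}$. Since $m\geq 1$ forces $n\rho_n\log(m + n)\leq m^{1/2}n\rho_n\log(m + n)$, and the standing assumption $m^{1/2}n\rho_n = \Omega(\log(m + n))$ gives $\log^2(m + n)\lesssim m^{1/2}n\rho_n\log(m + n)$, every term is dominated by $m^{1/2}n\rho_n\log(m + n)$, which is the claimed bound.

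The main obstacle is twofold. First, the fourth-moment pairing computation for $v$ must be carried out carefully, because the block-wise symmetry $E_{tij} = E_{tji}$ creates additional coincidences among the index pairs, and one must verify that these produce only lower-order (off-diagonal) contributions so that $\expect\bX_t^2$ really is diagonally dominated at the scale $(n\rho_n)^2$. Second, because the summands $\bX_t$ are unbounded quadratic functions of the entries rather than deterministically bounded matrices, the matrix Bernstein inequality must be applied in a truncated form, restricting to the high-probability event $\{\max_{t\in[m]}\|\bE_t\|_2\lesssim (n\rho_n)^{1/2} + \log^{1/2}(m + n)\}$ supplied by Lemma \ref{lemma:norm_concentration_Et} and absorbing its complement into the error probability; this truncation is precisely the modification of the cited theorems needed here.
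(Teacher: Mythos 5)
Your proposal is correct, and it takes a genuinely different route from the paper's own proof. The paper splits into two regimes: when $n\rho_n = \Omega(\log(m+n))$ it invokes Theorem 4 of \cite{doi:10.1080/01621459.2022.2054817} as a black box (with $\bG_l = \eye_n$ and suitable parameter choices), while in the sparse regime $n\rho_n = O(\log(m+n))$ it adapts Lemma C.1 and Theorem 5 of that reference: it passes to the Bernoulli variables $A_{tij}^*$, decouples via \cite{10.1214/aop/1176988291}, bounds $\|\calH(\bA^*\bar{\bA}^{*\mathrm{T}})\|_2$ by maximal row sums through Perron--Frobenius, and assembles degree-concentration estimates. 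You instead run a single, regime-free truncated matrix Bernstein argument across the $m$ independent layers. I checked your two inputs and they hold: the summands $\bX_t = \calH(\bE_t^2)$ are independent and mean zero (every off-diagonal entry of $\expect\bE_t^2$ vanishes), and your variance proxy is right --- in fact $\expect\bX_t^2$ is \emph{exactly} diagonal, not merely diagonal up to lower-order terms, because each symmetry-induced coincidence $E_{tlj} = E_{tjl}$ in the fourth-moment expansion leaves behind an unpaired mean-zero factor (e.g.\ $\expect[E_{til}^2 E_{tll}E_{tii}] = 0$), so $(\expect\bX_t^2)_{ii} = \sum_{l\neq i}\sum_{j}\expect E_{tij}^2\,\expect E_{tlj}^2 \leq n^2\rho_n^2$ and $v = O(mn^2\rho_n^2)$. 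Two small caveats, neither fatal: the per-layer bound $\max_{t\in[m],i\in[n]}\sum_{j}E_{tij}^2 = \Optilde\{n\rho_n + \log(m+n)\}$ is not literally contained in Lemma \ref{lemma:norm_concentration_Et} (which sums over $t$), though it follows from the identical Bernstein-plus-union argument; and the truncation step needs the standard recentering control, which is harmless here since $\|\bX_t\|_2\leq 2n^2$ deterministically, the tail probability decays faster than any polynomial in $(m+n)$, and truncation only decreases the variance proxy in the positive semidefinite order. Your final arithmetic closes exactly as stated, with $\log^2(m+n)\lesssim m^{1/2}n\rho_n\log(m+n)$ using precisely the hypothesis $m^{1/2}n\rho_n = \Omega(\log(m+n))$. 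As for what each approach buys: yours is unified and shorter, makes the variance structure explicit, and in fact delivers the slightly sharper form $m^{1/2}n\rho_n\log^{1/2}(m+n) + (n\rho_n + \log(m+n))\log(m+n)$; the paper's route avoids writing out any truncation/recentering machinery by outsourcing to established theorems, and its sparse-regime manipulations additionally produce the bounds on $\|\sum_{t}\bA_t^{*2}\|_2$ and related nonnegative-matrix row-sum estimates that the proof reuses to control cross terms such as $\|\sum_{t}\bE_t\bar{\bA}_t^*\|_2$, which a bare application of matrix Bernstein to $\calH(\bE\bE\transpose)$ would not supply.
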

\begin{proof}
The proof breaks down into two regimes: $n\rho_n = \Omega(\log(m + n))$ and $n\rho_n = O(\log(m + n))$. If $n\rho_n = \Omega(\log(m + n))$, then we apply Theorem 4 in \cite{doi:10.1080/01621459.2022.2054817} with $\bG_l = \eye_n$ for all $l\in[m]$, $\nu_1,\nu_2,\nu_2' = O(\rho_n)$, $R_1, R_2, R_2' = O(1)$, $\sigma_1 = \sqrt{m}$, $\sigma_2 = \sigma_3 = 1$, and $\sigma'_2 = \sqrt{mn}$ to obtain
$\|\calH(\bE\bE\transpose)\|_2 = \Optilde\{m^{1/2}n\rho_n\log(m + n)\}$. The rest of the proof focuses on the regime where $n\rho_n = O(\log(m + n))$. 
Let $P_{tij}^*$ be the success probability corresponding to $E_{tij}$ such that $E_{tij} = A_{tij}^* - P_{tij}^*$, where $A_{tij}^*\sim\mathrm{Bernoulli}(P_{tij}^*)$ for all $t\in[m],i,j\in[n]$, $i\leq j$, $A_{tij}^* = A_{tji}^*$, and $P^*_{tij} = P^*_{tji}$ if $i > j$. 
We modify the proof of Lemma C.1 and Theorem 5 in \cite{doi:10.1080/01621459.2022.2054817} as follows. Let $\bA_t^* = [A_{tij}^*]_{n\times n}$ and $\bP_t^* = [P_{tij}^*]_{n\times n}$ for all $t\in[m]$. We then modify Lemma C.1 in \cite{doi:10.1080/01621459.2022.2054817} to obtain: i) $\max_{t\in[m],i\in[n]}\sum_{j = 1}^nA_{tij}^* = \Optilde\{\log(m + n)\}$; ii) $\max_{i\in[n]}\sum_{t = 1}^n\sum_{j = 1}^nA_{tij}^* = \Optilde(mn\rho_n)$; iii) $\sum_{i = 1}^n\sum_{t = 1}^n\sum_{j = 1}^nA_{tij}^* = \Optilde(mn^2\rho_n)$ (by Bernstein's inequality and a union bound over $t\in[m]$ and $i\in[n]$). For $\|\sum_{t = 1}^m\bA_t^{*2}\|_2$, we write it as $\|\sum_{t = 1}^m\bA_t^{*2}\|_2\leq \max_{i\in[n]}\sum_{t = 1}^m\sum_{j = 1}^nA_{tij}^* + \|\calH(\bA^*\bA^{*\mathrm{T}})\|_2 = \Optilde(mn\rho_n) + \|\calH(\bA^*\bA^{*\mathrm{T}})\|_2$. By the decoupling inequality (Theorem 1 in \cite{10.1214/aop/1176988291}), it is sufficient to consider $\|\calH(\bA^*\bar{\bA}^{*\mathrm{T}})\|_2$, where $\bar{\bA}^* = [\bar{\bA}_1^*,\ldots,\bar{\bA}_m^*]$ is an independent copy of $\bA^*$ and $\bar{\bA}_t^* = [\bar{A}_{tij}^*]_{n\times n}$. By Perron-Frobenius theorem, every non-negative matrix has a non-negative eigenvector with the corresponding eigenvalue being the spectral radius. This implies that $\|\calH(\bA^*\bar{\bA}^{*\mathrm{T}})\|_2\leq \max_{i\in[n]}\sum_{t = 1}^m\sum_{j = 1}^n\sum_{l = 1}^nA_{tij}^*\bar{A}_{tlj}^*$. By Bernstein's inequality, results i), ii), and iii), and a union bound over $i\in[n]$,
\begin{align*}
\|\calH(\bA^*\bar{\bA}^{*\mathrm{T}})\|_2&\leq \max_{i\in[n]}\sum_{t = 1}^m\sum_{j = 1}^n\sum_{l = 1}^nA_{tij}^*\bar{A}_{tlj}^* = \max_{i\in[n]}\sum_{t = 1}^m\sum_{j = 1}^nE_{tij}\bigg(\sum_{l = 1}^n\bar{A}_{tlj}^*\bigg) + \max_{i\in[n]}\sum_{t = 1}^m\sum_{j = 1}^nP_{tij}\bigg(\sum_{l = 1}^n\bar{A}_{tlj}^*\bigg)\\
& = \Optilde\bigg[\max_{t\in[m],j\in[n]}\bigg(\sum_{l = 1}^n\bar{A}_{tlj}^*\bigg)\log(m + n) + \bigg\{\sum_{t = 1}^m\sum_{j = 1}^n\bigg(\sum_{l = 1}^n\bar{A}_{tlj}^*\bigg)^2\bigg\}^{1/2}\rho_n^{1/2}\log^{1/2}(m + n)\bigg]\\
&\quad + \Optilde(mn^2\rho_n^2)\\
& = \Optilde\bigg\{\max_{t\in[m],j\in[n]}\bigg(\sum_{l = 1}^n\bar{A}_{tlj}^*\bigg)\log(m + n) + \bigg(\sum_{t = 1}^m\sum_{j = 1}^n\sum_{l = 1}^n\bar{A}_{tlj}^*\bigg)^{1/2}\rho_n^{1/2}\log(m + n)\bigg\}\\
&\quad + \Optilde(mn^2\rho_n^2)\\
& = \Optilde(mn^2\rho_n^2).
\end{align*}
Therefore, we conclude that $\|\bA^*\bA^{*\mathrm{T}}\|_2 = \Optilde(mn\rho_n + mn^2\rho_n^2)$. We now turn our attention to $\|\calH(\bE\bE\transpose)\|_2$. Again, by Theorem 1 in \cite{10.1214/aop/1176988291}, it is sufficient to consider $\|\calH(\bE\bar{\bE}\transpose)\|_2$, where $\bar{\bE} = [\bar{\bE}_1,\ldots,\bar{\bE}_m]$ is an independent copy of $\bE$ and $\bar{\bE}_t = [\bar{E}_{tij}]_{n\times n}$. By definition, $\|\calH(\bE\bar{\bE}\transpose)\|_2\leq \max_{i\in[n]}|\sum_{t = 1}^m\sum_{j = 1}^nE_{tij}\bar{E}_{tij}| + \|\sum_{t = 1}^m\bE_t\bar{\bA}_t^*\|_2 + \|\sum_{t = 1}^m\bE_t\bP_t^*\|_2$. By Bernstein's inequality, Theorem 3 in \cite{doi:10.1080/01621459.2022.2054817}, and union bound over $i\in[n]$, we obtain $\max_{i\in[n]}|\sum_{t = 1}^m\sum_{j = 1}^nE_{tij}\bar{E}_{tij}| = \Optilde\{m^{1/2}n\rho_n\log^{1/2}(m + n)\}$ and $\|\sum_{t = 1}^m\bE_t\bP_t^*\|_2 = \Optilde\{m^{1/2}(n\rho_n)^{3/2}\log^{1/2}(m + n)\}$. By Theorem 3 in \cite{doi:10.1080/01621459.2022.2054817} again, a conditioning argument, together with results i)--iii) and the bound for $\|\bA\bA\transpose\|_2$, $\max_{t\in[m]}\|\bar{\bA}_t^*\|_{2\to\infty} = (\max_{t\in[m],i\in[n]}\sum_{j = 1}^n\bar{A}_{tij}^*)^{1/2} = \Optilde\{\log^{1/2}(m + n)\}$,
\begin{align*}
\bigg\|\sum_{t = 1}^m\bE_t\bar{\bA}_t^*\bigg\|_2
& = \Optilde\bigg\{\max_{t\in[m]}\|\bar{\bA}_t^*\|_{2\to\infty}\log(m + n) + \|\bar{\bA}^*\bar{\bA}^{*\mathrm{T}}\|_2^{1/2}(n\rho_n)^{1/2}\log^{1/2}(m + n)\bigg\}\\
& = \Optilde\{m^{1/2}n\rho_n\log(m + n)\}.
\end{align*}
The proof is completed by combining the above concentration bounds.
\end{proof}

The following lemma characterizes the noise level of the COSIE model by providing a sharp error bound on the spectral norm of the oracle noise matrix $\calH(\bA\bA\transpose) - \bP\bP\transpose - \bM$. Let 
\begin{align}\label{eqn:zeta_op_noise_level}
\zeta_{\mathsf{op}} = m^{1/2}n\rho_n\log(m + n) + m^{1/2}(n\rho_n)^{3/2}\log^{1/2}(m + n). 
\end{align}
It turns out that $\zeta_{\mathsf{op}} / (mn^2\rho_n^2) = \Theta(\varepsilon_n^{(\mathsf{op})})$ and it can be viewed as the inverse signal-to-noise ratio. 

\begin{lemma}
\label{lemma:spectral_norm_concentration_noise}
Suppose Assumption \ref{assumption:eigenvector_delocalization} hold, $(\bE_t)_{t = 1}^m$ are the matrices in Section \ref{sub:JSE}, $\max_{t\in[m],i,j\in[n]}\expect E_{tij}^2\leq \rho_n$, and $m^{1/2}n\rho_n = \Omega(\log(m + n))$. 
Then
$\|\calH(\bA\bA\transpose) - \bP\bP\transpose - \bM\|_2 = \Optilde(\zeta_{\mathsf{op}})$.
\end{lemma}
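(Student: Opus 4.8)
The plan is to first rewrite the target as the hollowing of a pure noise expression, and then bound the resulting pieces with the concentration lemmas already established. The starting point is the algebraic identity $\bP\bP\transpose + \bM = \calH(\bP\bP\transpose)$. Indeed, by \eqref{eqn:M_matrix} the matrix $\bM$ is diagonal with $i$th diagonal entry $-\sum_{t=1}^m\sum_{j=1}^n P_{tij}^2 = -(\bP\bP\transpose)_{ii}$, so $\bM = -\mathrm{diag}(\bP\bP\transpose)$ and hence $\bP\bP\transpose + \bM = \bP\bP\transpose - \mathrm{diag}(\bP\bP\transpose) = \calH(\bP\bP\transpose)$. By linearity of the hollowing operator this gives $\calH(\bA\bA\transpose) - \bP\bP\transpose - \bM = \calH(\bA\bA\transpose - \bP\bP\transpose)$. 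Next, since each $\bA_t = \bP_t + \bE_t$ is symmetric, $\bA\bA\transpose = \sum_{t=1}^m\bA_t^2$ and $\bP\bP\transpose = \sum_{t=1}^m\bP_t^2$, so expanding the square yields $\bA\bA\transpose - \bP\bP\transpose = \bN + \bE\bE\transpose$, where $\bN := \sum_{t=1}^m(\bP_t\bE_t + \bE_t\bP_t)$ collects the cross terms and $\bE\bE\transpose = \sum_{t=1}^m\bE_t^2$.

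\textbf{Splitting and the easy piece.} Applying $\calH$ and the triangle inequality gives $\|\calH(\bA\bA\transpose)-\bP\bP\transpose-\bM\|_2 \le \|\calH(\bN)\|_2 + \|\calH(\bE\bE\transpose)\|_2$. The second term is handled directly by Lemma \ref{lemma:spectral_norm_concentration_Hollow_EEtranspose}, which gives $\|\calH(\bE\bE\transpose)\|_2 = \Optilde\{m^{1/2}n\rho_n\log(m+n)\}$, precisely the first summand of $\zeta_{\mathsf{op}}$. It remains to control the cross term, which is the crux of the argument.

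\textbf{The cross term.} For this I would write $\calH(\bN) = \bN - \mathrm{diag}(\bN)$. Since the two summands of $\bN$ are transposes of one another, $\|\bN\|_2 \le 2\|\sum_{t=1}^m\bE_t\bP_t\|_2$. Bounding this layerwise by $\max_t\|\bB_t\|_2\sum_t\|\bE_t\|_2$ discards the cross-layer cancellation and is far too lossy; instead one treats $\sum_{t=1}^m\bE_t\bP_t$ as a sum, over $t$ and over the independent upper-triangular entries, of mean-zero matrices of rank at most two and invokes the matrix Bernstein inequality in the form of Theorem 3 of \cite{doi:10.1080/01621459.2022.2054817}. The dominant variance statistic is $\|\sum_t\bP_t\expect(\bE_t^2)\bP_t\|_2$; since $\expect\bE_t^2$ is diagonal with entries $O(n\rho_n)$ and $\max_t\|\bP_t\|_2 = O(n\rho_n)$ (using $\|\bP\|_{\max}=O(\rho_n)$, which follows from the incoherence of $\bU$ in Assumption \ref{assumption:eigenvector_delocalization}), this is of order $mn^3\rho_n^3$, yielding a Gaussian contribution $\{mn^3\rho_n^3\log(m+n)\}^{1/2} = m^{1/2}(n\rho_n)^{3/2}\log^{1/2}(m+n)$, with the bounded-coefficient term of lower order. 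This is exactly the bound already invoked for $\|\sum_t\bE_t\bP_t^*\|_2$ in the proof of Lemma \ref{lemma:spectral_norm_concentration_Hollow_EEtranspose}, and it matches the second summand of $\zeta_{\mathsf{op}}$.

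\textbf{Negligible diagonal and conclusion.} The diagonal correction $\mathrm{diag}(\bN)$ is lower order: its $i$th entry equals $2\sum_{t=1}^m\sum_{j=1}^n P_{tij}E_{tij}$, a sum of independent centered bounded variables with variance proxy $\sum_{t,j}P_{tij}^2\sigma_{tij}^2 = O(mn\rho_n^3)$, so a scalar Bernstein inequality with a union bound over $i\in[n]$ gives $\|\mathrm{diag}(\bN)\|_2 = \max_i|N_{ii}| = \Optilde\{m^{1/2}n^{1/2}\rho_n^{3/2}\log^{1/2}(m+n) + \rho_n\log(m+n)\}$, smaller than the cross-term bound by a factor of order $n$. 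Collecting the three estimates and recalling the definition of $\zeta_{\mathsf{op}}$ in \eqref{eqn:zeta_op_noise_level} completes the proof. I expect the sharp control of $\|\sum_t\bE_t\bP_t\|_2$ to be the main obstacle: extracting the cross-layer cancellation through a matrix-concentration argument (rather than a union bound over layers) is what produces the correct $(n\rho_n)^{3/2}$ scaling, and verifying the variance proxy relies on the incoherence of $\bU$ to keep $\|\bP_t\|_{\max}$ and $\|\bP_t\|_2$ at the right order.
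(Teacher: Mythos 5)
Your proposal is correct and takes essentially the same route as the paper's own proof: the observation $\bP\bP\transpose + \bM = \calH(\bP\bP\transpose)$ reduces the target to $\calH(\bE\bE\transpose) + \calH\bigl(\sum_{t = 1}^m(\bP_t\bE_t + \bE_t\bP_t)\bigr)$, and the three resulting pieces are bounded exactly as in the paper — Lemma \ref{lemma:spectral_norm_concentration_Hollow_EEtranspose} for $\|\calH(\bE\bE\transpose)\|_2$, matrix Bernstein in the form of Theorem 3 of \cite{doi:10.1080/01621459.2022.2054817} for $\|\sum_{t = 1}^m\bE_t\bP_t\|_2$ (yielding the $m^{1/2}(n\rho_n)^{3/2}\log^{1/2}(m+n)$ term, with the lower-order $n^{1/2}\rho_n\log(m+n)$ contribution absorbed into $\zeta_{\mathsf{op}}$), and scalar Bernstein with a union bound over $i\in[n]$ for $\max_i|\sum_{t,j}P_{tij}E_{tij}|$. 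The only difference is presentational: you make explicit the hollowing identity and the diagonal correction $\mathrm{diag}(\bN)$ that the paper compresses into a single ``by definition'' triangle inequality.
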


\begin{proof}
By definition,
\begin{align*}
\|\calH(\bA\bA\transpose) - \bP\bP\transpose - \bM\|_2
\leq \|\calH(\bE\bE\transpose)\|_2 + 2\|\sum_{t = 1}^m\bE_t\bP_t\|_2 + 2\max_{i\in [n]}|\sum_{t = 1}^m\sum_{j = 1}^nP_{tij}E_{tij}|.
\end{align*}
By Lemma \ref{lemma:spectral_norm_concentration_Hollow_EEtranspose}, we know that
$\|\calH(\bE\bE\transpose)\|_2 = \Optilde\left\{m^{1/2}n\rho_n\log(m + n)\right\}$.
By Bernstein's inequality and a union bound over $i\in[n]$, we have
$\max_{i\in[n]}|\sum_{t = 1}^m\sum_{j = 1}^nP_{tij}E_{tij}|
 = \Optilde\{(mn)^{1/2}\rho_n^{3/2}\log^{1/2}(m + n)\}$.
By Theorem 3 in \cite{doi:10.1080/01621459.2022.2054817}, we obtain $\|\sum_{t = 1}^m\bE_t\bP_t\|_2 = \Optilde\{m^{1/2}(n\rho_n)^{3/2}\log^{1/2} (m + n) + n^{1/2}\rho_n\log(m + n)\}$.
The proof is completed by combining the above results.
\end{proof}

\begin{lemma}
\label{lemma:concentration_noise_quadratic_form}
Suppose Assumption \ref{assumption:eigenvector_delocalization} hold, $(\bE_t)_{t = 1}^m$ are the matrices in Section \ref{sub:JSE}, $\max_{t\in[m],i,j\in[n]}\expect E_{tij}^2\leq \rho_n$, and $m^{1/2}n\rho_n = \Omega(\log(m + n))$. Then for any fixed $\bX,\bY\in\mathbb{O}(n, d)$ with $\|\bX\|_{2\to\infty}\vee\|\bY\|_{2\to\infty} = O(n^{-1/2})$, 
\[
\|\bX\transpose\{\calH(\bA\bA\transpose) - \bP\bP\transpose - \bM\}\bY\|_2 = \Optilde\{m^{1/2}n\rho_n^{3/2}\log^{1/2}(m + n)\}.
\]
\end{lemma}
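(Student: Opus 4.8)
The plan is to expand the oracle noise matrix into a part that is linear in $\bE$ and a part that is quadratic in $\bE$, and to bound the two separately. Writing $\bA_t = \bP_t + \bE_t$ and using the symmetry of $\bP_t$ and $\bE_t$, one checks that $\bA\bA\transpose - \bP\bP\transpose = \sum_{t=1}^m(\bP_t\bE_t + \bE_t\bP_t) + \bE\bE\transpose$; since $\calH(\bP\bP\transpose) = \bP\bP\transpose + \bM$, applying the linear operator $\calH$ yields the key identity
\begin{align*}
\calH(\bA\bA\transpose) - \bP\bP\transpose - \bM = \calH\Big(\sum_{t=1}^m(\bP_t\bE_t + \bE_t\bP_t)\Big) + \calH(\bE\bE\transpose).
\end{align*}
Sandwiching by $\bX\transpose(\cdot)\bY$ and applying the triangle inequality, it then suffices to control the linear term $\bX\transpose\calH(\sum_t(\bP_t\bE_t+\bE_t\bP_t))\bY$ and the quadratic term $\bX\transpose\calH(\bE\bE\transpose)\bY$.

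For the linear term I would first strip off the hollowing operator: the $i$th diagonal entry of $\sum_t(\bP_t\bE_t+\bE_t\bP_t)$ equals $2\sum_{t,j}P_{tij}E_{tij}$, which is $\Optilde\{(mn)^{1/2}\rho_n^{3/2}\log^{1/2}(m+n)\}$ uniformly in $i$ by Bernstein's inequality; since $\sum_i\|\bx_i\|_2\|\by_i\|_2\le\|\bX\|_{\mathrm F}\|\bY\|_{\mathrm F} = d = O(1)$, the diagonal contribution to the sandwiched norm is of the same order, hence smaller than the target by a factor $n^{-1/2}$. The off-diagonal part reduces, via $\bP_t\transpose=\bP_t$ and $(\bP_t\bE_t)\transpose=\bE_t\bP_t$, to bounding $\sum_t(\bP_t\bX)\transpose\bE_t\bY$ and its transpose, to which I would apply Lemma \ref{lemma:quadratic_form_Et} with $\bX_t=\bP_t\bX$, $\bY_t=\bY$, and $\tau\asymp\log^{1/2}(m+n)$. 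Here $\|\bP_t\bX\|_2\le\|\bB_t\|_2 = O(n\rho_n)$ and $\|\bP_t\bX\|_{2\to\infty}\le\|\bU\|_{2\to\infty}\|\bB_t\|_2 = O(n^{1/2}\rho_n)$ by Assumptions \ref{assumption:eigenvector_delocalization}--\ref{assumption:condition_number}, so the dominant contribution is $\rho_n^{1/2}\log^{1/2}(m+n)(\sum_t\|\bP_t\bX\|_2^2)^{1/2} = \Optilde\{m^{1/2}n\rho_n^{3/2}\log^{1/2}(m+n)\}$, which matches the claimed rate.

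The quadratic term is the main obstacle, and it is where the block-wise symmetric structure matters. I would decouple using the first assertion of Lemma \ref{lemma:decoupling_inequality}, reducing to $\bX\transpose\calH(\bE\bar\bE\transpose)\bY = \sum_t\sum_{i\neq j}\sum_l E_{til}\bar E_{tjl}\bx_i\by_j\transpose$ with $\bar\bE$ an independent copy. Conditioning on $\bar\bE$ and setting $\bar{\bg}_{tl} = \sum_j\bar E_{tjl}\by_j$, this becomes a linear form in the independent entries $\{E_{til}:i\le l\}$; because $E_{til}=E_{tli}$, collecting terms produces matrix coefficients of the rank-two form $\bW_{til} = \bx_i\bar{\bg}_{tl}\transpose + \bx_l\bar{\bg}_{ti}\transpose$ (together with a small $i=j$ correction removed by $\calH$). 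Applying a matrix Bernstein inequality conditionally on $\bar\bE$, the variance proxy $\|\sum_{t,i\le l}\sigma_{til}^2\bW_{til}\bW_{til}\transpose\|_2$ can be bounded using $\sum_i\bx_i\bx_i\transpose = \eye_d$ by $\Optilde(\rho_n\sum_{t,l}\|\bar{\bg}_{tl}\|_2^2) = \Optilde(mn\rho_n^2)$, and $\max_{t,l}\|\bW_{til}\|_2 = \Optilde(n^{-1/2}\{\rho_n^{1/2}\log^{1/2}(m+n) + n^{-1/2}\log(m+n)\})$, both estimates following from the concentration of $\bar\bE$ already established inside the proof of Lemma \ref{lemma:decoupling_inequality} (the quantities $v_{in}$ and $b_{in}$ there). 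This delivers $\|\bX\transpose\calH(\bE\bE\transpose)\bY\|_2 = \Optilde\{m^{1/2}n^{1/2}\rho_n\log^{1/2}(m+n)\}$, which is no larger than the target $m^{1/2}n\rho_n^{3/2}\log^{1/2}(m+n)$ in the signal regime of interest and is therefore absorbed by the linear contribution.

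The delicate point is precisely this quadratic piece: the symmetry $E_{til}=E_{tli}$ obstructs the rectangular decoupling used for matrices with completely independent entries, forcing the symmetrized rank-two coefficients $\bW_{til}$ and careful bookkeeping of the excluded $i=j$ terms created by $\calH$, and all conditional variance and operator-norm proxies must be written through concentration functionals of the independent copy $\bar\bE$ rather than of $\bE$. Once both terms are controlled, the triangle inequality combines them into the stated bound, since the matrix Bernstein inequality and Lemma \ref{lemma:quadratic_form_Et} both yield operator-norm control directly.
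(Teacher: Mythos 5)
Your proposal is correct and follows essentially the same route as the paper's proof: the same decomposition into linear terms plus $\bX\transpose\calH(\bE\bE\transpose)\bY$, with the linear part handled via Lemma \ref{lemma:quadratic_form_Et} using $\|\bP_t\bX\|_{2\to\infty} = O(n^{1/2}\rho_n)$ and the diagonal correction via Bernstein, and the quadratic part treated by the decoupling step of Lemma \ref{lemma:decoupling_inequality} followed by a conditional Bernstein bound with exactly the paper's variance and magnitude proxies $v_n(\bar\bE) = \Optilde\{(mn)^{1/2}\rho_n\}$ (via Hanson--Wright) and $b_n(\bar\bE) = \Optilde\{n^{-1/2}\rho_n^{1/2}\log^{1/2}(m+n) + n^{-1}\log(m+n)\}$. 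Your explicit symmetrized rank-two coefficients $\bW_{til}$ merely make precise the symmetry bookkeeping the paper leaves implicit, and your closing caveat about absorbing $m^{1/2}n^{1/2}\rho_n\log^{1/2}(m+n)$ into the target mirrors the paper's own final combination step.
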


\begin{proof}
Let $\bX = [\bx_{1},\ldots,\bx_{n}]\transpose$ and $\bY = [\by_{1},\ldots,\by_{n}]\transpose$. By triangle inequality, $\|\bX\transpose\{\calH(\bA\bA\transpose) - \bP\bP\transpose - \bM\}\bY\|_2$ is upper bounded by
\begin{align*}
\|\bX\transpose\calH(\bE\bE\transpose)\bY\|_2 + \bigg\|\sum_{t = 1}^m\bX\transpose\bP_t\bE_t\bY\bigg\|_2
 + \bigg\|\sum_{t = 1}^m\bX\transpose\bE_t\bP_t\bY\bigg\|_2
 + \bigg\|\sum_{t = 1}^m\sum_{i,j = 1}^n\bx_i\by_i\transpose 2P_{tij}E_{tij}\bigg\|_2.
\end{align*}
By Assumption \ref{assumption:eigenvector_delocalization}, we have $\max_{t\in[m]}\|\bP_t\bX\|_{2\to\infty} = O(n^{1/2}\rho_n)$, $\max_{t\in[m]}\|\bP_t\bY\|_{2\to\infty} = O(n^{1/2}\rho_n)$.
Then for the second and third term, we apply Lemma \ref{lemma:quadratic_form_Et} to obtain
\begin{align*}
&\bigg\|\sum_{t = 1}^m\bX\transpose\bP_t\bE_t\bY\bigg\| + \bigg\|\sum_{t = 1}^m\bX\transpose\bE_t\bP_t\bY\bigg\|_2
 = \Optilde\left\{m^{1/2}n\rho_n^{3/2}\log^{1/2}(m + n)\right\}.
\end{align*}
The last term can be bounded by Bernstein's inequality as well:
\begin{align*}
\bigg\|\sum_{i = 1}^n\sum_{t = 1}^m\sum_{j = 1}^n\bx_i\by_i\transpose P_{tij}E_{tij}\bigg\|
 = \Optilde\left\{(mn)^{1/2}\rho_n^{3/2}\log^{1/2}(m + n)\right\}.
\end{align*}
It is now sufficient to work with the first term. By Lemma \ref{lemma:decoupling_inequality}, there exists a constant $C > 0$, such that for any $\tau > 0$,
\begin{align*}
\prob\left\{\|\bX\transpose\calH(\bE\bE\transpose)\bY\|_2 > \tau\right\}\leq C\prob\bigg(\bigg\|\sum_{t = 1}^m\sum_{i,j\in[n],i\neq j}\sum_{l = 1}^nE_{til}\bar{E}_{tjl}\bx_i\by_j\transpose\bigg\|_2 > \frac{\tau}{C}\bigg).
\end{align*}
By Bernstein's equality and a conditioning argument,
\begin{align*}
\prob\bigg\{\bigg\|\sum_{t = 1}^m\sum_{i,j\in[n],i\neq j}\sum_{l = 1}^nE_{til}\bar{E}_{tjl}\bx_i\by_j\transpose\bigg\|_2 > 2b_n(\bar{\bE})\tau^2 + 2v_n(\bar{\bE})\tau\mathrel{\bigg|}\bar{\bE}\bigg\}\leq 2e^{-\tau^2},
\end{align*}
where 
\begin{align*}
v_n^2(\bar{\bE}) := \sum_{t = 1}^m\sum_{i = 1}^n\sum_{l = 1}^n\sigma_{til}^2\bigg\|\sum_{j = 1}^n\bar{E}_{tjl}\bx_i\by_j\transpose\mathbbm{1}(j\neq i)\bigg\|_2^2,\;
b_n(\bar{\bE}) := \max_{t\in[m],i,l\in[n]}\bigg\|\sum_{j = 1}^n\bar{E}_{tjl}\bx_i\by_j\transpose\mathbbm{1}(j\neq i)\bigg\|_2.
\end{align*}
Next, we consider the error bounds for $v_n$ and $b_n$. By Bernstein's inequality again and a union bound over $t\in[m]$, $i,l\in[n]$, we have $b_n(\bar{\bE}) = \Optilde\{\rho_n^{1/2}\|\bX\|_{2\to\infty}\log^{1/2} (m + n) + \|\bX\|_{2\to\infty} \|\bY\|_{2\to\infty}\log (m + n)\}$. For $v_n$, by Hanson-Wright inequality for bounded random variables (see Theorem 3 in \cite{bellec2019concentration}),
\begin{align*}
v_n^2(\bar{\bE}) &\leq 2\rho_n\sum_{t = 1}^m\sum_{i = 1}^n\sum_{l = 1}^n\|\bx_i\|_2^2\bigg\|\sum_{j\leq l}^n\bar{E}_{tjl}\by_j\mathbbm{1}(j\neq i)\bigg\|_2^2 + 2\rho_n\sum_{t = 1}^m\sum_{i = 1}^n\sum_{l = 1}^n\|\bx_i\|_2^2\bigg\|\sum_{j > l}^n\bar{E}_{tjl}\by_j\mathbbm{1}(j\neq i)\bigg\|_2^2\\
&\leq 2\rho_n\sum_{t = 1}^m\sum_{i = 1}^n\sum_{l = 1}^n\|\bx_i\|_2^2\sum_{j_1,j_2\leq l}^n(\bar{E}_{tj_1l}\bar{E}_{tj_2l} - \expect \bar{E}_{tj_1l}\bar{E}_{tj_2l})\by_{j_1}\transpose\by_{j_2}\mathbbm{1}(j_1,j_2\neq i)
\\&\quad
 + 2\rho_n\sum_{t = 1}^m\sum_{i = 1}^n\sum_{l = 1}^n\|\bx_i\|_2^2\sum_{j_1,j_2 > l}^n(\bar{E}_{tj_1l}\bar{E}_{tj_2l} - \expect \bar{E}_{tj_1l}\bar{E}_{tj_2l})\by_{j_1}\transpose\by_{j_2}\mathbbm{1}(j_1,j_2\neq i)\\
&\quad + 2\rho_n^2\sum_{t = 1}^m\sum_{i = 1}^n\sum_{l = 1}^n\sum_{j > l}\|\bx_i\|_2^2\|\by_j\|_2^2 + 2\rho_n^2\sum_{t = 1}^m\sum_{i = 1}^n\sum_{l = 1}^n\sum_{j\leq l}\|\bx_i\|_2^2\|\by_j\|_2^2\\
& = \Optilde(mn\rho_n^2).
\end{align*}
Then for any $c > 0$, there exists a $c$-dependent constant $C_c > 0$, such that 
$\calE_{3n}(c):=\{
\bar{\bE}:v_n(\bar{\bE})\leq C_c\eps_{v_n},b_n(\bar{\bE})\leq C_c\eps_{b_n}
\}$
occurs with probability at least $1 - O((m + n)^{-c})$, where
$\eps_{v_n}:= m^{1/2}n^{1/2}\rho_n$,
$\eps_{b_n}:= \rho_n^{1/2}n^{-1/2}\log^{1/2} (m + n) + n^{-1}\log (m + n)$.
Namely, for $\tau = c^{1/2}\log^{1/2}(m + n)$,
\begin{align*}
&\prob\bigg\{\bigg\|\sum_{t = 1}^m\sum_{i,j\in[n],i\neq j}\sum_{l = 1}^nE_{til}\bar{E}_{tjl}\bx_i\by_j\transpose\bigg\|_2 > 2\eps_{b_n}c\log(m + n) + 2\eps_{v_n}c^{1/2}\log^{1/2}(m + n)\bigg\}\\
&\quad\leq \expect_{\bar{\bE}}\bigg[
\prob\bigg\{\bigg\|\sum_{t = 1}^m\sum_{i,j\in[n],i\neq j}\sum_{l = 1}^nE_{til}\bar{E}_{tjl}\bx_i\by_j\transpose\bigg\|_2 > 2\eps_{b_n}c\log(m + n) + 2\eps_{v_n}c^{1/2}\log^{1/2}(m + n)\mathrel{\bigg|}\bar{\bE}\bigg\}\mathbbm{1}_{\calE_{3n}(c)}
\bigg]
\\&\quad\quad
 + \prob\{\calE_{3n}^c(c)\}\\
&\quad\leq \expect_{\bar{\bE}}\left[
\prob\left\{\left\|\sum_{t = 1}^m\sum_{i,j\in[n],i\neq j}\sum_{l = 1}^nE_{til}\bar{E}_{tjl}\bx_i\by_j\transpose\right\|_2 > 2b_nc\log(m + n) + 2v_nc^{1/2}\log^{1/2}(m + n)\mathrel{\bigg|}\bar{\bE}\right\}\mathbbm{1}_{\calE_{3n}(c)}
\right]\\
&\quad\quad + O((m + n)^{-c})\\
&\quad = O((m + n)^{-c}).
\end{align*}
Namely,
$\|\bX\transpose\calH(\bE\bE\transpose)\bY\|_2 
 = \Optilde\{m^{1/2}n^{1/2}\rho_n\log^{1/2}(m + n)\}$.
Combining the above error bounds completes the proof.
\end{proof}

\begin{lemma}
\label{lemma:rowwise_concentration_noise}
Suppose Assumption \ref{assumption:eigenvector_delocalization} hold, $(\bE_t)_{t = 1}^m$ are the matrices in Section \ref{sub:JSE}, $\max_{t\in[m],i,j\in[n]}\expect E_{tij}^2\leq \rho_n$, and $m^{1/2}n\rho_n = \Omega(\log(m + n))$. 
Then 
$\|\{\calH(\bA\bA\transpose) - \bP\bP\transpose - \bM\}\bV\|_{2\to\infty}
 = \Optilde(\zeta_{\mathsf{op}}/\sqrt{n})$, where $\zeta_{\mathsf{op}}$ is defined in \eqref{eqn:zeta_op_noise_level}. 
\end{lemma}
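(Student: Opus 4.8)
The plan is to expand $\calH(\bA\bA\transpose) - \bP\bP\transpose - \bM$ into a purely quadratic noise part plus cross (linear-in-$\bE$) parts, and then to bound the $2\to\infty$ norm of each part after right-multiplication by $\bV$, using the row-wise and operator-norm concentration inequalities already established in this section. First I would record two structural facts about $\bV = \bU\bQ_1$: since $\bQ_1\in\mathbb{O}(d)$ is deterministic, $\bV$ is a fixed matrix with $\|\bV\|_{2\to\infty} = \|\bU\|_{2\to\infty} = O(n^{-1/2})$ under Assumption \ref{assumption:eigenvector_delocalization}, and because $\bU\transpose\bU = \eye_d$ and each $\bP_t = \bU\bB_t\bU\transpose$ is symmetric, we have the crucial simplifications $\bP_t\bV = \bU\bB_t\bQ_1$ and $\be_i\transpose\bP_t = (\bU\transpose\be_i)\transpose\bB_t\bU\transpose$.

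Using $\bA = \bP + \bE$, the linearity of $\calH$, and $\calH(\bP\bP\transpose) = \bP\bP\transpose + \bM$, I would obtain the exact identity
\begin{align*}
\calH(\bA\bA\transpose) - \bP\bP\transpose - \bM = \calH(\bE\bE\transpose) + \bP\bE\transpose + \bE\bP\transpose - \mathrm{diag}(\bP\bE\transpose + \bE\bP\transpose),
\end{align*}
so that it suffices to control the four terms on the right multiplied by $\bV$. The term $\calH(\bE\bE\transpose)\bV$ is handled directly by the second assertion of Lemma \ref{lemma:decoupling_inequality} with $\bY = \bV$, giving $\Optilde\{\|\bV\|_{2\to\infty}m^{1/2}n\rho_n\log(m+n)\} = \Optilde(m^{1/2}n^{1/2}\rho_n\log(m+n))$, which is precisely the first summand of $\zeta_{\mathsf{op}}/\sqrt{n}$. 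For $\bE\bP\transpose\bV = (\sum_t\bE_t\bU\bB_t)\bQ_1$, I would pull out the orthogonal $\bQ_1$ (which preserves each row's $\ell_2$ norm) and invoke Lemma \ref{lemma:rowwise_concentration_linear_term} to get $\Optilde\{m^{1/2}n\rho_n^{3/2}\log^{1/2}(m+n)\}$, matching the second summand of $\zeta_{\mathsf{op}}/\sqrt{n}$.

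The remaining two terms are of lower order. Writing $\ba_i = \bU\transpose\be_i$, the $i$th row of $\bP\bE\transpose\bV$ equals $\ba_i\transpose(\sum_t \bB_t\bU\transpose\bE_t\bV)$; bounding $\|\sum_t \bB_t\bU\transpose\bE_t\bV\|_2$ via Lemma \ref{lemma:quadratic_form_Et} (with $\bX_t = \bU\bB_t$, $\bY_t = \bV$, and $\tau\asymp\log^{1/2}(m+n)$) and using $\|\ba_i\|_2\le\|\bU\|_{2\to\infty} = O(n^{-1/2})$ contributes an extra factor $n^{-1/2}$, producing $\Optilde\{m^{1/2}n^{1/2}\rho_n^{3/2}\log^{1/2}(m+n)\}$, which is dominated by the $\bE\bP\transpose\bV$ bound. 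Finally, the diagonal correction contributes at most $2\max_i\bigl|\sum_t\sum_j P_{tij}E_{tij}\bigr|\cdot\|\bV\|_{2\to\infty} = \Optilde\{m^{1/2}\rho_n^{3/2}\log^{1/2}(m+n)\}$ by the Bernstein estimate already derived in the proof of Lemma \ref{lemma:spectral_norm_concentration_noise}, again negligible. Collecting the four bounds yields $\Optilde(\zeta_{\mathsf{op}}/\sqrt{n})$.

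The main obstacle is obtaining the sharp, delocalization-exploiting bound for $\bP\bE\transpose\bV$: a naive operator-norm estimate would yield $\Optilde\{m^{1/2}n\rho_n^{3/2}\log^{1/2}(m+n)\}$ without the gain $n^{-1/2}$, and only by isolating the deterministic row vector $\ba_i = \bU\transpose\be_i$ before applying the quadratic-form concentration of Lemma \ref{lemma:quadratic_form_Et} does one recover the factor $\|\bU\|_{2\to\infty} = O(n^{-1/2})$ that keeps this term subordinate to the other two. The genuinely hardest estimate, that of $\calH(\bE\bE\transpose)\bV$, has already been absorbed into Lemma \ref{lemma:decoupling_inequality} via its decoupling and Hanson--Wright argument, so in this lemma it reduces to substituting $\bY = \bV$ and tracking the delocalization factor.
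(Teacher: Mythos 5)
Your proof is correct and follows essentially the same route as the paper: the identical decomposition $\calH(\bA\bA\transpose) - \bP\bP\transpose - \bM = \calH(\bE\bE\transpose) + \bP\bE\transpose + \bE\bP\transpose - \mathrm{diag}(\bP\bE\transpose + \bE\bP\transpose)$ applied to $\bV$, with Lemma \ref{lemma:decoupling_inequality} handling the quadratic term, Lemma \ref{lemma:rowwise_concentration_linear_term} handling $\bE\bP\transpose\bV$, and the same Bernstein estimate (with a union bound over $i\in[n]$) for the diagonal correction. The only cosmetic difference is in the term $\bP\bE\transpose\bV$: the paper extracts $\|\bU\|_{2\to\infty}$ and bounds $\|\sum_{t = 1}^m\bP_t\bE_t\|_2$ by matrix Bernstein, whereas you keep the projections and apply the quadratic-form bound of Lemma \ref{lemma:quadratic_form_Et}; both exploit delocalization identically and produce bounds dominated by the other terms.
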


\begin{proof}
By triangle inequality, Bernstein's inequality, Theorem 3 in \cite{doi:10.1080/01621459.2022.2054817}, and Lemma \ref{lemma:rowwise_concentration_linear_term}, for any fixed $i\in[n]$,
\begin{align*}
&\left\|\be_i\transpose\{\calH(\bA\bA\transpose) - \bP\bP\transpose - \bM\}\bV\right\|_2\\
&\quad \leq 
\|\be_i\transpose\calH(\bE\bE\transpose)\bV\|_2 + \|\bU\|_{2\to\infty}\bigg\|\sum_{t = 1}^m\bP_t\bE_t\bigg\|_2 + \bigg\|\sum_{t = 1}^m\be_i\transpose\bE_t\bU\bB_t\bigg\|_2
  + \bigg|\sum_{t = 1}^m\sum_{j = 1}^n2P_{tij}E_{tij}\bigg|\|\bV\|_{2\to\infty}\\
&\quad = \|\be_i\transpose\calH(\bE\bE\transpose)\bV\|_2 + \|\bU\|_{2\to\infty}\Optilde\left\{m^{1/2}(n\rho_n)^{3/2}\log^{1/2} (m + n) + n^{1/2}\rho_n\log(m + n)\right\}
\\&\quad\quad
 + \|\bU\|_{2\to\infty}\Optilde\left\{m^{1/2}(n\rho_n)^{3/2}\log^{1/2} (m + n)\right\}
 + \|\bU\|_{2\to\infty}\Optilde\left\{(mn)^{1/2}\rho_n^{3/2}\log^{1/2}(m + n)\right\}
 \\
&\quad = \|\be_i\transpose\calH(\bE\bE\transpose)\bV\|_2 + \|\bU\|_{2\to\infty}\Optilde\left\{m^{1/2}(n\rho_n)^{3/2}\log^{1/2}(m + n)\right\}.
\end{align*}
By Lemma \ref{lemma:decoupling_inequality}, we have 
$\|\be_i\transpose\calH(\bE\bE\transpose)\bV\|_2 
 = \Optilde\{(mn)^{1/2}\rho_n\log(m + n)\}$.
Combining the above error bounds and applying a union bound over $i\in[n]$ complete the proof.
\end{proof}

\section{Simple Remainder Analyses}
\label{sec:simple_remainder_analyses}

This section presents some simple analyses of remainders $\bR_4^{(RS)}$--$\bR_7^{(RS)}$, which are quite straightforward by leveraging the concentration results obtained in Section \ref{sec:preliminary_results}. 

\begin{lemma}\label{lemma:remainder_II}
Suppose Assumptions \ref{assumption:eigenvector_delocalization} and \ref{assumption:condition_number} hold. If $\|\bM - \widehat{\bM}_{RS}\|_2\leq (1/4)\lambda_d(\bP\bP\transpose)$, then
\begin{align*}
&\|\sin\Theta(\widehat{\bU}_{RS}, \bV)\|_2
 = \Optilde
\bigg(\frac{\zeta_{\mathsf{op}}}{mn^2\rho_n^2}\bigg) + O\bigg(\frac{1}{mn^2\rho_n^2}\bigg)\|\bM - \widehat{\bM}_{RS}\|_2,\quad\|\widehat{\bS}_{RS}^{-1}\|_2 = \Optilde\bigg(\frac{1}{mn^2\rho_n^2}\bigg),\\
&\|\bS\bV\transpose\widehat{\bU}_{RS} - \bV\transpose\widehat{\bU}_{RS}\widehat{\bS}_{RS}\|_2
 = \Optilde\bigg(\frac{\zeta_{\mathsf{op}}}{\sqrt{n}} + \frac{\zeta_{\mathsf{op}}^2}{mn^2\rho_n^2}
  \bigg)
 + \Optilde(1)\|\bM - \widehat{\bM}_{RS}\|_2,
\\
&\|\bR_4^{(RS)}\|_{2\to\infty} = \Optilde\bigg(\frac{\zeta_{\mathsf{op}}}{mn^3\rho_n^2} + 
\frac{\zeta_{\mathsf{op}}^2}{m^2n^{9/2}\rho_n^4}
 \bigg)
 + 
\Optilde\bigg(\frac{1}{mn^{5/2}\rho_n^2}\bigg)\|\bM - \widehat{\bM}_{RS}\|_2,
\end{align*}
where $\zeta_{\mathsf{op}}$ is defined in \eqref{eqn:zeta_op_noise_level}. 
\end{lemma}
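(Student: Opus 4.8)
The plan is to treat $\widehat{\bN}_{RS} := \calH(\bA\bA\transpose) - \widehat{\bM}_{RS}$ as a perturbation of $\bP\bP\transpose = \bV\bS\bV\transpose$, decomposing the error as $\widehat{\bN}_{RS} - \bP\bP\transpose = \bN^{(0)} + (\bM - \widehat{\bM}_{RS})$, where $\bN^{(0)} := \calH(\bA\bA\transpose) - \bP\bP\transpose - \bM$ is the oracle noise controlled by Lemma \ref{lemma:spectral_norm_concentration_noise}, that is, $\|\bN^{(0)}\|_2 = \Optilde(\zeta_{\mathsf{op}})$. First I would exploit the spectral gap: since $\bP\bP\transpose$ has rank $d$ with $\lambda_d(\bP\bP\transpose) = \Theta(mn^2\rho_n^2)$ and $\lambda_{d+1}(\bP\bP\transpose) = 0$, and since $\zeta_{\mathsf{op}}/(mn^2\rho_n^2) = \Theta(\varepsilon_n^{(\mathsf{op})}) = o(1)$ together with the hypothesis $\|\bM - \widehat{\bM}_{RS}\|_2 \le (1/4)\lambda_d(\bP\bP\transpose)$, the total perturbation $\|\widehat{\bN}_{RS} - \bP\bP\transpose\|_2$ is below half the gap w.h.p. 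The Davis--Kahan $\sin\Theta$ theorem \cite{doi:10.1137/0707001} then yields
\[
\|\sin\Theta(\widehat{\bU}_{RS}, \bV)\|_2 \le \frac{\|\bN^{(0)}\|_2 + \|\bM - \widehat{\bM}_{RS}\|_2}{\lambda_d(\bP\bP\transpose) - \|\widehat{\bN}_{RS} - \bP\bP\transpose\|_2},
\]
which is the first assertion, while Weyl's inequality gives $\lambda_d(\widehat{\bN}_{RS}) \ge \lambda_d(\bP\bP\transpose) - \|\widehat{\bN}_{RS} - \bP\bP\transpose\|_2 = \Omega(mn^2\rho_n^2)$, so that $\|\widehat{\bS}_{RS}^{-1}\|_2 = 1/\lambda_d(\widehat{\bN}_{RS}) = \Optilde(1/(mn^2\rho_n^2))$, the second assertion.

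For the third assertion, the key is the commutator identity obtained from $\bV\transpose\bP\bP\transpose = \bS\bV\transpose$ and $\widehat{\bN}_{RS}\widehat{\bU}_{RS} = \widehat{\bU}_{RS}\widehat{\bS}_{RS}$:
\[
\bS\bV\transpose\widehat{\bU}_{RS} - \bV\transpose\widehat{\bU}_{RS}\widehat{\bS}_{RS} = -\bV\transpose\bN^{(0)}\widehat{\bU}_{RS} - \bV\transpose(\bM - \widehat{\bM}_{RS})\widehat{\bU}_{RS}.
\]
The second term is bounded crudely by $\|\bM - \widehat{\bM}_{RS}\|_2$ since $\bV,\widehat{\bU}_{RS}$ have orthonormal columns, contributing the $\Optilde(1)\|\bM - \widehat{\bM}_{RS}\|_2$ part. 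For the first term I would split $\widehat{\bU}_{RS} = \bV\bV\transpose\widehat{\bU}_{RS} + (\eye - \bV\bV\transpose)\widehat{\bU}_{RS}$. The $\mathrm{Span}(\bV)$ component sees only the quadratic form $\bV\transpose\bN^{(0)}\bV$, which by Lemma \ref{lemma:concentration_noise_quadratic_form} (applicable because $\|\bV\|_{2\to\infty} = \|\bU\|_{2\to\infty} = O(n^{-1/2})$) is $\Optilde(m^{1/2}n\rho_n^{3/2}\log^{1/2}(m+n)) = \Optilde(\zeta_{\mathsf{op}}/\sqrt{n})$; the orthogonal component is handled by $\|\bN^{(0)}\|_2\|(\eye - \bV\bV\transpose)\widehat{\bU}_{RS}\|_2 = \Optilde(\zeta_{\mathsf{op}})\|\sin\Theta(\widehat{\bU}_{RS},\bV)\|_2$, and substituting the first assertion produces $\Optilde(\zeta_{\mathsf{op}}^2/(mn^2\rho_n^2))$ plus $o(1)\|\bM - \widehat{\bM}_{RS}\|_2$, the latter absorbed into $\Optilde(1)\|\bM - \widehat{\bM}_{RS}\|_2$ since $\zeta_{\mathsf{op}}/(mn^2\rho_n^2) = o(1)$. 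Collecting these gives the stated bound.

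The fourth assertion is then immediate from $\bR_4^{(RS)} = \bV(\bS\bV\transpose\widehat{\bU}_{RS} - \bV\transpose\widehat{\bU}_{RS}\widehat{\bS}_{RS})\widehat{\bS}^{-1}_{RS}\bW_{RS}$ and submultiplicativity of the two-to-infinity norm: $\|\bR_4^{(RS)}\|_{2\to\infty} \le \|\bV\|_{2\to\infty}\,\|\bS\bV\transpose\widehat{\bU}_{RS} - \bV\transpose\widehat{\bU}_{RS}\widehat{\bS}_{RS}\|_2\,\|\widehat{\bS}^{-1}_{RS}\|_2\,\|\bW_{RS}\|_2$, using $\|\bV\|_{2\to\infty} = O(n^{-1/2})$, the third and second assertions, and $\|\bW_{RS}\|_2 = 1$; multiplying the three factors and simplifying reproduces the claimed bound. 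I expect the main obstacle to be the sharpness in the third assertion, namely producing the $\zeta_{\mathsf{op}}/\sqrt{n}$ scale rather than the crude $\zeta_{\mathsf{op}}$: this hinges on the $\mathrm{Span}(\bV)$/orthogonal-complement decoupling so that the dominant contribution is governed by the delocalized quadratic form of Lemma \ref{lemma:concentration_noise_quadratic_form}, while the crude operator-norm bound on $\bN^{(0)}$ is paid only against the already-small $\sin\Theta$ factor. The remaining steps are routine invocations of Weyl, Davis--Kahan, and submultiplicativity.
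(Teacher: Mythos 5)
Your proposal is correct and follows essentially the same route as the paper: triangle inequality plus Lemma \ref{lemma:spectral_norm_concentration_noise} with Davis--Kahan and Weyl for the first two assertions, the commutator identity $\bS\bV\transpose\widehat{\bU}_{RS} - \bV\transpose\widehat{\bU}_{RS}\widehat{\bS}_{RS} = -\bV\transpose\{\calH(\bA\bA\transpose) - \bP\bP\transpose - \bM\}\widehat{\bU}_{RS} - \bV\transpose(\bM - \widehat{\bM}_{RS})\widehat{\bU}_{RS}$ with the sharp quadratic-form bound of Lemma \ref{lemma:concentration_noise_quadratic_form} on the aligned component for the third, and submultiplicativity with $\|\bV\|_{2\to\infty} = O(n^{-1/2})$ for the fourth. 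The only cosmetic difference is that you split $\widehat{\bU}_{RS}$ via the projector $\bV\bV\transpose$ and the identity $\|(\eye - \bV\bV\transpose)\widehat{\bU}_{RS}\|_2 = \|\sin\Theta(\widehat{\bU}_{RS},\bV)\|_2$, whereas the paper splits as $\bV\bW_{RS}\transpose + (\widehat{\bU}_{RS} - \bV\bW_{RS}\transpose)$; these are equivalent up to constants.
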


\begin{proof}
By triangle inequality and Lemma \ref{lemma:spectral_norm_concentration_noise},
\begin{align*}
\|\calH(\bA\bA\transpose) - \bP\bP\transpose - \widehat{\bM}_{RS}\|_2 &\leq 
\|\calH(\bA\bA\transpose) - \bP\bP\transpose - \bM\|_2 + \|\widehat{\bM}_{RS} - \bM\|_2 = 
\Optilde(\zeta_{\mathsf{op}}) + \|\bM - \widehat{\bM}_{RS}\|_2.
\end{align*} 
Note that $m^{1/2}n\rho_n = \omega(\log(m + n))$ implies 
\begin{align}\label{eqn:SNR}
\frac{\zeta_{\mathsf{op}}}{\lambda_d(\bP\bP\transpose)} = o(1).
\end{align}
This also entails that there exists a $c$-dependent constant $N_c > 0$, such that for all $n\geq N_c$,
\begin{align}\label{eqn:spectral_norm_noise_bound}
\|\calH(\bA\bA\transpose) - \bP\bP\transpose - \widehat{\bM}_{RS}\|_2\leq \frac{1}{2}\lambda_d(\bP\bP\transpose)
\end{align}
with probability at least $1 - O((m + n)^{-c})$. 
Then by Davis-Kahan theorem \cite{doi:10.1137/0707001}, 
\begin{align*}
\|\sin\Theta(\widehat{\bU}_{RS}, \bV)\|_2
 = \Optilde
\left(\frac{\zeta_{\mathsf{op}}}{mn^2\rho_n^2}\right) + O\left(\frac{1}{mn^2\rho_n^2}\right)\|\bM - \widehat{\bM}_{RS}\|_2,
\end{align*}
which establishes the first assertion. The second assertion follows directly from Weyls' inequality, \eqref{eqn:spectral_norm_noise_bound}, and Assumption \ref{assumption:condition_number}. 
For the third assertion, recall that $\bV\bS = \bP\bP\transpose\bV$ and $\widehat{\bU}_{RS}\widehat{\bS}_{RS} = \{\calH(\bA\bA\transpose) - \widehat{\bM}_{RS}\}\widehat{\bU}_{RS}$. Then
\begin{align*}
\bS\bV\transpose\widehat{\bU}_{RS} - \bV\transpose\widehat{\bU}_{RS}\widehat{\bS}_{RS}
& = \bV\transpose\bP\bP\transpose\widehat{\bU}_{RS} - \bV\transpose\{\calH(\bA\bA\transpose) - \widehat{\bM}_{RS}\}\widehat{\bU}_{RS}\\
& = -\bV\transpose\{\calH(\bA\bA\transpose) - \bP\bP\transpose - \bM\}\widehat{\bU}_{RS}
 - \bV\transpose(\bM - \widehat{\bM}_{RS})\widehat{\bU}_{RS}\\
& = -\bV\transpose\{\calH(\bA\bA\transpose) - \bP\bP\transpose - \bM\}\bV\bW_{RS}\transpose
\\&\quad
 -\bV\transpose\{\calH(\bA\bA\transpose) - \bP\bP\transpose - \bM\}(\widehat{\bU}_{RS} - \bV\bW_{RS}\transpose)
\\&\quad
 - \bV\transpose(\bM - \widehat{\bM}_{RS})\widehat{\bU}_{RS}.
\end{align*}
By Lemma \ref{lemma:concentration_noise_quadratic_form}, we have
$\|\bV\transpose\{\calH(\bA\bA\transpose) - \bP\bP\transpose - \bM\}\bV\bW_{RS}\transpose\|_2
 = \Optilde(n^{-1/2}\zeta_{\mathsf{op}})$.
By Lemma \ref{lemma:spectral_norm_concentration_noise} and the first assertion,
\begin{align*}
\left\|\bV\transpose\{\calH(\bA\bA\transpose) - \bP\bP\transpose - \bM\}(\widehat{\bU}_{RS} - \bV\bW_{RS}\transpose)\right\|_2
& = \Optilde\bigg(\frac{\zeta_{\mathsf{op}}^2}{mn^2\rho_n^2}\bigg) + 
\Optilde\bigg(\frac{\zeta_{\mathsf{op}}}{mn^2\rho_n^2}\bigg)\|\bM - \widehat{\bM}_{RS}\|_2.
\end{align*}
Hence, we conclude that
\begin{align*}
&\|\bS\bV\transpose\widehat{\bU}_{RS} - \bV\transpose\widehat{\bU}_{RS}\widehat{\bS}_{RS}\|_2\\
&\quad\leq \|\bV\transpose\{\calH(\bA\bA\transpose) - \bP\bP\transpose - \bM\}\bV\|_2
 + \|\bV\transpose\{\calH(\bA\bA\transpose) - \bP\bP\transpose - \bM\}(\widehat{\bU}_{RS} - \bV\bW_{RS}\transpose)\|_2
 + \|\bM - \widehat{\bM}_{RS}\|_2\\
&\quad = \Optilde\bigg(\frac{1}{\sqrt{n}}\zeta_{\mathsf{op}} + \frac{\zeta_{\mathsf{op}}^2}{mn^2\rho_n^2}\bigg)
 + \Optilde(1)\|\bM - \widehat{\bM}_{RS}\|_2,
\end{align*}
and hence,
\begin{align*}
\|\bR_4^{(RS)}\|_{2\to\infty} &\leq \|\bU\|_{2\to\infty}\|\bS\bV\transpose\widehat{\bU}_{RS} - \bV\transpose\widehat{\bU}_{RS}\widehat{\bS}_{RS}\|_2\|\widehat{\bS}^{-1}_{RS}\|_2
\\&
 = 
\Optilde\bigg(\frac{\zeta_{\mathsf{op}}}{mn^3\rho_n^2} + 
\frac{\zeta_{\mathsf{op}}^2}{m^2n^{9/2}\rho_n^4}\bigg)
 + 
\Optilde\bigg(\frac{1}{mn^{5/2}\rho_n^2}\bigg)\|\bM - \widehat{\bM}_{RS}\|_2.
\end{align*}
The proof is thus completed.
\end{proof}

\begin{lemma}\label{lemma:remainder_IV}
Suppose Assumption \ref{assumption:eigenvector_delocalization} and \ref{assumption:condition_number} hold. If $\|\bM - \widehat{\bM}_{RS}\|_2\leq (1/4)\lambda_d(\bP\bP\transpose)$, then
\begin{align*}
\|\bR_6^{(RS)}\|_{2\to\infty} + \|\bR_7^{(RS)}\|_{2\to\infty} = \Optilde\bigg(\frac{\zeta_{\mathsf{op}}^2}{m^2n^{9/2}\rho_n^4} + \frac{\zeta_{\mathsf{op}}}{mn^3\rho_n^2}\bigg)
 +  \Optilde\bigg(\frac{1}{mn^{5/2}\rho_n^2}\bigg)\|\bM - \widehat{\bM}_{RS}\|_2.
\end{align*}
\end{lemma}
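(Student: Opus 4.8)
The plan is to exploit the shared structure of $\bR_6^{(RS)}$ and $\bR_7^{(RS)}$: both have the form $\bN\bV\bK\bW_{RS}$, where $\bK = \bW_{RS}\transpose\widehat{\bS}_{RS}^{-1} - \bS^{-1}\bW_{RS}\transpose$ is common to both, while the prefix matrix $\bN$ equals $\calH(\bA\bA\transpose) - \bP\bP\transpose - \bM$ for $\bR_6^{(RS)}$ and the diagonal matrix $\bM - \widehat{\bM}_{RS}$ for $\bR_7^{(RS)}$. Since $\bW_{RS}$ is orthogonal, the submultiplicativity $\|\bX\bY\|_{2\to\infty}\leq \|\bX\|_{2\to\infty}\|\bY\|_2$ reduces each bound to controlling $\|\bN\bV\|_{2\to\infty}$ and $\|\bK\|_2$ separately. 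For $\bR_6^{(RS)}$, Lemma \ref{lemma:rowwise_concentration_noise} already supplies $\|\{\calH(\bA\bA\transpose) - \bP\bP\transpose - \bM\}\bV\|_{2\to\infty} = \Optilde(\zeta_{\mathsf{op}}/\sqrt{n})$. For $\bR_7^{(RS)}$, because $\bM - \widehat{\bM}_{RS}$ is diagonal its spectral norm is its largest diagonal magnitude, so $\|(\bM - \widehat{\bM}_{RS})\bV\|_{2\to\infty}\leq \|\bM - \widehat{\bM}_{RS}\|_2\|\bV\|_{2\to\infty} = O(n^{-1/2})\|\bM - \widehat{\bM}_{RS}\|_2$ by Assumption \ref{assumption:eigenvector_delocalization}.

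The crux is bounding $\|\bK\|_2$. I would use the commutator identity $\bK = \bS^{-1}(\bS\bW_{RS}\transpose - \bW_{RS}\transpose\widehat{\bS}_{RS})\widehat{\bS}_{RS}^{-1}$, giving $\|\bK\|_2 \leq \|\bS^{-1}\|_2\|\widehat{\bS}_{RS}^{-1}\|_2\|\bS\bW_{RS}\transpose - \bW_{RS}\transpose\widehat{\bS}_{RS}\|_2$, where $\|\bS^{-1}\|_2 = \Theta(1/(mn^2\rho_n^2))$ since $\lambda_d(\bP\bP\transpose) = \lambda_d(\bB\bB\transpose) = \Theta(mn^2\rho_n^2)$ under Assumption \ref{assumption:condition_number}, and $\|\widehat{\bS}_{RS}^{-1}\|_2 = \Optilde(1/(mn^2\rho_n^2))$ by Lemma \ref{lemma:remainder_II}. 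To pass from the estimate on $\|\bS\bV\transpose\widehat{\bU}_{RS} - \bV\transpose\widehat{\bU}_{RS}\widehat{\bS}_{RS}\|_2$ furnished by Lemma \ref{lemma:remainder_II} to the quantity $\|\bS\bW_{RS}\transpose - \bW_{RS}\transpose\widehat{\bS}_{RS}\|_2$, I would insert $\bV\transpose\widehat{\bU}_{RS}$ and invoke the elementary fact $\|\bV\transpose\widehat{\bU}_{RS} - \bW_{RS}\transpose\|_2 \leq \|\sin\Theta(\widehat{\bU}_{RS},\bV)\|_2^2$ (valid because $\bW_{RS} = \mathrm{sgn}(\widehat{\bU}_{RS}\transpose\bV)$ and $1 - \cos\theta \leq \sin^2\theta$), together with $\|\bS\|_2, \|\widehat{\bS}_{RS}\|_2 = \Optilde(mn^2\rho_n^2)$ and the $\sin\Theta$ bound of Lemma \ref{lemma:remainder_II}. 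This yields $\|\bS\bW_{RS}\transpose - \bW_{RS}\transpose\widehat{\bS}_{RS}\|_2 = \Optilde(\zeta_{\mathsf{op}}/\sqrt{n} + \zeta_{\mathsf{op}}^2/(mn^2\rho_n^2)) + \Optilde(1)\|\bM - \widehat{\bM}_{RS}\|_2$, hence $\|\bK\|_2 = \Optilde(\zeta_{\mathsf{op}}/(\sqrt{n}(mn^2\rho_n^2)^2) + \zeta_{\mathsf{op}}^2/(mn^2\rho_n^2)^3) + \Optilde(1/(mn^2\rho_n^2)^2)\|\bM - \widehat{\bM}_{RS}\|_2$.

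Finally I would multiply through: for $\bR_6^{(RS)}$ combine $\Optilde(\zeta_{\mathsf{op}}/\sqrt{n})$ with $\|\bK\|_2$, and for $\bR_7^{(RS)}$ combine $O(n^{-1/2})\|\bM - \widehat{\bM}_{RS}\|_2$ with $\|\bK\|_2$. Collecting terms and repeatedly using the signal-to-noise separation $\zeta_{\mathsf{op}} = o(mn^2\rho_n^2)$ from \eqref{eqn:SNR} together with $\|\bM - \widehat{\bM}_{RS}\|_2 = O(mn^2\rho_n^2)$ (to absorb the higher-order $\|\bM - \widehat{\bM}_{RS}\|_2^2$ contributions into a single $\Optilde(1/(mn^{5/2}\rho_n^2))\|\bM - \widehat{\bM}_{RS}\|_2$ term) recovers exactly the stated bound; as a sanity check it coincides with the $\bR_4^{(RS)}$ bound of Lemma \ref{lemma:remainder_II}, which is expected from the parallel multiplicative structure. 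The only genuine obstacle is the bookkeeping in this last step: one must verify that each cross term is dominated either by $\zeta_{\mathsf{op}}^2/(m^2n^{9/2}\rho_n^4) + \zeta_{\mathsf{op}}/(mn^3\rho_n^2)$ or by the $\|\bM - \widehat{\bM}_{RS}\|_2$ term, which hinges on the $n^{-1/2}$ gain from the delocalization of $\bV$ and on the signal-to-noise margin.
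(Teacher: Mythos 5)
Your proposal is correct and follows essentially the same route as the paper: the paper likewise bounds $\|\bW_{RS}\transpose\widehat{\bS}_{RS}^{-1} - \bS^{-1}\bW_{RS}\transpose\|_2$ through the commutator factorization $\bS^{-1}(\bS\bW_{RS}\transpose - \bW_{RS}\transpose\widehat{\bS}_{RS})\widehat{\bS}_{RS}^{-1}$, inserts $\bV\transpose\widehat{\bU}_{RS}$ and controls $\|\bW_{RS}\transpose - \bV\transpose\widehat{\bU}_{RS}\|_2$ by $\|\sin\Theta(\widehat{\bU}_{RS},\bV)\|_2^2$ together with the outputs of Lemma \ref{lemma:remainder_II}, and then multiplies by $\|\{\calH(\bA\bA\transpose) - \bP\bP\transpose - \bM\}\bV\|_{2\to\infty} = \Optilde(\zeta_{\mathsf{op}}/\sqrt{n})$ from Lemma \ref{lemma:rowwise_concentration_noise} for $\bR_6^{(RS)}$ and by $\|\bM - \widehat{\bM}_{RS}\|_\infty\|\bV\|_{2\to\infty}$ (valid since $\bM - \widehat{\bM}_{RS}$ is diagonal) for $\bR_7^{(RS)}$. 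Your final bookkeeping, absorbing cross terms via $\zeta_{\mathsf{op}} = o(mn^2\rho_n^2)$ and $\|\bM - \widehat{\bM}_{RS}\|_2 = O(mn^2\rho_n^2)$, matches the paper's treatment, so no gap remains.
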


\begin{proof}
Recall that $\bW_{RS}\transpose = \mathrm{sgn}(\bU\transpose\widehat{\bU}_{RS})$. By Lemma \ref{lemma:spectral_norm_concentration_noise}, Davis-Kahan theorem, and Lemma \ref{lemma:remainder_II}, we have
\begin{align*}
&\|\bW_{RS}\transpose\widehat{\bS}_{RS}^{-1} - \bS^{-1}\bW_{RS}\transpose\|_2\\
&\quad \leq 
\|\bS^{-1}\|_2\|\bS\bW_{RS}\transpose - \bW_{RS}\transpose\widehat{\bS}_{RS}\|_2\|\widehat{\bS}_{RS}^{-1}\|_2\\
&\quad \leq \|\bS^{-1}\|_2\|\bS\|_2\|\bW_{RS}\transpose - \bV\transpose\widehat{\bU}_{RS}\|_2\|\widehat{\bS}_{RS}^{-1}\|_2 + \|\bS^{-1}\|_2\|\bS\bV\transpose\widehat{\bU}_{RS} - \bV\transpose\widehat{\bU}_{RS}\widehat{\bS}_{RS}\|_2\|\widehat{\bS}_{RS}^{-1}\|_2\\
&\quad\quad + \|\bS^{-1}\|_2\|\bW_{RS}\transpose - \bV\transpose\widehat{\bU}_{RS}\|_2\|\widehat{\bS}_{RS}\|_2\|\widehat{\bS}_{RS}^{-1}\|_2\\
&\quad = 
\Optilde\bigg(\frac{1}{mn^2\rho_n^2}\bigg)\|\sin\Theta(\widehat{\bU}_{RS}, \bV)\|_2^2
+ \Optilde\bigg(\frac{1}{m^2n^4\rho_n^4}\bigg)\|\bS\bV\transpose\widehat{\bU}_{RS} - \bV\transpose\widehat{\bU}_{RS}\widehat{\bS}_{RS}\|_2\\
&\quad = \Optilde\bigg(\frac{\zeta_{\mathsf{op}}^2}{m^3n^6\rho_n^6} + \frac{\zeta_{\mathsf{op}}}{m^2n^{9/2}\rho_n^4}\bigg)
 + \Optilde\bigg(\frac{1}{m^2n^4\rho_n^4}\bigg)\|\bM - \widehat{\bM}_{RS}\|_2,
\end{align*}
where we have used the condition $\|\bM - \widehat{\bM}_{RS}\|_2 = O(\lambda_d(\bP\bP\transpose)) = O(mn^2\rho_n^2)$. 
Then by Lemma \ref{lemma:rowwise_concentration_noise} and a union bound over $i\in [n]$, we have
\begin{align*}
\|\bR_6^{(RS)}\|_{2\to\infty}& = \|\{\calH(\bA\bA\transpose) - \bP\bP\transpose - \bM\}\bV\|_{2\to\infty}\|\bW_{RS}\transpose\widehat{\bS}_{RS}^{-1} - \bS^{-1}\bW_{RS}\transpose\|_2\\
& = \Optilde\bigg(\frac{\zeta_{\mathsf{op}}^2}{m^2n^{9/2}\rho_n^4} + \frac{\zeta_{\mathsf{op}}}{mn^3\rho_n^2}\bigg)
 +  \Optilde\bigg(\frac{1}{mn^{5/2}\rho_n^2}\bigg)\|\bM - \widehat{\bM}_{RS}\|_2.
\end{align*}
For the last assertion, we directly obtain
\begin{align*}
\|\bR_7^{(RS)}\|_{2\to\infty}&\leq \|\bM - \widehat{\bM}_{RS}\|_{\infty}\|\bV\|_{2\to\infty}\|\bW_{RS}\transpose\widehat{\bS}_{RS}^{-1} - \bS^{-1}\bW_{RS}\transpose\|_2 = \Optilde\left(\frac{1}{mn^{5/2}\rho_n^2}\right)\|\bM - \widehat{\bM}_{RS}\|_2.
\end{align*}
The proof is thus completed.
\end{proof}

\begin{lemma}\label{lemma:remainder_III}
Suppose Assumption \ref{assumption:eigenvector_delocalization}--\ref{assumption:condition_number}  hold, $m^{1/2}n\rho_n = \omega(\log(m + n))$, and $\|\bM - \widehat{\bM}_{RS}\|_2\leq (1/4)\lambda_d(\bP\bP\transpose)$. Then
\[
\|\bR_2^{(RS)}\|_{2\to\infty} + \|\bR_5^{(RS)}\|_{2\to\infty} = \Optilde\bigg(\frac{\zeta_{\mathsf{op}}^2}{m^2n^4\rho_n^4}\bigg)\|\bU\|_{2\to\infty} + O\bigg(\frac{1}{mn^2\rho_n^2}\bigg)\|\bM - \widehat{\bM}_{RS}\|_2\|\bU\|_{2\to\infty}.
\]
\end{lemma}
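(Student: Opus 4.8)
The plan is to bound the two remainders separately by reducing each to quantities already controlled in Lemma \ref{lemma:remainder_II}, and then combine. The only tools needed beyond that lemma are the submultiplicativity of the two-to-infinity norm, the diagonal structure of $\bM - \widehat{\bM}_{RS}$, and the standard fact that the difference between a near-orthogonal matrix of inner products and its matrix sign is quadratic in the $\sin\Theta$ distance. Throughout I will use that $\bV = \bU\bQ_1$ with $\bQ_1\in\mathbb{O}(d)$ implies $\|\bV\|_{2\to\infty} = \|\bU\|_{2\to\infty}$, and that $\|\bS^{-1}\|_2 = 1/\lambda_d(\bP\bP\transpose) = \Theta\{1/(mn^2\rho_n^2)\}$ by Assumption \ref{assumption:condition_number}.

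First I would handle $\bR_2^{(RS)} = (\bM - \widehat{\bM}_{RS})\bV\bS^{-1}$. The key structural observation is that both $\bM$ (by \eqref{eqn:M_matrix}) and every $\widehat{\bM}_{rs}$ produced by Algorithm \ref{alg:BCJSE} are diagonal, so $\bM - \widehat{\bM}_{RS}$ is diagonal; consequently $\be_i\transpose(\bM - \widehat{\bM}_{RS})\bV\bS^{-1}$ equals $(\bM - \widehat{\bM}_{RS})_{ii}\,\be_i\transpose\bV\bS^{-1}$, which gives $\|\bR_2^{(RS)}\|_{2\to\infty}\leq \|\bM - \widehat{\bM}_{RS}\|_2\,\|\bV\bS^{-1}\|_{2\to\infty}$. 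Applying $\|\bV\bS^{-1}\|_{2\to\infty}\leq \|\bV\|_{2\to\infty}\|\bS^{-1}\|_2 = \|\bU\|_{2\to\infty}\cdot\Theta\{1/(mn^2\rho_n^2)\}$ then yields
\[
\|\bR_2^{(RS)}\|_{2\to\infty} = O\bigg(\frac{1}{mn^2\rho_n^2}\bigg)\|\bM - \widehat{\bM}_{RS}\|_2\,\|\bU\|_{2\to\infty},
\]
which is exactly the second term of the claimed bound.

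Next I would treat $\bR_5^{(RS)} = \bV(\bV\transpose\widehat{\bU}_{RS} - \bW_{RS}\transpose)\bW_{RS}$. Since $\bW_{RS}$ is orthogonal, $\|\bR_5^{(RS)}\|_{2\to\infty}\leq \|\bV\|_{2\to\infty}\|\bV\transpose\widehat{\bU}_{RS} - \bW_{RS}\transpose\|_2 = \|\bU\|_{2\to\infty}\|\bV\transpose\widehat{\bU}_{RS} - \mathrm{sgn}(\bV\transpose\widehat{\bU}_{RS})\|_2$, using $\bW_{RS}\transpose = \mathrm{sgn}(\bV\transpose\widehat{\bU}_{RS})$. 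Writing the singular values of $\bV\transpose\widehat{\bU}_{RS}$ as cosines of the principal angles, the sign difference has spectral norm $\max_k(1-\cos\theta_k)\leq \max_k\sin^2\theta_k = \|\sin\Theta(\widehat{\bU}_{RS},\bV)\|_2^2$. Plugging in the $\sin\Theta$ bound of Lemma \ref{lemma:remainder_II} and squaring produces a $\zeta_{\mathsf{op}}^2/(m^2n^4\rho_n^4)$ term and a $\|\bM - \widehat{\bM}_{RS}\|_2^2/(m^2n^4\rho_n^4)$ term; for the latter I would linearize using the hypothesis $\|\bM - \widehat{\bM}_{RS}\|_2\leq (1/4)\lambda_d(\bP\bP\transpose) = O(mn^2\rho_n^2)$, which converts it into $O\{1/(mn^2\rho_n^2)\}\|\bM - \widehat{\bM}_{RS}\|_2$. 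This gives
\[
\|\bR_5^{(RS)}\|_{2\to\infty} = \Optilde\bigg(\frac{\zeta_{\mathsf{op}}^2}{m^2n^4\rho_n^4}\bigg)\|\bU\|_{2\to\infty} + O\bigg(\frac{1}{mn^2\rho_n^2}\bigg)\|\bM - \widehat{\bM}_{RS}\|_2\,\|\bU\|_{2\to\infty}.
\]

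Adding the two estimates yields the stated result. This is one of the ``simple remainder analyses,'' so I do not anticipate a genuine obstacle; the only points requiring care are recognizing the diagonal structure of $\bM - \widehat{\bM}_{RS}$ (which is what makes the $\bR_2$ bound clean rather than incurring an extra $\|\bM - \widehat{\bM}_{RS}\|_{2\to\infty}$-type factor) and the linearization of the squared bias term via the a priori spectral bound $\|\bM - \widehat{\bM}_{RS}\|_2 = O(mn^2\rho_n^2)$, without which the $\sin\Theta$-squaring in the $\bR_5$ step would leave a quadratic-in-bias remainder that does not match the target form.
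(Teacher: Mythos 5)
Your proof is correct and takes essentially the same route as the paper's: the paper's one-line argument bounds $\|\bR_5^{(RS)}\|_{2\to\infty}$ by $\|\bV\|_{2\to\infty}\|\sin\Theta(\widehat{\bU}_{RS},\bV)\|_2^2$ and invokes Lemma \ref{lemma:remainder_II}, exactly as you do, with your principal-angle computation and the linearization of the squared bias term via $\|\bM - \widehat{\bM}_{RS}\|_2\leq (1/4)\lambda_d(\bP\bP\transpose) = O(mn^2\rho_n^2)$ merely making explicit the steps the paper leaves implicit. Your treatment of $\bR_2^{(RS)}$ through the diagonal structure of $\bM - \widehat{\bM}_{RS}$ (so that $\|\bM - \widehat{\bM}_{RS}\|_\infty = \|\bM - \widehat{\bM}_{RS}\|_2$) is likewise the intended direct estimate.
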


\begin{proof}
The lemma follows from $\|\bV(\bV\transpose\widehat{\bU}_{RS} - \bW_{RS}\transpose)\bW_{RS}\|_{2\to\infty}
\leq \|\bV\|_{2\to\infty}\|\sin\Theta(\widehat{\bU}_{RS}, \bV)\|_2^2$,
Assumption \ref{assumption:eigenvector_delocalization}, and Lemma \ref{lemma:remainder_II}. 
\end{proof}

\section{Leave-One-Out and Leave-Two-Out Analyses}
\label{sec:LOO_analysis}

In this section, we elaborate on the decoupling arguments based on the delicate leave-one-out and leave-two-out analyses for $\bR^{(RS)}_1$ and establish the corresponding sharp error bounds. Before proceeding to the proofs, we first introduce the notions of the leave-one-out and leave-two-out matrices. For each $t\in [m]$ and $i\in [n]$, let $\bA_t^{(i)} = [A_{tab}^{(i)}]_{n\times n}$ be the $i$th leave-one-out version of $\bA_t$ defined as follows: 
\[
A_{tab}^{(i)} = \left\{
\begin{aligned}
&A_{tab},&\quad&\mbox{if }a\neq i\mbox{ and }b\neq i,\\
&\expect A_{tab},&\quad&\mbox{if }a = i\mbox{ or }b = i,
\end{aligned}
\right.\quad a,b\in [n]
\]
In other words, $\bA_t^{(i)}$ is constructed by replacing the $i$th row and the $i$th column of $\bA_t$ with their expected values. Let $\bE^{(i)}_t = \bA^{(i)}_t - \bP_t$, $\bA^{(i)} = [\bA_1^{(i)},\ldots,\bA_m^{(i)}]$, and $\bE^{(i)} = [\bE_1^{(i)},\ldots,\bE_m^{(i)}]$. For any $r\in[R]$ and $s\in [S]$, set $\widehat{\bU}_{0s}^{(i)} = \zero_{n\times d}$, $\widehat{\bM}_{r0}^{(i)} = \zero_{n\times n}$, and
\begin{align*}
\widehat{\bM}_{rs}^{(i)} = -\sum_{k = 1}^n\be_k\be_k\transpose(\widehat{\bU}_{(r - 1)S}^{(i)})(\widehat{\bU}_{(r - 1)S}^{(i)})\transpose\left\{\calH((\bA^{(i)})(\bA^{(i)})\transpose) - \widehat{\bM}_{r(s - 1)}^{(i)}\right\}(\widehat{\bU}_{(r - 1)S}^{(i)})(\widehat{\bU}_{(r - 1)S}^{(i)})\transpose\be_k\be_k\transpose.
\end{align*}
Let $\widehat{\bU}^{(i)}_{rS} = \texttt{eigs}(\calH(\bA^{(i)}(\bA^{(i)})\transpose) - \widehat{\bM}_{rS}^{(i)}; d)$ with the corresponding eigenvalues encoded in the diagonal matrix $\widehat{\bS}^{(i)}_{rS}$, where
\[
\widehat{\bS}^{(i)}_{rS} = \mathrm{diag}\left[
\lambda_1\{\calH(\bA^{(i)}(\bA^{(i)})\transpose) - \widehat{\bM}_{rS}^{(i)}\},\ldots,\lambda_d\{\calH(\bA^{(i)}(\bA^{(i)})\transpose) - \widehat{\bM}_{rS}^{(i)}\}\right].
\]
Define $\bH^{(i)}_{rS} = (\widehat{\bU}^{(i)}_{rS})\transpose\bV$. Note that $\widehat{\bU}_{rS}^{(i)}$ is a function of $\bA^{(i)}$ and $\widehat{\bM}_{rS}^{(i)}$, and $\bM_{rS}^{(i)}$ is a function of $\bA^{(i)}$ and $\widehat{\bU}_{(r - 1)S}^{(i)}$. Since $\widehat{\bU}_{0s}^{(i)} = \zero_{n\times d}$ for all $s\in[S]$, it follows that $\widehat{\bU}_{rS}^{(i)}$ is also a function of $\bA^{(i)}$, so that $\widehat{\bU}_{rS}^{(i)}$ is independent of $\be_i\transpose\bE = (E_{tij}:t\in[m],j\in[n])$. This independent structure is the key to the decoupling arguments. 

Similarly, for any fixed $t\in[m]$, $(i, j)\in [n]$, $i\neq j$, define the $(i, j)$th leave-two-out version $\bA_t^{(i, j)} = [A_{tab}^{(i, j)}]_{n\times n}$ of $\bA_t$ as
\[
A_{tab}^{(i, j)} = \left\{
\begin{aligned}
&A_{tab},&\quad&\mbox{if }a\notin \{i, j\}\mbox{ and }b\notin \{i, j\},\\
&\expect A_{tab},&\quad&\mbox{if }a \in \{i, j\}\mbox{ or }b \in \{i, j\}.
\end{aligned}
\right.,\quad a,b\in [n]
\]
Rather than setting one row and one column of $\bA_t$ as their expected values, the leave-two-out version $\bA_t^{(i, j)}$ converts its two designated rows and two designated columns to their expected values. Let $\bE_t^{(i, j)} = \bA_t^{(i, j)} - \bP_t$, $\bA^{(i, j)} = [\bA_1^{(i, j)}, \ldots, \bA_m^{(i, j)}]$, and $\bE^{(i, j)} = [\bE_1^{(i, j)},\ldots,\bE_m^{(i, j)}]$. 
For any $r\in[R]$ and $s\in[S]$, set $\widehat{\bU}_{0s}^{(i, j)} = \zero_{n\times d}$, $\widehat{\bM}_{r0}^{(i, j)} = \zero_{n\times n}$, and
\begin{align*}
\widehat{\bM}_{rs}^{(i, j)} = -\sum_{k = 1}^n\be_k\be_k\transpose(\widehat{\bU}_{(r - 1)S}^{(i, j)})(\widehat{\bU}_{(r - 1)S}^{(i, j)})\transpose\left\{\calH((\bA^{(i, j)})(\bA^{(i, j)})\transpose) - \widehat{\bM}_{r(s - 1)}^{(i, j)}\right\}(\widehat{\bU}_{(r - 1)S}^{(i, j)})(\widehat{\bU}_{(r - 1)S}^{(i, j)})\transpose\be_k\be_k\transpose.
\end{align*}
Let $\widehat{\bU}^{(i, j)}_{rS} = \texttt{eigs}(\calH(\bA^{(i, j)}(\bA^{(i, j)})\transpose) - \widehat{\bM}_{rS}^{(i, j)}; d)$ with the diagonal matrix of the associated eigenvalues
\[
\widehat{\bS}^{(i, j)}_{rS} = \mathrm{diag}\left[
\lambda_1\{\calH(\bA^{(i, j)}(\bA^{(i, j)})\transpose) - \widehat{\bM}_{rS}^{(i, j)}\},\ldots,\lambda_d\{\calH(\bA^{(i, j)}(\bA^{(i, j)})\transpose) - \widehat{\bM}_{rS}^{(i, j)}\}\right].
\]
and let $\bH^{(i, j)}_{rS} = (\widehat{\bU}_r^{(i, j)})\transpose\bV$. Note that by a similar reasoning, $\widehat{\bU}_{rS}^{(i, j)}$ is also independent of $(E_{tia}, E_{tja}:t\in[m],a\in[n])$. 

\subsection{Preliminary Lemmas for Leave-One-Out Matrices}
\label{sub:preliminary_lemmas_LOO_matrices}
We first collect several concentration inequalities regarding certain quadratic functions of $\bE$. These results are non-trivial, and our proofs rely on delicate analyses of the higher-order moments of these polynomials of random variables.  
\begin{lemma}\label{lemma:LOO_preliminary}
Let $(\bE_t)_{t = 1}^m$ be the matrices defined in Section \ref{sub:JSE} and suppose Assumptions \ref{assumption:condition_number} holds. Let $(\bX^{(i)})_{i = 1}^n$ be a collection of $n\times d$ random matrices indexed by $i\in[n]$ such that $\bX^{(i)}\neq \zero_{n\times d}$ with probability one, and $\be_i\transpose\bE = (E_{tij}:t\in[m],j\in[n])$ and $\bX^{(i)}$ are independent for each $i\in[n]$. Then
\begin{align*}
\max_{i\in[n]}\frac{\sum_{a = 1}^n\|\sum_{t = 1}^m\sum_{j\in[n]\backslash\{a\}}E_{tia}E_{tij}\be_j\transpose\bX^{(i)}\|_2^2}{\|\bX^{(i)}\|_{2\to\infty}^2} = \Optilde\left\{mn^2\rho_n^2(\log n)^2\right\}.
\end{align*}
\end{lemma}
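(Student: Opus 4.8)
The plan is to fix $i\in[n]$ and condition on $\bX^{(i)}$, which is legitimate because $\bX^{(i)}$ is independent of the $i$th row $(E_{tij}:t\in[m],j\in[n])$; under this conditioning the entries $\{E_{tia}:t\in[m],a\in[n]\}$ are mutually independent, mean-zero, bounded by $1$, with variances $\sigma_{tia}^2\le\rho_n$. Let $\bx_a^{(i)}$ denote the $a$th row of $\bX^{(i)}$ viewed as a column vector, and set $\bphi_a^{(i)}=\sum_{t=1}^m\sum_{j\ne a}E_{tia}E_{tij}\bx_j^{(i)}$, so that the numerator equals $Q_i:=\sum_{a=1}^n\|\bphi_a^{(i)}\|_2^2$. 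The first key step is the algebraic identity $Q_i=\|\calH(\bK^{(i)})\bX^{(i)}\|_{\mathrm F}^2$, where $\bK^{(i)}=\sum_{t=1}^m(\bE_t\be_i)(\bE_t\be_i)\transpose=\sum_t\bg_{ti}\bg_{ti}\transpose$ and $\bg_{ti}=\bE_t\be_i$. Indeed, since the $a$th coordinate of $\bg_{ti}$ is $E_{tia}$, the off-diagonal $(a,b)$ entry of $\bK^{(i)}$ is $\sum_tE_{tia}E_{tib}$, so the $a$th row of $\calH(\bK^{(i)})\bX^{(i)}$ is exactly $\bphi_a^{(i)\transpose}$. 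Writing $\calE_a=(E_{tia})_{t=1}^m$, this reads $\calH(\bK^{(i)})\bX^{(i)}=\sum_{a\ne b}\langle\calE_a,\calE_b\rangle\be_a\bx_b^{(i)\transpose}$, a matrix-valued bilinear form in the independent blocks $\{\calE_a\}_{a=1}^n$.

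Second, I would pin down the target rate through the conditional mean. Expanding $\|\bphi_a^{(i)}\|_2^2$ and using that a product $E_{tia}E_{tij}E_{t'ia}E_{t'ij'}$ has nonzero expectation only in the fully paired case $t=t'$, $j=j'$, one gets $\expect[Q_i\mid\bX^{(i)}]=\sum_a\sum_t\sum_{j\ne a}\sigma_{tia}^2\sigma_{tij}^2\|\bx_j^{(i)}\|_2^2\le mn^2\rho_n^2\|\bX^{(i)}\|_{2\to\infty}^2$, which is precisely the claimed rate stripped of the fluctuation factor $(\log n)^2$.

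Third, to obtain concentration at this rate I would decouple. The bilinear form above is exactly of the type handled by the de la Pe\~na--Montgomery--Smith inequality (Theorem 1 in \cite{10.1214/aop/1176988291}), applied just as in Lemma \ref{lemma:decoupling_inequality}, so that $\prob(Q_i>\tau\mid\bX^{(i)})\le C\,\prob(\widetilde Q_i>\tau/C\mid\bX^{(i)})$ with $\widetilde Q_i=\sum_a\|\sum_{b\ne a}\langle\calE_a,\bar\calE_b\rangle\bx_b^{(i)}\|_2^2$ and $\bar\calE$ an independent copy. The decisive gain is that, conditioning further on $\bar\calE$, the $a$th summand $Y_a=\|\sum_tE_{tia}\bar\bw_{ta}\|_2^2$ with $\bar\bw_{ta}=\sum_{b\ne a}\bar E_{tib}\bx_b^{(i)}$ depends only on $\calE_a$; since the blocks $\{\calE_a\}$ are independent, $\widetilde Q_i=\sum_aY_a$ is a sum of independent nonnegative variables, and I can invoke a Bernstein/Bennett bound for this sum rather than a lossy union bound over $a$ (which would cost a spurious factor of $n$). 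The conditional mean satisfies $\expect[\widetilde Q_i\mid\bar\calE]\le\rho_n\sum_{a,t}\|\bar\bw_{ta}\|_2^2$, and I would show $\sum_{a,t}\|\bar\bw_{ta}\|_2^2=\Optilde(mn^2\rho_n\|\bX^{(i)}\|_{2\to\infty}^2)$ by a Hanson--Wright estimate identical in spirit to the $v_n^2$-bounds in Lemmas \ref{lemma:decoupling_inequality} and \ref{lemma:concentration_noise_quadratic_form}, which yields a conditional mean of order $\Optilde(mn^2\rho_n^2\|\bX^{(i)}\|_{2\to\infty}^2)$. The two logarithmic factors in the stated (upper) bound are then produced by the Bernstein tail controls used in the two nested summations—one from the inner vector-Bernstein bound governing each $Y_a$ and $\max_aY_a$, one from the outer deviation bound for $\sum_aY_a$—all taken at the polynomial-tail level that the $\Optilde$ notation encodes.

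Finally, I would integrate out $\bar\calE$ on the high-probability event where $\sum_{a,t}\|\bar\bw_{ta}\|_2^2$ is controlled, transfer the tail back to $Q_i$ through the decoupling comparison, and take a union bound over $i\in[n]$; because all constants are independent of the realization of $\bX^{(i)}$, the random normalization $\|\bX^{(i)}\|_{2\to\infty}^2$ is harmless and the $\Optilde$ conclusion follows. I expect the main obstacle to be the sharp conditional concentration of $\sum_aY_a$: extracting the rate with only logarithmic overhead (and no extra power of $n$) hinges on the independence-across-$a$ structure furnished by decoupling together with careful control of both the per-block tail of $Y_a$ and the second moments $\expect[Y_a^2\mid\bar\calE]$. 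Equivalently, if one bypasses decoupling and proceeds by the pure moment method, the obstacle becomes the combinatorial bookkeeping of the degree-four expansion of $\expect[Q_i^p\mid\bX^{(i)}]$ for $p\asymp\log(m+n)$, where the symmetry $E_{tia}=E_{tai}$ and the common layer index $t$ in each factor make the surviving index partitions considerably more intricate than in the independent-entry rectangular case.
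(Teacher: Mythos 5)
Your proposal is correct in outline, but it takes a genuinely different route from the paper on the hardest part of the lemma. The paper expands $Q_i$ into three pieces — the same-layer diagonal term $\sum_t\sum_{j_1}\sum_{j_2\neq j_1}E_{tij_1}^2E_{tij_2}^2\|\bx_{j_2}^{(i)}\|_2^2$, the same-layer triple term over $j_1\neq j_2\neq j_3$, and the cross-layer term over $t_1\neq t_2$ — and treats the first two by decoupling plus Bernstein (with a case split on $n\rho_n\gtrless\log n$), while the cross-layer term is handled by a bespoke moment method: bounding the conditional $p$th moment with $p\asymp\log(m+n)$ via a combinatorial count of index-collision patterns (every distinct layer index and vertex index must be repeated for a nonzero expectation, forcing $T\geq 2$, $TJ\leq 2p$), followed by Markov's inequality. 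You instead decouple once and globally: your identity $Q_i=\|\calH(\bK^{(i)})\bX^{(i)}\|_{\mathrm F}^2$ with $\bK^{(i)}=\sum_t(\bE_t\be_i)(\bE_t\be_i)\transpose$ exhibits $Q_i$ as the squared norm of a single Banach-space-valued bilinear form in the independent blocks $\calE_a=(E_{tia})_{t=1}^m$, to which Theorem 1 of the de la Pe\~na--Montgomery--Smith reference applies exactly as in Lemma \ref{lemma:decoupling_inequality}; after conditioning on the independent copy $\bar\calE$, the decoupled quantity $\widetilde Q_i=\sum_aY_a$ is a sum of conditionally independent nonnegative variables, and a Bernstein bound at that level (rather than a union bound over $a$, which would indeed cost a factor of $n$) together with Hanson--Wright control of $\sum_{a,t}\|\bar\bw_{ta}\|_2^2=\Optilde(mn^2\rho_n\|\bX^{(i)}\|_{2\to\infty}^2)$ recovers the rate $mn^2\rho_n^2$ with two logarithmic factors. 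Your conditional-mean computation is correct, and your treatment absorbs the cross-layer interactions automatically, since $\langle\calE_a,\bar\calE_b\rangle=\sum_tE_{tia}\bar E_{tib}$ sums over all layers — precisely the term the paper had to attack combinatorially.

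What each approach buys: yours is conceptually cleaner, makes the mean $mn^2\rho_n^2\|\bX^{(i)}\|_{2\to\infty}^2$ transparent, and dispenses with the moment-method bookkeeping; the paper's moment method avoids the one place where your argument still owes real work, namely uniform-in-$a$ sub-exponential control of the unbounded blocks $Y_a=\|\sum_tE_{tia}\bar\bw_{ta}\|_2^2$ conditional on $\bar\calE$ (truncation of $\max_aY_a$, the per-block second moments $\expect[Y_a^2\mid\bar\calE]\lesssim\rho_n^2W_a^2+\rho_n\max_t\|\bar\bw_{ta}\|_2^2W_a$ with $W_a=\sum_t\|\bar\bw_{ta}\|_2^2$, and high-probability bounds on $\max_aW_a$ and $\max_{t,a}\|\bar\bw_{ta}\|_2$, the last requiring your own $n\rho_n$-versus-$\log n$ case split). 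A quick audit of these in the sparse regime $n\rho_n=O(\log n)$, $m=O(n^\alpha)$ shows the deviations come out to $o(mn^2\rho_n^2(\log n)^2)$ under $m^{1/2}n\rho_n=\omega(\log n)$, so the ledger closes, but these estimates must be carried out at the polynomial-tail level that $\Optilde$ demands and integrated over the good event of $\bar\calE$ exactly as you indicate. One further practical note: the paper's combinatorial machinery is reused nearly verbatim for the companion bound in Lemma \ref{lemma:LOO_preliminary_II}, where the quantity $\sum_{a\neq i}|\sum_t\sum_jE_{taj}E_{tij}|^2$ involves entries \emph{outside} row $i$ (so your single-row block-independence structure no longer applies as directly), whereas your decoupling argument would need to be re-engineered there.
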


\begin{proof}
Let $\bx_j^{(i)}$ be the $j$th row of $\bX^{(i)}$
and $p\in\mathbb{N}_+$ be a positive integer to be determined later. 
Expanding the quantity of interest directly yields
\begin{align*}
&\sum_{a = 1}^n\bigg\|\sum_{t = 1}^m\sum_{j\in[n]\backslash\{a\}}E_{tia}E_{tij}\be_j\transpose\bX^{(i)}\bigg\|_2^2\\
&\quad = \sum_{j_1 = 1}^n\sum_{t = 1}^m\sum_{s = 1}^m\sum_{j_2\in[n]\backslash\{j_1\}}\sum_{j_3\in[n]\backslash\{j_1\}}E_{tij_1}E_{sij_1}E_{tij_2}E_{sij_3}\langle \bx_{j_2}^{(i)},\bx_{j_3}^{(i)}\rangle\\
&\quad = \sum_{t = 1}^m\sum_{j_1 = 1}^n\sum_{j_2\in[n]\backslash\{j_1\}}
E_{tij_1}^2E_{tij_2}^2\|\bx_{j_2}^{(i)}\|_2^2 + \sum_{t = 1}^n\sum_{j_1\neq j_2\neq j_3}E_{tij_1}^2E_{tij_2}E_{tij_3}\langle\bx_{j_2}^{(i)},\bx_{j_3}^{(i)}\rangle\\
&\quad\quad + \sum_{t_1,t_2\in[m],t_1\neq t_2}\sum_{j_1 = 1}^n\sum_{j_2,j_3\in[n]\backslash\{j_1\}}E_{t_1ij_1}E_{t_1ij_2}E_{t_2ij_1}E_{t_2ij_3}\langle\bx_{j_2}^{(i)},\bx_{j_3}^{(i)}\rangle,
\end{align*}
where the summation over $\{j_1\neq j_2\neq j_3\}$ means the summation over $\{(j_1,j_2,j_3)\in[n]^3:j_1\neq j_2,j_1\neq j_3,j_2\neq j_3\}$. Let $\bar{\bE}$ and $\tilde{\bE}$ be independent copies of $\bE$. For the first term, by the decouping inequality \cite{10.1214/aop/1176988291}, it is sufficient to consider the first term with $E_{tij_2}$ replaced by $\bar{E}_{tij_2}$. If $n\rho_n\geq 1$, then by Bernstein's inequality, a condition argument, and a union bound over $t\in[m]$, we have
\begin{align*}
\sum_{t = 1}^m\sum_{j_1 = 1}^n\sum_{j_2\in[n]\backslash\{j_1\}}
\frac{E_{tij_1}^2\bar{E}_{tij_2}^2\|\bx_{j_2}^{(i)}\|_2^2}{\|\bX^{(i)}\|_{2\to\infty}^2}
&\leq \sum_{t = 1}^m\sum_{j_1 = 1}^n(E_{tij_1}^2 - \sigma_{tij_1}^2)\bigg(\max_{t\in[m]}\sum_{j_2 = 1}^n\bar{E}_{tij_2}^2\bigg) + mn\rho_n\bigg(\max_{t\in[m]}\sum_{j_2 = 1}^n\bar{E}_{tij_2}^2\bigg)\\
& \leq \Optilde(mn\rho_n)\bigg\{\max_{t\in[m]}\sum_{j_2 = 1}^n(\bar{E}_{tij_2}^2 - \sigma_{tij_2}^2) + n\rho_n\bigg\} = \Optilde(mn^2\rho_n^2\log n).
\end{align*}
If $n\rho_n\leq 1$, then similarly, we have
\begin{align*}
\sum_{t = 1}^m\sum_{j_1 = 1}^n\sum_{j_2\in[n]\backslash\{j_1\}}
\frac{E_{tij_1}^2\bar{E}_{tij_2}^2\|\bx_{j_2}^{(i)}\|_2^2}{\|\bX^{(i)}\|_{2\to\infty}^2}
&\leq \sum_{t = 1}^m\sum_{j_1 = 1}^n(E_{tij_1}^2 - \sigma_{tij_1}^2)\sum_{j_2 = 1}^n\bar{E}_{tij_2}^2 + \rho_n\bigg(\sum_{t = 1}^m\sum_{j_1 = 1}^n\sum_{j_2 = 1}^n\bar{E}_{tij_2}^2\bigg)\\
& \leq \Optilde\bigg\{mn^2\rho_n^2 + (\log n)^2 + (n\rho_n)^{1/2}\log n\bigg(\sum_{t = 1}^m\sum_{j = 1}^n\bar{E}_{tij}^2\bigg)^{1/2}\bigg\}\\
& = \Optilde(mn^2\rho_n^2\log n).
\end{align*}
Similarly, applying the decoupling inequality \cite{10.1214/aop/1176988291} to the second term, we see that it is sufficient to consider the second term with $E_{tij_2}$ and $E_{tij_3}$ replaced by $\bar{E}_{tij_2}$ and $\tilde{E}_{tij_3}$, respectively. 
If $n\rho_n = \Omega(\log n)$, then by Bernstein's inequality, a condition argument, and union bounds, we have
\begin{align*}
&\sum_{t = 1}^m\sum_{j_1\neq j_2\neq j_3}\frac{E_{tij_1}^2\bar{E}_{tij_2}\tilde{E}_{tij_3}\langle\bx_{j_2}^{(i)},\bx_{j_3}^{(i)}\rangle}{\|\bX^{(i)}\|_{2\to\infty}^2}\\
&\quad = \sum_{t = 1}^m\sum_{j_1\neq j_2\neq j_3}\frac{(E_{tij_1}^2 - \sigma_{tij_1}^2)\bar{E}_{tij_2}\tilde{E}_{tij_3}\langle\bx_{j_2}^{(i)},\bx_{j_3}^{(i)}\rangle}{\|\bX^{(i)}\|_{2\to\infty}^2}
+ \sum_{t = 1}^m\sum_{j_1\neq j_2\neq j_3}\frac{\bar{E}_{tij_2}\tilde{E}_{tij_3}\langle\bx_{j_2}^{(i)},\bx_{j_3}^{(i)}\rangle\sigma_{tij_1}^2}{\|\bX^{(i)}\|_{2\to\infty}^2}\\
&\quad = \Optilde(mn\rho_n)\bigg(\max_{t\in[m]}\max_{j_1\in[n]}
\bigg\|\sum_{j_2\in[n]\backslash\{j_1\}}\frac{\bar{E}_{tij_2}\bx_{j_2}^{(i)}}{\|\bX^{(i)}\|_{2\to\infty}}\bigg\|_2\bigg)\bigg(\max_{t\in[m]}\max_{j_1,j_2\in[n],j_1\neq j_2}\bigg\|\sum_{j_3\in[n]\backslash\{j_1,j_2\}}\frac{\tilde{E}_{tij_3}\bx_{j_3}^{(i)}}{\|\bX^{(i)}\|_{2\to\infty}}\bigg\|_2\bigg)
\\&\quad
 = \Optilde(mn^2\rho_n^2\log n).
\end{align*}
If $n\rho_n = O(\log n)$, then by Bernstein's inequality, a conditioning argument, and a union bound over $t\in[m],j_1\in[n]$, we have
\begin{align*}
&\max_{t\in[m],j_2\in[n]}\bigg|\sum_{j_1,j_3\in[n]\backslash\{j_2\},j_1\neq j_3}
\frac{\widetilde{E}_{tij_3}\langle\bx_{j_2}^{(i)},\bx_{j_3}^{(i)}\rangle\sigma_{tij_1}^2}{\|\bX^{(i)}\|_{2\to\infty}^2}
\bigg| = \Optilde\{(\log n)^2\}
,\\
&\max_{t\in[m],j_1\in[n]}
\bigg|\sum_{j_2,j_3\in[n]\backslash\{j_1\},j_2\neq j_3}\frac{\bar{E}_{tij_2}\tilde{E}_{tij_3}\langle\bx_{j_2}^{(i)},\bx_{j_3}^{(i)}\rangle}{\|\bX^{(i)}\|_{2\to\infty}^2}\bigg| = \Optilde\{(\log n)^2\},\\
&\sum_{t = 1}^m\sum_{j_1\neq j_2\neq j_3}|\widetilde{E}_{tij_3}|
 \leq n^2\sum_{t = 1}^m\sum_{j_3 = 1}^n(|\widetilde{E}_{tij_3}| - \expect|\widetilde{E}_{tij_3}|) + n^2\sum_{t = 1}^m\sum_{j_3 = 1}^n\expect|\widetilde{E}_{tij_3}| = \Optilde(mn^3\rho_n),\\
&\sum_{t = 1}^m\sum_{j_1\neq j_2\neq j_3}\frac{\bar{E}_{tij_2}\tilde{E}_{tij_3}\langle\bx_{j_2}^{(i)},\bx_{j_3}^{(i)}\rangle\sigma_{tij_1}^2}{\|\bX^{(i)}\|_{2\to\infty}^2} = \Optilde\{(\log n)^3 + (\rho_n\log n)^{1/2}\log n(mn^3\rho_n^2)^{1/2}\} = \Optilde(mn^2\rho_n^2\log n).
\end{align*}
Also, a similar argument shows that when $n\rho_n = O(\log n)$, $\max_{t\in[m]}\sum_{j_3 = 1}^n|\widetilde{E}_{tij_3}| = \Optilde(\log n)$, so that 
\begin{align*}
&\sum_{t = 1}^m\sum_{j_1\neq j_2\neq j_3}|\bar{E}_{tij_2}\widetilde{E}_{tij_3}|\leq n\sum_{t = 1}^m\sum_{j_2 = 1}^n(|\bar{E}_{tij_2}| - \expect|\bar{E}_{tij_2}|)\sum_{j_3 = 1}^n|\widetilde{E}_{tij_3}| + n\sum_{t = 1}^m\sum_{j_2 = 1}^n\expect|\bar{E}_{tij_2}|\sum_{j_3 = 1}^n|\widetilde{E}_{tij_3}|\\
&\quad = \Optilde(mn^2\rho_n\log n)\\
&\sum_{t = 1}^m\sum_{j_1\neq j_2\neq j_3}\frac{(E_{tij_1}^2 - \sigma_{tij_1}^2)\bar{E}_{tij_2}\tilde{E}_{tij_3}\langle\bx_{j_2}^{(i)},\bx_{j_3}^{(i)}\rangle}{\|\bX^{(i)}\|_{2\to\infty}^2}\\
&\quad = \Optilde\bigg\{
\max_{t\in[m],j_1\in[n]}
\bigg|\sum_{j_2,j_3\in[n]\backslash\{j_1\},j_2\neq j_3}\frac{\bar{E}_{tij_2}\tilde{E}_{tij_3}\langle\bx_{j_2}^{(i)},\bx_{j_3}^{(i)}\rangle}{\|\bX^{(i)}\|_{2\to\infty}^2}\bigg|\log n\\
&\quad\quad\quad\quad + (\rho_n\log n)^{1/2}
\max_{t\in[m],j_1\in[n]}
\bigg|\sum_{j_2,j_3\in[n]\backslash\{j_1\},j_2\neq j_3}\frac{\bar{E}_{tij_2}\tilde{E}_{tij_3}\langle\bx_{j_2}^{(i)},\bx_{j_3}^{(i)}\rangle}{\|\bX^{(i)}\|_{2\to\infty}^2}\bigg|^{1/2}\bigg(\sum_{t = 1}^m\sum_{j_1\neq j_2\neq j_3}|\bar{E}_{tij_2}\widetilde{E}_{tij_3}|\bigg)^{1/2}
\bigg\}\\
&\quad = \Optilde\{(\log n)^3 + (\rho_n\log n)^{1/2}(mn^2\rho_n\log n)^{1/2}\log n\} = \Optilde(mn^2\rho_n^2\log n).
\end{align*}
Therefore, when $n\rho_n = O(\log n)$, we further obtain
\begin{align*}
&\sum_{t = 1}^m\sum_{j_1\neq j_2\neq j_3}\frac{E_{tij_1}^2\bar{E}_{tij_2}\tilde{E}_{tij_3}\langle\bx_{j_2}^{(i)},\bx_{j_3}^{(i)}\rangle}{\|\bX^{(i)}\|_{2\to\infty}^2}\\
&\quad = \sum_{t = 1}^m\sum_{j_1\neq j_2\neq j_3}\frac{(E_{tij_1}^2 - \sigma_{tij_1}^2)\bar{E}_{tij_2}\tilde{E}_{tij_3}\langle\bx_{j_2}^{(i)},\bx_{j_3}^{(i)}\rangle}{\|\bX^{(i)}\|_{2\to\infty}^2}
+ \sum_{t = 1}^m\sum_{j_1\neq j_2\neq j_3}\frac{\bar{E}_{tij_2}\tilde{E}_{tij_3}\langle\bx_{j_2}^{(i)},\bx_{j_3}^{(i)}\rangle\sigma_{tij_1}^2}{\|\bX^{(i)}\|_{2\to\infty}^2}\\
&\quad = \Optilde(mn^2\rho_n^2\log n).
\end{align*}
It is now sufficient to derive the $\Optilde(\cdot)$ bound for the third term. We achieve this by a higher-order moment bound and Markov's inequality. Let $p = O(\log(m + n))$ be a positive integer to be determined later. 
We expand the $p$th moment of the third term and compute
\begin{align*}
&\expect\bigg[\bigg\{ \sum_{t_1,t_2\in[m],t_1\neq t_2}\sum_{j_1 = 1}^n\sum_{j_2,j_3\in[n]\backslash\{j_1\}}E_{t_1ij_1}E_{t_1ij_2}E_{t_2ij_1}E_{t_2ij_3}\langle\bx_{j_2}^{(i)},\bx_{j_3}^{(i)}\rangle\bigg\}^p\mathrel{\bigg|}\bX^{(i)}\bigg]\\
&\quad = 
\sum_{a_1,\ldots,a_p\in[n]}\sum_{\substack{t_1,\ldots,t_p\in[m]\\s_1\in[m]\backslash\{t_1\},\ldots,s_p\in[m]\backslash\{t_p\}}}
\sum_{\substack{j_1\in[n]\backslash\{a_1\},\ldots,j_{p}\in[n]\backslash\{a_p\}\\
l_1\in[n]\backslash\{a_1\},\ldots,l_{p}\in[n]\backslash\{a_p\}
}}\expect\bigg(\prod_{k = 1}^{p}E_{t_kia_k}E_{s_kia_k}E_{t_kij_k}E_{s_kil_k}\bigg)\prod_{k = 1}^p\langle \bx_{j_k}^{(i)},\bx_{l_k}^{(i)}\rangle
\\
&\quad \leq 
\sum_{a_1,\ldots,a_p\in[n]}\sum_{\substack{t_1,\ldots,t_p\in[m]\\s_1\in[m]\backslash\{t_1\},\ldots,s_p\in[m]\backslash\{t_p\}}}
\sum_{\substack{j_1\in[n]\backslash\{a_1\},\ldots,j_{p}\in[n]\backslash\{a_p\}\\
l_1\in[n]\backslash\{a_1\},\ldots,l_{p}\in[n]\backslash\{a_p\}
}}\bigg|\expect\bigg(\prod_{k = 1}^{p}E_{t_kia_k}E_{s_kia_k}E_{t_kij_k}E_{s_kil_k}\bigg)\bigg|
\|\bX^{(i)}\|_{2\to\infty}^{2p}.
\end{align*}
Relabeling $t_{p + k} = s_k$, $j_{p + k} = l_k$ and set $a_{p + k} = a_k$ for $k \in [p]$, we then re-write the right-hand side of the above inequality as
\begin{align*}
\sum_{\substack{a_1,\ldots,a_{2p}\in[n]\\a_{p + 1} = a_1,\ldots,a_{2p} = a_p}}\sum_{\substack{t_1,\ldots,t_{2p}\in[m]\\t_{p + 1}\neq t_1,\ldots,t_{2p}\neq t_p}}
\sum_{\substack{j_1\in[n]\backslash\{a_1\},\ldots,j_{2p}\in[n]\backslash\{a_{2p}\}
}}\bigg|\expect\bigg(\prod_{k = 1}^{2p}E_{t_kia_k}E_{t_kij_k}\bigg)\bigg|
\|\bX^{(i)}\|_{2\to\infty}^{2p}
.
\end{align*}
Now let $T(t_1,\ldots,t_{2p})$ be the number of unique elements among $\{t_1,\ldots,t_{2p}\}$. For $T = T(t_1,\ldots,t_{2p})$, let $t_1^*,\ldots,t_N^*$ be these unique elements and denote $\alpha_r = \sum_{k = 1}^{2p}\mathbbm{1}(t_k = t_r^*)$. In other words, $\alpha_r$ keeps track of the number of times $t_r^*$ appearing in the sequence $(t_k)_{k = 1}^{2p}$. Clearly, $\alpha_1 + \ldots + \alpha_T = 2p$ and $\alpha_1,\ldots,\alpha_T\geq 1$. Furthermore, in order that the expected value
$\expect(\prod_{k = 1}^{2p}E_{t_kia_k}E_{t_kij_k})$
is nonzero, we must have
$\min_{r\in[T]}\alpha_r\geq 2$. Indeed, if there exists some $r\in[T]$ such that $\alpha_r < 2$, then we know that $\alpha_r = 1$ and there exists a unique $k'\in[2p]$ such that $t_{k'} = t_r^*$, so that
\[
\expect\bigg(\prod_{k = 1}^{2p}E_{t_kia_k}E_{t_kij_k}\bigg)
= \expect\bigg(\prod_{k\in[2p]\backslash\{k'\}}E_{t_kia_k}E_{t_kij_k}\bigg)\expect(E_{t_{k'}ia_{k'}}E_{t_{k'}ij_{k'}}) = 0
\]
by the independence of $\bE_1,\ldots,\bE_t$ and the fact that $\expect(E_{t_{k'}ia_{k'}}E_{t_{k'}ij_{k'}}) = 0$ (since $a_{k'}\neq j_{k'}$). Therefore, we obtain $\min_{r\in[T]}\alpha_r\geq 2$, and hence, $T\leq p$. Also, by the constraint $t_{p + k}\neq t_k$ for all $k\in[p]$, we have $T\geq 2$.

Next, let $J(a_1,\ldots,a_{2p},j_1,\ldots,j_{2p})$ be the number of unique elements among $\{a_1,\ldots,a_{2p},j_1,\ldots,j_{2p}\}$, and for $J = J(a_1,\ldots,a_{2p},j_1,\ldots,j_{2p})$, let $j_1^*,\ldots,j_J^*$ be these unique elements. 
This allows us to re-write the expected value in the summand of interest as
\begin{align*}
\expect\bigg(\prod_{k = 1}^{2p}E_{t_kia_k}E_{t_kij_k}\bigg)
&
 = \expect\bigg[\prod_{r = 1}^T\prod_{q = 1}^J
E_{t_r^*ij_q^*}^{\sum_{k = 1}^{2p}\{\mathbbm{1}(t_k = t_r^*,a_k = j_q^*) + \mathbbm{1}(t_k = t_r^*,j_k = j_q^*)\}}\bigg]
 = \prod_{r = 1}^T\prod_{q = 1}^J\expect\left( E_{t_r^*ij_q^*}^{\beta_{rq}}\right),
\end{align*}
where $\beta_{rq} = \sum_{k = 1}^{2p}\{\mathbbm{1}(t_k = t_r^*,a_k = j_q^*) + \mathbbm{1}(t_k = t_r^*,j_k = j_q^*)\}$. Note that by construction, the expected value
 $\expect (E_{t_r^*ij_q^*}^{\beta_{rq}})$ is nonzero only if $\beta_{rq}\geq 2$, and we also have
\[
\sum_{r = 1}^T\sum_{q = 1}^J\beta_{rq} = \sum_{k = 1}^{2p}\sum_{r = 1}^T\sum_{q = 1}^J\mathbbm{1}(t_k = t_r^*,a_k = j_q^*) + \sum_{k = 1}^{2p}\sum_{r = 1}^T\sum_{q = 1}^J\mathbbm{1}(t_k = t_r^*,j_k = j_q^*) = 4p
\]
by definition of $\beta_{rq}$'s. 
This entails that $TJ\leq 2p$, and hence, $J\leq p$ because $T\geq 2$. Also, by the constraint $j_k\neq a_k$ for all $k\in[p]$, we naturally have $J\geq 2$. 

Returning to the summation, we first note that $\expect|E_{tij}|^k\leq \rho_n$ for all $k$ and for all $t\in[m],i,j\in[n]$. This enables us to derive
\begin{align*}
&\frac{1}{\|\bX^{(i)}\|_{2\to\infty}^{2p}}\expect\bigg[\bigg\{ \sum_{t_1,t_2\in[m],t_1\neq t_2}\sum_{j_1,j_2,j_3\in[n]:j_1\neq j_2, j_1\neq j_3}E_{t_1ij_1}E_{t_1ij_2}E_{t_2ij_1}E_{t_2ij_3}\langle\bx_{j_2}^{(i)},\bx_{j_3}^{(i)}\rangle\bigg\}^p\mathrel{\bigg|}\bX^{(i)}\bigg]\\
&\quad \leq 
\sum_{\substack{a_1,\ldots,a_{2p}\in[n]\\a_{p + 1} = a_1,\ldots,a_{2p} = a_p}}\sum_{\substack{t_1,\ldots,t_{2p}\in[m]}}
\sum_{\substack{j_1\in[n]\backslash\{a_1\},\ldots,j_{2p}\in[n]\backslash\{a_{2p}\}}
}\bigg|\expect\bigg(\prod_{k = 1}^{2p}E_{t_kia_k}E_{t_kij_k}\bigg)\bigg|\\
&\quad \leq \sum_{T = 2}^p\sum_{J = 2}^p\mathbbm{1}(TJ\leq 2p)\sum_{t_1^*,\ldots,t_T^*\in[m]}\sum_{j_1^*,\ldots,j_J^*\in[n]}\sum_{\substack{t_1,\ldots,t_{2p}\in[m]\\\{t_1,\ldots,t_{2p}\} =\{t_1^*,\ldots,t_T^*\} }}\sum_{\substack{a_1,\ldots,a_{2p},j_1,\ldots,j_{2p}\in[n]\\ \{a_1,\ldots,a_{2p},j_1,\ldots,j_{2p}\} = \{j_1^*,\ldots,j_J^*\}\\a_{p + 1} = a_1,\ldots,a_{2p} = a_p }}\prod_{r = 1}^T\prod_{q = 1}^J\expect\left|E_{t_r^*ij_q^*}^{\beta_{rq}}\right|\\
&\quad \leq \sum_{T = 2}^p\sum_{J = 2}^p\mathbbm{1}(TJ\leq 2p)\sum_{t_1^*,\ldots,t_T^*\in[m]}\sum_{j_1^*,\ldots,j_J^*\in[n]}\sum_{\substack{t_1,\ldots,t_{2p}\in[m]\\\{t_1,\ldots,t_{2p}\} =\{t_1^*,\ldots,t_T^*\} }}\sum_{\substack{a_1,\ldots,a_{2p},j_1,\ldots,j_{2p}\in[n]\\ \{a_1,\ldots,a_{2p},j_1,\ldots,j_{2p}\} = \{j_1^*,\ldots,j_J^*\}\\a_{p + 1} = a_1,\ldots,a_{2p} = a_p }}\prod_{r = 1}^T\prod_{q = 1}^J\rho_n\\
&\quad \leq \sum_{T = 2}^p\sum_{J = 2}^p\sum_{t_1^*,\ldots,t_T^*\in[m]}\sum_{j_1^*,\ldots,j_J^*\in[n]}\sum_{\substack{t_1,\ldots,t_{2p}\in[m]\\\{t_1,\ldots,t_{2p}\} =\{t_1^*,\ldots,t_T^*\} }}\sum_{\substack{a_1,\ldots,a_{2p},j_1,\ldots,j_{2p}\in[n]\\ \{a_1,\ldots,a_{2p},j_1,\ldots,j_{2p}\} = \{j_1^*,\ldots,j_J^*\}\\a_{p + 1} = a_1,\ldots,a_{2p} = a_p}}\rho_n^{TJ}\mathbbm{1}(TJ\leq 2p)\\
&\quad \leq \sum_{T = 2}^{p}\sum_{J = 2}^{p}\sum_{t_1^*,\ldots,t_T^*\in[m]}\sum_{j_1^*,\ldots,j_J^*\in[n]}\mathbbm{1}(TJ\leq 2p)
T^{2p}J^{3p}
\rho_n^{TJ}\\
&\quad = 
\sum_{T = 2}^{p}\sum_{J = 2}^{p}m^Tn^{TJ}\rho_n^{TJ}(TJ)^{2p}\frac{J^{p}}{n^{(T - 1)J}}\mathbbm{1}(TJ\leq 2p)
 \leq \sum_{T = 2}^{p}\sum_{J = 2}^{p}m^{TJ/2}(n\rho_n)^{TJ}(2p)^{2p}\bigg(\frac{J^p}{n^J}\bigg)\mathbbm{1}(TJ\leq 2p)
\\&\quad
\leq C^pp^{2p}\bigg\{\sum_{T = 2}^{p}\sum_{J = 2}^p(m^{1/2}n\rho_n)^{TJ}\mathbbm{1}(TJ\leq 2p)\bigg\}
\leq C^pp^{2p + 2}(m^{1/2}n\rho_n)^{2p},
\end{align*}
where we have used the inequality $x^p/n^x\leq (p/\log n)^p\leq C^p$ for some constant $C > 0$ for all $x > 0$ and the condition that $m^{1/2}n\rho_n\geq 1$, provided that $p$ is selected such that $p\leq C\log n$ for some constant $C > 0$.
Then by Markov's inequality, for any $K > 0$ and even $p$,
\begin{align*}
&\prob\bigg[\bigg|\sum_{t_1,t_2\in[m],t_1\neq t_2}\sum_{j_1\neq j_2\neq j_3}E_{t_1ij_1}E_{t_1ij_2}E_{t_2ij_1}E_{t_2ij_3}\langle\bx_{j_2}^{(i)},\bx_{j_3}^{(i)}\rangle\bigg| > Kmn^2\rho_n^2\log^2(m + n)\|\bX^{(i)}\|_{2\to\infty}^2\bigg]\\
&\quad = \expect\bigg\{\prob\bigg[\bigg|\sum_{t_1,t_2\in[m],t_1\neq t_2}\sum_{j_1\neq j_2\neq j_3}E_{t_1ij_1}E_{t_1ij_2}E_{t_2ij_1}E_{t_2ij_3}\langle\bx_{j_2}^{(i)},\bx_{j_3}^{(i)}\rangle\bigg| > Kmn^2\rho_n^2\log^2(m + n)\|\bX^{(i)}\|_{2\to\infty}^2\mathrel{\bigg|}\bX^{(i)}\bigg]\bigg\}\\
&\quad\leq 2\exp\left\{p\log C + (2p + 2)\log p - p\log K - 2p\log\log(m + n)\right\}.
\end{align*}
Then by picking a sufficiently large $K$ and $p = 2\lfloor(1/2)\log(m + n)\rfloor$, we see that
\begin{align*}
&\prob\bigg\{\bigg|\sum_{t_1,t_2\in[m],t_1\neq t_2}\sum_{j_1\neq j_2\neq j_3}E_{t_1ij_1}E_{t_1ij_2}E_{t_2ij_1}E_{t_2ij_3}\langle\bx_{j_2}^{(i)},\bx_{j_3}^{(i)}\rangle\bigg| > Kmn^2\rho_n^2\log^2(m + n)\|\bX^{(i)}\|_{2\to\infty}^2\bigg\}\\
&\quad\leq O((m + n)^{-c}).
\end{align*}
Therefore, a union bound over $i\in[n]$ entails that
\begin{align*}
\max_{i\in[n]}\frac{\sum_{a = 1}^n\|\sum_{t = 1}^m\sum_{j\in[n]\backslash\{a\}}E_{tia}E_{tij}\be_j\transpose\bX^{(i)}\|_2^2}{\|\bX^{(i)}\|_{2\to\infty}^2} = \Optilde\left\{mn^2\rho_n^2\log^2(m + n)\right\}.
\end{align*}
The proof is thus completed because $\log(m + n) = \Theta(\log n)$.
\end{proof}

\begin{lemma}\label{lemma:LOO_preliminary_II}
Let $(\bE_t)_{t = 1}^m$ be the matrices defined in Section \ref{sub:JSE} and suppose Assumption \ref{assumption:condition_number} holds. Then
\begin{align*}
\sum_{a \in[n]\backslash\{i\}}\bigg|\sum_{t = 1}^m\sum_{j = 1}^nE_{taj}E_{tij}\bigg|^2 = \Optilde\left\{mn^2\rho_n^2(\log n)^2\right\}.
\end{align*}
\end{lemma}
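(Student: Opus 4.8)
The plan is to recognize the quantity of interest as the squared Euclidean norm of one off-diagonal row of $\calH(\bE\bE\transpose)$, so that the bound reduces immediately to the row-wise concentration already proved in Lemma \ref{lemma:decoupling_inequality}. First I would record the algebraic identity that drives everything. Since each $\bE_t$ is symmetric, for every $a,i\in[n]$ we have $\sum_{j=1}^n E_{taj}E_{tij} = (\bE_t\bE_t\transpose)_{ai}$, and hence, writing $\bE = [\bE_1,\ldots,\bE_m]$,
\[
\sum_{t=1}^m\sum_{j=1}^n E_{taj}E_{tij} = \bigg(\sum_{t=1}^m\bE_t\bE_t\transpose\bigg)_{ai} = (\bE\bE\transpose)_{ai}.
\]
Because the hollowing operator $\calH$ deletes only the diagonal and $\bE\bE\transpose$ is symmetric, this gives
\[
\sum_{a\in[n]\backslash\{i\}}\bigg|\sum_{t=1}^m\sum_{j=1}^n E_{taj}E_{tij}\bigg|^2 = \sum_{a\in[n]\backslash\{i\}}(\bE\bE\transpose)_{ai}^2 = \big\|\be_i\transpose\calH(\bE\bE\transpose)\big\|_2^2 .
\]

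Next I would invoke Lemma \ref{lemma:decoupling_inequality} with the choice $\bY = \eye_n\in\mathbb{O}(n,n)$, for which $\|\eye_n\|_{2\to\infty}=1$. Under Assumption \ref{assumption:condition_number} we have $m^{1/2}n\rho_n = \omega(\log n) = \Omega(\log(m+n))$, so the hypotheses of that lemma are met and it yields $\|\be_i\transpose\calH(\bE\bE\transpose)\|_2 = \Optilde\{m^{1/2}n\rho_n\log(m+n)\}$. Squaring this $\Optilde$ bound (which is legitimate since both sides are nonnegative) and using $\log(m+n) = \Theta(\log n)$, a consequence of $m = O(n^\alpha)$ in Assumption \ref{assumption:condition_number}, produces exactly $\Optilde\{mn^2\rho_n^2(\log n)^2\}$, as claimed.

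The one step needing care — and the only genuine obstacle — is that Lemma \ref{lemma:decoupling_inequality} is primarily meant for $\bY\in\mathbb{O}(n,d)$ with $d = O(1)$, whereas $\bY = \eye_n$ has full rank $d = n$. I would therefore verify the bound survives this substitution rather than citing it blindly. Retracing its proof, the variance proxy obeys $v_{in}^2 \lesssim mn^2\rho_n^2\|\bY\|_{2\to\infty}^2$ in general (the leading Hanson–Wright term is $\rho_n^2\sum_{t,l}\sum_{j\le l}\|\by_j\|_2^2 \lesssim mn\rho_n^2\|\bY\|_{\mathrm{F}}^2 \le mn^2\rho_n^2\|\bY\|_{2\to\infty}^2$), so $\|\eye_n\|_{2\to\infty}=1$ gives $v_{in} = \Optilde(m^{1/2}n\rho_n)$, while the boundedness parameter is $b_{in} = \Optilde\{(n\rho_n)^{1/2} + \log^{1/2}(m+n)\}$. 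Feeding these into the conditional Bernstein step with $\tau \asymp \log^{1/2}(m+n)$ reproduces $\Optilde\{m^{1/2}n\rho_n\log(m+n)\}$, since under $m^{1/2}n\rho_n = \Omega(\log(m+n))$ the term $v_{in}\log^{1/2}(m+n) \le m^{1/2}n\rho_n\log(m+n)$ dominates both $b_{in}\log(m+n)$ contributions. Once this applicability check is in place the argument is complete, everything else being the routine squaring and the $\log(m+n)\asymp\log n$ conversion.
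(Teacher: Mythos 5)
Your proposal is correct, but it takes a genuinely different route from the paper. The paper proves this lemma from scratch by a combinatorial moment computation: it expands the $p$th moment of $\sum_{a\neq i}|\sum_{t,j}E_{taj}E_{tij}|^2$, counts the numbers $T, J, A$ of unique layer and vertex indices, uses that every factor $E_{t_r^* a_v^* j_q^*}$ must appear at least twice for a nonzero expectation to bound the sum by $p^{2p+3}(m^{1/2}n\rho_n)^{2p}$, and finishes with Markov's inequality at $p = \lfloor\log(m+n)\rfloor$. You instead observe the clean identity $\sum_{a\in[n]\setminus\{i\}}|\sum_{t,j}E_{taj}E_{tij}|^2 = \|\be_i\transpose\calH(\bE\bE\transpose)\|_2^2$ (correct, since $\bE_t = \bE_t\transpose$ and $\bE\bE\transpose$ is symmetric) and recycle Lemma \ref{lemma:decoupling_inequality} with $\bY = \eye_n$. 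Your instinct to not cite that lemma blindly is exactly right: its proof is calibrated to $d = O(1)$ (e.g., its $b_{in}$ bound and the closing remark $\|\bY\|_{2\to\infty}\geq\sqrt{d/n}$), and your re-derivation for the full-rank case is correct — $v_{in}^2\leq\rho_n\sum_{t,j,l}\bar{E}_{tjl}^2 = \Optilde(mn^2\rho_n^2)$ by a direct Bernstein bound on the sum of squares (no Hanson–Wright needed), and $b_{in}^2 = \max_{t,l}\sum_{j\neq i}\bar{E}_{tjl}^2 = \Optilde\{n\rho_n + \log(m+n)\}$, after which $b_{in}\log(m+n) + v_{in}\log^{1/2}(m+n)\lesssim m^{1/2}n\rho_n\log(m+n)$ follows from $mn\rho_n\geq 1$ and $m^{1/2}n\rho_n = \omega(\log n)$. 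The one residual caveat you leave implicit is the conditional Bernstein step itself: for a sum of independent vectors in $\mathbb{R}^n$ (rather than $\mathbb{R}^d$ with $d = O(1)$), a coordinatewise union bound would cost a factor $\sqrt{n}$ through $\|\cdot\|_2\leq\sqrt{n}\|\cdot\|_\infty$, so you need a dimension-free Hilbert-space Bernstein inequality (e.g., Pinelis) with variance proxy $\sum_{t,l}\sigma_{til}^2\|\bw_{tl}\|_2^2$ — this is standard and a one-line fix, not a gap. On the trade-off: your argument is shorter, reuses existing machinery, and makes the statement conceptually transparent as a row-norm analog of Lemma \ref{lemma:spectral_norm_concentration_Hollow_EEtranspose}; the paper's moment method is heavier but self-contained and is the same engine it needs anyway for the weighted variant in Lemma \ref{lemma:LOO_preliminary}, where the weights $\bx_j^{(i)}$ are random and only independent of row $i$, a setting your reduction does not cover.
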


\begin{proof}
The proof idea is similar to that of Lemma \ref{lemma:LOO_preliminary}, and indeed, is slightly more straightforward. 
Let $p$ be a positive integer to be determined later. We focus on bounding $p$th moment of the quantity of interest.
Write
\begin{align*}
\expect\bigg[\bigg(\sum_{a \in[n]\backslash\{i\}}\bigg|\sum_{t = 1}^m\sum_{j = 1}^nE_{taj}E_{tij}\bigg|^2\bigg)^p\bigg]
 = \sum_{a_1,\ldots,a_p\in[n]\backslash\{i\}}\sum_{\substack{t_1,\ldots,t_p\in[m]\\s_1,\ldots,s_p\in[m]}}\sum_{\substack{j_1,\ldots,j_p\in[n]\\l_1,\ldots,l_p\in[n]}}\expect\bigg(
\prod_{k = 1}^pE_{t_ka_kj_k}E_{t_kij_k}E_{s_ka_kl_k}E_{s_kil_k}\bigg).
\end{align*}
Relabel $a_{p + k} = a_k$, $t_{p + k} = s_k$, $j_{p + k} = l_k$ for all $k\in[p]$. This allows us to write the expected value in the summation above as
$\expect(\prod_{k = 1}^{2p}E_{t_ka_kj_k}E_{t_kij_k})$.
Let $A(a_1,\ldots,a_{2p})$ denote the number of unique elements in $\{a_1,\ldots,a_{2p}\}$, $T(t_1, \ldots, t_{2p})$ denote the number of unique elements in $\{t_1,\ldots,t_{2p}\}$, and $J(j_1,\ldots,j_{2p})$ denote the number of unique elements in $\{j_1,\ldots,j_{2p}\}$. 
Note that $A(a_1,\ldots,a_{2p})\leq p$ because of the constraint $a_{p + k} = a_k$ for all $k\in[p]$. 
For $A = A(a_1,\ldots,a_{2p})$, $T = T(t_1,\ldots,t_{2p})$, $J = J(j_1,\ldots,j_{2p})$, let $a_1^*,\ldots,a_A^*$ be the unique elements among $\{a_1,\ldots,a_p\}$, $t_1^*,\ldots,t_T^*$ be the unique elements among $\{t_1,\ldots,t_{2p}\}$, and $j_1^*,\ldots,j_J^*$ be the unique elements among $\{j_1,\ldots,j_{2p}\}$. For each $v\in[A]$, $r\in[T]$, $q\in[J]$, define $\alpha_r = \sum_{k = 1}^{2p}\mathbbm{1}(t_k = t_r^*)$, $\beta_{rq} = \sum_{k = 1}^{2p}\mathbbm{1}(t_k = t_r^*, j_k = j_q^*)$, and $\gamma_{rqv} = \sum_{k = 1}^{2p}\mathbbm{1}(t_k = t_r^*, j_k = j_q^*, a_k = a_v^*)$. Then the expected value in the summand can be re-written as
\begin{align*}
\bigg|\expect\bigg(\prod_{k = 1}^{2p}E_{t_ka_kj_k}E_{t_kij_k}\bigg)\bigg|
& = \bigg|\expect\bigg\{\prod_{r = 1}^T\prod_{q = 1}^J\prod_{v = 1}^A\prod_{k = 1}^{2p}E_{t_r^*a_v^*j_q^*}^{\mathbbm{1}(t_k = t_r^*,j_k = j_q^*,a_k = a_v^*)}E_{t_r^*ij_q^*}^{\mathbbm{1}(t_k = t_r^*,j_k = j_q^*,a_k = a_v^*)}\bigg\}\bigg|\\
& = \bigg|\expect\bigg\{\prod_{r = 1}^T\prod_{q = 1}^J\prod_{v = 1}^AE_{t_r^*a_v^*j_q^*}^{\sum_{k = 1}^{2p}\mathbbm{1}(t_k = t_r^*,j_k = j_q^*,a_k = a_v^*)}E_{t_r^*ij_q^*}^{\sum_{k = 1}^{2p}\mathbbm{1}(t_k = t_r^*,j_k = j_q^*,a_k = a_v^*)}\bigg\}\bigg|\\
& = \bigg|\expect\bigg\{\prod_{r = 1}^T\prod_{q = 1}^J\prod_{v = 1}^AE_{t_r^*a_v^*j_q^*}^{\gamma_{rqv}}E_{t_r^*ij_q^*}^{\gamma_{rqv}}\bigg\}\bigg|
 = \bigg|\expect\bigg\{\prod_{r = 1}^T\prod_{q = 1}^J\bigg(\prod_{v = 1}^AE_{t_r^*a_v^*j_q^*}^{\gamma_{rqv}}\bigg)\bigg(E_{t_r^*ij_q^*}^{\sum_{v = 1}^A\gamma_{rqv}}\bigg)\bigg\}\bigg|\\
& = \bigg|\expect\bigg\{\prod_{r = 1}^T\prod_{q = 1}^J\bigg(\prod_{v = 1}^AE_{t_r^*a_v^*j_q^*}^{\gamma_{rqv}}\bigg)\bigg(E_{t_r^*ij_q^*}^{\beta_{rq}}\bigg)\bigg\}\bigg|
 = \prod_{r = 1}^T\prod_{q = 1}^J\bigg|\prod_{v = 1}^A\expect\bigg(E_{t_r^*a_v^*j_q^*}^{\gamma_{rqv}}\bigg)\bigg|\bigg|\expect\bigg(E_{t_r^*ij_q^*}^{\beta_{rq}}\bigg)\bigg|.
\end{align*}
Note that in order for the above expected value to be nonzero, it is necessary that $\beta_{rq}\geq 2$ and $\gamma_{rqv}\geq 2$ for all $r\in[T],q\in[J],v\in[A]$. Since $\sum_{r = 1}^T\sum_{q = 1}^J\beta_{rq} = 2p$ and $\sum_{r = 1}^T\sum_{q = 1}^J\sum_{v = 1}^A\gamma_{rqv} = 2p$, it is also necessary that $TJA\leq p$ and hence, $T\leq p$ and $J\leq p$. Observe that $\expect|E_{tij}|^k\leq \rho_n$ for all $t\in[m],i,j\in[n],k\geq 2$ and 
$\expect(\prod_{k = 1}^{2p}E_{t_ka_kj_k}E_{t_kij_k})|
\leq \prod_{r = 1}^T\prod_{q = 1}^J\rho_n^{A + 1}$.
Returning to the sum of interest, we can write it alternatively as follows:
\begin{align*}
&
\expect\bigg\{\bigg(\sum_{a \in[n]\backslash\{i\}}\bigg|\sum_{t = 1}^m\sum_{j = 1}^nE_{taj}E_{tij}\bigg|^2\bigg)^p\bigg\}
\\
&\quad \leq
\sum_{T = 1}^p\sum_{J = 1}^p\sum_{A = 1}^p\mathbbm{1}(TJA\leq p)\sum_{\substack{(a_v^*)_{v = 1}^A\subset[n]\backslash\{i\}\\
(t_r^*)_{r = 1}^T\subset[m]\\
(j_q^*)_{q = 1}^J\subset[n]}}
\sum_{
\substack{
a_1,\ldots,a_{2p}:\{a_1,\ldots,a_{2p}\} = \{a_1^*,\ldots,a_A^*\}\\
t_1,\ldots,t_{2p}:\{t_1,\ldots,t_{2p}\} = \{t_1^*,\ldots,t_T^*\}\\
j_1,\ldots,j_{2p}:\{j_1,\ldots,j_{2p}\} = \{j_1^*,\ldots,j_J^*\}
}}
\bigg|\expect\bigg(
\prod_{k = 1}^pE_{t_ka_kj_k}E_{t_kij_k}E_{s_kal_k}E_{s_kil_k}\bigg)\bigg|\\
&\quad \leq
\sum_{T = 1}^p\sum_{J = 1}^p\sum_{A = 1}^p\mathbbm{1}(TJA\leq p)\sum_{\substack{(a_v^*)_{v = 1}^A\subset[n]\backslash\{i\}\\
(t_r^*)_{r = 1}^T\subset[m]\\
(j_q^*)_{q = 1}^J\subset[n]}}
\sum_{
\substack{
a_1,\ldots,a_{2p}:\{a_1,\ldots,a_{2p}\} = \{a_1^*,\ldots,a_A^*\}\\
t_1,\ldots,t_{2p}:\{t_1,\ldots,t_{2p}\} = \{t_1^*,\ldots,t_T^*\}\\
j_1,\ldots,j_{2p}:\{j_1,\ldots,j_{2p}\} = \{j_1^*,\ldots,j_J^*\}
}}
\rho_n^{(A + 1)TJ}
\\
&\quad\leq p^{2p}\sum_{T = 1}^p\sum_{J = 1}^p\sum_{A = 1}^p\mathbbm{1}(TJA\leq p)m^Tn^{A + J}\rho_n^{(A + 1)TJ}
\leq p^{2p}\sum_{T = 1}^p\sum_{J = 1}^p\sum_{A = 1}^p\mathbbm{1}(TJA\leq p)(m^{1/2}n\rho_n)^{(A + 1)TJ}\\
&\quad\leq p^{2p + 3}(m^{1/2}n\rho_n)^{2p}.
\end{align*}
Then by Markov's inequality, for any $K > 0$,
\begin{align*}
\prob\bigg\{\sum_{a\in[n]\backslash\{i\}}\bigg|\sum_{t = 1}^m\sum_{j = 1}^nE_{taj}E_{tij}\bigg|^2 > Kmn^2\rho_n^2\log^2(m + n)\bigg\}
&\leq \exp\left\{(2p + 3)\log p - p\log K - 2p\log\log(m + n)\right\}. 
\end{align*}
Then for any $c > 0$, with $p = \lfloor\log(m + n)\rfloor$ and $K = e^{c + 3}$, we see immediately that
\[
\prob\bigg\{\sum_{a\in[n]\backslash\{i\}}\bigg|\sum_{t = 1}^m\sum_{j = 1}^nE_{taj}E_{tij}\bigg|^2 > e^{c + 3}mn^2\rho_n^2\log^2(m + n)\bigg\}\leq O((m + n)^{-c}).
\]
The proof is thus completed because $\log(m + n) = \Theta(\log n)$. 
\end{proof}
\begin{lemma}\label{lemma:LOO_stage_I}
Suppose Assumption \ref{assumption:eigenvector_delocalization} and \ref{assumption:condition_number} hold. Let $(\bX^{(i)})_{i = 1}^n$ be random matrices in $\mathbb{O}(n, d)$ such that $\bX^{(i)}\in\mathbb{R}^{n\times d}$ is independent of $\be_i\transpose\bE = (E_{tij}:t\in[m],j\in[n])$  for each $i\in [n]$. Then
\begin{align*}
\max_{i\in[n]}\frac{\|\{\calH((\bA^{(i)})(\bA^{(i)})\transpose) - \calH(\bA\bA\transpose)\}\bX^{(i)}\|_2}{\|\bX^{(i)}\|_{2\to\infty}} = \Optilde(\zeta_{\mathsf{op}}).
\end{align*}
\end{lemma}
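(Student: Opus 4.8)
The plan is to expand the leave-one-out difference explicitly and bound it term by term, exploiting throughout the independence between $\bX^{(i)}$ and the $i$th row $\be_i\transpose\bE = (E_{tij}:t\in[m],j\in[n])$. Since $\bA_t^{(i)}$ agrees with $\bA_t$ except on the $i$th row and column, I would write $\bA_t = \bA_t^{(i)} + \bD_t^{(i)}$, where $\bD_t^{(i)} = \be_i\br_{ti}\transpose + \br_{ti}\be_i\transpose - E_{tii}\be_i\be_i\transpose$ is supported on the $i$th row and column and $\br_{ti} := \bE_t\be_i$ denotes the $i$th column of $\bE_t$. Because $\bA\bA\transpose = \sum_t\bA_t^2$ and $\calH$ is linear, a direct computation gives
\[
\calH(\bA\bA\transpose) - \calH(\bA^{(i)}(\bA^{(i)})\transpose)
= \calH\Big(\textstyle\sum_{t=1}^m\bA_t^{(i)}\bD_t^{(i)} + \sum_{t=1}^m\bD_t^{(i)}\bA_t^{(i)} + \sum_{t=1}^m(\bD_t^{(i)})^2\Big).
\]
Using the clean identities $\bA_t^{(i)}\be_i = \bP_t\be_i =: \bp_{ti}$ and $(\bD_t^{(i)})^2 = \br_{ti}\br_{ti}\transpose + (\|\br_{ti}\|_2^2 - E_{tii}^2)\be_i\be_i\transpose$, each of the three summands splits into rank-structured pieces that I would treat separately after right-multiplication by $\bX^{(i)}$ and classify by whether they carry a $\be_i\transpose$ on the right, a $\be_i$ on the left, or neither.

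First I would handle the pieces carrying $\be_i\transpose$ on the right, namely $\sum_t(\bA_t^{(i)}\br_{ti})\be_i\transpose$ and $-\sum_t E_{tii}\bp_{ti}\be_i\transpose$: after multiplication by $\bX^{(i)}$ these produce a factor $\be_i\transpose\bX^{(i)}$, so their contribution is at most $\|\bX^{(i)}\|_{2\to\infty}$ times a vector norm. Splitting $\bA_t^{(i)} = \bP_t + \bE_t^{(i)}$, the bound $\|\sum_t\bP_t\br_{ti}\|_2 = \Optilde(\zeta_{\mathsf{op}})$ follows from Lemma~\ref{lemma:quadratic_form_Et} conditionally on $\bX^{(i)}$, while $\|\sum_t\bE_t^{(i)}\br_{ti}\|_2 = \Optilde(m^{1/2}n\rho_n\log n)$ is essentially the content of Lemma~\ref{lemma:LOO_preliminary_II} (the $j=i$ cross-terms are negligible by Bernstein's inequality), and the $E_{tii}$ piece is lower order. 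Second, the purely quadratic term $\calH(\sum_t\br_{ti}\br_{ti}\transpose)\bX^{(i)}$ coming from $\sum_t(\bD_t^{(i)})^2$ (the $\be_i\be_i\transpose$ part is annihilated by $\calH$) has $a$th row exactly $\sum_t\sum_{j\neq a}E_{tia}E_{tij}\be_j\transpose\bX^{(i)}$, so its Frobenius norm is controlled directly by Lemma~\ref{lemma:LOO_preliminary}, yielding $\Optilde(m^{1/2}n\rho_n\log n\cdot\|\bX^{(i)}\|_{2\to\infty})$.

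Third I would treat the $\be_i$-on-the-left piece $\sum_t\be_i(\bA_t^{(i)}\br_{ti})\transpose$, which after multiplication gives the row-$i$-supported matrix $\be_i\,\mathbf{w}^{\!\top}$ with $d$-vector $\mathbf{w}^{\!\top} = \sum_t\be_i\transpose\bE_t\bA_t^{(i)}\bX^{(i)}$, linear in the independent row-$i$ entries; its conditional variance is $\lesssim\rho_n\sum_t\|\bA_t^{(i)}\bX^{(i)}\|_{\mathrm{F}}^2 = \Optilde(mn^2\rho_n^3)$, so Bernstein's inequality bounds it by $\Optilde(m^{1/2}n\rho_n^{3/2}(\log n)^{1/2})$. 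Finally come the two ``spread'' pieces $\sum_t\bp_{ti}\br_{ti}\transpose$ and $\sum_t\br_{ti}\bp_{ti}\transpose$. For the former I would write, conditionally on $\bX^{(i)}$, $\sum_t\bp_{ti}(\be_i\transpose\bE_t\bX^{(i)}) = \sum_t\bP_t\be_i\be_i\transpose\bE_t\bX^{(i)}$ and invoke Lemma~\ref{lemma:quadratic_form_Et} with $\bX_t = \be_i\be_i\transpose\bP_t$, $\bY_t = \bX^{(i)}$. The key point for the bookkeeping is that each row-$i$-supported or spread contribution produces a bound of order $\Optilde(m^{1/2}n\rho_n^{3/2}(\log n)^{O(1)})$ that does \emph{not} factor through $\|\bX^{(i)}\|_{2\to\infty}$; dividing by $\|\bX^{(i)}\|_{2\to\infty}\geq\sqrt{d/n}$ inflates it by at most $\sqrt{n}$, producing $\Optilde(m^{1/2}(n\rho_n)^{3/2}(\log n)^{1/2})$, which is absorbed into $\zeta_{\mathsf{op}}$ once the logarithmic factors are shaved using $\rho_n\leq 1$ and $m^{1/2}n\rho_n = \Omega(\log n)$. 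A union bound over $i\in[n]$ then gives the claim.

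I expect the main obstacle to be the remaining spread term $\sum_t\br_{ti}(\be_i\transpose\bP_t\bX^{(i)})$. Here the left factor $\br_{ti}$ is a full $n$-dimensional random vector, so the spectral norm cannot factor through $\|\bX^{(i)}\|_{2\to\infty}$ and one cannot afford a net over $\calS^{n-1}$. The resolution I would use is to transpose and exploit $d = O(1)$, so that only a $d$-dimensional net over $\calS^{d-1}$ is needed; for fixed $\bv$ the surviving quantity $\sum_{k}|\sum_t (\bv\transpose\bP_t\be_i)\,E_{tik}|^2$ is a sum of genuinely layer-wise independent contributions amenable to Bernstein's inequality, analogous to the estimate underlying Lemma~\ref{lemma:concentration_noise_quadratic_form}. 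The delicate part is matching the logarithmic power so that, after the unavoidable $\sqrt n$-inflation from dividing by $\|\bX^{(i)}\|_{2\to\infty}$, the bound still lands inside $\zeta_{\mathsf{op}}$ rather than exceeding it by a spurious $\log^{1/2} n$ factor.
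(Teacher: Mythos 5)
Your decomposition is algebraically correct ($\bD_t^{(i)}=\be_i\br_{ti}\transpose+\br_{ti}\be_i\transpose-E_{tii}\be_i\be_i\transpose$, and $(\bD_t^{(i)})^2=\br_{ti}\br_{ti}\transpose+(\|\br_{ti}\|_2^2-E_{tii}^2)\be_i\be_i\transpose$ both check out), and once the rank-structured pieces are matched up, your plan is essentially the paper's proof reorganized: your hollowed $\sum_t\br_{ti}\br_{ti}\transpose$ piece is the paper's third term (Lemma \ref{lemma:LOO_preliminary}), your $\be_i$-row piece is the $a=i$ row of the paper's first term (Lemma \ref{lemma:decoupling_inequality} plus Bernstein), your $\sum_t\bE_t^{(i)}\br_{ti}$ piece is where the paper invokes Lemma \ref{lemma:LOO_preliminary_II}, and your two spread pieces are exactly the paper's $E_{tia}P_{tij}$ and $P_{tia}E_{tij}$ sums, with the same $\|\bX^{(i)}\|_{2\to\infty}\geq\sqrt{d/n}$ inflation trick and union bound over $i$. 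However, two of your quick justifications would fail in regimes the assumptions permit.

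First, ``the $j=i$ cross-terms are negligible by Bernstein's inequality'' is too glib. The difference between $\sum_t\bE_t^{(i)}\br_{ti}$ and the quantity controlled by Lemma \ref{lemma:LOO_preliminary_II} is the vector with entries $\sum_{t=1}^mE_{tai}E_{tii}$, $a\neq i$. Per-entry Bernstein with a union bound over $a$ gives an $\ell_2$-norm bound of order $m^{1/2}n^{1/2}\rho_n\log^{1/2}n+n^{1/2}\log n$, and the second term can dominate $\zeta_{\mathsf{op}}$: take $m=n$, $\rho_n=n^{-5/4}$, which satisfies $m^{1/2}n\rho_n=n^{1/4}=\omega(\log n)$, yet $n^{1/2}\log n\gg\zeta_{\mathsf{op}}\asymp n^{1/4}\log n$. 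The statement is still true there (the vector's typical size is $m^{1/2}n^{1/2}\rho_n$ up to logs), but you need a vector-level argument; the cleanest fix is to observe that the higher-moment proof of Lemma \ref{lemma:LOO_preliminary_II} applies verbatim with the $j$-sum restricted to $j\neq i$, so no separate cross-term estimate is needed at all.

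Second, for your flagged obstacle $\sum_t\br_{ti}\bp_{ti}\transpose\bX^{(i)}$, a per-coordinate Bernstein bound on $w_k=\sum_tc_tE_{tik}$ with a union over $k\in[n]$ leaves the max term $\max_t|c_t|\log n\lesssim n^{1/2}\rho_n\log n$ per coordinate, hence $n\rho_n\log n$ for $\|\bw\|_2$ and $n^{3/2}\rho_n\log n$ after the $\sqrt{n}$ inflation; this exceeds $m^{1/2}n\rho_n\log n$ whenever $m\ll n$ (e.g., $m=O(1)$, $n\rho_n=\log^2n$). What closes it is concentrating the quadratic form $\sum_k(w_k^2-\expect w_k^2)$ via Hanson--Wright (as in the proof of Lemma \ref{lemma:concentration_noise_quadratic_form}) or via decoupling plus conditional Bernstein, which is exactly how the paper treats this term: there the linear-in-$\log$ deviation carries $\|c\|_2^2=O(mn\rho_n^2)$ rather than the union-bound maximum. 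Also note that, contrary to your claim, this piece \emph{does} factor through $\|\bX^{(i)}\|_{2\to\infty}$, since $|\bp_{ti}\transpose\bX^{(i)}\bv|\leq\sum_j|P_{tij}|\,|\bx_j^{(i)\mathrm{T}}\bv|\lesssim n\rho_n\|\bX^{(i)}\|_{2\to\infty}$ — this is precisely the paper's bound $\sum_{j,l}P_{tij}P_{sil}\langle\bx_j^{(i)},\bx_l^{(i)}\rangle\leq n^2\rho_n^2\|\bX^{(i)}\|_{2\to\infty}^2$ — so the $\sqrt{n}$ inflation and the log-matching delicacy you were worried about disappear for this term; only the quadratic-form concentration is genuinely needed. (One last pedantic point, at which the paper operates at the same level of rigor as you: conditioning on $\bA^{(i)}_t\bX^{(i)}$ jointly, as in your variance computation for the $\be_i$-row piece, strictly requires \emph{joint} independence of $(\bA^{(i)},\bX^{(i)})$ from $\be_i\transpose\bE$, which holds in the intended application where $\bX^{(i)}$ is a function of $\bA^{(i)}$ but is not literally implied by the lemma's marginal-independence hypothesis.)
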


\begin{proof}
By definition, 
\begin{align}
\label{eqn:LOO_stage_I}
&\left\{\calH(\bA\bA\transpose) - \calH((\bA^{(i)})(\bA^{(i)})\transpose)\right\}\bX^{(i)}
\\&\quad
 = \sum_{t = 1}^m\calH((\bA_t - \bA_t^{(i)})\bA_t)\bX^{(i)}
 + \sum_{t = 1}^m\calH(\bA_t(\bA_t - \bA_t^{(i)}))\bX^{(i)} + \sum_{t = 1}^m\calH((\bA_t - \bA_t^{(i)})^2)\bX^{(i)}.\nonumber
\end{align}
Below, we work with the three terms on the right-hand side of \eqref{eqn:LOO_stage_I} separately. 

\noindent
$\blacksquare$ \textbf{The first term in \eqref{eqn:LOO_stage_I}.}
For each $j\in [n]$, by definition, we have
\begin{align*}
\calH((\bA_t - \bA_t^{(i)})\bA_t)\be_j
& = \begin{bmatrix}
E_{ti1}A_{tij}&\cdots&E_{ti(i - 1)}A_{tij}&\be_i\transpose\bE_t\bA_t\be_j&E_{ti(i + 1)}A_{tij}&\cdots&E_{tin}A_{tij}
\end{bmatrix}\transpose
 - E_{tij}A_{tij}\be_j
\end{align*}
if $j\neq i$ and $\calH((\bA_t - \bA_t^{(i)})\bA_t)\be_i
 = [E_{ti1}A_{tii},\cdots,E_{ti(i - 1)}A_{tii},0,E_{ti(i + 1)}A_{tii},\cdots,E_{tin}A_{tii}]\transpose$.
Let $\bx_j^{(i)}$ denote the transpose of the $j$th row of $\bX^{(i)}$. Then for any $a\in [n]$, $a\neq i$, we have
\begin{align*}
\be_a\transpose\sum_{t = 1}^m\calH((\bA_t - \bA_t^{(i)})\bA_t)\bX^{(i)}
& = \sum_{t = 1}^m\sum_{j\in[n]\backslash\{i\}}(E_{tia}A_{tij} - E_{tij}A_{tij}\be_a\transpose\be_j)({\bx}_j^{(i)})\transpose + \sum_{t = 1}^mE_{tia}A_{tii}(\bx_i^{(i)})\transpose\\
& = \sum_{t = 1}^m\sum_{j = 1}^nE_{tia}A_{tij}(\bx_j^{(i)})\transpose - E_{tia}A_{ia}(\bx_a^{(i)})\transpose
 = \sum_{t = 1}^m\sum_{j \in[n]\backslash\{a\}}E_{tia}A_{tij}(\bx_j^{(i)})\transpose.
\end{align*}
This implies that
\begin{align*}
&\sum_{a\in[n]\backslash\{i\}}\bigg\|\be_a\transpose\sum_{t = 1}^m\calH((\bA_t - \bA_t^{(i)})\bA_t)\bX^{(i)}\bigg\|_2^2\\
&\quad\lesssim \sum_{a \in[n]\backslash\{i\}}\bigg\|\sum_{t = 1}^m\sum_{j\in[n]\backslash\{a\}}E_{tia}E_{tij}(\bx_j^{(i)})\transpose\bigg\|_2^2 + \sum_{a\in[n]\backslash\{i\}}\bigg\|\sum_{t = 1}^m\sum_{j\in[n]\backslash\{a\}}E_{tia}P_{tij}(\bx_j^{(i)})\transpose\bigg\|_2^2.
\end{align*}
For the second term, by Bernstein's inequality, we write
\begin{align*}
&\sum_{a\in[n]\backslash\{i\}}\bigg\|\sum_{t = 1}^m\sum_{j\in[n]\backslash\{a\}}E_{tia}P_{tij}(\bx_j^{(i)})\transpose\bigg\|_2^2\\
&\quad = \sum_{a\in[n]\backslash\{i\}}\sum_{t,s\in[m]}\sum_{j,l\in[n]\backslash\{a\}}E_{tia}E_{sia}P_{tij}P_{sil}\langle\bx_j^{(a)},\bx_l^{(a)}\rangle\\
&\quad = \sum_{a\in[n]\backslash\{i\}}\sum_{t = 1}^m\sum_{j,l\in[n]\backslash\{a\}}E_{tia}^2P_{tij}P_{til}\langle\bx_j^{(a)},\bx_l^{(a)}\rangle + \sum_{a\in[n]\backslash\{i\}}\sum_{t,s\in[m],t\neq s}\sum_{j,l\in[n]\backslash\{a\}}E_{tia}E_{sia}P_{tij}P_{sil}\langle\bx_j^{(a)},\bx_l^{(a)}\rangle\\
&\quad = \sum_{a\in[n]\backslash\{i\}}\sum_{t = 1}^m(E_{tia}^2 - \sigma_{tia}^2)\bigg(\sum_{j,l\in[n]\backslash\{a\}}P_{tij}P_{til}\langle\bx_j^{(a)},\bx_l^{(a)}\rangle\bigg)
+ \sum_{a\in[n]\backslash\{i\}}\sum_{t = 1}^m\sigma_{tia}^2\bigg(\sum_{j,l\in[n]\backslash\{a\}}P_{tij}P_{til}\langle\bx_j^{(a)},\bx_l^{(a)}\rangle\bigg)\\
&\quad\quad + \sum_{a\in[n]\backslash\{i\}}\sum_{t,s\in[m],t\neq s}\sum_{j,l\in[n]\backslash\{a\}}E_{tia}E_{sia}P_{tij}P_{sil}\langle\bx_j^{(a)},\bx_l^{(a)}\rangle\\
&\quad = \Optilde(mn^3\rho_n^3)\|\bX^{(i)}\|_{2\to\infty}^2 + \sum_{a\in[n]\backslash\{i\}}\sum_{t,s\in[m],t\neq s}\sum_{j,l\in[n]\backslash\{a\}}E_{tia}E_{sia}P_{tij}P_{sil}\langle\bx_j^{(a)},\bx_l^{(a)}\rangle.
\end{align*}
The concentration bound for the second term above can be obtained by applying a decoupling argument. Specifically, let $\{\bar{E}_{sia}:s\in[m],a\in[n]\}$ be an independent copy of $\{E_{sia}:s\in[m],a\in[n]\}$. By a conditioning argument and Bernstein's inequality, we obtain
\begin{align*}
&\sum_{a\in[n]\backslash\{i\}}\sum_{t,s\in[m],t\neq s}\sum_{j,l\in[n]\backslash\{a\}}E_{tia}\bar{E}_{sia}P_{tij}P_{sil}\langle\bx_j^{(a)},\bx_l^{(a)}\rangle\\
&\quad = \Optilde\bigg\{(mn\rho_n)^{1/2}\log^{1/2}(m + n)\max_{a\in[n],t\in[m]}\bigg|\sum_{s\in[m]\backslash\{a\}}\sum_{j,l\in[n]\backslash\{a\}}\bar{E}_{sia}P_{tij}P_{sil}\langle\bx_j^{(a)},\bx_l^{(a)}\rangle\bigg|\bigg\}\\
&\quad = \Optilde\bigg[(mn\rho_n)^{1/2}(\log n)^{1/2}\{\log n + (m\rho_n)^{1/2}(\log n)^{1/2}\}n^2\rho_n^2\|\bX^{(i)}\|_{2\to\infty}^2\bigg]\\
&\quad = \Optilde\{mn^3\rho_n^3\log n\|\bX^{(i)}\|_{2\to\infty}^2\}.
\end{align*}
Then by the decoupling inequality  \cite{10.1214/aop/1176988291}, we obtain
\[
\sum_{a\in[n]\backslash\{i\}}\sum_{t,s\in[m],t\neq s}\sum_{j,l\in[n]\backslash\{a\}}E_{tia}E_{sia}P_{tij}P_{sil}\langle\bx_j^{(a)},\bx_l^{(a)}\rangle = \Optilde\{mn^3\rho_n^3\log n\|\bX^{(i)}\|_{2\to\infty}^2\}.
\]
Therefore, 
\begin{align*}
\max_{i\in[n]}\frac{2\sum_{a\in[n]\backslash\{i\}}\|\sum_{t = 1}^m\sum_{j\in[n]\backslash\{a\}}E_{tia}P_{tij}(\bx_j^{(i)})\transpose\|_2^2}{\|\bX^{(i)}\|_{2\to\infty}^2}
= \Optilde\left\{m^{1/2}(n\rho_n)^{3/2}(\log n)^{1/2}\right\}^2.
\end{align*}
For the first term, Lemma \ref{lemma:LOO_preliminary} yields
\begin{align*}
\max_{i\in[n]}\frac{\sum_{a\in[n]}\|\sum_{t = 1}^m\sum_{j\in[n]\backslash\{a\}}E_{tia}E_{tij}(\bx_j^{(i)})\transpose\|_2^2}{\|\bX^{(i)}\|_{2\to\infty}^2}
 = \Optilde\left\{mn^2\rho_n^2(\log n)^2\right\}.
\end{align*} 
We then proceed to combine the above results and compute
\begin{equation}
\label{eqn:LOO_term_II}
\begin{aligned}
&\max_{i\in[n]}\frac{\sum_{a\in[n]\backslash\{i\}}\|\be_a\transpose\sum_{t = 1}^m\calH((\bA_t - \bA_t^{(i)})\bA_t)\bX^{(i)}\|_2^2}{\|\bX^{(i)}\|_{2\to\infty}^2}
 = \Optilde(\zeta_{\mathsf{op}}^2).
\end{aligned}
\end{equation}
For $a = i$, by definition, we have
\begin{equation}
\label{eqn:LOO_term_I}
\begin{aligned}
\be_i\transpose\sum_{t = 1}^m\calH((\bA_t - \bA_t^{(i)})\bA_t)\bX^{(i)}
& = \sum_{t = 1}^m\sum_{j\in[n]\backslash\{i\}}\sum_{l = 1}^nE_{til}E_{tlj}(\bx_j^{(i)})\transpose + \sum_{t = 1}^m\sum_{j\in[n]\backslash\{i\}}\sum_{l = 1}^nE_{til}P_{tlj}(\bx_j^{(i)})\transpose.
\end{aligned}
\end{equation}
For the first term in \eqref{eqn:LOO_term_I}, note that 
$\sum_{t = 1}^m\sum_{j\in[n]\backslash\{i\}}\sum_{l = 1}^nE_{til}E_{tlj}(\bx_j^{(i)})\transpose = \be_i\transpose\calH(\bE\bE\transpose)\bX^{(i)}$.
Then by Lemma \ref{lemma:decoupling_inequality}
\begin{equation}
\label{eqn:LOO_term_I_1}
\begin{aligned}
\max_{i\in[n]}\frac{\|\sum_{t = 1}^m\sum_{j\in[n]\backslash\{i\}}\sum_{l = 1}^nE_{til}E_{tlj}(\bx_j^{(i)})\transpose\|_2}{\|\bX^{(i)}\|_{2\to\infty}} = \Optilde(m^{1/2}n\rho_n\log n).
\end{aligned}
\end{equation}
For the second term in \eqref{eqn:LOO_term_I}, by Bernstein's inequality,
\begin{equation}
\label{eqn:LOO_term_I_2}
\begin{aligned}
\sum_{t = 1}^m\sum_{\substack{j\in[n]\backslash\{i\}}}\sum_{l = 1}^n
E_{til}P_{tlj}(\bx_j^{(i)})\transpose
& = \max_{t\in[m],l\in[n]}\bigg\|\sum_{j = 1}^nP_{tlj}(\bx_j^{(i)})\transpose\bigg\|_2\Optilde\left\{(mn\rho_n)^{1/2}(\log n)^{1/2}\right\}
\\
& \leq \|\bX\|_{2\to\infty}\Optilde\left\{m^{1/2}(n\rho_n)^{3/2}(\log n)^{1/2}\right\}.
\end{aligned}
\end{equation}
Combining \eqref{eqn:LOO_term_I_1} and \eqref{eqn:LOO_term_I_2} leads to
\begin{align*}
&\max_{i\in[n]}\frac{\|\be_i\transpose\sum_{t = 1}^m\calH((\bA_t - \bA_t^{(i)})\bA_t)\bX^{(i)}\|_2}{\|\bX^{(i)}\|_{2\to\infty}} = \Optilde(\zeta_{\mathsf{op}})
\end{align*}
by \eqref{eqn:LOO_term_I}. Combining the above result with \eqref{eqn:LOO_term_II}, we conclude that
\begin{align*}
\max_{i\in[n]}\frac{\|\sum_{t = 1}^m\calH((\bA_t - \bA_t^{(i)})\bA_t)\bX^{(i)}\|_2}{\|\bX^{(i)}\|_{2\to\infty}} = \Optilde(\zeta_{\mathsf{op}}).
\end{align*}

\noindent$\blacksquare$ \textbf{The second term in \eqref{eqn:LOO_stage_I}.} By definition, for any $j\in[n]$, $\bA_t(\bA_t - \bA_t^{(i)})\be_j = E_{tij}\bA_t\be_i$ if $j\neq i$ and $\bA_t(\bA_t - \bA_t^{(i)})\be_i = \bA_t\bE_t\be_i$.
Then for any $a\in[n]$, 
\begin{align*}
&\be_a\transpose\sum_{t = 1}^m\calH(\bA_t(\bA_t - \bA_t^{(i)}))\bX^{(i)}\\
&\quad = \sum_{t = 1}^m\sum_{j\in[n]\backslash\{i\}}\be_a\transpose\calH(\bA_t(\bA_t - \bA_t^{(i)}))\be_j(\bx_j^{(i)})\transpose + \sum_{t = 1}^m\be_a\transpose\calH(\bA_t(\bA_t - \bA_t^{(i)}))\be_i(\bx_i^{(i)})\transpose\\
&\quad = \sum_{t = 1}^m\sum_{\substack{j = 1\\j\neq i}}^nE_{tia}E_{tij}(\bx_j^{(i)})\transpose\mathbbm{1}(a\neq j) + \sum_{t = 1}^m\sum_{\substack{j = 1\\j\neq i}}^nP_{tia}E_{tij}(\bx_j^{(i)})\transpose\mathbbm{1}(a\neq j)
 + \sum_{t = 1}^m\be_a\transpose\bA_t\bE_t\be_i
(\bx_i^{(i)})\transpose\mathbbm{1}(a\neq i)\\
&\quad = \sum_{t = 1}^m\sum_{\substack{j = 1\\j\neq i}}^nE_{tia}E_{tij}(\bx_j^{(i)})\transpose\mathbbm{1}(a\neq j) + \sum_{t = 1}^m\sum_{\substack{j = 1\\j\neq i}}^nP_{tia}E_{tij}(\bx_j^{(i)})\transpose\mathbbm{1}(a\neq j)\\
&\quad\quad + \sum_{t = 1}^m\sum_{j = 1}^nE_{taj}E_{tij}(\bx_i^{(i)})\transpose\mathbbm{1}(a\neq i) + \sum_{t = 1}^m\sum_{j = 1}^nP_{taj}E_{tij}(\bx_i^{(i)})\transpose\mathbbm{1}(a\neq i).
\end{align*}
We next proceed to write
\begin{align*}
&\bigg\|\sum_{t = 1}^m\calH(\bA_t(\bA_t - \bA_t^{(i)}))\bX^{(i)}\bigg\|_2\\
&\quad\lesssim \bigg\{\sum_{a = 1}^n\bigg\|\sum_{t = 1}^m\sum_{j\in[n]\backslash\{i,a\}}E_{tia}E_{tij}\bx_j^{(i)}\bigg\|_2^2\bigg\}^{1/2} + \sqrt{n}\max_{a\in[n]}\bigg\|\sum_{t = 1}^m\sum_{j\in[n]\backslash\{i,a\}}P_{tia}E_{tij}\bx_j^{(i)}\bigg\|_2\\
&\quad\quad + \|\bX^{(i)}\|_{2\to\infty}\bigg[\bigg\{\sum_{a \in[n]\backslash\{i\}}\bigg|\sum_{t = 1}^m\sum_{j = 1}^nE_{taj}E_{tij}\bigg|_2^2\bigg\}^{1/2} + \sqrt{n}\max_{a \in[n]}\bigg|\sum_{t = 1}^m\sum_{j = 1}^nP_{taj}E_{tij}\bigg|_2\bigg].
\end{align*}
By Lemma \ref{lemma:LOO_preliminary}, the first term satisfies
\begin{align*}
\max_{i\in[n]}\frac{\{\sum_{a = 1}^n\|\sum_{t = 1}^m\sum_{j\in[n]\backslash\{i,a\}}E_{tia}E_{tij}\bx_j^{(i)}\|_2^2\}^{1/2}}{\|\bX^{(i)}\|_{2\to\infty}} = \Optilde(m^{1/2}n\rho_n\log n).
\end{align*}
By Bernstein's inequality and a union bound over $i,a\in[n]$, the second term and the fourth term satisfy
\begin{align*}
&\max_{i\in[n]}\frac{\sqrt{n}\max_{a\in[n]}\|\sum_{t = 1}^m\sum_{j\in[n]\backslash\{i,a\}}P_{tia}E_{tij}\bx_j^{(i)}\|_2}{\|\bX^{(i)}\|_{2\to\infty}}
 = \Optilde\left\{m^{1/2}n\rho_n^{3/2}(\log n)^{1/2}\right\},\\
&\max_{i\in[n]}\frac{\|\bX^{(i)}\|_{2\to\infty}\sqrt{n}\max_{a \in[n]\backslash\{i\}}|\sum_{t = 1}^m\sum_{j = 1}^nP_{taj}E_{tij}|}{\|\bX^{(i)}\|_{2\to\infty}}
 = \Optilde\left\{m^{1/2}n\rho_n^{3/2}(\log n)^{1/2}\right\}.
\end{align*}
For the third term, we apply Lemma \ref{lemma:LOO_preliminary_II} together with a union bound over $i\in[n]$ to obtain
\begin{align*}
&\max_{i\in[n]}\frac{\|\bX^{(i)}\|_{2\to\infty}\{\sum_{a\in[n]\backslash\{i\}}\sum_{t = 1}^m|\sum_{t = 1}^m\sum_{j = 1}^nE_{taj}E_{tij}|^2\}^{1/2}}{\|\bX^{(i)}\|_{2\to\infty}}
 = \Optilde(m^{1/2}n\rho_n\log n).
\end{align*}
Combining the above concentration results yields
\begin{align*}
\max_{i\in[n]}\frac{\|\sum_{t = 1}^m\calH(\bA_t(\bA_t - \bA_t^{(i)}))\bX^{(i)}\|_2}{\|\bX^{(i)}\|_{2\to\infty}}
& = \Optilde(\zeta_{\mathsf{op}}).
\end{align*}

\noindent
$\blacksquare$ \textbf{The third term in \eqref{eqn:LOO_stage_I}.} By definition, 
\begin{align*}
\calH((\bA_t - \bA_t^{(i)})^2)\be_j = [
E_{ti1}E_{tij},\cdots,E_{ti(j - 1)}E_{tij},0,E_{ti(j + 1)}E_{tij},\cdots, E_{tin}E_{tij}]\transpose 
\end{align*}
if $j\neq i$, and 
\begin{align*}
\calH((\bA_t - \bA_t^{(i)})^2)\be_i = 
[E_{ti1}E_{tii}, \cdots, E_{ti(i - 1)}E_{tii}, 0, E_{ti(i + 1)}E_{tii}, \cdots, E_{tin}E_{tii}]\transpose.
\end{align*} 
Therefore, for any $a\in[n]$,
\begin{align*}
\be_a\transpose\sum_{t = 1}^m\calH((\bA_t - \bA_t^{(i)})^2)\bX^{(i)}
& = \sum_{t = 1}^m\sum_{j = 1}^nE_{tia}E_{tij}\mathbbm{1}(a\neq j)(\bx_j^{(i)})\transpose.
\end{align*}
This further entails that
\begin{align*}
\|\sum_{t = 1}^m\calH((\bA_t - \bA_t^{(i)})^2)\bX^{(i)}\|_2
 = \bigg\{\sum_{a\in[n]\backslash\{i\}}\|\sum_{t = 1}^m\sum_{j\in[n]\backslash\{a\}}^nE_{tia}E_{tij}\bx_j^{(i)}\|_2^2\bigg\}^{1/2}
,
\end{align*}
and by Lemma \ref{lemma:LOO_preliminary}, we immediately obtain that
\begin{align*}
\max_{i\in[n]}\frac{\|\sum_{t = 1}^m\calH((\bA_t - \bA_t^{(i)})^2)\bX^{(i)}\|_2}{\|\bX^{(i)}\|_{2\to\infty}} = \Optilde(m^{1/2}n\rho_n\log n).
\end{align*}
Combining the concentration results for the three terms in \eqref{eqn:LOO_stage_I}
completes the proof. 
\end{proof}

\subsection{Concentration Bounds for Leave-One-Out Matrices}
\label{sub:concentration_bounds_LOO_matrices}
We next apply the preliminary concentration results in Section \ref{sub:preliminary_lemmas_LOO_matrices} to obtain sharp leave-one-out error bounds. The proofs of these results primarily rely recursive error bounds and induction. 

\begin{lemma}\label{lemma:LOO_stage_II}
Suppose Assumptions \ref{assumption:eigenvector_delocalization}--\ref{assumption:condition_number} hold. Further assume that, for any $c > 0$, there exists a $c$-dependent constant $N_c > 0$, such that for all $n\geq N_c$, with probability at least $1 - O(n^{-c})$, $\max_{s\in[S],r\in[R]}\|\bM - \widehat{\bM}_{rs}\|_2\leq \lambda_d(\bP\bP\transpose)/8$ and $\max_{i\in[n]}\max_{s\in[S],r\in[R]}\|\bM - \widehat{\bM}_{rs}^{(i)}\|_2\leq \lambda_d(\bP\bP\transpose)/8$.
If $R,S = O(1)$, then
\begin{align*}
\max_{i\in[n]}\|\widehat{\bU}^{(i)}_{RS}\bH^{(i)}_{RS} - \widehat{\bU}_{RS}\bH_{RS}\|_2 & = \Optilde\left(\frac{\zeta_{\mathsf{op}}}{mn^2\rho_n^2}\right)\max_{i\in[n]}\|\widehat{\bU}_{RS}^{(i)}\|_{2\to\infty}.
\end{align*}
\end{lemma}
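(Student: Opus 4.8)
The plan is to run an induction on the outer iteration index $r$, with base case $r=0$ trivial since $\widehat{\bU}_{0S} = \widehat{\bU}_{0S}^{(i)} = \zero_{n\times d}$. Throughout, abbreviate $\widehat{\bN}_r = \calH(\bA\bA\transpose) - \widehat{\bM}_{rS}$ and $\widehat{\bN}_r^{(i)} = \calH(\bA^{(i)}(\bA^{(i)})\transpose) - \widehat{\bM}_{rS}^{(i)}$, so that $\widehat{\bU}_{rS} = \texttt{eigs}(\widehat{\bN}_r;d)$ and $\widehat{\bU}_{rS}^{(i)} = \texttt{eigs}(\widehat{\bN}_r^{(i)};d)$. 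The first observation is that, because $\bH_{rS} = \widehat{\bU}_{rS}\transpose\bV$, the quantity of interest is a difference of spectral projections applied to $\bV$:
\[
\widehat{\bU}_{rS}^{(i)}\bH_{rS}^{(i)} - \widehat{\bU}_{rS}\bH_{rS} = \big(\widehat{\bU}_{rS}^{(i)}(\widehat{\bU}_{rS}^{(i)})\transpose - \widehat{\bU}_{rS}\widehat{\bU}_{rS}\transpose\big)\bV.
\]
Under the standing hypothesis $\|\bM - \widehat{\bM}_{rS}\|_2\vee\|\bM - \widehat{\bM}_{rS}^{(i)}\|_2 \le \lambda_d(\bP\bP\transpose)/8$ together with Lemma \ref{lemma:spectral_norm_concentration_noise}, both $\widehat{\bN}_r$ and $\widehat{\bN}_r^{(i)}$ have their top-$d$ eigenvalues of order $\lambda_d(\bP\bP\transpose) = \Theta(mn^2\rho_n^2)$ and separated from the rest of the spectrum by a gap of the same order. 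A one-sided Davis--Kahan bound, in the form already used in Lemma \ref{lemma:remainder_II}, then gives
\[
\|\widehat{\bU}_{rS}^{(i)}\bH_{rS}^{(i)} - \widehat{\bU}_{rS}\bH_{rS}\|_2 = \Optilde\Big(\tfrac{1}{mn^2\rho_n^2}\Big)\big\|(\widehat{\bN}_r - \widehat{\bN}_r^{(i)})\widehat{\bU}_{rS}^{(i)}\big\|_2,
\]
so the task reduces to controlling $\|(\widehat{\bN}_r - \widehat{\bN}_r^{(i)})\widehat{\bU}_{rS}^{(i)}\|_2$.

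Next I would split
\[
\widehat{\bN}_r - \widehat{\bN}_r^{(i)} = \big\{\calH(\bA\bA\transpose) - \calH(\bA^{(i)}(\bA^{(i)})\transpose)\big\} - \big(\widehat{\bM}_{rS} - \widehat{\bM}_{rS}^{(i)}\big).
\]
The first bracket, applied to $\widehat{\bU}_{rS}^{(i)}$, is handled directly by Lemma \ref{lemma:LOO_stage_I}: since $\widehat{\bU}_{rS}^{(i)} \in \mathbb{O}(n,d)$ is a function of $\bA^{(i)}$ and is therefore independent of $\be_i\transpose\bE$, it is an admissible $\bX^{(i)}$, and the lemma yields $\|\{\calH(\bA\bA\transpose) - \calH(\bA^{(i)}(\bA^{(i)})\transpose)\}\widehat{\bU}_{rS}^{(i)}\|_2 = \Optilde(\zeta_{\mathsf{op}})\|\widehat{\bU}_{rS}^{(i)}\|_{2\to\infty}$. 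After dividing by the gap this contributes exactly the target order $\Optilde(\zeta_{\mathsf{op}}/(mn^2\rho_n^2))\|\widehat{\bU}_{rS}^{(i)}\|_{2\to\infty}$.

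The remaining term, the bias difference $\Delta\bM_r := \widehat{\bM}_{rS} - \widehat{\bM}_{rS}^{(i)}$, is the crux. Both bias matrices are diagonal, so $\|\Delta\bM_r\widehat{\bU}_{rS}^{(i)}\|_2 \le \|\Delta\bM_r\|_{\max}$, and I would bound $\|\Delta\bM_r\|_{\max}$ by unwinding the inner bias-calibration recursion. Writing $\Delta\bM_{rs}$ for the analogous difference at inner step $s$ and subtracting the two defining recursions produces three types of source terms: first, terms carrying the projection difference $\widehat{\bU}_{(r-1)S}\widehat{\bU}_{(r-1)S}\transpose - \widehat{\bU}_{(r-1)S}^{(i)}(\widehat{\bU}_{(r-1)S}^{(i)})\transpose$, which by the inductive hypothesis and the $\sin\Theta$ bounds of Lemma \ref{lemma:remainder_II} are controlled by $d_{r-1}^{(i)} := \|\widehat{\bU}_{(r-1)S}^{(i)}\bH_{(r-1)S}^{(i)} - \widehat{\bU}_{(r-1)S}\bH_{(r-1)S}\|_2$; second, the matrix difference $\calH(\bA\bA\transpose) - \calH(\bA^{(i)}(\bA^{(i)})\transpose)$ sandwiched between projections, again reachable through Lemma \ref{lemma:LOO_stage_I}; and third, the previous-step bias difference $\Delta\bM_{r(s-1)}$. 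The essential simplification is that, because each diagonal entry is sandwiched between $\widehat{\bU}_{(r-1)S}\widehat{\bU}_{(r-1)S}\transpose\be_k$ factors of size $\|\widehat{\bU}\|_{2\to\infty}^2 = O(1/n)$ under Assumption \ref{assumption:eigenvector_delocalization}, the third type enters the inner recursion with a contraction factor $O(1/n)$; since $S = O(1)$, iterating over $s$ only multiplies the first two source types by $O(1)$ and leaves their order unchanged. Putting the pieces together yields an outer recursion of the schematic form
\[
d_r^{(i)} = \Optilde\Big(\tfrac{\zeta_{\mathsf{op}}}{mn^2\rho_n^2}\Big)\|\widehat{\bU}_{rS}^{(i)}\|_{2\to\infty} + \optilde(1)\, d_{r-1}^{(i)},
\]
where the coefficient of $d_{r-1}^{(i)}$ is $o(1)$ because the ambient matrix norm $\Theta(mn^2\rho_n^2)$ cancels against the gap while the incoherence factor and the noise ratio $\varepsilon_n^{(\mathsf{op})}$ remain. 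Since $R = O(1)$ and $d_0^{(i)} = 0$, the induction closes and gives the claimed bound, after also checking that the quantities $\|\widehat{\bU}_{rS}^{(i)}\|_{2\to\infty}$ appearing across steps are mutually comparable, which follows from $\widehat{\bU}_{rS}^{(i)}$ being close to $\bV\bH_{rS}^{(i)\transpose}$ and hence incoherent. The main obstacle is precisely this bookkeeping for $\Delta\bM_r$: one must track which diagonal sandwiching factors are independent of $\be_i\transpose\bE$ (so that Lemma \ref{lemma:LOO_stage_I} applies) versus which carry the non-independent full projection, and confirm that every projection-difference and inner-recursion contribution is genuinely lower order than the second source type that sets the final rate.
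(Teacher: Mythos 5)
Your proposal follows essentially the same route as the paper's proof: a one-sided Davis--Kahan bound with eigengap $\Theta(mn^2\rho_n^2)$ (via Weyl's inequality and the assumed bias bounds), a split of $\widehat{\bN}_r - \widehat{\bN}_r^{(i)}$ into the hollowed Gram difference (handled by Lemma \ref{lemma:LOO_stage_I}, using that $\widehat{\bU}_{rS}^{(i)}$ is independent of $\be_i\transpose\bE$) plus the diagonal bias difference, an unwinding of the inner $s$-recursion, and induction over $r$ from the zero initialization. The only cosmetic differences are that your $O(1/n)$ inner contraction and the closing remark about incoherence of $\widehat{\bU}_{rS}^{(i)}$ are stronger than needed (and the latter is not yet available at this stage of the bootstrap) — the paper simply uses $\|\widehat{\bU}_{rS}^{(i)}\|_{2\to\infty}\leq 1$ together with $R,S = O(1)$ so that the $O(1)$-many $\Optilde(1)$ accumulation factors are harmless.
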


\begin{proof}
By equation \eqref{eqn:spectral_norm_noise_bound} in the proof of Lemma \ref{lemma:remainder_II} and Weyl's inequality, $\lambda_d\{\calH((\bA^{(i)})(\bA^{(i)})\transpose) - \widehat{\bM}_{RS}^{(i)}\}\geq \lambda_d(\bP\bP\transpose)/2$ and $\lambda_{d + 1}\{\calH(\bA\bA\transpose) - \widehat{\bM}_{RS}\}\leq (1/4)\lambda_d(\bP\bP\transpose)$
with probability at least $1 - O(n^{-c})$. 
This entails that 
\[
\lambda_d\{\calH((\bA^{(i)})(\bA^{(i)})\transpose) - \widehat{\bM}_{RS}^{(i)}\} - \lambda_{d + 1}\{\calH(\bA\bA\transpose) - \widehat{\bM}_{RS}\}\geq\lambda_d(\bP\bP\transpose)/4
\]
with probability at least $1 - O(n^{-c})$. By Davis-Kahan theorem (in the form of Theorem VII 3.4 in \cite{bhatia2013matrix}), we have
\begin{align*}
\|\widehat{\bU}^{(i)}_{RS}\bH^{(i)}_{RS} - \widehat{\bU}_{RS}\bH_{RS}\|_2 
&\leq \frac{\|\{\calH((\bA^{(i)})(\bA^{(i)})\transpose) - \calH(\bA\bA\transpose) - \widehat{\bM}_{RS}^{(i)} + \widehat{\bM}_{RS}\}\widehat{\bU}_{RS}^{(i)}\|_2}{\lambda_d\{\calH((\bA^{(i)})(\bA^{(i)})\transpose) - \widehat{\bM}_{RS}^{(i)}\} - \lambda_{d + 1}\{\calH(\bA\bA\transpose) - \widehat{\bM}_{RS}\}}\\
&\leq \frac{4\|\{\calH((\bA^{(i)})(\bA^{(i)})\transpose) - \calH(\bA\bA\transpose)\}\widehat{\bU}_{RS}^{(i)}\|_2}{m\Delta_n^2}
  + \frac{4\|\widehat{\bM}_{RS} - \widehat{\bM}_{RS}^{(i)}\|_2\|\widehat{\bU}_{RS}^{(i)}\|_{2\to\infty}}{m\Delta_n^2},
\end{align*}
where we have used the fact that $\widehat{\bM}_{RS}$ and $\widehat{\bM}_{RS}^{(i)}$ are diagonal matrices. For the second term, by the definition of $\widehat{\bM}_{Rs}$, $\widehat{\bM}_{Rs}^{(i)}$, and triangle inequality, we have, for any $s\in[S]$,
\begin{align*}
\|\widehat{\bM}_{Rs} - \widehat{\bM}_{Rs}^{(i)}\|_2
&\leq \|\widehat{\bU}_{(R - 1)S}\widehat{\bU}_{(R - 1)S}\transpose - (\widehat{\bU}_{(R - 1)S}^{(i)})(\widehat{\bU}_{(R - 1)S}^{(i)})\transpose\|_2\\
&\quad\times \{\|\calH(\bA\bA\transpose) - \bM\|_2 + \|\calH((\bA^{(i)})(\bA^{(i)})\transpose) - \bM\|_2 + \|\widehat{\bM}_{R(s - 1)} - \bM\|_2 + \|\widehat{\bM}_{R(s - 1)}^{(i)} - \bM\|_2\}
\\&\quad
 + \|\{\calH((\bA^{(i)})(\bA^{(i)})\transpose) - \calH(\bA\bA\transpose)\}\widehat{\bU}_{(R - 1)S}^{(i)}\|_2 + \|\widehat{\bM}_{R(s - 1)} - \widehat{\bM}_{R(s - 1)}^{(i)}\|_2.
\end{align*}
By Lemma \ref{lemma:spectral_norm_concentration_noise}, we know that
$\|\calH(\bA\bA\transpose) - \bM\|_2 = \Optilde(mn^2\rho_n^2)$ and $\|\{\calH((\bA^{(i)})(\bA^{(i)})\transpose) - \bM\|_2 = \Optilde(mn^2\rho_n^2)$ because $\bA^{(i)} = \bP + \bE^{(i)}$ and $\bE^{(i)}$ also satisfies Assumption \ref{assumption:condition_number}.
Note that $\widehat{\bM}_{R0} = \widehat{\bM}_{R0}^{(i)} = \zero_{n\times n}$. Then by induction over $s$, we obtain
\begin{align*}
\|\widehat{\bM}_{RS} - \widehat{\bM}_{RS}^{(i)}\|_2
&\leq \Optilde(mn^2\rho_n^2)\|\widehat{\bU}_{(R - 1)S}\widehat{\bU}_{(R - 1)S}\transpose - (\widehat{\bU}_{(R - 1)S}^{(i)})(\widehat{\bU}_{(R - 1)S}^{(i)})\transpose\|_2\\
&\quad + S\|\{\calH((\bA^{(i)})(\bA^{(i)})\transpose) - \calH(\bA\bA\transpose)\}\widehat{\bU}_{(R - 1)S}^{(i)}\|_2.
\end{align*}
This entails that
\begin{align*}
\|\widehat{\bU}^{(i)}_{RS}\bH^{(i)}_{RS} - \widehat{\bU}_{RS}\bH_{RS}\|_2
&\leq \frac{4\|\{\calH((\bA^{(i)})(\bA^{(i)})\transpose) - \calH(\bA\bA\transpose)\}\widehat{\bU}_{RS}^{(i)}\|_2}{m\Delta_n^2}  + \frac{4\|\widehat{\bM}_{RS} - \widehat{\bM}_{RS}^{(i)}\|_2\|\widehat{\bU}_{RS}^{(i)}\|_{2\to\infty}}{m\Delta_n^2}\\
&\leq \frac{4\|\{\calH((\bA^{(i)})(\bA^{(i)})\transpose) - \calH(\bA\bA\transpose)\}\widehat{\bU}_{RS}^{(i)}\|_2}{m\Delta_n^2}
\\&\quad
 + \Optilde(1)\|\widehat{\bU}_{(R - 1)S}\widehat{\bU}_{(R - 1)S}\transpose - (\widehat{\bU}_{(R - 1)S}^{(i)})(\widehat{\bU}_{(R - 1)S}^{(i)})\transpose\|_2\|\widehat{\bU}_{RS}^{(i)}\|_{2\to\infty}\\
&\quad + \frac{S\|\{\calH((\bA^{(i)})(\bA^{(i)})\transpose) - \calH(\bA\bA\transpose)\}\widehat{\bU}_{(R - 1)S}^{(i)}\|_2}{m\Delta_n^2}\|\widehat{\bU}_{RS}^{(i)}\|_{2\to\infty}.
\end{align*}
Observe that $\widehat{\bU}_{RS}^{(i)}$ is independent of $\be_i\transpose\bE = (E_{tij}:t\in[m],j\in[n])$. Then by Lemma \ref{lemma:LOO_stage_I}, for any $c > 0$, there exists a constant $C_c > 0$ such that for sufficiently large $n$, with probability at least $1 - O(n^{-c})$,
\begin{align*}
&\max_{i\in[n]}\|\widehat{\bU}^{(i)}_{RS}\bH^{(i)}_{RS} - \widehat{\bU}_{RS}\bH_{RS}\|_2\\
&\quad\leq \Optilde\bigg(\frac{\zeta_{\mathsf{op}}}{mn^2\rho_n^2}\bigg)
\max_{i\in[n]}\|\widehat{\bU}_{RS}^{(i)}\|_{2\to\infty}
+ \Optilde\bigg(\frac{\zeta_{\mathsf{op}}}{mn^2\rho_n^2}\bigg)
\max_{i\in[n]}\|\widehat{\bU}_{(R - 1)S}^{(i)}\|_{2\to\infty}
\|\widehat{\bU}_{RS}^{(i)}\|_{2\to\infty}
\\&\quad\quad
 + \Optilde(1)\max_{i\in[n]}\|\widehat{\bU}_{(R - 1)S}\widehat{\bU}_{(R - 1)S}\transpose - (\widehat{\bU}_{(R - 1)S}^{(i)})(\widehat{\bU}_{(R - 1)S}^{(i)})\transpose\|_2\max_{i\in[n]}\|\widehat{\bU}_{RS}^{(i)}\|_2\\
&\quad \leq \Optilde\bigg(\frac{\zeta_{\mathsf{op}}}{mn^2\rho_n^2}\bigg)\max_{i\in[n]}\|\widehat{\bU}_{RS}^{(i)}\|_{2\to\infty}
 + \Optilde(1)\max_{i\in[n]}\left\|\widehat{\bU}_{(R - 1)S}^{(i)}\bH_{(R - 1)S}^{(i)} - \widehat{\bU}_{(R - 1)S}\bH_{(R - 1)S}\right\|_2\max_{i\in[n]}\|\widehat{\bU}_{RS}^{(i)}\|_{2\to\infty}.
\end{align*}
The remaining proof is completed by induction over $R$ and observe that $\widehat{\bU}_{0S} = \widehat{\bU}_{0S}^{(i)} = \zero_{n\times d}$ for all $i\in[n]$. 
\end{proof}

\begin{lemma}\label{lemma:LOO_Utilde_two_to_infinity_norm}
Suppose Assumptions \ref{assumption:eigenvector_delocalization} and \ref{assumption:condition_number} hold. Further assume that for any $c > 0$, there exists a $c$-dependent constant $N_c$, such that for any $n \geq N_c$, with probability at least $1 - O(n^{-c})$, $\max_{r\in[R],s\in[S]}\|\bM - \widehat{\bM}_{rs}\|_2\leq \lambda_d(\bP\bP\transpose)/20$ and $\max_{i\in[n]}\max_{r\in[R],s\in[S]}\|\bM - \widehat{\bM}_{rs}^{(i)}\|_2\leq \lambda_d(\bP\bP\transpose)/20$.
If $R,S = O(1)$, then for any $c > 0$, there exist a $c$-dependent constants $N_c > 0$, such that for any $n\geq N_c$, 
$\max_{i\in[n]}\|(\bH^{(i)}_{RS})^{-1}\|_2\leq 2$ and 
$\max_{i\in[n]}\|\widehat{\bU}^{(i)}_{RS}\|_{2\to\infty} \leq 4\|\widehat{\bU}_{RS}\|_{2\to\infty}$
with probability at least $1 - O(n^{-c})$. Consequently,
\begin{align*}
\max_{i\in[n]}\|\widehat{\bU}^{(i)}_{RS}\bH^{(i)}_{RS} - \widehat{\bU}_{RS}\bH_{RS}\|_2 & = \Optilde\left(\frac{\zeta_{\mathsf{op}}}{mn^2\rho_n^2}\right)\|\widehat{\bU}_{RS}\|_{2\to\infty},\;
\max_{i\in[n]}\|\widehat{\bU}^{(i)}_{RS}\bH^{(i)}_{RS} - \bV\|_{2\to\infty} = \Optilde(1)\|\widehat{\bU}_{RS}\|_{2\to\infty}.
\end{align*}
\end{lemma}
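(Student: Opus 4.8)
The plan is to bootstrap from Lemma \ref{lemma:LOO_stage_II}, whose right-hand side still carries the leave-one-out quantity $\max_{i\in[n]}\|\widehat{\bU}_{RS}^{(i)}\|_{2\to\infty}$, and to convert it into the genuine quantity $\|\widehat{\bU}_{RS}\|_{2\to\infty}$ via a self-bounding (absorption) argument. First I would establish $\max_{i\in[n]}\|(\bH^{(i)}_{RS})^{-1}\|_2\leq 2$. Since the singular values of $\bH^{(i)}_{RS} = (\widehat{\bU}^{(i)}_{RS})\transpose\bV$ are the cosines of the principal angles between $\mathrm{Span}(\widehat{\bU}^{(i)}_{RS})$ and $\mathrm{Span}(\bV)$, it suffices to show $\|\sin\Theta(\widehat{\bU}^{(i)}_{RS},\bV)\|_2^2\leq 3/4$ uniformly in $i$. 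To this end I would first note that $\expect\calH(\bA^{(i)}(\bA^{(i)})\transpose) = \bP\bP\transpose + \bM$: the cross terms have zero mean, $\expect\bE^{(i)}(\bE^{(i)})\transpose$ is diagonal (distinct unordered index pairs are independent), and hollowing removes the diagonal, so the target bias $\bM$ of \eqref{eqn:M_matrix} is unchanged by the leave-one-out surgery. Applying Lemma \ref{lemma:spectral_norm_concentration_noise} to $\bE^{(i)}$ (which also satisfies Assumption \ref{assumption:condition_number}) and a union bound over $i$ gives $\|\calH(\bA^{(i)}(\bA^{(i)})\transpose) - \bP\bP\transpose - \bM\|_2 = \Optilde(\zeta_{\mathsf{op}})$.

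Combining this with the hypothesis $\|\bM - \widehat{\bM}_{RS}^{(i)}\|_2 \leq \lambda_d(\bP\bP\transpose)/20$ and the signal-to-noise fact $\zeta_{\mathsf{op}}/\lambda_d(\bP\bP\transpose) = o(1)$ from \eqref{eqn:SNR}, the total perturbation of $\calH(\bA^{(i)}(\bA^{(i)})\transpose) - \widehat{\bM}_{RS}^{(i)}$ away from $\bP\bP\transpose$ is at most $\lambda_d(\bP\bP\transpose)/10$ for large $n$. Weyl's inequality then forces $\lambda_{d+1}\{\calH(\bA^{(i)}(\bA^{(i)})\transpose) - \widehat{\bM}_{RS}^{(i)}\}\leq \lambda_d(\bP\bP\transpose)/10$, so the Davis--Kahan theorem \cite{doi:10.1137/0707001} yields $\|\sin\Theta(\widehat{\bU}^{(i)}_{RS},\bV)\|_2\leq (\lambda_d(\bP\bP\transpose)/10)/(9\lambda_d(\bP\bP\transpose)/10) = 1/9$ uniformly over $i$ with high probability, whence $\|(\bH^{(i)}_{RS})^{-1}\|_2 = \{1 - \|\sin\Theta(\widehat{\bU}^{(i)}_{RS},\bV)\|_2^2\}^{-1/2}\leq 2$.

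Next I would run the absorption step. Using $\|\cdot\|_{2\to\infty}\leq\|\cdot\|_2$, $\|\bH_{RS}\|_2\leq 1$, and the factorization $\widehat{\bU}^{(i)}_{RS} = \widehat{\bU}^{(i)}_{RS}\bH^{(i)}_{RS}(\bH^{(i)}_{RS})^{-1}$, I obtain
\begin{align*}
\|\widehat{\bU}^{(i)}_{RS}\|_{2\to\infty}
&\leq \|(\bH^{(i)}_{RS})^{-1}\|_2\big(\|\widehat{\bU}_{RS}\bH_{RS}\|_{2\to\infty} + \|\widehat{\bU}^{(i)}_{RS}\bH^{(i)}_{RS} - \widehat{\bU}_{RS}\bH_{RS}\|_{2\to\infty}\big)\\
&\leq 2\|\widehat{\bU}_{RS}\|_{2\to\infty} + 2\|\widehat{\bU}^{(i)}_{RS}\bH^{(i)}_{RS} - \widehat{\bU}_{RS}\bH_{RS}\|_2.
\end{align*}
Inserting Lemma \ref{lemma:LOO_stage_II} and maximizing over $i$ produces the self-referential inequality $\max_i\|\widehat{\bU}^{(i)}_{RS}\|_{2\to\infty}\leq 2\|\widehat{\bU}_{RS}\|_{2\to\infty} + \Optilde(\zeta_{\mathsf{op}}/(mn^2\rho_n^2))\max_i\|\widehat{\bU}_{RS}^{(i)}\|_{2\to\infty}$. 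Since $\zeta_{\mathsf{op}}/(mn^2\rho_n^2) = \Theta(\varepsilon_n^{(\mathsf{op})}) = o(1)$, the coefficient of the max-term is at most $1/2$ for large $n$, and absorbing it to the left gives $\max_i\|\widehat{\bU}^{(i)}_{RS}\|_{2\to\infty}\leq 4\|\widehat{\bU}_{RS}\|_{2\to\infty}$. Feeding this back into Lemma \ref{lemma:LOO_stage_II} yields the spectral-norm consequence. For the last claim I would use $\|\widehat{\bU}^{(i)}_{RS}\bH^{(i)}_{RS} - \bV\|_{2\to\infty}\leq\|\widehat{\bU}^{(i)}_{RS}\bH^{(i)}_{RS} - \widehat{\bU}_{RS}\bH_{RS}\|_2 + \|\widehat{\bU}_{RS}\bH_{RS}\|_{2\to\infty} + \|\bV\|_{2\to\infty}$, bound the first summand by the spectral consequence and note $\|\bV\|_{2\to\infty} = \|\bU\|_{2\to\infty} = O(\sqrt{d/n})$ by Assumption \ref{assumption:eigenvector_delocalization}; since $\|\widehat{\bU}_{RS}\|_{2\to\infty}\geq\sqrt{d/n}$ for any matrix with orthonormal columns, every term is $\Optilde(1)\|\widehat{\bU}_{RS}\|_{2\to\infty}$.

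I expect the main obstacle to be the first step, namely establishing the uniform $\sin\Theta$ control underlying $\|(\bH^{(i)}_{RS})^{-1}\|_2\leq 2$, because it must hold simultaneously for all $n$ leave-one-out indices and hinges on verifying that the recursively defined $\widehat{\bM}_{RS}^{(i)}$ stays inside the prescribed ball around $\bM$ (supplied by the lemma's hypothesis) while the random part remains at scale $\zeta_{\mathsf{op}} = o(\lambda_d(\bP\bP\transpose))$. Once $\|(\bH^{(i)}_{RS})^{-1}\|_2$ is controlled, the absorption step is routine, being powered entirely by the fact that the noise coefficient $\zeta_{\mathsf{op}}/(mn^2\rho_n^2)$ is $o(1)$.
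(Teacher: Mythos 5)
Your proposal is correct and follows essentially the same route as the paper: uniform spectral-norm concentration of $\calH((\bA^{(i)})(\bA^{(i)})\transpose) - \bP\bP\transpose - \bM$ plus the hypothesis on $\widehat{\bM}_{RS}^{(i)}$ to get $\max_{i\in[n]}\|(\bH_{RS}^{(i)})^{-1}\|_2\leq 2$, followed by exactly the same self-bounding absorption of $\max_{i\in[n]}\|\widehat{\bU}_{RS}^{(i)}\|_{2\to\infty}$ via Lemma \ref{lemma:LOO_stage_II} and the $o(1)$ coefficient $\zeta_{\mathsf{op}}/(mn^2\rho_n^2)$, with the consequences obtained by the same triangle inequality. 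The only cosmetic difference is that you derive the $\|(\bH_{RS}^{(i)})^{-1}\|_2\leq 2$ bound directly from Weyl, Davis--Kahan, and the cosine identity (and helpfully make explicit the fact $\|\widehat{\bU}_{RS}\|_{2\to\infty}\geq\sqrt{d/n}$), whereas the paper cites Lemma 2 of \cite{abbe-fan-wang-zhong-2020} for the same conclusion.
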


\begin{proof}
The ``consequently'' part follows from the second assertion, Lemma \ref{lemma:LOO_stage_II}, and the triangle inequality that
\[
\max_{i\in[n]}\|\widehat{\bU}^{(i)}_{RS}\bH^{(i)}_{RS} - \bV\|_{2\to\infty}
\leq \max_{i\in[n]}\|\widehat{\bU}^{(i)}_{RS}\bH^{(i)}_{RS} - \widehat{\bU}_{RS}\bH_{RS}\|_2 + \|\widehat{\bU}_{RS}\|_{2\to\infty} + \|\bV\|_{2\to\infty}.
\] 
It is thus sufficient to establish the first and second assertions. 
By Lemma \ref{lemma:spectral_norm_concentration_noise} and a union bound over $i\in[n]$, there exists some $c$-dependent constant $N_c$, such that
\begin{align*}
\max_{i\in[n]}\frac{\|\calH((\bA^{(i)})(\bA^{(i)})\transpose) - \bP\bP\transpose - \widehat{\bM}_{RS}^{(i)}\|_2}{\lambda_d(\bP\bP\transpose)}
&\leq \max_{i\in[n]}\frac{\|\calH((\bA^{(i)})(\bA^{(i)})\transpose) - \bP\bP\transpose - \bM\|_2}{m\Delta_n^2} + \max_{i\in[n]}\frac{\|\bM - \widehat{\bM}_{RS}^{(i)}\|_2}{m\Delta_n^2}\\
&\leq 
\frac{1}{2}
\end{align*}
with probability at least $1 - O(n^{-c})$ for all $n\geq N_c$. By Lemma 2 in \cite{abbe-fan-wang-zhong-2020}, 
\begin{align}\label{eqn:Hri_error_bound}
\prob\left\{\max_{i\in[n]}\|(\bH^{(i)}_{RS})^{-1}\|_2\leq 2\right\}\geq 1 - O(n^{-c})\quad\mbox{for all }n\geq N_c.
\end{align}
By Lemma \ref{lemma:LOO_stage_II}, there exists a $c$-dependent constant $C_c > 0$, such that
\begin{align*}
\max_{i\in[n]}\|\widehat{\bU}_{RS}^{(i)}\|_{2\to\infty}
&\leq \max_{i\in[n]}\|\widehat{\bU}_{RS}^{(i)}\bH^{(i)}_{RS}\|_{2\to\infty}\|(\bH^{(i)}_{RS})^{-1}\|_2
\leq 2\max_{i\in[n]}\|\widehat{\bU}_{RS}^{(i)}\bH^{(i)}_{RS}\|_{2\to\infty}
\\&
\leq 2\max_{i\in[n]}\|\widehat{\bU}_{RS}^{(i)}\bH^{(i)}_{RS} - \widehat{\bU}_{RS}\bH_{RS}\|_{2\to\infty} + 2\|\widehat{\bU}_{RS}\|_{2\to\infty}
\\&
\leq 2\|\widehat{\bU}_{RS}\|_{2\to\infty} + \frac{2C_c\zeta_{\mathsf{op}}}{m\Delta_n^2}
\max_{i\in[n]}\|\widehat{\bU}_{RS}^{(i)}\|_{2\to\infty}
\end{align*}
with probability at least $1 - O(n^{-c})$ whenever $n\geq N_c$. Namely, 
\begin{align*}
\frac{1}{2}\max_{i\in[n]}\|\widehat{\bU}_{RS}^{(i)}\|_{2\to\infty}
&\leq \left(1 - \frac{2C_c\zeta_{\mathsf{op}}}{m\Delta_n^2}\right)\max_{i\in[n]}\|\widehat{\bU}_{RS}^{(i)}\|_{2\to\infty}
\leq 2\|\widehat{\bU}_{RS}\|_{2\to\infty},
\end{align*}
and hence, $\max_{i\in[n]}\|\widehat{\bU}_{RS}^{(i)}\|_{2\to\infty}
\leq 4\|\widehat{\bU}_{RS}\|_{2\to\infty}$
with probability at least $1 - O(n^{-c})$ for all $n\geq N_c$.
\end{proof}

\begin{lemma}\label{lemma:LOO_Rowwise_concentration}
Suppose the conditions of Lemma \ref{lemma:LOO_Utilde_two_to_infinity_norm} hold. Further assume that, for any $c > 0$, there exists a $c$-dependent constant $N_c > 0$, such that for any $n > N_c$, 
$\max_{i,j\in[n],i\neq j,r\in[R],s\in[S]}\|\bM - \widehat{\bM}_{rs}^{(i,j)}\|_2 \leq \lambda_d(\bP\bP\transpose)/20$
 with probability at least $1 - O(n^{-c})$. Then 
\begin{align*}
&\max_{i\in[n]}\|\be_i\transpose\calH(\bE\bE\transpose)(\widehat{\bU}_{RS}^{(i)}\bH_{RS}^{(i)} - \bV)\|_2
 =  \Optilde\bigg(q_n\|\widehat{\bU}_{RS}\|_{2\to\infty} + \frac{\zeta_{\mathsf{op}}\log n}{m^{1/2}n^{3/2}\rho_n}\bigg)
 + \Optilde\bigg(\frac{\log n}{m^{1/2}n^{3/2}\rho_n}\bigg)
\|\bM - \widehat{\bM}_{RS}\|_2
,\\
 &\max_{i\in[n]}\frac{\|\be_i\transpose\{\calH(\bA\bA\transpose) - \bP\bP\transpose - \bM\}(\widehat{\bU}_{RS}^{(i)}\bH_{RS}^{(i)} - \bV)\|_2}{m\Delta_n^2}
\\&\quad
 = \Optilde\bigg\{\frac{\log n}{m^{1/2}n\rho_n} + \frac{(\log n)^{1/2}}{(mn\rho_n)^{1/2}}\bigg\}\|\widehat{\bU}_{RS}\|_{2\to\infty}
 + \Optilde\bigg(\frac{\log n}{m^{3/2}n^{7/2}\rho_n^3}\bigg)\|\bM - \widehat{\bM}_{RS}\|_2,
\end{align*}
where $q_n
 = (\log n)^2 
+ \zeta_{\mathsf{op}}\log n(n\rho_n\vee\log n)^{1/2}/(mn^2\rho_n^2)
$.
\end{lemma}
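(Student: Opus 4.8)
The plan is to exploit the leave-one-out independence recorded in Section~\ref{sec:LOO_analysis}: writing $\bY^{(i)} = \widehat{\bU}_{RS}^{(i)}\bH_{RS}^{(i)} - \bV$, the matrix $\bY^{(i)}$ is a function of $\bA^{(i)}$ and hence independent of the row $\be_i\transpose\bE = (E_{tij}:t\in[m],j\in[n])$. Before attacking the two displays I would collect the norm bounds $\|\bY^{(i)}\|_{2\to\infty} = \Optilde(1)\|\widehat{\bU}_{RS}\|_{2\to\infty}$ from Lemma~\ref{lemma:LOO_Utilde_two_to_infinity_norm}, and, by combining Lemma~\ref{lemma:LOO_stage_II} with the $\sin\Theta$ estimate of Lemma~\ref{lemma:remainder_II}, the bound $\|\bY^{(i)}\|_2\vee\|\bY^{(i)}\|_{\mathrm{F}} = \Optilde(\zeta_{\mathsf{op}}/(mn^2\rho_n^2)) + O(1/(mn^2\rho_n^2))\|\bM - \widehat{\bM}_{RS}\|_2$ (using $d = O(1)$). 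I would also record the identity $\calH(\bA\bA\transpose) - \bP\bP\transpose - \bM = \calH(\bE\bE\transpose) + \calH(\sum_{t=1}^m(\bP_t\bE_t + \bE_t\bP_t))$, so that the numerator of the second display equals that of the first display plus a term linear in $\bE$.

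For the first display I would write $\be_i\transpose\calH(\bE\bE\transpose)\bY^{(i)} = \sum_{a\neq i}(\sum_{t}\sum_{l}E_{til}E_{tal})\be_a\transpose\bY^{(i)}$ and split the inner sum according to $l=i$ and $l\neq i$. Conditioning on $\bA^{(i)}$ freezes $\bY^{(i)}$ together with every $E_{tal}$, $a,l\neq i$, leaving only the independent mean-zero variables $(E_{til})_{l}$ of row $i$. The $l\neq i$ part is then a $d$-vector-valued linear form $\sum_{t}\sum_{l\neq i}E_{til}\bg_{tl}$ with coefficients $\bg_{tl} = \sum_{a\neq i}E_{tal}(\be_a\transpose\bY^{(i)})\transpose$, to which I would apply a conditional vector Bernstein inequality. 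Its conditional variance proxy is $\rho_n\sum_{t,l}\|\bg_{tl}\|_2^2 = \rho_n\,\mathrm{tr}\{(\bY^{(i)})\transpose\bE^{(i)}(\bE^{(i)})\transpose\bY^{(i)}\}$, which I would bound by splitting $\bE^{(i)}(\bE^{(i)})\transpose$ into its diagonal (controlled by Lemma~\ref{lemma:norm_concentration_Et}) and its hollow part (controlled by Lemma~\ref{lemma:spectral_norm_concentration_Hollow_EEtranspose}); the maximal coefficient $\max_{t,l}\|\bg_{tl}\|_2$ requires replacing $\bY^{(i)}$ by the leave-two-out $\bY^{(i,l)} := \widehat{\bU}_{RS}^{(i,l)}\bH_{RS}^{(i,l)} - \bV$, which is independent of column $l$, so that $\bg_{tl}$ becomes a genuine linear form in $(E_{tal})_a$ amenable to Bernstein over $a$. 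The $l=i$ part, $\sum_{t}E_{tii}\sum_{a\neq i}E_{tai}(\be_a\transpose\bY^{(i)})$, is a quadratic function purely of $\be_i\transpose\bE$ and I would control it with the decoupling-plus-moment machinery already developed in Lemmas~\ref{lemma:LOO_preliminary} and \ref{lemma:LOO_preliminary_II}. Tracking the three contributions and inserting the norm bounds for $\bY^{(i)}$ produces the $q_n\|\widehat{\bU}_{RS}\|_{2\to\infty}$, the $\zeta_{\mathsf{op}}\log n/(m^{1/2}n^{3/2}\rho_n)$, and the bias terms.

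For the second display I would add the linear cross term $\be_i\transpose\calH(\sum_t(\bP_t\bE_t + \bE_t\bP_t))\bY^{(i)}$. Writing out its $(i,a)$ entries and again splitting the inner index $l=i$ versus $l\neq i$, the row-$i$ randomness gives linear forms with frozen coefficients (direct Bernstein), while the coupled $l\neq i$ part is decoupled via the same leave-two-out replacement; because $\bP$ is deterministic with $\|\bP\|_{\max} = \Theta(\rho_n)$, these estimates are strictly easier than those for $\calH(\bE\bE\transpose)$ and are of the same order as the row bound $\Optilde(\zeta_{\mathsf{op}}/\sqrt n)$ of Lemma~\ref{lemma:rowwise_concentration_noise}, now scaled by $\|\bY^{(i)}\|$. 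Dividing the combined estimate by $m\Delta_n^2 = \Theta(mn^2\rho_n^2)$ and using $m^{1/2}n\rho_n = \omega(\log n)$ to discard the lower-order pieces collapses everything into $\Optilde\{\log n/(m^{1/2}n\rho_n) + (\log n)^{1/2}/(mn\rho_n)^{1/2}\}\|\widehat{\bU}_{RS}\|_{2\to\infty}$ plus the stated bias term, after a union bound over $i\in[n]$.

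I expect the main obstacle to be the leave-two-out decoupling needed to bound $\max_{t,l}\|\bg_{tl}\|_2$ sharply: one must pass from $\bY^{(i)}$ to $\bY^{(i,l)}$ while proving the induced perturbation negligible, which is precisely why the hypothesis carries the additional control $\max_{i\neq j,r,s}\|\bM - \widehat{\bM}_{rs}^{(i,j)}\|_2\leq\lambda_d(\bP\bP\transpose)/20$; establishing this stability requires re-running the recursion of Lemma~\ref{lemma:LOO_stage_II} at the leave-two-out level. A secondary difficulty is the higher-order moment bookkeeping for the genuinely quadratic $l=i$ term, whose $(\log n)^2$ factor must be tracked carefully to reproduce $q_n$.
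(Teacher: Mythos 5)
Your proposal reproduces the paper's argument in all essentials: the same leave-one-out split of $\be_i\transpose\calH(\bE\bE\transpose)(\widehat{\bU}_{RS}^{(i)}\bH_{RS}^{(i)} - \bV)$ into the conditionally linear part $\sum_{t}\be_i\transpose\bE_t\bE_t^{(i)}(\widehat{\bU}_{RS}^{(i)}\bH_{RS}^{(i)} - \bV)$ (conditional Bernstein with variance proxy $\|(\bE^{(i)})(\bE^{(i)})\transpose\|_2^{1/2}\|\widehat{\bU}_{RS}^{(i)}\bH_{RS}^{(i)} - \bV\|_{\mathrm{F}}$, with the Gram matrix controlled by its diagonal plus hollowed part) and the diagonal $l=i$ part, the same leave-two-out replacement $\widehat{\bU}_{RS}^{(i)}\bH_{RS}^{(i)}\to\widehat{\bU}_{RS}^{(i,j)}\bH_{RS}^{(i,j)}$ to decouple the maximal coefficient (you correctly identify this as the reason for the hypothesis on $\|\bM - \widehat{\bM}_{rs}^{(i,j)}\|_2$), and the same $q_1 + q_2 + q_3 + q_4$ treatment of the linear cross terms for the second display. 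The only deviations are cosmetic and harmless: the paper dispatches the coupled $\sum_t\bP_t\bE_t$ term by the crude norm product $\|\sum_t\bP_t\bE_t\|_{2\to\infty}\max_i\|\widehat{\bU}_{RS}^{(i)}\bH_{RS}^{(i)} - \bV\|_2$ combined with delocalization rather than a further leave-two-out decoupling, and it handles the $l=i$ term by conditioning on $(E_{tii})_t$ (independent of the off-diagonal row-$i$ entries, so no genuine decoupling or higher-moment bookkeeping is needed) rather than the heavier machinery of the preliminary moment lemmas you cite.
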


\begin{proof}
For the first claim, a simple algebra shows that
\[
\be_i\transpose\calH(\bE\bE\transpose)
 = \sum_{t = 1}^m\be_i\transpose\bE_t\bE_t^{(i)} + 
\sum_{t = 1}^m E_{tii}(\be_i\transpose\bE_t - E_{tii}\be_i\transpose)
\]
and
\begin{equation}
\begin{aligned}
\label{eqn:LOO_rowwise_concentration}
\be_i\transpose\calH(\bE\bE\transpose)(\widehat{\bU}_{RS}^{(i)}\bH_{RS}^{(i)} - \bV)
 = \sum_{t = 1}^m\be_i\transpose\bE_t\bE_t^{(i)}(\widehat{\bU}_{RS}^{(i)}\bH_{RS}^{(i)} - \bV)
 + \sum_{t = 1}^mE_{tii}(\be_i\transpose\bE_t - E_{tii}\be_i\transpose)
(\widehat{\bU}_{RS}^{(i)}\bH_{RS}^{(i)} - \bV).
\end{aligned}
\end{equation}
Below, we analyze the two terms on the right-hand side of \eqref{eqn:LOO_rowwise_concentration} separately. 

\noindent
$\blacksquare$
For the first term in \eqref{eqn:LOO_rowwise_concentration}, observe that $(\be_i\transpose\bE_t)_{t = 1}^m$ and $\{\bE_t^{(i)}(\widehat{\bU}_{RS}^{(i)}\bH_{RS}^{(i)} - \bV)\}_{t = 1}^m$ are independent. Also, note that by Lemma \ref{lemma:LOO_Utilde_two_to_infinity_norm}, Lemma \ref{lemma:spectral_norm_concentration_noise}, Davis-Kahan theorem, and a union bound over $i\in[n]$, we have
\begin{align*}
\max_{i\in[n]}\|\widehat{\bU}^{(i)}_{RS}\bH^{(i)}_{RS} - \bV\|_{2}
& \leq \max_{i\in[n]}\|\widehat{\bU}^{(i)}_{RS}\bH^{(i)}_{RS} - \widehat{\bU}_{RS}\bH_{RS}\|_2 + \|\widehat{\bU}_{RS}\bH_{RS} - \bV\|_2
\\&
 = 
\Optilde\left(\frac{\zeta_{\mathsf{op}}}{mn^2\rho_n^2}\right) + O\left(\frac{1}{mn^2\rho_n^2}\right)\|\bM - \widehat{\bM}_{RS}\|_2.
\end{align*}
By Lemma \ref{lemma:norm_concentration_Et} and Lemma \ref{lemma:spectral_norm_concentration_Hollow_EEtranspose}, we have
\begin{align*}
\|\bE\bE\transpose\|_2\leq \|\calH(\bE\bE\transpose)\|_2 + \max_{i\in[n]}\sum_{t = 1}^m\sum_{j = 1}^nE_{tij}^2 = \Optilde(m^{1/2}n\rho_n\log n + mn\rho_n)\leq \Optilde(mn\rho_n\log n).
\end{align*} 
By a union bound, $\max_{i\in[n]}\|(\bE^{(i)})(\bE^{(i)})\transpose\|_2 = \Optilde(mn\rho_n\log n)$.
Then by Bernstein's inequality, Lemma \ref{lemma:norm_concentration_Et}, and a union bound over $i\in[n]$,
\begin{align*}
&
\max_{i\in[n]}\bigg\|\sum_{t = 1}^m\be_i\transpose\bE_t\bE_t^{(i)}(\widehat{\bU}_{RS}^{(i)}\bH_{RS}^{(i)} - \bV)\bigg\|_2
\\
&
\quad
 = \max_{i\in[n]}\max_{t\in[m]}\left\|\bE_t^{(i)}(\widehat{\bU}_{RS}^{(i)}\bH_{RS}^{(i)} - \bV)\right\|_{2\to\infty}\Optilde(\log n)\\
&\quad\quad + \max_{i\in[n]}\bigg\{\sum_{t = 1}^m\sum_{j = 1}^n\|\be_j\transpose\bE_t^{(i)}(\widehat{\bU}_{RS}^{(i)}\bH_{RS}^{(i)} - \bV)\|_2\bigg\}^{1/2}\Optilde\left\{\rho_n^{1/2}(\log n)^{1/2}\right\}
 \\
&
\quad
 = \max_{i\in[n]}\max_{t\in[m]}\left\|\bE_t^{(i)}(\widehat{\bU}_{RS}^{(i)}\bH_{RS}^{(i)} - \bV)\right\|_{2\to\infty}\Optilde(\log n)
 + \max_{i\in[n]}
 \|(\bE^{(i)})\transpose(\widehat{\bU}_{RS}^{(i)}\bH_{RS}^{(i)} - \bV)\|_{\mathrm{F}}
 \Optilde\left\{\rho_n^{1/2}(\log n)^{1/2}\right\}
  \\
&
\quad
 = \max_{i\in[n]}\max_{t\in[m]}\left\|\bE_t^{(i)}(\widehat{\bU}_{RS}^{(i)}\bH_{RS}^{(i)} - \bV)\right\|_{2\to\infty}\Optilde(\log n)\\
&\quad\quad + \max_{i\in[n]}
 \|(\bE^{(i)})\transpose(\bE^{(i)})\transpose\|_2^{1/2}\|\widehat{\bU}_{RS}^{(i)}\bH_{RS}^{(i)} - \bV\|_{\mathrm{F}}
 \Optilde\left\{\rho_n^{1/2}(\log n)^{1/2}\right\}
\\
&
\quad
 = \max_{i\in[n]}\max_{t\in[m]}\left\|\bE_t^{(i)}(\widehat{\bU}_{RS}^{(i)}\bH_{RS}^{(i)} - \bV)\right\|_{2\to\infty}\Optilde(\log n)
 + 
\Optilde
\bigg(\frac{\zeta_{\mathsf{op}}\log n}{m^{1/2}n^{3/2}\rho_n}
 \bigg)
 + \Optilde\bigg(
\frac{\log n}{m^{1/2}n^{3/2}\rho_n}\bigg)
\|\bM - \widehat{\bM}_{RS}\|_2.
\end{align*}
The analysis of $\max_{i\in[n]}\max_{t\in[m]}\|\bE_t^{(i)}(\widehat{\bU}_{RS}^{(i)}\bH_{RS}^{(i)} - \bV)\|_{2\to\infty}$ requires the introduction of the leave-two-out matrices. Write
\begin{align*}
&\max_{i\in[n]}\max_{t\in[m]}\|\bE_t^{(i)}(\widehat{\bU}_{RS}^{(i)}\bH_{RS}^{(i)} - \bV)\|_{2\to\infty}
 = \max_{i\in[n]}\max_{t\in[m]}\max_{j\in[n]\backslash\{i\}}
\|\be_j\transpose\bE_t^{(i)}(\widehat{\bU}_{RS}^{(i)}\bH_{RS}^{(i)} - \bV)\|_2\\
&\quad\leq \max_{i\in[n]}\max_{t\in[m]}\max_{j\in[n]\backslash\{i\}}\|\be_j\transpose\bE_t^{(i)}(\widehat{\bU}_{RS}^{(i)}\bH_{RS}^{(i)} - \widehat{\bU}_{RS}^{(i, j)}\bH_{RS}^{(i, j)})\|_2
 + \max_{i\in[n]}\max_{t\in[m]}\max_{j\in[n]\backslash\{i\}}\|\be_j\transpose\bE_t^{(i)}(\widehat{\bU}_{RS}^{(i, j)}\bH_{RS}^{(i, j)} - \bV)\|_2\\
&\quad\leq \max_{t\in[m]}\max_{i,j\in[n],i\neq j}\|\bE_t^{(i)}\|_2\|\widehat{\bU}_{RS}^{(i)}\bH_{RS}^{(i)} - \widehat{\bU}_{RS}^{(i, j)}\bH_{RS}^{(i, j)}\|_2
 + \max_{i\in[n]}\max_{t\in[m]}\max_{j\in[n]\backslash\{i\}}\|\be_j\transpose\bE_t^{(i)}(\widehat{\bU}_{RS}^{(i, j)}\bH_{RS}^{(i, j)} - \bV)\|_2
\end{align*}
Since $\widehat{\bU}_{RS}^{(i, j)}$ and $\bH_{RS}^{(i, j)}$ are the leave-one-out versions of $\widehat{\bU}_{RS}^{(i)}$ and $\bH_{RS}^{(i)}$, respectively, and $\bE^{(i)}$ also satisfies Assumptions \ref{assumption:eigenvector_delocalization}--\ref{assumption:condition_number}, 
then by Lemma \ref{lemma:LOO_Utilde_two_to_infinity_norm}, we have
\begin{align*}
\max_{i,j\in[n],i\neq j}\|\widehat{\bU}_{RS}^{(i)}\bH_{RS}^{(i)} - \widehat{\bU}_{RS}^{(i, j)}\bH_{RS}^{(i, j)}\|_2
 = \Optilde\{\zeta_{\mathsf{op}}(mn^2\rho_n^2)^{-1}\}\|\widehat{\bU}_{RS}\|_{2\to\infty}
 ,
\end{align*}
so that the first term satisfies
\begin{align*}
\max_{t\in[m]}\max_{i,j\in[n],i\neq j}\|\bE_t^{(i)}\|_2\|\widehat{\bU}_{RS}^{(i)}\bH_{RS}^{(i)} - \widehat{\bU}_{RS}^{(i, j)}\bH_{RS}^{(i, j)}\|_2 = \Optilde\bigg\{\zeta_{\mathsf{op}}\frac{(n\rho_n\vee\log n)^{1/2}}{mn^2\rho_n^2}
\bigg\}\|\widehat{\bU}_{RS}\|_{2\to\infty}
\end{align*}
by Lemma \ref{lemma:norm_concentration_Et} and a union bound over $t\in[m]$. 
Note here we also used 
\[
\max_{i,j\in[n],i\neq j}\max_{r\in[R],s\in[S]}\|\bM - \widehat{\bM}_{rs}^{(i, j)}\|_2 \leq \lambda_d(\bP\bP\transpose)/20
\]
 with probability at least $1 - O(n^{-c})$.
For the second term, observe that by Lemma \ref{lemma:LOO_Utilde_two_to_infinity_norm}, Davis-Kahan theorem, Lemma \ref{lemma:spectral_norm_concentration_noise}, and a union bound over $i,j\in[n],i\neq j$, we have
\begin{align*}
\max_{i,j\in[n],i\neq j}\|\widehat{\bU}^{(i, j)}_{RS}\bH^{(i, j)}_{RS} - \bV\|_{2}
&\leq\max_{i,j\in[n],i\neq j}\|\widehat{\bU}^{(i, j)}_{RS}\bH^{(i, j)}_{RS} - \widehat{\bU}^{(i)}_{RS}\bH^{(i)}_{RS}\|_{2} + \max_{i\in[n]}\|\widehat{\bU}^{(i)}_{RS}\bH^{(i)}_{RS} - \bV\|_{2}\\
& = \Optilde\bigg(\frac{\zeta_{\mathsf{op}}}{mn^2\rho_n^2}\bigg) + O\left(\frac{1}{mn^2\rho_n^2}\right)\|\bM - \widehat{\bM}_{RS}\|_2,\\
\max_{i,j\in[n],i\neq j}\|\widehat{\bU}^{(i, j)}_{RS}\bH^{(i, j)}_{RS} - \bV\|_{2\to\infty}
& = \Optilde(\|\widehat{\bU}_{RS}\|_{2\to\infty}).
\end{align*}
Also, note that $(\be_j\transpose\bE_t^{(i)})_{t = 1}^m$ and $\widehat{\bU}_{RS}^{(i, j)}\bH_{RS}^{(i, j)} - \bV$ are independent so that the second term can be bounded using a union bound over $i,j\in[n],t\in[m]$, Bernstein's inequality, and Lemma \ref{lemma:LOO_Utilde_two_to_infinity_norm}:
\begin{align*}
&\max_{i\in[n]}\max_{t\in[m]}\max_{j\in[n]\backslash\{i\}}\|\be_j\transpose\bE_t^{(i)}(\widehat{\bU}_{RS}^{(i, j)}\bH_{RS}^{(i, j)} - \bV)\|_2\\
&\quad = \Optilde\bigg\{\log n\max_{i,j\in[n]:i\neq j}\|\widehat{\bU}_{RS}^{(i, j)}\bH_{RS}^{(i, j)} - \bV\|_{2\to\infty} + \rho_n^{1/2}(\log n)^{1/2}\max_{i,j\in[n],i\neq j}\|\widehat{\bU}^{(i, j)}_{RS}\bH^{(i, j)}_{RS} - \bV\|_{2}\bigg\}\\
&\quad\leq  \Optilde(\log n)\|\widehat{\bU}_{RS}\|_{2\to\infty}
 + \Optilde\bigg\{\frac{\zeta_{\mathsf{op}}(\log n)^{1/2}}{mn^2\rho_n^{3/2}}\bigg\}
 + \Optilde\bigg\{\frac{(\log n)^{1/2}}{mn^2\rho_n^{3/2}}\bigg\}\|\bM - \widehat{\bM}_{RS}\|_2.
\end{align*}
Combining the two pieces above together, we obtain
\begin{align*}
&\max_{i\in[n]}\max_{t\in[m]}\|\bE_t^{(i)}(\widehat{\bU}_{RS}^{(i)}\bH_{RS}^{(i)} - \bV)\|_{2\to\infty}\\
&\quad\leq \max_{t\in[m]}\max_{i,j\in[n],i\neq j}\|\bE_t^{(i)}\|_2\|\widehat{\bU}_{RS}^{(i)}\bH_{RS}^{(i)} - \widehat{\bU}_{RS}^{(i, j)}\bH_{RS}^{(i, j)}\|_2
 + \max_{i\in[n]}\max_{t\in[m]}\max_{j\in[n]\backslash\{i\}}\|\be_j\transpose\bE_t^{(i)}(\widehat{\bU}_{RS}^{(i, j)}\bH_{RS}^{(i, j)} - \bV)\|_2\\
&\quad = \Optilde\bigg\{\zeta_{\mathsf{op}}\frac{(n\rho_n \vee\log n)^{1/2}}{mn^2\rho_n^2} + \log n
\bigg\}\|\widehat{\bU}_{RS}\|_{2\to\infty} 
 + \Optilde\bigg\{\frac{\zeta_{\mathsf{op}}(\log n)^{1/2}}{mn^2\rho_n^{3/2}}\bigg\}
 + \Optilde\bigg\{\frac{(\log n)^{1/2}}{mn^2\rho_n^{3/2}}\bigg\}\|\bM - \widehat{\bM}_{RS}\|_2.
\end{align*}
Hence, we further obtain
\begin{align*}
&\max_{i\in[n]}\bigg\|\sum_{t = 1}^m\be_i\transpose\bE_t\bE_t^{(i)}(\widehat{\bU}_{RS}^{(i)}\bH_{RS}^{(i)} - \bV)\bigg\|_2\\
&\quad = 
\Optilde\bigg\{
(\log n)^2
+ \zeta_{\mathsf{op}}\frac{(n\rho_n\vee\log n)^{1/2}}{mn^2\rho_n^2}\log n
\bigg\}\|\widehat{\bU}_{RS}\|_{2\to\infty}
+ \Optilde\bigg(
\frac{\zeta_{\mathsf{op}}\log n}{m^{1/2}n^{3/2}\rho_n}\bigg)\\
&\quad\quad
 + \Optilde\bigg(
\frac{\log n}{m^{1/2}n^{3/2}\rho_n}\bigg)
\|\bM - \widehat{\bM}_{RS}\|_2. 
\end{align*}


\noindent$\blacksquare$ For the second term in \eqref{eqn:LOO_rowwise_concentration}, we can rewrite it as $\sum_{t = 1}^m\sum_{j\in[n]\backslash\{i\}}E_{tii}E_{tij}\be_j\transpose(\widehat{\bU}_{RS}^{(i)}\bH_{RS}^{(i)} - \bV)$.
Denote by $\widehat{\bV}_{RS}^{(i)} = \widehat{\bU}_{RS}^{(i)}\bH_{RS}^{(i)} - \bV$ and $[\widehat{\bV}_{RS}^{(i)}]_{jk}$ the $(j, k)$th entry of $\widehat{\bV}_{RS}^{(i)}$. By Bernstein's inequality and a conditioning argument,
\begin{align*}
&\bigg\|\sum_{t = 1}^mE_{tii}(\be_i\transpose\bE_t - E_{tii}\be_i\transpose)\widehat{\bV}_{RS}^{(i)}\bigg\|_2^2\\
&\quad = \Optilde\bigg\{\|\widehat{\bV}_{RS}^{(i)}\|_{2\to\infty}\log n + \bigg(\sum_{t = 1}^m\sum_{j = 1}^nE_{tii}^2\|\be_j\transpose\widehat{\bV}_{RS}^{(i)}\|_2^2\bigg)^{1/2}\rho_n^{1/2}(\log n)^{1/2}\bigg\}\\
&\quad = \Optilde\bigg[\|\widehat{\bV}_{RS}^{(i)}\|_{2\to\infty}\log n + \bigg\{\sum_{t = 1}^m(E_{tii}^2 - \sigma_{tii}^2) + \sum_{t = 1}^m\sigma_{tii}^2\bigg\}^{1/2}\|\widehat{\bV}_{RS}^{(i)}\|_{\mathrm{F}}\rho_n^{1/2}(\log n)^{1/2}\bigg]\\
&\quad = \Optilde\bigg\{\|\widehat{\bV}_{RS}^{(i)}\|_{2\to\infty}\log n + \|\widehat{\bV}_{RS}^{(i)}\|_{\mathrm{F}}m^{1/2}\rho_n(\log n)^{1/2} + \|\widehat{\bV}_{RS}^{(i)}\|_{\mathrm{F}}\rho_n^{1/2}\log n\bigg\}\\
&\quad = \Optilde\bigg(\|\widehat{\bU}_{RS}\|_{2\to\infty}\log n + \frac{\zeta_{\mathsf{op}}\log n}{m^{1/2}n^{3/2}\rho_n}\bigg) + \Optilde\bigg(\frac{\log n}{m^{1/2}n^{3/2}\rho_n}\bigg)\|\bM - \widehat{\bM}_{RS}\|_2.
\end{align*}
Combining the concentration bounds for the two terms in \eqref{eqn:LOO_rowwise_concentration} completes the proof of the first assertion. For the second assertion, by triangle inequality, we have
\begin{align*}
\max_{i\in[n]}\frac{\|\be_i\transpose\{\calH(\bA\bA\transpose) - \bP\bP\transpose - \bM\}(\widehat{\bU}_{RS}^{(i)}\bH_{RS}^{(i)} - \bV)\|_2}{m\Delta_n^2}
&\leq q_1 + q_2 + q_3 + q_4,
\end{align*}
where
\begin{equation}
\label{eqn:q1_q2_q3_q4}
\begin{aligned}
q_1 & = \max_{i\in[n]}\frac{\|\be_i\transpose\calH(\bE\bE\transpose)(\widehat{\bU}_{RS}^{(i)}\bH_{RS}^{(i)} - \bV)\|_2}{m\Delta_n^2},\;
q_2 = \frac{1}{m\Delta_n^2}\bigg\|\sum_{t = 1}^m\bP_t\bE_t\bigg\|_{2\to\infty}\max_{i\in[n]}\|\widehat{\bU}_{RS}^{(i)}\bH_{RS}^{(i)} - \bV\|_2,\\
q_3 & = \frac{1}{m\Delta_n^2}\max_{i\in[n]}\bigg\|\be_i\transpose\sum_{t = 1}^m\bE_t\bP_t(\widehat{\bU}_{RS}^{(i)}\bH_{RS}^{(i)} - \bV)\bigg\|_2,\;
q_4 = \frac{1}{m\Delta_n^2}\max_{i\in[n]}\bigg\|2\sum_{t = 1}^m\sum_{j = 1}^nP_{tij}E_{tij}\be_i\transpose(\widehat{\bU}_{RS}^{(i)}\bH_{RS}^{(i)} - \bV)\bigg\|_2.
\end{aligned}
\end{equation}
By the first assertion, we have
\begin{align*}
q_1 & = \Optilde\bigg\{\frac{(\log n)^2}{mn^2\rho_n^2} + \frac{\log n}{mn^2\rho_n^2}(n\rho_n\vee\log n)^{1/2}\bigg\}\|\widehat{\bU}_{RS}\|_{2\to\infty} + \Optilde\bigg(\frac{\log n}{m^{1/2}n\rho_n}\times\frac{1}{\sqrt{n}}\bigg)\\
&\quad + \Optilde\bigg(\frac{\log n}{m^{3/2}n^{7/2}\rho_n^3}\bigg)
\|\bM - \widehat{\bM}_{RS}\|_2\\
& = \Optilde\bigg(\frac{\log n}{m^{1/2}n\rho_n}\|\widehat{\bU}_{RS}\|_{2\to\infty}\bigg)
 + \Optilde\bigg(\frac{\log n}{m^{3/2}n^{7/2}\rho_n^3}\bigg)
\|\bM - \widehat{\bM}_{RS}\|_2.
\end{align*}
For $q_2$, by Bernstein's inequality, we see that $\|\sum_{t = 1}^m\bE_t\bP_t\|_2 = 
\Optilde\{m^{1/2}(n\rho_n)^{3/2}(\log n)^{1/2}\}$. 
Then it follows from Lemma \ref{lemma:LOO_Utilde_two_to_infinity_norm} that 
\begin{align*}
q_2 &\leq \frac{\sqrt{n}}{m\Delta_n^2}\left\|\sum_{t = 1}^m\bP_t\bE_t\right\|_{2\to\infty}\max_{i\in[n]}\|\widehat{\bU}_{RS}^{(i)}\bH_{RS}^{(i)} - \bV\|_{2\to\infty}
\leq \frac{d^{1/2}\mu^{1/2}}{m\Delta_n^2}\left\|\sum_{t = 1}^m\bB_t\bU\transpose\bE_t\right\|_2\max_{i\in[n]}\|\widehat{\bU}_{RS}^{(i)}\bH_{RS}^{(i)} - \bV\|_{2\to\infty} 
\\&
= \frac{d^{1/2}\mu^{1/2}}{m\Delta_n^2}\left\|\sum_{t = 1}^m\bE_t\bP_t\right\|_2\max_{i\in[n]}\|\widehat{\bU}_{RS}^{(i)}\bH_{RS}^{(i)} - \bV\|_{2\to\infty} = \Optilde\left\{\frac{(\log n)^{1/2}}{(mn\rho_n)^{1/2}}\right\}\|\widehat{\bU}_{RS}\|_{2\to\infty}.
\end{align*} 
For $q_3$, by Bernstein's inequality and Lemma \ref{lemma:LOO_Utilde_two_to_infinity_norm}, we obtain
\begin{align*}
q_3
& = \Optilde\left\{\frac{(n\rho_n\log n)^{1/2}}{m^{1/2}\Delta_n^2}\right\}
\max_{t\in[m]}\|\bP_t\|_\infty\|\widehat{\bU}_{RS}^{(i)}\bH_{RS}^{(i)} - \bV\|_{2\to\infty} = \Optilde\left\{\frac{(\log n)^{1/2}}{(mn\rho_n)^{1/2}}\right\}\|\widehat{\bU}_{RS}\|_{2\to\infty}
\end{align*}
For $q_4$, by Bernstein's inequality, Lemma \ref{lemma:LOO_Utilde_two_to_infinity_norm}, and Assumption \ref{assumption:eigenvector_delocalization}, we also have
\begin{align*}
q_4& = \Optilde\left\{\frac{(\log n)^{1/2}}{m^{1/2}(n\rho_n)^{3/2}}\right\}\|\bP\|_{\max}\max_{i\in[n]}\|\widehat{\bU}_{RS}^{(i)}\bH_{RS}^{(i)} - \bV\|_{2\to\infty} = \Optilde\left\{\frac{\rho_n(\log n)^{1/2}}{m^{1/2}(n\rho_n)^{3/2}}\right\}\|\widehat{\bU}_{RS}\|_{2\to\infty}.
\end{align*}
The proof is thus completed by combining the above error bounds for $q_1$, $q_2$, $q_3$, and $q_4$.
\end{proof}

\begin{lemma}\label{lemma:Utilde_two_to_infinity_norm}
Suppose the conditions of Lemma \ref{lemma:LOO_Rowwise_concentration} hold.
Then for any $c > 0$, there exists a $c$-dependent constant $N_c > 0$, such that 
$\|\bH_{RS}^{-1}\|_2\leq 2$ and $\|\widehat{\bU}_{RS}\|_{2\to\infty}\leq C\kappa^2n^{-1/2}$ with probability at least $1 - O(n^{-c})$ for all $n\geq N_c$.
\end{lemma}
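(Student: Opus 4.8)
The plan is to prove the two assertions separately. The alignment bound $\|\bH_{RS}^{-1}\|_2\leq 2$ will follow directly from the spectral-norm concentration already in hand, exactly as in the leave-one-out version (Lemma~\ref{lemma:LOO_Utilde_two_to_infinity_norm}). The two-to-infinity bound $\|\widehat{\bU}_{RS}\|_{2\to\infty}\leq C\kappa^2 n^{-1/2}$ is genuinely a crude \emph{a priori} estimate, and I would obtain it through a self-referential (bootstrap) inequality that is closed by absorbing a vanishing multiple of $\|\widehat{\bU}_{RS}\|_{2\to\infty}$ back into the left-hand side. The key point is that every ingredient I invoke — the simple remainder analyses and the leave-one-out lemmas — already states its bound either in condition-number/noise quantities or in terms of $\|\widehat{\bU}_{RS}\|_{2\to\infty}$ itself, so no circularity arises.

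For the alignment, I would first apply Lemma~\ref{lemma:spectral_norm_concentration_noise} to get $\|\calH(\bA\bA\transpose) - \bP\bP\transpose - \bM\|_2 = \Optilde(\zeta_{\mathsf{op}})$, then combine it with the standing hypothesis $\|\bM - \widehat{\bM}_{RS}\|_2\leq\lambda_d(\bP\bP\transpose)/20$ and with $\zeta_{\mathsf{op}}/\lambda_d(\bP\bP\transpose) = o(1)$ from \eqref{eqn:SNR} to conclude $\|\calH(\bA\bA\transpose) - \bP\bP\transpose - \widehat{\bM}_{RS}\|_2\leq\lambda_d(\bP\bP\transpose)/2$ w.h.p.\ for all large $n$. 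Since $\widehat{\bU}_{RS} = \texttt{eigs}(\calH(\bA\bA\transpose) - \widehat{\bM}_{RS};d)$ and $\bV$ spans the leading eigenspace of $\bP\bP\transpose$, Lemma~2 in \cite{abbe-fan-wang-zhong-2020} then gives $\|\bH_{RS}^{-1}\|_2\leq 2$ w.h.p., which is the first assertion.

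For the two-to-infinity bound, I would start from $\|\widehat{\bU}_{RS}\|_{2\to\infty} = \|\widehat{\bU}_{RS}\bW_{RS}\|_{2\to\infty}\leq\|\bV\|_{2\to\infty} + \|\widehat{\bU}_{RS}\bW_{RS} - \bV\|_{2\to\infty}$, where $\|\bV\|_{2\to\infty} = \|\bU\|_{2\to\infty} = O(n^{-1/2})$ by Assumption~\ref{assumption:eigenvector_delocalization}, and expand $\widehat{\bU}_{RS}\bW_{RS} - \bV$ through the keystone decomposition \eqref{eqn:eigenvector_decomposition_main}--\eqref{eqn:eigenvector_decomposition}. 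The leading term $\{\calH(\bA\bA\transpose) - \bP\bP\transpose - \bM\}\bV\bS^{-1}$ is handled row-wise by Lemma~\ref{lemma:rowwise_concentration_noise} and $\|\bS^{-1}\|_2 = \Theta(1/(mn^2\rho_n^2))$, contributing $\Optilde(\varepsilon_n^{(\mathsf{op})}/\sqrt{n}) = \Optilde(n^{-1/2})$; the remainders $\bR_2^{(RS)}$--$\bR_7^{(RS)}$ are bounded by Lemmas~\ref{lemma:remainder_II}, \ref{lemma:remainder_IV}, \ref{lemma:remainder_III}, and inserting the crude bound $\|\bM - \widehat{\bM}_{RS}\|_2 = O(mn^2\rho_n^2)$ makes each $\Optilde(n^{-1/2})$, the $\|\bM-\widehat{\bM}_{RS}\|_2$ pieces collapsing because $m^{1/2}n\rho_n = \omega(\log n)$. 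None of these involve $\|\widehat{\bU}_{RS}\|_{2\to\infty}$.

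The only self-referential term is $\bR_1^{(RS)}$, and this is where the real work lies. For each $i$ I would split $\widehat{\bU}_{RS}\bW_{RS} - \bV = (\widehat{\bU}_{RS}^{(i)}\bH_{RS}^{(i)} - \bV) + (\widehat{\bU}_{RS}\bW_{RS} - \widehat{\bU}_{RS}^{(i)}\bH_{RS}^{(i)})$: the first piece, being independent of $\be_i\transpose\bE$, is controlled by the second assertion of Lemma~\ref{lemma:LOO_Rowwise_concentration}, which yields $\Optilde(\varepsilon_n^{(\mathsf{op})})\|\widehat{\bU}_{RS}\|_{2\to\infty} + \Optilde(n^{-1/2})$; the second piece is bounded by $\|\be_i\transpose\{\calH(\bA\bA\transpose) - \bP\bP\transpose - \bM\}\|_2 = \Optilde(\zeta_{\mathsf{op}})$ times the leave-one-out stability estimates of Lemmas~\ref{lemma:LOO_stage_II} and \ref{lemma:LOO_Utilde_two_to_infinity_norm}, contributing $\Optilde((\varepsilon_n^{(\mathsf{op})})^2)\|\widehat{\bU}_{RS}\|_{2\to\infty}$ after the $\|\widehat{\bS}_{RS}^{-1}\|_2$ factor, and the $\bW_{RS}$-versus-$\bH_{RS}$ discrepancy of order $\|\sin\Theta(\widehat{\bU}_{RS},\bV)\|_2^2$ is absorbed here as well. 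Collecting all contributions on one high-probability event produces $\|\widehat{\bU}_{RS}\|_{2\to\infty}\leq C_1\kappa^2 n^{-1/2} + C_2\varepsilon_n^{(\mathsf{op})}\|\widehat{\bU}_{RS}\|_{2\to\infty}$, and since $\varepsilon_n^{(\mathsf{op})} = o(1)$ under Assumption~\ref{assumption:condition_number}, for $n\geq N_c$ the coefficient satisfies $C_2\varepsilon_n^{(\mathsf{op})}\leq 1/2$, so moving that term to the left and tracking $\kappa$ through the Davis--Kahan and $\bS^{-1}$ factors gives $\|\widehat{\bU}_{RS}\|_{2\to\infty}\leq C\kappa^2 n^{-1/2}$. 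The hard part will be guaranteeing that the self-referential coefficient is strictly below $1$ \emph{uniformly} on a single high-probability event; this rests on the leave-one-out decoupling that detaches the $\bR_1^{(RS)}$ row from $\be_i\transpose\bE$ and on the quadratic $(\varepsilon_n^{(\mathsf{op})})^2$ gain in the stability estimate, whereas the bookkeeping of $\bW_{RS}$ versus $\bH_{RS}$ and of the $\|\bM-\widehat{\bM}_{RS}\|_2$ terms is routine by comparison.
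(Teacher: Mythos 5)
Your proposal is correct, and its engine --- the self-referential (bootstrap) inequality closed by leave-one-out decoupling, with Lemma~\ref{lemma:LOO_Rowwise_concentration} controlling the decoupled row term $\be_i\transpose\{\calH(\bA\bA\transpose)-\bP\bP\transpose-\bM\}(\widehat{\bU}_{RS}^{(i)}\bH_{RS}^{(i)}-\bV)$ and Lemmas~\ref{lemma:LOO_stage_II}--\ref{lemma:LOO_Utilde_two_to_infinity_norm} controlling the coupling term $\widehat{\bU}_{RS}\bH_{RS}-\widehat{\bU}_{RS}^{(i)}\bH_{RS}^{(i)}$ --- is exactly the paper's. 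Where you genuinely diverge is in how the bootstrap inequality is generated. You run the keystone decomposition \eqref{eqn:eigenvector_decomposition_main}--\eqref{eqn:eigenvector_decomposition}, bounding $\bR_2^{(RS)}$ through $\bR_7^{(RS)}$ by Lemmas~\ref{lemma:remainder_II}, \ref{lemma:remainder_IV}, and \ref{lemma:remainder_III} and isolating the self-reference in $\bR_1^{(RS)}$; the paper instead bypasses the decomposition entirely at this stage and applies the row-wise eigenvector bound of Lemma~1 in \cite{abbe-fan-wang-zhong-2020} to $\be_i\transpose\widehat{\bU}_{RS}\bH_{RS}$, which in one step yields $\|\widehat{\bU}_{RS}\|_{2\to\infty}\leq C\kappa^2\|\bU\|_{2\to\infty}+\frac{C}{m\Delta_n^2}\max_{i\in[n]}\|\be_i\transpose\{\calH(\bA\bA\transpose)-\bP\bP\transpose-\bM\}(\widehat{\bU}_{RS}\bH_{RS}-\bV)\|_2$ plus absorbable fractions of $\|\widehat{\bU}_{RS}\|_{2\to\infty}$, with no $\bW_{RS}$-versus-$\bH_{RS}$ bookkeeping and no $\widehat{\bS}_{RS}^{-1}$ remainders to track. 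The paper's route is shorter and makes it transparent that the final constant $C$ is independent of $c$, which the statement implicitly requires: every $\Optilde$ constant is $c$-dependent, so you must check that each such constant multiplies either an $o(n^{-1/2})$ quantity or an $o(1)\cdot\|\widehat{\bU}_{RS}\|_{2\to\infty}$ quantity, both of which are driven below $n^{-1/2}$, respectively $\frac14\|\widehat{\bU}_{RS}\|_{2\to\infty}$, by enlarging $N_c$ (for the $\|\bM-\widehat{\bM}_{RS}\|_2$ pieces, use $\|\bM-\widehat{\bM}_{RS}\|_2\leq\lambda_d(\bP\bP\transpose)/20$ and, if absorbing into the self-referential mass, $\|\widehat{\bU}_{RS}\|_{2\to\infty}\geq\sqrt{d/n}$). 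Your sketch is consistent with this but should state it explicitly. What your route buys is a preview of the machinery later used for Lemmas~\ref{lemma:recursive_two_to_infinity_error_bound} and \ref{lemma:R1_remainder}, at the cost of extra bookkeeping.

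One inaccuracy to repair, though it does not sink the argument: your claim that none of $\bR_2^{(RS)}$--$\bR_7^{(RS)}$ involves $\|\widehat{\bU}_{RS}\|_{2\to\infty}$ fails for $\bR_3^{(RS)}=(\bM-\widehat{\bM}_{RS})(\widehat{\bU}_{RS}-\bV\bW_{RS}\transpose)\widehat{\bS}_{RS}^{-1}\bW_{RS}$. Since $\bM-\widehat{\bM}_{RS}$ is diagonal, the natural bound is $\|\bR_3^{(RS)}\|_{2\to\infty}\leq\|\bM-\widehat{\bM}_{RS}\|_2(\|\widehat{\bU}_{RS}\|_{2\to\infty}+\|\bV\|_{2\to\infty})\|\widehat{\bS}_{RS}^{-1}\|_2$, a second self-referential term. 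Under the standing hypothesis $\|\bM-\widehat{\bM}_{RS}\|_2\leq\lambda_d(\bP\bP\transpose)/20$ together with $\|\widehat{\bS}_{RS}^{-1}\|_2\leq 2/\lambda_d(\bP\bP\transpose)$, its coefficient is at most $1/10$, so it simply joins the absorbed mass; but it must be counted when you verify that the total self-referential coefficient on your single high-probability event stays strictly below $1$.
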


\begin{proof}
For the first assertion, note that $\|\calH(\bA\bA\transpose) - \bP\bP\transpose - \widehat{\bM}_{RS}\|_2/\lambda_d(\bP\bP\transpose)\leq \Optilde\{{\zeta_{\mathsf{op}}}/(m\Delta_n^2)\} + 1/10$
with probability at least $1 - O(n^{-c})$. Then Lemma 2 in \cite{abbe-fan-wang-zhong-2020} yields that for any $c > 0$, there exists a $c$-dependent constant $N_c > 0$, such that
\begin{align}\label{eqn:Hr_error_bound}
\prob\left(\|\bH_{RS}^{-1}\|_2\leq 2\right)\geq 1 -O(n^{-c})\quad\mbox{for all }n\geq N_c.
\end{align}  
Now we focus on the second assertion.
By Lemma \ref{lemma:spectral_norm_concentration_noise}, there exists a $c$-dependent constant $N_c > 0$, such that
\begin{align*}
\frac{\|\calH(\bA\bA\transpose) - \bP\bP\transpose - \widehat{\bM}_{RS}\|_2}{\lambda_d(\bP\bP\transpose)}&\leq \frac{C_c\zeta_{\mathsf{op}}}{\lambda_d(\bP\bP\transpose)} + \frac{\|\bM - \widehat{\bM}_{RS}\|_2}{\lambda_d(\bP\bP\transpose)}\leq \frac{1}{10}
\end{align*}
with probability $1 - O(n^{-c})$ for any $n\geq N_c$. Then by Lemma 1 in \cite{abbe-fan-wang-zhong-2020}, Lemma \ref{lemma:spectral_norm_concentration_noise}, Davis-Kahan theorem, and Lemma \ref{lemma:rowwise_concentration_noise}, there exists a $c$-dependent constant $C_c > 0$, such that for any $i\in[n]$, 
\begin{align*}
\|\be_i\transpose\widehat{\bU}_{RS}\bH_{RS}\|_2
&\leq \frac{2}{\lambda_d(\bP\bP\transpose)}[\|\be_i\transpose\{\calH(\bA\bA\transpose) - \widehat{\bM}_{RS}\}\bV\|_2 + \|\{\calH(\bA\bA\transpose) - \widehat{\bM}_{RS}\}(\widehat{\bU}_{RS}\bH_{RS} - \bV)\|_2]\\
&\leq \bigg(\frac{2C_c\zeta_{\mathsf{op}}}{m\Delta_n^2} + 2\kappa^2 + \frac{1}{10}\bigg)\|\bU\|_{2\to\infty}
 + \frac{2}{\lambda_d(\bP\bP\transpose)}\|\be_i\transpose\{\calH(\bA\bA\transpose) - \bP\bP\transpose - \bM\}(\widehat{\bU}_{RS}\bH_{RS} - \bV)\|_2\\
&\quad + \bigg(2\kappa^2\|\bU\|_{2\to\infty}\|\widehat{\bU}_{RS}\bH_{RS} - \bV\|_2 + \frac{1}{5}\|\widehat{\bU}_{RS}\|_{2\to\infty} + \frac{1}{5}\|\bU\|_{2\to\infty}\bigg)\\
&\leq \frac{2}{m\Delta_n^2}\|\be_i\transpose\{\calH(\bA\bA\transpose) - \bP\bP\transpose - \bM\}(\widehat{\bU}_{RS}\bH_{RS} - \bV)\|_2 + C\kappa^2\|\bU\|_{2\to\infty} + \frac{1}{5}\|\widehat{\bU}_{RS}\|_{2\to\infty}
\end{align*}
with probability at least $1 - O(n^{-c})$ for all $n\geq N_c$, where $C > 0$ is a constant not depending on $c$. 
It follows that
\begin{align*}
\|\widehat{\bU}_{RS}\|_{2\to\infty}
& = \max_{i\in[n]}\|\be_i\transpose\widehat{\bU}_{RS}\bH_{RS}\bH_{RS}^{-1}\|_2
\leq \max_{i\in[n]}\|\be_i\transpose\widehat{\bU}_{RS}\bH_{RS}\|_2\|\bH_{RS}^{-1}\|_2
\leq 2\max_{i\in[n]}\|\be_i\transpose\widehat{\bU}_{RS}\bH_{RS}\|_2\\
&\leq \frac{4}{m\Delta_n^2}\max_{i\in[n]}\|\be_i\transpose\{\calH(\bA\bA\transpose) - \bP\bP\transpose - \bM\}(\widehat{\bU}_{RS}\bH_{RS} - \bV)\|_2
 + C\kappa^2\|\bU\|_{2\to\infty} + \frac{2}{5}\|\widehat{\bU}_{RS}\|_{2\to\infty}.
\end{align*}
Hence,
\begin{align*}
\|\widehat{\bU}_{RS}\|_{2\to\infty}\leq C\kappa^2\|\bU\|_{2\to\infty} + \frac{C}{m\Delta_n^2}\max_{i\in[n]}\|\be_i\transpose\{\calH(\bA\bA\transpose) - \bP\bP\transpose - \bM\}(\widehat{\bU}_{RS}\bH_{RS} - \bV)\|_2
\end{align*}
with probability at least $1 - O(n^{-c})$ for all $n\geq N_c$, where $C > 0$ is a constant not depending on $c$. It is sufficient to focus on the last term on the right-hand side above. We invoke the leave-one-out matrices, Lemma \ref{lemma:spectral_norm_concentration_noise}, and Lemma \ref{lemma:LOO_Utilde_two_to_infinity_norm} to write
\begin{align*}
&\frac{C}{m\Delta_n^2}\max_{i\in[n]}\|\be_i\transpose\{\calH(\bA\bA\transpose) - \bP\bP\transpose - \bM\}(\widehat{\bU}_{RS}\bH_{RS} - \bV)\|_2\\
&\quad\leq \frac{C}{m\Delta_n^2}\max_{i\in[n]}\|\be_i\transpose\{\calH(\bA\bA\transpose) - \bP\bP\transpose - \bM\}(\widehat{\bU}_{RS}\bH_{RS} - \widehat{\bU}_{RS}^{(i)}\bH^{(i)}_{RS})\|_2 
\\&\quad\quad
 + \frac{C}{m\Delta_n^2}\max_{i\in[n]}\|\be_i\transpose\{\calH(\bA\bA\transpose) - \bP\bP\transpose - \bM\}(\widehat{\bU}_{RS}^{(i)}\bH_{RS}^{(i)} - \bV)\|_2\\
&\quad\leq \frac{C\|\calH(\bA\bA\transpose) - \bP\bP\transpose - \bM\|_2}{m\Delta_n^2}\max_{i\in[n]}\|\widehat{\bU}_{RS}\bH_{RS} - \widehat{\bU}_{RS}^{(i)}\bH^{(i)}_{RS}\|_2\\
&\quad\quad + \max_{i\in[n]}\frac{C\|\be_i\transpose\{\calH(\bA\bA\transpose) - \bP\bP\transpose - \bM\}(\widehat{\bU}_{RS}^{(i)}\bH_{RS}^{(i)} - \bV)\|_2}{m\Delta_n^2}\\
&\quad\leq \frac{1}{4}\|\widehat{\bU}_{RS}\|_{2\to\infty} + \max_{i\in[n]}\frac{C\|\be_i\transpose\{\calH(\bA\bA\transpose) - \bP\bP\transpose - \bM\}(\widehat{\bU}_{RS}^{(i)}\bH_{RS}^{(i)} - \bV)\|_2}{mn^2\rho_n^2}.
\end{align*}
with probability at least $1 - O(n^{-c})$ for all $n\geq N_c$. For the last term, by Lemma \ref{lemma:LOO_Rowwise_concentration}, there exist $c$-dependent constants $C_c, N_c > 0$, such that
\begin{align*}
&\max_{i\in[n]}\frac{C\|\be_i\transpose\{\calH(\bA\bA\transpose) - \bP\bP\transpose - \bM\}(\widehat{\bU}_{RS}^{(i)}\bH_{RS}^{(i)} - \bV)\|_2}{mn^2\rho_n^2}
\\&\quad
 \leq \bigg\{\frac{C_c(\log n)^{1/2}}{(mn\rho_n)^{1/2}} + \frac{C_c\log n}{m^{1/2}n\rho_n}\bigg\}\|\widehat{\bU}_{RS}\|_{2\to\infty} + C_c\bigg(\frac{\log n}{m^{3/2}n^{7/2}\rho_n^3}\bigg)\|\bM - \widehat{\bM}_{RS}\|_2\leq \frac{1}{4}\|\widehat{\bU}_{RS}\|_{2\to\infty}
\end{align*}
with probability at least $1 - O(n^{-c})$ for all $n\geq N_c$, and hence, we conclude that
$\|\widehat{\bU}_{RS}\|_{2\to\infty}\leq C\kappa^2n^{-1/2}$
with probability at least $1 - O(n^{-c})$ for all $n\geq N_c$. 
The proof is thus completed. 
\end{proof}

The following lemma seems to be quite similar to Lemma \ref{lemma:Utilde_two_to_infinity_norm}. Nevertheless, it is worth remarking that the conditions are weaker. The key difference is that, unlike Lemma \ref{lemma:Utilde_two_to_infinity_norm}, Lemma \ref{lemma:Utilde_two_to_infinity_norm_II} below no longer requires the condition on $\|\bM - \widehat{\bM}_{rs}\|_2$, $\|\widehat{\bM} - \widehat{\bM}_{rs}^{(i)}\|_2$, and $\|\bM  - \widehat{\bM}_{rs}^{(i, j)}\|_2$ for all $r\in[R]$, $s\in[S]$, $i,j\in[n]$, $i\neq j$, but directly justifies these conditions instead.

\begin{lemma}\label{lemma:Utilde_two_to_infinity_norm_II}
Suppose Assumptions \ref{assumption:eigenvector_delocalization}--\ref{assumption:condition_number} hold.
If $R, S = O(1)$, 
then for any $c > 0$, there exists a $c$-dependent constant $N_c > 0$, such that 
\[
\max\bigg\{\max_{r\in[R],s\in[S]}\|\bM - \widehat{\bM}_{rs}\|_2,\max_{i\in[n],r\in[R],s\in[S]}\|\bM - \widehat{\bM}_{rs}^{(i)}\|_2,\max_{i,j\in[n],i\neq j}\max_{r\in[R],s\in[S]}\|\bM - \widehat{\bM}_{rs}^{(i,j)}\|_2\bigg\} \leq \frac{\lambda_d(\bP\bP\transpose)}{20}
\]
with probability at least $1 - O(n^{-c})$ for all $n\geq N_c$. Consequently,
\begin{align*}
\|\bH_{RS}^{-1}\|_2\leq 2,\quad
\max_{i\in[n]}\|(\bH_{RS}^{(i)})^{-1}\|_2\leq 2,\quad
\|\widehat{\bU}_{RS}\|_{2\to\infty}\leq C\kappa^2n^{-1/2},\quad
\max_{i\in[n]}\|\widehat{\bU}_{RS}^{(i)}\|_{2\to\infty}\leq C\kappa^2n^{-1/2}
\end{align*}
with probability at least $1 - O(n^{-c})$ for all $n\geq N_c$, where $C > 0$ is a constant not depending on $c$.
Furthermore, 
\begin{align*}
\max_{i\in[n]}\|\widehat{\bU}_{RS}^{(i)}\bH_{RS}^{(i)} - \widehat{\bU}_{RS}\bH_{RS}\|_2
  = \Optilde\left(\frac{\zeta_{\mathsf{op}}}{mn^{5/2}\rho_n^2}\right),\quad
\max_{i\in[n]}\|\widehat{\bU}_{RS}^{(i)}\bH_{RS}^{(i)} - \bV\|_{2\to\infty} = \Optilde(n^{-1/2})_,
 \end{align*}
and
\begin{align*}
&\max_{i\in[n]}\|\be_i\transpose\calH(\bE\bE\transpose)(\widehat{\bU}_{RS}^{(i)}\bH_{RS}^{(i)} - \bV)\|_2
 =  \Optilde\bigg(\frac{q_n}{\sqrt{n}} + \frac{\zeta_{\mathsf{op}}\log n}{m^{1/2}n^{3/2}\rho_n}\bigg)
 + \Optilde\bigg(\frac{\log n}{m^{1/2}n^{3/2}\rho_n}\bigg)
\|\bM - \widehat{\bM}_{RS}\|_2
,\\
 &\max_{i\in[n]}\frac{\|\be_i\transpose\{\calH(\bA\bA\transpose) - \bP\bP\transpose - \bM\}(\widehat{\bU}_{RS}^{(i)}\bH_{RS}^{(i)} - \bV)\|_2}{m\Delta_n^2}
\\&\quad
 = \Optilde\bigg\{\frac{\log n}{m^{1/2}n^{3/2}\rho_n} + \frac{(\log n)^{1/2}}{m^{1/2}n\rho_n^{1/2}}\bigg\}
 + \Optilde\bigg(\frac{\log n}{m^{3/2}n^{7/2}\rho_n^3}\bigg)\|\bM - \widehat{\bM}_{RS}\|_2,
\end{align*}
where $q_n = (\log n)^2 + \zeta_{\mathsf{op}}\log n(n\rho_n \vee\log n)^{1/2}\}/(mn^2\rho_n^2)$.
\end{lemma}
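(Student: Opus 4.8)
The plan is to observe that once the first displayed inequality—the uniform bound $\|\bM - \widehat{\bM}_{rs}\|_2 \le \lambda_d(\bP\bP\transpose)/20$ together with its leave-one-out and leave-two-out analogues—is established, every remaining conclusion is a corollary. Indeed, feeding this bias bound into the already-proved Lemmas \ref{lemma:Utilde_two_to_infinity_norm}, \ref{lemma:LOO_Utilde_two_to_infinity_norm}, and \ref{lemma:LOO_Rowwise_concentration} yields the bounds on $\|\bH_{RS}^{-1}\|_2$, the two-to-infinity norms, the leave-one-out differences, and the row-wise concentration estimates, after substituting the resulting $\|\widehat{\bU}_{RS}\|_{2\to\infty} = \Optilde(n^{-1/2})$ in place of the generic factor $\|\widehat{\bU}_{RS}\|_{2\to\infty}$ appearing in those statements. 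Thus the whole lemma reduces to the bias bound. The reason this is nontrivial—and the reason the lemma is separated from Lemma \ref{lemma:Utilde_two_to_infinity_norm}—is a circular dependence: controlling $\widehat{\bM}_{rs}$ requires two-to-infinity control of the iterate $\widehat{\bU}_{(r-1)S}$ feeding into it, while that control is itself a consequence of the bias bound via Lemma \ref{lemma:Utilde_two_to_infinity_norm}.

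I would break the circle by an outer induction on $r$, carrying the \emph{sharp} hypothesis $\|\bM - \widehat{\bM}_{(r-1)S}\|_2 = o(mn^2\rho_n^2)$ (and its leave-one-out and leave-two-out analogues) together with $\|\widehat{\bU}_{(r-1)S}\|_{2\to\infty} = \Optilde(n^{-1/2})$. For the base case $r=1$, since $\widehat{\bU}_{0S} = \zero_{n\times d}$ the projection $\widehat{\bU}_{0S}\widehat{\bU}_{0S}\transpose$ vanishes, so $\widehat{\bM}_{1s} = \zero_{n\times n}$ for every $s$ and likewise for the leave-one-out and leave-two-out copies; hence $\|\bM - \widehat{\bM}_{1s}\|_2 = \|\bM\|_2 = \max_i\sum_t\sum_j P_{tij}^2 = O(mn\rho_n^2) = o(mn^2\rho_n^2)$, using $\|\bP\|_{\max} = \Theta(\rho_n)$ and $\lambda_d(\bP\bP\transpose) = \lambda_d(\bB\bB\transpose) = \Theta(mn^2\rho_n^2)$ from Assumption \ref{assumption:condition_number}. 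Invoking Lemmas \ref{lemma:Utilde_two_to_infinity_norm} and \ref{lemma:LOO_Utilde_two_to_infinity_norm} then supplies the required two-to-infinity bounds at level $1$.

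For the inductive step I would analyze the inner loop at level $r$. Writing $\bPi = \widehat{\bU}_{(r-1)S}\widehat{\bU}_{(r-1)S}\transpose$ and decomposing $\calH(\bA\bA\transpose) - \widehat{\bM}_{r(s-1)} = \bP\bP\transpose + \bN + (\bM - \widehat{\bM}_{r(s-1)})$, where $\bN = \calH(\bA\bA\transpose) - \bP\bP\transpose - \bM$ obeys $\|\bN\|_2 = \Optilde(\zeta_{\mathsf{op}})$ by Lemma \ref{lemma:spectral_norm_concentration_noise}, and using $\bM = -\mathrm{diag}(\bP\bP\transpose) = -\mathrm{diag}(\bV\bV\transpose\bP\bP\transpose\bV\bV\transpose)$, one obtains
\begin{align*}
\bM - \widehat{\bM}_{rs} = \mathrm{diag}[\bPi\bP\bP\transpose\bPi - \bV\bV\transpose\bP\bP\transpose\bV\bV\transpose] + \mathrm{diag}[\bPi\bN\bPi] + \mathrm{diag}[\bPi(\bM - \widehat{\bM}_{r(s-1)})\bPi].
\end{align*}
The crucial observation is that for any matrix $\bD$ one has $\|\mathrm{diag}[\bPi\bD\bPi]\|_2 \le \|\widehat{\bU}_{(r-1)S}\|_{2\to\infty}^2\|\bD\|_2 = \Optilde(n^{-1})\|\bD\|_2$, since the $i$th diagonal entry equals $(\widehat{\bU}_{(r-1)S}\transpose\be_i)\transpose(\widehat{\bU}_{(r-1)S}\transpose\bD\widehat{\bU}_{(r-1)S})(\widehat{\bU}_{(r-1)S}\transpose\be_i)$. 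Applied to the third term this produces a geometric contraction of factor $\Optilde(n^{-1})$ in $s$; the second term is $\Optilde(\zeta_{\mathsf{op}}/n)$; and the first (projection) term is $\Optilde(\zeta_{\mathsf{op}})$ because, writing $\bPi = \bV\bV\transpose + \bDelta$ with $\|\bDelta\|_2 \lesssim \|\sin\Theta(\widehat{\bU}_{(r-1)S},\bV)\|_2$, Lemma \ref{lemma:remainder_II} together with the inductive sharp bias bound forces $\|\bDelta\|_2 = \Optilde(\varepsilon_n^{(\mathsf{op})}) = o(1)$, whence the term is $\lesssim \|\bDelta\|_2\|\bP\bP\transpose\|_2 = \Optilde(\zeta_{\mathsf{op}})$. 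Unrolling over $s = 1,\ldots,S$ with $\widehat{\bM}_{r0} = \zero_{n\times n}$ gives $\|\bM - \widehat{\bM}_{rS}\|_2 = \Optilde(\zeta_{\mathsf{op}}) + \Optilde(n^{-S})\|\bM\|_2 = \Optilde(\zeta_{\mathsf{op}} + mn^{1-S}\rho_n^2)$, which is $o(mn^2\rho_n^2)$ and in particular below $\lambda_d(\bP\bP\transpose)/20$; the same holds at every intermediate $s$ since the partial sums are monotone. Because $\bE^{(i)}$ and $\bE^{(i,j)}$ satisfy Assumptions \ref{assumption:eigenvector_delocalization}--\ref{assumption:condition_number}, the identical argument bounds the leave-one-out and leave-two-out bias errors verbatim. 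Feeding these three sharp bias bounds back into Lemmas \ref{lemma:Utilde_two_to_infinity_norm} and \ref{lemma:LOO_Utilde_two_to_infinity_norm} closes the induction at level $r$.

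The main obstacle is precisely establishing the $\Optilde(n^{-1})$ contraction uniformly over the whole leave-one-out and leave-two-out family while respecting the one-directional dependence of the inner loop on the previous outer iterate, and in particular ensuring that the \emph{sharp} (not merely $\lambda_d/20$) bias estimate is propagated so that $\sin\Theta(\widehat{\bU}_{(r-1)S},\bV) = o(1)$ and the projection term stays $o(\lambda_d(\bP\bP\transpose))$. Once that is in place, the hypothesis $R, S = O(1)$ renders the geometric series harmless, and every remaining assertion follows immediately from the earlier lemmas.
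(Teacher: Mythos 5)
Your proposal is correct, and its skeleton is exactly the paper's: reduce everything after the bias bound to Lemmas \ref{lemma:LOO_stage_II}, \ref{lemma:LOO_Utilde_two_to_infinity_norm}, \ref{lemma:LOO_Rowwise_concentration}, and \ref{lemma:Utilde_two_to_infinity_norm}; prove the bias bound by an outer induction on $r$ with the trivial base case $\widehat{\bM}_{1s} = \zero_{n\times n}$; contract the inner loop over $s$ via the factor $\|\widehat{\bU}_{(r-1)S}\|_{2\to\infty}^2 = \Optilde(n^{-1})$; and handle the leave-one-out and leave-two-out families verbatim since $\bE^{(i)}$ and $\bE^{(i,j)}$ satisfy the same assumptions. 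Where you diverge is the induction step, and here you do strictly more work than the paper. The paper never decomposes $\bM - \widehat{\bM}_{rs}$ into projection, noise, and contraction pieces: it bounds $\|\widehat{\bM}_{(R+1)s}\|_2 \leq \{\|\calH(\bA\bA\transpose)\|_2 + \|\widehat{\bM}_{(R+1)(s-1)}\|_2\}\|\widehat{\bU}_{RS}\|_{2\to\infty}^2 = \Optilde(mn\rho_n^2) + \Optilde(n^{-1})\|\widehat{\bM}_{(R+1)(s-1)}\|_2$ and then applies the triangle inequality with $\|\bM\|_2 \leq mn\rho_n^2$, which is already $o(\lambda_d(\bP\bP\transpose))$ because $\lambda_d(\bP\bP\transpose) = \Theta(mn^2\rho_n^2)$. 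Consequently, your stated ``main obstacle''---that the \emph{sharp} bias estimate must be propagated so that $\sin\Theta(\widehat{\bU}_{(r-1)S},\bV) = o(1)$ keeps the projection term $o(\lambda_d(\bP\bP\transpose))$---is a misdiagnosis: even your projection term obeys the crude bound $\|\mathrm{diag}[\bPi\bP\bP\transpose\bPi]\|_2 \leq \|\widehat{\bU}_{(r-1)S}\|_{2\to\infty}^2\|\bP\bP\transpose\|_2 = \Optilde(mn\rho_n^2)$ without ever splitting off $\bV\bV\transpose$, so the circularity is broken by the crude estimate alone, with no Davis--Kahan input inside this lemma. Your three-term decomposition is algebraically sound and your resulting $\Optilde(\zeta_{\mathsf{op}} + mn^{1-S}\rho_n^2)$ bound is valid (the inductive bias contribution to $\|\bDelta\|_2$ that you gloss over is absorbed into the same order), but it essentially re-derives within this lemma the refined recursive estimate that the paper deliberately defers to the joint proof of Lemma \ref{lemma:recursive_two_to_infinity_error_bound}, where $\|\bM - \widehat{\bM}_{RS}\|_2$ is tied to $\|\widehat{\bU}_{(R-1)S}\bW_{(R-1)S} - \bV\|_{2\to\infty}$; for the $\lambda_d(\bP\bP\transpose)/20$ conclusion needed here, the extra sharpness buys nothing, at the cost of a heavier inductive hypothesis.
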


\begin{proof}
By Lemma \ref{lemma:LOO_stage_II}, Lemma \ref{lemma:LOO_Utilde_two_to_infinity_norm}, Lemma \ref{lemma:LOO_Rowwise_concentration}, and Lemma \ref{lemma:Utilde_two_to_infinity_norm},
it is sufficient to establish the part before ``consequently''. 
We prove these error bounds by induction. When $R = 1$, we immediately have $\widehat{\bM}_{1s} = \widehat{\bM}_{1s}^{(i)} = \zero_{n\times n}$, so that 
\begin{align*}
&\max_{s\in[S]}\|\bM - \widehat{\bM}_{1s}\|_2 = \max_{i\in[n],s\in[S]}\|\bM - \widehat{\bM}_{1s}^{(i)}\|_2 = \max_{i,j\in[n],i\neq j}\max_{s\in[S]}\|\bM - \widehat{\bM}_{1s}^{(i, j)}\|_2 = \|\bM\|_2\leq mn\rho_n^2 = o(\lambda_d(\bP\bP\transpose))
\end{align*}
with probability one. Now assume for any $c > 0$, there exists a $c$-dependent constant $N_c > 0$, such that
\begin{align*}
\max\left\{\max_{r\in[R],s\in[S]}\|\bM - \widehat{\bM}_{rs}\|_2,\max_{i\in[n]}\max_{r\in[R],s\in[S]}\|\bM - \widehat{\bM}_{rs}^{(i)}\|_2,
\max_{i,j\in[n],i\neq j}\max_{r\in[R],s\in[S]}\|\bM - \widehat{\bM}_{rs}^{(i, j)}\|_2\right\} \leq \frac{\lambda_d(\bP\bP\transpose)}{20}
\end{align*}
with probability at least $1 - O(n^{-c})$ whenever $n\geq N_c$. By Lemma \ref{lemma:Utilde_two_to_infinity_norm},
$\|\widehat{\bU}_{RS}\|_{2\to\infty} = \Optilde(n^{-1/2})$. Furthermore, by Lemma \ref{lemma:LOO_Utilde_two_to_infinity_norm}, we also know that $\max_{i\in[n]}\|\widehat{\bU}_{RS}^{(i)}\|_{2\to\infty} = \Optilde(n^{-1/2})$ and $\max_{i,j\in[n],i\neq j}\|\widehat{\bU}_{RS}^{(i,j)}\|_{2\to\infty} = \Optilde(n^{-1/2})$. 
In addition, by Lemma \ref{lemma:spectral_norm_concentration_noise} and union bounds over $i,j\in[n]$, 
\begin{align*}
&\|\calH(\bA\bA\transpose)\|_2
\leq \|\calH(\bA\bA\transpose) - \bP\bP\transpose - \bM\|_2 + \|\bP\bP\transpose\|_2 + \|\bM\|_2 = \Optilde(\zeta_{\mathsf{op}}) + 2\kappa^2 m\Delta_n^2 = \Optilde(\kappa^2 m\Delta_n^2),\\
&\max_{i\in[n]}\|\calH((\bA^{(i)})(\bA^{(i)})\transpose)\|_2 = \Optilde(\kappa^2 m\Delta_n^2),\quad
\max_{i,j\in[n],i\neq j}\|\calH((\bA^{(i, j)})(\bA^{(i, j)})\transpose)\|_2 = \Optilde(\kappa^2 m\Delta_n^2).
\end{align*}
Then, we work with $\widehat{\bM}_{(R + 1)s}$, $\widehat{\bM}_{(R + 1)s}^{(i)}$, and $\widehat{\bM}_{(R + 1)s}^{(i, j)}$ for $s \in [S]$. Recall that 
$\widehat{\bM}_{(R + 1)0} = \widehat{\bM}_{(R + 1)0}^{(i)} = \widehat{\bM}_{(R + 1)0}^{(i, j)} = \zero_{n\times n}$
by definition, and we have the recursive relation
\begin{align*}
\|\widehat{\bM}_{(R + 1)s}\|_2
&\leq \{\|\calH(\bA\bA\transpose)\|_2 + \|\widehat{\bM}_{(R + 1)(s - 1)}\|_2\}\|\widehat{\bU}_{RS}\|_{2\to\infty}^2
 = \Optilde(mn\rho_n^2) + \Optilde\bigg(\frac{\|\widehat{\bM}_{(R + 1)(s - 1)}\|_2}{n}\bigg),\\
\max_{i\in[n]}\|\widehat{\bM}_{(R + 1)s}^{(i)}\|_2
&\leq \max_{i\in[n]}\left\{\|\calH((\bA^{(i)})(\bA^{(i)})\transpose)\|_2 + \|\widehat{\bM}_{(R + 1)(s - 1)}^{(i)}\|_2\right\}\|\widehat{\bU}_{RS}^{(i)}\|_{2\to\infty}^2\\
& = \Optilde(mn\rho_n^2) + \Optilde\bigg(\frac{\|\widehat{\bM}_{(R + 1)(s - 1)}\|_2}{n}\bigg),\\
\max_{i,j\in[n],i\neq j}\|\widehat{\bM}_{(R + 1)s}^{(i, j)}\|_2
&\leq \max_{i,j\in[n],i\neq j}\left\{\|\calH(\bA^{(i, j)}(\bA^{(i, j)})\transpose)\|_2 + \|\widehat{\bM}^{(i, j)}_{(R + 1)(s - 1)}\|_2\right\}\|\widehat{\bU}_{RS}^{(i, j)}\|_{2\to\infty}^2\\
& = \Optilde(mn\rho_n^2) + \Optilde\bigg(\frac{\|\widehat{\bM}_{(R + 1)(s - 1)}\|_2}{n}\bigg).
\end{align*}
Then by induction, we immediately obtain, for all $s \in [S]$,
\begin{equation}
\label{eqn:Mtilde_spectral_norm}
\begin{aligned}
\|\widehat{\bM}_{(R + 1)s}\|_2 = \Optilde\left(mn\rho_n^2\right),\;
\max_{i\in[n]}\|\widehat{\bM}_{(R + 1)s}^{(i)}\|_2 = \Optilde\left(mn\rho_n^2\right),\;
&\max_{i,j\in[n],i\neq j}\|\widehat{\bM}_{(R + 1)s}^{(i, j)}\|_2 = \Optilde\left(mn\rho_n^2\right).
\end{aligned}
\end{equation}
Therefore, for the case of $R + 1$, there exist $c$-dependent constants $C_c, N_c > 0$, such that
\begin{align*}
\max_{s\in[S]}\|\bM - \widehat{\bM}_{(R + 1)s}\|_2
&\leq \|\bM\|_2 + \max_{s\in[S]}\|\widehat{\bM}_{(R + 1)s}\|_2\leq C_cmn\rho_n^2
\leq \frac{\lambda_d(\bP\bP\transpose)}{20},\\
\max_{i\in[n]}\max_{s\in[S]}\|\bM - \widehat{\bM}_{(R + 1)s}^{(i)}\|_2
&\leq \|\bM\|_2 + \max_{i\in[n]}\max_{s\in[S]}\|\widehat{\bM}_{(R + 1)s}^{(i)}\|_2\leq C_cmn\rho_n^2
\leq \frac{\lambda_d(\bP\bP\transpose)}{20},\\
 \max_{i,j\in[n],i\neq j}\max_{s\in[S]}\|\bM - \widehat{\bM}_{(R + 1)s}^{(i, j)}\|_2
&\leq \|\bM\|_2 + \max_{i,j\in[n],i\neq j}\max_{s\in[S]}\|\widehat{\bM}_{(R + 1)s}^{(i, j)}\|_2\leq  C_cmn\rho_n^2
\leq \frac{\lambda_d(\bP\bP\transpose)}{20}
\end{align*}
with probability at least $1 - O(n^{-c})$ whenever $n\geq N_c$. The proof is thus completed.
\end{proof}

\subsection{Joint proof of Lemma \ref{lemma:recursive_two_to_infinity_error_bound} and Lemma \ref{lemma:R1_remainder}}
\label{sub:proof_of_perturbation_theorem}

We first establish the second assertion of Lemma \ref{lemma:recursive_two_to_infinity_error_bound}. For any $s\in [S]$, by definition, triangle inequality, and Lemma \ref{lemma:Utilde_two_to_infinity_norm_II}, we have
\begin{align*}
\|\bM - \widehat{\bM}_{Rs}\|_2 
&\leq \|\widehat{\bU}_{(R - 1)S}\bW_{(R - 1)S} - \bV\|_{2\to\infty}\left\{\|\calH(\bA\bA\transpose)\|_2 + \|\widehat{\bM}_{R(s - 1)}\|_2\right\}\|\widehat{\bU}_{(R - 1)S}\|_{2\to\infty}\\
&\quad + \|\bU\|_{2\to\infty}\|\widehat{\bU}_{(R - 1)S}\bW_{(R - 1)S} - \bV\|_2\left\{\|\calH(\bA\bA\transpose)\|_2 + \|\widehat{\bM}_{R(s - 1)}\|_2\right\}\|\widehat{\bU}_{(R - 1)S}\|_{2\to\infty}\\
&\quad + \|\bU\|_{2\to\infty}\|\calH(\bA\bA\transpose) - \bP\bP\transpose - \widehat{\bM}_{R(s - 1)}\|_2\|\widehat{\bU}_{(R - 1)S}\bW_{(R - 1)S} - \bV\|_2\|\widehat{\bU}_{(R - 1)S}\|_{2\to\infty}\\
&\quad + \|\bU\|_{2\to\infty}\|\bV\transpose\{\calH(\bA\bA\transpose) - \bP\bP\transpose - \bM\}\bV\|_2\|\widehat{\bU}_{(R - 1)S}\|_{2\to\infty}\\
&\quad + \|\bU\|_{2\to\infty}\|\bV\transpose(\widehat{\bM}_{R(s - 1)} - \bM)\bV\|_2\|\widehat{\bU}_{(R - 1)S}\|_{2\to\infty}\\
&\quad + \|\bU\|_{2\to\infty}\|\bP\bP\transpose\|_2\|\widehat{\bU}_{(R - 1)S}\bW_{(R - 1)S} - \bV\|_2\|\widetilde{\bU}_{(R - 1)S}\|_{2\to\infty}\\
&\quad + \|\bU\|_{2\to\infty}\|\bP\bP\transpose\|_2\|\widehat{\bU}_{(R - 1)S}\bW_{(R - 1)S} - \bV\|_{2\to\infty}.
\end{align*}
By Lemma \ref{lemma:spectral_norm_concentration_noise} and \eqref{eqn:Mtilde_spectral_norm}, we know that $\|\calH(\bA\bA\transpose)\|_2 = \Optilde(mn^2\rho_n^2)$ and $\|\widehat{\bM}_{Rs}\|_2 = \Optilde(mn^2\rho_n^2)$ for any $s\in [S]$. Furthermore, Lemma \ref{lemma:spectral_norm_concentration_noise} entails that 
$\|\calH(\bA\bA\transpose) - \bP\bP\transpose - \widehat{\bM}_{Rs}\|_2
\leq \|\calH(\bA\bA\transpose) - \bP\bP\transpose - \bM\|_2 + \|\bM\|_2 + \|\widehat{\bM}_{Rs}\|_2
 = \Optilde\left(\zeta_{\mathsf{op}} + mn\rho_n^2\right)$
for any $s\in[S]$. 
In addition, Lemma \ref{lemma:concentration_noise_quadratic_form} implies that
\begin{align*}
\|\bV\transpose\{\calH(\bA\bA\transpose) - \bP\bP\transpose - \widehat{\bM}_{Rs}\}\bV\|_2
&\leq \|\bV\transpose\{\calH(\bA\bA\transpose) - \bP\bP\transpose - \bM\}\bV\|_2 + \|\bM - \widehat{\bM}_{Rs}\|_2
\\&
 = \Optilde\bigg(\frac{\zeta_{\mathsf{op}}}{\sqrt{n}}\bigg) + \|\widehat{\bM}_{Rs} - \bM\|_2
\end{align*}
for any $s\in[S]$. 
It follows from Lemma \ref{lemma:Utilde_two_to_infinity_norm_II} that
\begin{align*}
\|\bM - \widehat{\bM}_{Rs}\|_2
&\leq \Optilde(mn^{3/2}\rho_n^2)\|\widehat{\bU}_{(R - 1)S}\bW_{(R - 1)S} - \bV\|_{2\to\infty} + \Optilde(mn\rho_n^2)\|\widehat{\bU}_{(R - 1)S}\bW_{(R - 1)S} - \bV\|_2
\\&\quad
 + \Optilde\bigg(\frac{\zeta_{\mathsf{op}}}{n^{3/2}}\bigg) + \Optilde\bigg(\frac{1}{n}\bigg)\|\bM - \widehat{\bM}_{R(s - 1)}\|_2.
\end{align*}
By induction over $s$ and the assumption $s\leq S = O(1)$, we have
\begin{align*}
\|\bM - \widehat{\bM}_{RS}\|_2
&\leq \Optilde(mn^{3/2}\rho_n^2)\|\widehat{\bU}_{(R - 1)S}\bW_{(R - 1)S} - \bV\|_{2\to\infty} + \Optilde\bigg(\frac{mn^2\rho_n^2}{n^{S + 1}} + \frac{\zeta_{\mathsf{op}}}{n^{3/2}}\bigg).
\end{align*} 
The proof of the second assertion of Lemma \ref{lemma:recursive_two_to_infinity_error_bound} is therefore completed by dividing both sides of the inequality by $mn^{5/2}\rho_n^2$.

\noindent Next, we work with $\|\bR_1^{(RS)}\|_{2\to\infty}$.
By Lemma \ref{lemma:rowwise_concentration_noise}, we directly have
\begin{align*}
\left\|\{\calH(\bA\bA\transpose) - \bP\bP\transpose - \bM\}\bV\bS^{-1}\right\|_{2\to\infty}
 = \Optilde\bigg(\frac{\zeta_{\mathsf{op}}}{mn^{5/2}\rho_n^2}
  \bigg).
\end{align*}
By Lemma \ref{lemma:remainder_II}, Lemma \ref{lemma:remainder_IV}, Lemma \ref{lemma:remainder_III}, and Lemma \ref{lemma:Utilde_two_to_infinity_norm_II} we have
\begin{align*}
\|\bR_2^{(RS)} + \bR_4^{(RS)} + \bR_5^{(RS)} + \bR_6^{(RS)} + \bR_7^{(RS)}\|_{2\to\infty}
& = \Optilde\bigg(\frac{\zeta_{\mathsf{op}}}{mn^{5/2}\rho_n^2}
 \bigg)
 + \Optilde\bigg(\frac{1}{mn^{5/2}\rho_n^2}\bigg)\|\bM - \widehat{\bM}_{RS}\|_2.
\end{align*}
For $\bR_3^{(RS)}$, by Lemma \ref{lemma:Utilde_two_to_infinity_norm_II}, we also have
\begin{align*}
\|\bR_3^{(RS)}\|_{2\to\infty}\leq \|\bM - \widehat{\bM}_{RS}\|_\infty(\|\widehat{\bU}_{RS}\|_{2\to\infty} + \|\bV\|_{2\to\infty})\|\widehat{\bS}_{RS}^{-1}\|_2 = \Optilde\left(\frac{1}{mn^{5/2}\rho_n^2}\right)\|\bM - \widehat{\bM}_{RS}\|_2.
\end{align*}
It is therefore sufficient to show that 
\begin{align*}
\|\bR_1^{(RS)}\|_{2\to\infty}
& = \Optilde\bigg(
\frac{\zeta_{\mathsf{op}}}{mn^{5/2}\rho_n^2}\bigg)
  + \Optilde\bigg(\frac{1}{mn^{5/2}\rho_n^2}\bigg)\|\bM - \widehat{\bM}_{RS}\|_2
\end{align*}
because by the second assertion of Lemma \ref{lemma:recursive_two_to_infinity_error_bound}, we have
\begin{align*}
\Optilde\bigg(\frac{1}{mn^{5/2}\rho_n^2}\bigg)\|\bM - \widehat{\bM}_{RS}\|_2
& = \Optilde\left(\frac{1}{n}\right)\|\widehat{\bU}_{(R - 1)S}\bW_{(R - 1)S} - \bV\|_{2\to\infty}
 + \Optilde\bigg(\frac{1}{n^{S + 3/2}} + \frac{\zeta_{\mathsf{op}}}{mn^{5/2}\rho_n^2}\bigg).
\end{align*}
By definition of $\bR_1^{(RS)}$, we have
\begin{align*}
\max_{i\in[n]}\|\be_i\transpose\bR_1^{(RS)}\|_2& = \max_{i\in[n]}\left\|\be_i\transpose\{\calH(\bA\bA\transpose) - \bP\bP\transpose - \bM\}(\widehat{\bU}_{RS}\bW_{RS} - \bV)\widehat{\bS}^{-1}_{RS}\right\|_2
\leq r_1^{(1)} + r_1^{(2)} + r_1^{(3)} + r_1^{(4)} + r_1^{(5)},
\end{align*}
where
\begin{equation}\label{eqn:r11_r12_r13_r14}
\begin{aligned}
r_1^{(1)}& = \max_{i\in[n]}\left\|\be_i\transpose\{\calH(\bA\bA\transpose) - \bP\bP\transpose - \bM\}(\widehat{\bU}_{RS}\bH_{RS} - \widehat{\bU}_{RS}^{(i)}\bH_{RS}^{(i)})\bH_{RS}^{-1}(\bW_{RS} - \bH_{RS})\widehat{\bS}^{-1}_{RS}\right\|_2,\\
r_{1}^{(2)}& = \max_{i\in[n]}\left\|\be_i\transpose\{\calH(\bA\bA\transpose) - \bP\bP\transpose - \bM\}(\widehat{\bU}_{RS}^{(i)}\bH_{RS}^{(i)} - \bV)\bH_{RS}^{-1}(\bW_{RS} - \bH_{RS})\widehat{\bS}^{-1}_{RS}\right\|_2,\\
r_{1}^{(3)}& = \max_{i\in[n]}\left\|\be_i\transpose\{\calH(\bA\bA\transpose) - \bP\bP\transpose - \bM\}\bV\bH_{RS}^{-1}(\bW_{RS} - \bH_{RS})\widehat{\bS}^{-1}_{RS}\right\|_2,\\
r_{1}^{(4)}& = \max_{i\in[n]}\left\|\be_i\transpose\{\calH(\bA\bA\transpose) - \bP\bP\transpose - \bM\}(\widehat{\bU}_{RS}\bH_{RS} - \widehat{\bU}_{RS}^{(i)}\bH_{RS}^{(i)})\widehat{\bS}^{-1}_{RS}\right\|_2,\\
r_{1}^{(5)}& = \max_{i\in[n]}\left\|\be_i\transpose\{\calH(\bA\bA\transpose) - \bP\bP\transpose - \bM\}(\widehat{\bU}_{RS}^{(i)}\bH_{RS}^{(i)} - \bV)\widehat{\bS}^{-1}_{RS}\right\|_2.
\end{aligned}
\end{equation}
By Lemma \ref{lemma:Utilde_two_to_infinity_norm_II}, we know that for any $c > 0$, there exists a $c$-dependent constant $N_c > 0$ such that $\|\bH_{RS}^{-1}\|_2\leq 2$ and $\max_{i\in[n]}\|(\bH^{(i)}_{RS})^{-1}\|_2\leq 2$ with probability at least $1 - O(n^{-c})$ for all $n\geq N_c$. Then for $r_{1}^{(1)}$ and $r_1^{(4)}$, by Lemma \ref{lemma:spectral_norm_concentration_noise} and Lemma \ref{lemma:Utilde_two_to_infinity_norm_II}, 
$
r_1^{(1)}\vee r_1^{(4)} \leq 2\|\calH(\bA\bA\transpose) - \bP\bP\transpose - \bM\|_2\max_{i\in[n]}\|\widehat{\bU}_{RS}\bH_{RS} - \widehat{\bU}_{RS}^{(i)}\bH_{RS}^{(i)}\|_2\|\widehat{\bS}_{RS}^{-1}\|_2 = \Optilde\{\zeta_{\mathsf{op}}(mn^{5/2}\rho_n^2)^{-1}\}
$.
For $r_1^{(2)}$ and $r_1^{(5)}$, by Lemma \ref{lemma:Utilde_two_to_infinity_norm_II}, we have
\begin{align*}
r_1^{(2)}\vee r_1^{(5)} 
&\lesssim \max_{i\in[n]}\|\be_i\transpose\{\calH(\bA\bA\transpose) - \bP\bP\transpose - \bM)(\widehat{\bU}_{RS}^{(i)}\bH_{RS}^{(i)} - \bV)\|_2\|\widehat{\bS}_{RS}^{-1}\|_2\\
& =  \Optilde\bigg\{\frac{\log n}{m^{1/2}n^{3/2}\rho_n} + \frac{(\log n)^{1/2}}{m^{1/2}n\rho_n^{1/2}}\bigg\}
  + \Optilde\left(\frac{1}{mn^{5/2}\rho_n^2}\right)\|\bM - \widehat{\bM}_{RS}\|_2.
\end{align*}
For $r_1^{(3)}$, by Lemma \ref{lemma:rowwise_concentration_noise}, Lemma \ref{lemma:Utilde_two_to_infinity_norm_II}, and Lemma \ref{lemma:remainder_II}, we have
\begin{align*}
r_1^{(3)}&\leq 2\max_{i\in[n]}\|\be_i\transpose\{\calH(\bA\bA\transpose) - \bP\bP\transpose - \bM\}\bV\|_2\|\widehat{\bS}^{-1}_{RS}\|_2 = \Optilde\bigg(\frac{\zeta_{\mathsf{op}}}{mn^{5/2}\rho_n^2}\bigg).
\end{align*}
Combining the error bounds for $r_1^{(1)}$ through $r_1^{(5)}$ yields
\begin{align*}
\max_{i\in[n]}\|\be_i\transpose\bR_1^{(RS)}\|_2
& = \Optilde\bigg\{
\frac{\log n}{m^{1/2}n^{3/2}\rho_n} + \frac{(\log n)^{1/2}}{m^{1/2}n\rho_n^{1/2}}\bigg\}
  + \Optilde\left(\frac{1}{mn^{5/2}\rho_n^2}\right)\|\bM - \widehat{\bM}_{RS}\|_2.
\end{align*}
This completes the proof of Lemma \ref{lemma:R1_remainder}. The proof of the first assertion of Lemma \ref{lemma:recursive_two_to_infinity_error_bound} is then completed by combining the error bounds for $\|\bR_1^{(RS)}\|_{2\to\infty}$ through $\|\bR_7^{(RS)}\|_{2\to\infty}$.

\section{Proof of Exact Recovery in MLSBM (Theorem \ref{thm:spectral_clustering_MLSBM})}
\label{sec:proof_of_exact_recovery_MLSBM}

Let $\bZ$ be an $n\times K$ community assignment matrix whose $\tau(i)$th entry of the $i$th row is $1$ for all $i\in[n]$, and the remaining entries are zero. Let $n_k = \sum_{i = 1}^n\mathbbm{1}\{\tau(i) = k\}$, \emph{i.e.}, the number of vertices in the $k$th community, and denote by $n_{\min} = \min_{k\in[K]}n_k$ and $n_{\max} = \max_{k\in[K]}n_k$. Then we take $\bU = \bZ(\bZ\transpose\bZ)^{-1/2}$ so that $\bU\in\mathbb{O}(n, d)$ and $\bB_t = (\bZ\transpose\bZ)^{1/2}\bC_t(\bZ\transpose\bZ)^{1/2}$ for all $t\in[m]$, and it is clear that $\mathrm{MLSBM}(\tau;\bC_1,\ldots,\bC_m) = \mathrm{COSIE}(\bU;\bB_1,\ldots,\bB_m)$. Since $n_k\asymp n$, then $\sigma_k(\bB_t) = \Theta(n\rho_n)$ for all $k\in[K]$, $t\in[m]$, and $\|\bU\|_{2\to\infty}\leq \|\bZ\|_{2\to\infty}\|(\bZ\transpose\bZ)^{-1/2}\|_2 = O(n^{-1/2})$, so that the conditions of Theorem \ref{thm:entrywise_eigenvector_perturbation_bound} are satisfied. Furthermore, if $(\bmu_k^*)_{k = 1}^K$ are the $K$ unique rows of $\bU$, then we also have $\min_{k\neq l}\|\bmu_k^* - \bmu_l^*\|_2\geq c_0n_{\max}^{-1/2}$ for some constant $c_0 \in (0, 1]$. 
By Theorem \ref{thm:entrywise_eigenvector_perturbation_bound}, there exists some $\bW\in\mathbb{O}(K)$ such that $\|\widehat{\bU}\bW - \bU\|_{2\to\infty} = \Optilde\{(\varepsilon_n^{(\mathsf{op})} + \varepsilon_n^{(\mathsf{bc})})/\sqrt{n}\}$ and $\|\widehat{\bU}\bW - \bU\|_{\mathrm{F}} = \Optilde(\varepsilon_n^{(\mathsf{op})} + \varepsilon_n^{(\mathsf{bc})})$. Let $\delta_k = \min_{l\in[K]\backslash\{k\}}\|\bmu_k^* - \bmu_l^*\|_2$. Clearly, $\delta_k\geq n_k^{-1/2}$ for all $k\in[K]$ and $\delta_k\leq (2/n_{\min})^{1/2}$. Because $\varepsilon_n^{(\mathsf{op})} + \varepsilon_n^{(\mathsf{bc})} = o(1)$, Theorem \ref{thm:entrywise_eigenvector_perturbation_bound} implies that for any $c > 0$ and the given threshold $\eps > 0$, there exists a constant $N_{c} > 0$, such that 
\[
\prob\bigg(\|\widehat{\bU}\bW - \bU\|_{\mathrm{F}}^2\leq \frac{c_0^2\beta n_{\min}}{512n}\bigg)\geq 1 - O(n^{-c})\quad\mbox{for all }n\geq N_{c},
\]
where $\beta:=n_{\min}/n_{\max}$. Define $\calS_k = \{i\in[n]:\tau(i) = k,\|\be_i\transpose(\widehat{\bL}\bW - \bU)\|_2\geq \delta_k/2\}$ and denote by $|\calS_k|$ the number of elements in $\calS_k$. By Lemma 5.3 in \cite{lei2015}, there exists a permutation $\sigma\in\calP_K$, where $\calP_K$ denotes the collection of all permutations over $[K]$, such that 
\[
\frac{1}{n}\sum_{i = 1}^n\mathbbm{1}\{\widehat{\tau}(i)\neq \sigma(\tau(i))\}\leq \frac{1}{n}\sum_{k = 1}^K|\calS_k|\leq \sum_{k = 1}^K\frac{|\calS_k|}{n_k}\leq \sum_{k = 1}^K|\calS_k|\delta_k^2\leq 16\|\widehat{\bU}\bW - \bU\|_{\mathrm{F}}^2\leq \frac{c_0^2\beta n_{\min}}{64n}
\]
with probability at least $1 - O(n^{-c})$ for all $n\geq N_{c,\eps}$. Let $\calT_k = \{i\in[n]:\tau(i) = k\}\backslash\calS_k$. Then Lemma 5.3 in \cite{lei2015} further implies that $\widehat{\tau}(i) = \sigma(\tau(i))$ for all $i\in\bigcup_{k = 1}^K\calT_k$, so that $|\calT_k| = n_k - |\calS_k|\geq n_k - \sum_{l = 1}^K|\calS_l|\geq 63n_k/64$. Now let $\calC_k = \{i\in[n]:\tau(i) = k\}$ and $\widehat{\calC}_k = \{i\in[n]:\widehat{\tau}(i) = k\}$. Clearly, $\calT_{\sigma^{-1}(k)}\subset \{i\in[n]:\sigma(\tau(i)) = \widehat{\tau}(i) = k\}\subset\widehat{\calC}_k$, so that one also obtains $|\widehat{\calC}_k|\geq|\calT_{\sigma^{-1}(k)}|\geq n_{\min} - \sum_{l = 1}^K|\calS_l|\geq (1 - \beta/64)n_{\min}$ and $|\widehat{\calC}_k\backslash\calC_{\sigma^{-1}(k)}|\leq c_0^2\beta n_{\min}/64$.

Note that by the definition of $\widehat{\bL}$, if we take $(\widehat{\bmu}_k)_{k = 1}^K$ as the $K$ unique rows of $\widehat{\bL}$, then given $\widehat{\tau}(\cdot)$, the $K$-means clustering problem is equivalent to the following minimization problem
\[
\min_{(\widehat{\bmu}_k)_{k = 1}^K}\sum_{i = 1}^n\|\widehat{\bmu}_{\widehat{\tau}(i)} - \bU\transpose\be_i\|_{\mathrm{F}}^2 = \min_{(\widehat{\bmu}_k)_{k = 1}^K}\sum_{k = 1}^K\sum_{i\in[n]:\widehat{\tau}(i) = k}\|\widehat{\bmu}_{k} - \bU\transpose\be_i\|_{\mathrm{F}}^2,
\]
the solution of which are given by $\widehat{\bmu}_k = |\widehat{\calC}_k|^{-1}\sum_{i\in\widehat{\calC}_k}\widehat{\bU}\transpose\be_i$, $k\in[K]$. It follows that for all $k\in[K]$,
\begin{align*}
\|\bW\transpose\widehat{\bmu}_k - \bmu_{\sigma^{-1}(k)}^*\|_2
&\leq \bigg\|\frac{1}{|\widehat{\calC}_k|}\sum_{i\in\widehat{\calC}_k}(\bW\transpose\widehat{\bU}\transpose\be_i - \bU\transpose\be_i)\bigg\|_2 + \bigg\|\frac{1}{|\widehat{\calC}_k|}\sum_{i\in\widehat{\calC}_k}(\bU\transpose\be_i - \bmu_{\sigma^{-1}(k)}^*)\bigg\|_2\\
& = \bigg\|\frac{1}{|\widehat{\calC}_k|}\sum_{i\in\widehat{\calC}_k}(\bW\transpose\widehat{\bU}\transpose\be_i - \bU\transpose\be_i)\bigg\|_2 + \bigg\|\frac{1}{|\widehat{\calC}_k|}\sum_{i\in\widehat{\calC}_k\backslash\calC_{\sigma^{-1}(k)}}(\bU\transpose\be_i - \bmu_{\sigma^{-1}(k)}^*)\bigg\|_2\\
&\leq \frac{1}{|\widehat{\calC}_k|^{1/2}}\|\widehat{\bU}\bW - \bU\|_{\mathrm{F}} + \frac{\sqrt{2}|\widehat{\calC}_k\backslash\calC_{\sigma^{-1}(k)}|}{n_{\min}^{1/2}|\widehat{\calC}_k|}
\leq \bigg\{\frac{c_0^2\beta}{512n(1 - \beta/64)}\bigg\}^{1/2} + \frac{c_0^2\sqrt{2\beta}}{63n_{\max}^{1/2}}\leq \frac{c_0}{8n_{\max}^{1/2}}
\end{align*}
with probability at least $1 - O(n^{-c})$ for all $n\geq N_c$ because $\beta < 1$. Now we claim that for any $i\in[n]$, if $\|\be_i\transpose(\widehat{\bU}\bW - \bU)\|_2\leq c_0/(4n_{\max}^{1/2})$, then $\widehat{\tau}(i) = \sigma(\tau(i))$. Suppose $\tau(i) = k$, and we will argue that $\widehat{\tau}(i) = \sigma(k)$. First note that $\|\be_i\transpose(\widehat{\bU}\bW - \bU)\|_2\leq c_0/(4n_{\max}^{1/2})$ implies
\begin{align*}
\|\bW\transpose\widehat{\bU}\transpose\be_i - \bW\transpose\widehat{\bmu}_{\sigma(k)}\|_2&\leq 
\|\bW\transpose\widehat{\bU}\transpose\be_i - \bU\transpose\be_i\|_2 + \|\bmu_k^* - \bW\transpose\widehat{\bmu}_{\sigma(k)}\|_2\leq \frac{3c_0}{8n_{\max}^{1/2}},\\
\|\bW\transpose\widehat{\bU}\transpose\be_i - \bW\transpose\widehat{\bmu}_{\sigma(l)}\|_2&\geq
\|{\bmu}^*_l - {\bmu}^*_k\|_2 -  
\|\bW\transpose\widehat{\bU}\transpose\be_i - \bU\transpose\be_i\|_2 - \|\bmu_l^* - \bW\transpose\widehat{\bmu}_{\sigma(l)}\|_2\geq \frac{5c_0}{8n_{\max}^{1/2}}.
\end{align*}
Now we argue that $\widehat{\tau}(i) = \sigma(k)$ by contradiction. Indeed, assume otherwise, so that $\widehat{\tau}(i) = \sigma(l)$ for some $l\in[K]\backslash\{k\}$. Then
\[
\|\widehat{\bU} - \widehat{\bL}\|_{\mathrm{F}}^2 = \sum_{j\in[n]\backslash\{i\}}\|\be_j\transpose(\widehat{\bU} - \widehat{\bL})\|_2^2 + \|\bW\transpose\widehat{\bU}\transpose\be_i - \bW\transpose\widehat{\bmu}_{\sigma(l)}\|_2^2 > \sum_{j\in[n]\backslash\{i\}}\|\be_j\transpose(\widehat{\bU} - \widehat{\bL})\|_2^2 + \|\bW\transpose\widehat{\bU}\transpose\be_i - \bW\transpose\widehat{\bmu}_{\sigma(k)}\|_2^2,
\]
which implies that replacing $\widehat{\tau}(i) = \sigma(l)$ with $\widehat{\tau}(i) = \sigma(k)$ strictly decreases the objective function without violating the feasibility because $|\widehat{\calC}_l|\geq 2$ for all $l\in[K]$. This violates the optimality of $\widehat{\bL}$. Hence, it must be the case that $\widehat{\tau}(i) = \sigma(k)$. Therefore,
\begin{align*}
\prob\bigg\{\text{There exists }\sigma\in\calP_K\text{ such that }\widehat{\tau}(i)
& = \sigma(\tau(i))\text{ for all }i\in[n]\bigg\}\geq \prob\left(\|\widehat{\bU}\bW - \bU\|_{2\to\infty}\leq \frac{c_0}{4n_{\max}^{1/2}}\right)\\
&\geq 1 - O(n^{-c})
\end{align*}
for all $n\geq N_c$, where $N_c$ is a constant depending on the given power $c > 0$. The proof is therefore completed.

\section{Proof of Entrywise Eigenvector Limit Theorem (Theorem \ref{thm:Rowwise_CLT})}
\label{sec:proof_of_eigenvector_CLT}

\subsection{Proof of Lemma \ref{lemma:martingale_construction}}
\label{sub:proof_sketch_CLT}

By definition, we have $\bV\bS^{-1}\bQ_1\transpose = \bU(\bB\bB\transpose)^{-1}$ and $\widehat{\bU}\mathrm{sgn}(\widehat{\bU}\transpose\bU) - \bU = \{\widehat{\bU}\mathrm{sgn}(\widehat{\bU}\transpose\bV) - \bV)\}\bQ_1\transpose$.
Let $\widehat{\bU}$ denote $\widehat{\bU}_{RS}$, $\bW$ denote $\bW_{RS}$, and $\bT$ denote $\bT^{(RS)}$. Recall the decomposition \eqref{eqn:eigenvector_decomposition_main}:
$\widehat{\bU}\bW - \bU = \{\calH(\bA\bA\transpose) - \bP\bP\transpose - \bM\}\bV\bS^{-1} + \bT$. It turns out that $\|\bT\|_{2\to\infty}$ is negligible compared to the first term $\be_i\transpose\{\calH(\bA\bA\transpose) - \bP\bP\transpose - \bM\}\bV\bS^{-1}$ for each fixed $i\in[n]$. For the leading term, by Bernstein's inequality, we have
$\be_i\transpose\left\{\calH(\bA\bA\transpose) - \bP\bP\transpose - \bM\right\}\bV\bS^{-1}\bQ_1\transpose\bGamma_i^{-1/2}\bz = X_{in}(\bz) + o_p(1)$, where $\bz\in\mathbb{R}^d$ is a unit vector, and
\begin{align}\label{eqn:CLT_dominating_term}
X_{in}(\bz) & = \be_i\transpose\calH(\bE\bE\transpose)\bV\bS^{-1}\bQ_1\transpose\bGamma_i^{-1/2}\bz + \sum_{t = 1}^m\sum_{j = 1}^nE_{tij}(\be_j\transpose\bP_t\bV\bS^{-1}\bQ_1\transpose\bGamma_i^{-1/2}\bz).
\end{align}
We first argue that $X_{in}(\bz) = \sum_{t = 1}^m\sum_{\substack{j_1,j_2\in[n],j_1 \leq j_2}}E_{tj_1j_2}\{b_{tij_1j_2}(\bz) + c_{tij_1j_2}(\bz)\}$, where $b_{tij_1j_2}(\bz)$ and $c_{tij_1j_2}(\bz)$ are defined in \eqref{eqn:btj1j2} and \eqref{eqn:ctj1j2}. For notational convenience, below in this section, we will suppress the dependence on $\bz$ from $b_{tij_1j_2}$, $c_{tij_1j_2}$, $\gamma_{ij}$, and $\xi_{tij}$. The dependence on $\bz$ will be used later in the proof of hypothesis testing for membership profiles in MLMM. 
Write
\begin{align*}
X_{in}(\bz)
& = \sum_{t = 1}^m\sum_{j_1 = 1}^n\sum_{j_2 = 1}^n\sum_{j_3 = 1}^ne_{ij_1}E_{tj_1j_2}E_{tj_2j_3}\gamma_{ij_3}\mathbbm{1}(j_3\neq i) + \sum_{t = 1}^m\sum_{j_1 = 1}^n\sum_{j_2 = 1}^ne_{ij_1}E_{tj_1j_2}\xi_{tij_2}\\
& = \sum_{t = 1}^m\sum_{j_1,j_2,j_3\in[n]:j_1 > j_3}e_{ij_1}E_{tj_1j_2}E_{tj_2j_3}\gamma_{ij_3}\mathbbm{1}(j_3\neq i) + \sum_{t = 1}^m\sum_{j_1,j_2,j_3\in[n]:j_1 < j_3}e_{ij_1}E_{tj_1j_2}E_{tj_2j_3}\gamma_{ij_3}\mathbbm{1}(j_3\neq i)\\
&\quad + \sum_{t = 1}^m\sum_{j_1,j_2\in[n]}e_{ij_1}E_{tj_1j_2}^2\gamma_{ij_1}\mathbbm{1}(j_1\neq i) + \sum_{t = 1}^m\sum_{j_1 = 1}^n\sum_{j_2 = 1}^ne_{ij_1}E_{tj_1j_2}\xi_{tij_2}\\
& = \sum_{t = 1}^m\sum_{j_1,j_2,j_3\in[n]:j_1 > j_3}E_{tj_1j_2}E_{tj_2j_3}\left\{e_{ij_1}
\gamma_{ij_3}\mathbbm{1}(j_3\neq i) + 
e_{ij_3}\gamma_{ij_1}\mathbbm{1}(j_1\neq i)\right\}
 + \sum_{t = 1}^m\sum_{j_1 = 1}^n\sum_{j_2 = 1}^ne_{ij_1}E_{tj_1j_2}\xi_{tij_2}.
\end{align*}
Breaking the first summation into three parts $\{j_1 < j_2\}$, $\{j_1 = j_2\}$, and $\{j_1 > j_2\}$, we further obtain
\begin{align*}
X_{in}(\bz)
& = \sum_{t = 1}^m\sum_{\substack{j_1,j_2,j_3\in[n]\\
j_1 > j_3, j_1 < j_2}}E_{tj_1j_2}E_{tj_2j_3}\left\{e_{ij_1}
\gamma_{ij_3}\mathbbm{1}(j_3\neq i) + 
e_{ij_3}\gamma_{ij_1}\mathbbm{1}(j_1\neq i)\right\}
\\&\quad
 + \sum_{t = 1}^m\sum_{\substack{j_1,j_2,j_3\in[n]\\
j_1 > j_3, j_1 > j_2}}E_{tj_1j_2}E_{tj_2j_3}\left\{e_{ij_1}
\gamma_{ij_3}\mathbbm{1}(j_3\neq i) + 
e_{ij_3}\gamma_{ij_1}\mathbbm{1}(j_1\neq i)\right\}\\
&\quad + \sum_{t = 1}^m\sum_{j_1,j_3\in[n]:j_1 > j_3}E_{tj_1j_1}E_{tj_1j_3}\left\{e_{ij_1}
\gamma_{ij_3}\mathbbm{1}(j_3\neq i) + 
e_{ij_3}\gamma_{ij_1}\mathbbm{1}(j_1\neq i)\right\}
 + \sum_{t = 1}^m\sum_{j_1 = 1}^n\sum_{j_2 = 1}^ne_{ij_1}E_{tj_1j_2}\xi_{tij_2}.
\end{align*}
Now switching the roles of $j_1$ and $j_2$ in the second summation above and rearranging complete the proof that $X_{in}(\bz) = \sum_{t = 1}^m\sum_{j_1,j_2\in[n],j_1\leq j_2}E_{tj_1j_2}\{b_{tij_1j_2}(\bz) + c_{tij_1j_2}(\bz)\}$.
We first argue that $(t, j_1, j_2)\mapsto \alpha(t, j_1, j_2)$ is one-to-one. Assume otherwise. Then there exists some $(t', j_1', j_2')\in[m]\times [n]\times [n]$, $j_1'\leq j_2'$, such that
\begin{align}\label{eqn:martingale_mapping_rule}
\frac{1}{2}(t - 1)n(n + 1) + j_1 + \frac{1}{2}j_2(j_2 - 1) = 
\frac{1}{2}(t' - 1)n(n + 1) + j_1' + \frac{1}{2}j_2'(j_2' - 1).
\end{align}
This equation forces $t = t'$ because if not, then without loss of generality, we may assume that $t \leq t' - 1$, in which case
\begin{align*}
\frac{1}{2}(t - 1)n(n + 1) + j_1 + \frac{1}{2}j_2(j_2 - 1)
&\leq \frac{1}{2}(t - 1)n(n + 1) + n + \frac{1}{2}n(n - 1) = \frac{1}{2}tn(n + 1)\\
&\leq \frac{1}{2}(t' - 1)n(n + 1) < \frac{1}{2}(t' - 1)n(n + 1) + j_1' + \frac{1}{2}j_2'(j_2' - 1),
\end{align*}
contradicting with \eqref{eqn:martingale_mapping_rule}. Therefore, \eqref{eqn:martingale_mapping_rule} reduces to
\begin{align}\label{eqn:martingale_mapping_rule_II}
j_1 + \frac{1}{2}j_2(j_2 - 1) = 
j_1' + \frac{1}{2}j_2'(j_2' - 1).
\end{align}
Now we claim that \eqref{eqn:martingale_mapping_rule_II} implies that $j_1 = j_1'$ and $j_2 = j_2'$. Assume otherwise. Then it follows that $j_1 \neq j_1'$ and $j_2\neq j_2'$, and without loss of generality, assume that $j_2 \leq j_2' - 1$. Then \eqref{eqn:martingale_mapping_rule_II} yields that
$(1/2)j_2'(j_2' - 1) - (1/2)j_2(j_2 - 1) = j_1 - j_1'\leq j_1 - 1$.
On the other hand, $j_2 \leq j_2' - 1$ implies
$(1/2)j_2'(j_2' - 1) - (1/2)j_2(j_2 - 1)\geq (1/2)(j_2 + 1)j_2 - (1/2)j_2(j_2 - 1) = j_2\geq j_1 > j_1 - 1$,
which contradicts with the previous inequality. Hence, we conclude that $j_1 = j_1'$ and $j_2 = j_2'$. 

The relabeling scheme $(t, j_1, j_2)\mapsto \alpha = \alpha(t, j_1, j_2)$ enables us to rewrite
\[
X_{in}(\bz) = \sum_{t = 1}^m\sum_{\substack{j_1,j_2\in[n],j_1\leq j_2}}E_{tj_1j_2}(b_{tij_1j_2} + c_{tij_1j_2}) = \sum_{t = 1}^m\sum_{\substack{j_1,j_2\in[n],j_1\leq j_2}}Y_{n\alpha(t, j_1, j_2)} = \sum_{\alpha = 1}^{N_n}Y_{n\alpha},
\]
where we set $Y_{n\alpha} = Y_{n\alpha(t, j_1, j_2)} = E_{tj_1j_2}(b_{tij_1j_2} + c_{tij_1j_2})$ and $N_n = mn(n + 1)/2$. Clearly, the nested $\sigma$-fields $\calF_{n0}\subset\calF_{n1}\subset\ldots\subset\calF_{nN_n}$ form a filtration. For any $\alpha \in [N_n]$, the random variable
$Z_{n\alpha} := \sum_{\beta = 1}^\alpha Y_{n\beta}$ is $\calF_{n\alpha}$-measurable. Indeed, first observe that $E_{tj_1j_2}$ is $\calF_{n\alpha}$-measurable for any $(t, j_1, j_2)$ satisfying $\alpha(t,j_1, j_2)\leq\alpha$. Also, for any $t\in[m], j_1, j_2\in[n]$, $j_1\leq j_2$ with $\alpha(t, j_1, j_2)\leq \alpha$,
$b_{tij_1j_2}(\bz)$
is a function of $(E_{tj_3j_2})_{j_3 = 1}^{j_1 - 1}$ and $(E_{tj_1j_3})_{j_3 = 1}^{j_2 - 1}$, and these random variables are $\calF_{n(\alpha - 1)}$-measurable because
\[
\alpha(t, j_3, j_2) = \frac{1}{2}(t - 1)n(n + 1) + j_3 + \frac{1}{2}j_2(j_2 - 1)\leq \frac{1}{2}(t - 1)n(n + 1) + j_1 - 1 + \frac{1}{2}j_2(j_2 - 1) = \alpha(t, j_1, j_2) - 1\leq\alpha - 1,
\]
for any $j_3 \leq j_1 - 1$, 
\[
\alpha(t, j_1, j_3) = \frac{1}{2}(t - 1)n(n + 1) + j_1 + \frac{1}{2}j_3(j_3 - 1)\leq\frac{1}{2}(t - 1)n(n + 1) + j_1 + \frac{1}{2}(j_2 - 1)(j_2 - 2)\leq \alpha(t, j_1, j_2) - 1\leq\alpha - 1
\]
if $j_1 \leq j_3$, and
\begin{align*}
\alpha(t, j_3, j_1) &\leq\frac{1}{2}(t - 1)n(n + 1) + j_1 - 1 + \frac{1}{2}j_1(j_1 - 1)\leq \frac{1}{2}(t - 1)n(n + 1) + j_1 - 1 + \frac{1}{2}j_2(j_2 - 1)\\
& = \alpha(t, j_1, j_2) - 1<\alpha
\end{align*}
if $j_1 > j_3$.
Note that $c_{tij_1j_2}\in\calF_{n(\alpha - 1)}$ because it is a constant. These observations imply that $Y_{n\alpha}$ is $\calF_{n\alpha}$-measurable and $b_{tij_1j_2} + c_{tij_1j_2}$ is $\calF_{n(\alpha - 1)}$-measurable if $\alpha(t, j_1, j_2)\leq \alpha$. Therefore, with $t(\alpha)\in[m],j_1(\alpha),j_2(\alpha)\in[n]$ satisfying $\alpha(t(\alpha),j_1(\alpha),j_2(\alpha)) = \alpha$, we further obtain 
\begin{align*}
\expect\left(Z_{n\alpha}\mid\calF_{n(\alpha - 1)}\right)
& = Z_{n(\alpha - 1)} + \expect\left\{E_{t(\alpha)j_1(\alpha)j_2(\alpha)}(b_{t(\alpha)ij_1(\alpha)j_2(\alpha)} + c_{t(\alpha)ij_1(\alpha)j_2(\alpha)})\mid\calF_{n(\alpha - 1)}\right\} = Z_{n(\alpha - 1)},
\end{align*}
where we have used the facts that $E_{t(\alpha)j_1(\alpha)j_2(\alpha)}$ is a mean-zero independent random variable independent of the $\sigma$-field $\calF_{n(\alpha - 1)}$ and that $b_{t(\alpha)ij_1(\alpha)j_2(\alpha)} + c_{t(\alpha)ij_1(\alpha)j_2(\alpha)}$ is $\calF_{n(\alpha - 1)}$-measurable. Therefore, we see that $(Z_{n\alpha})_{\alpha = 1}^N$ forms a martingale with martingale difference sequence $(Y_{n\alpha})_{\alpha = 1}^N$. The proof is completed.

\subsection{Martingale Moment Bounds}
\label{sub:martingale_moment_bound}
We first establish several martingale moment bounds that facilitate the application of the martingale central limit theorem. 
\begin{lemma}\label{lemma:btj1j2_fourth_moment}
Suppose Assumptions \ref{assumption:eigenvector_delocalization}--\ref{assumption:condition_number} hold. Then for any $t\in[m]$ and $j_1,j_2\in[n]$, $j_1\leq j_2$,
\begin{align*}
\expect\{b_{tij_1j_2}^4(\bz)\}\lesssim (n\rho_ne_{ij_1} + n^2\rho_n^2e_{ij_1} + n\rho_ne_{ij_2} + n^2\rho_n^2e_{ij_2} + \rho_n)\max_{\|\bz\|_2\leq 1,j_1,j_2\in[n]}|\gamma_{j_1j_2}(\bz)|^4,
\end{align*}
where $b_{tij_1j_2}(\bz)$ is defined in \eqref{eqn:btj1j2}.
\end{lemma}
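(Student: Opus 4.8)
The plan is to recognize $b_{tij_1j_2}(\bz)$ as a weighted sum of independent, centered, bounded random variables and then apply the standard fourth-moment identity for such sums. Write $b_{tij_1j_2}(\bz) = \iota_{j_1j_2}(A+B)$, where $A = \sum_{j_3=1}^{j_1-1} E_{tj_2j_3}\,c_{j_3}$ with $c_{j_3} = e_{ij_1}\gamma_{ij_3}(\bz) + e_{ij_3}\gamma_{ij_1}(\bz)$, and $B$ is the analogous sum over $j_3\le j_2-1$ with the roles of $j_1,j_2$ interchanged. Since $j_3 < j_1 \le j_2$, the entries $\{E_{tj_2j_3}\}_{j_3<j_1}$ index distinct off-diagonal positions of $\bE_t$ (using $E_{tab}=E_{tba}$), so they are independent and mean-zero; the summands of $B$ are likewise distinct independent mean-zero entries (the index $j_3=j_1$ merely contributes the diagonal entry $E_{tj_1j_1}$, which is independent of the others). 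Because $(a+b)^4\le 8(a^4+b^4)$ and $\iota_{j_1j_2}\le 1$, it suffices to bound $\expect[A^4]$ and $\expect[B^4]$ separately, so no independence between $A$ and $B$ is needed (this also seamlessly covers the degenerate case $j_1=j_2$, where $A=B$ and $\iota_{j_1j_2}=1/2$).

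First I would invoke the fourth-moment formula for a sum of independent mean-zero variables, namely $\expect[(\sum_k c_k E_k)^4] = \sum_k c_k^4\,\expect E_k^4 + 3\sum_{k\ne l} c_k^2 c_l^2\,\expect E_k^2\,\expect E_l^2 \le \rho_n\sum_k c_k^4 + 3\rho_n^2\big(\sum_k c_k^2\big)^2$, where I use $\expect E_{tab}^4 \le \expect E_{tab}^2 \le \rho_n$ from Assumption~\ref{assumption:condition_number} together with $|E_{tab}|\le 1$.

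The main bookkeeping step is to control the coefficient sums $\sum_{j_3<j_1} c_{j_3}^2$ and $\sum_{j_3<j_1} c_{j_3}^4$. The crucial simplification is that the cross term in $c_{j_3}^2 = (e_{ij_1}\gamma_{ij_3} + e_{ij_3}\gamma_{ij_1})^2$ carries the factor $e_{ij_1}e_{ij_3}$, which forces $j_1=j_3$ and hence vanishes on the range $j_3<j_1$; the same holds for $c_{j_3}^4$. Writing $\gamma_* = \max_{\|\bz\|_2\le 1,\,a,b\in[n]}|\gamma_{ab}(\bz)|$, this yields $\sum_{j_3<j_1} c_{j_3}^2 = e_{ij_1}\sum_{j_3<j_1}\gamma_{ij_3}^2 + \gamma_{ij_1}^2\sum_{j_3<j_1}e_{ij_3} \le e_{ij_1}n\gamma_*^2 + \gamma_*^2$, since $\sum_{j_3<j_1}e_{ij_3}\le 1$, and similarly $\sum_{j_3<j_1}c_{j_3}^4 \le e_{ij_1}n\gamma_*^4 + \gamma_*^4$. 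Plugging these into the moment formula and using $\rho_n^2\le\rho_n$ gives $\expect[A^4]\lesssim (e_{ij_1}n\rho_n + e_{ij_1}n^2\rho_n^2 + \rho_n)\gamma_*^4$; the symmetric estimate for $\expect[B^4]$ replaces $e_{ij_1}$ by $e_{ij_2}$, and combining the two produces the claimed bound.

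The only genuine obstacle is the careful accounting of the indicator factors: one must distinguish the two mechanisms by which $c_{j_3}$ is nonzero, namely $i=j_1$ (which activates a full sum of $O(n)$ terms and drives the $n\rho_n$ and $n^2\rho_n^2$ contributions) versus $i=j_3$ (which occurs for at most one index $j_3$ and therefore contributes only the lone $\rho_n$ term). Keeping these separate is exactly what yields the structure of the stated bound; everything else is a routine application of the independent-sum moment identity.
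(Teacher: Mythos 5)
Your proposal is correct and follows essentially the same route as the paper's proof: split $b_{tij_1j_2}$ into the two constituent sums via $(a+b)^4\lesssim a^4+b^4$ (the paper's Young's-inequality step), then bound each fourth moment using the expansion for sums of independent mean-zero bounded variables, with the indicator bookkeeping ($e_{ij_1}e_{ij_3}=0$ on the range $j_3<j_1$, and $\sum_{j_3}e_{ij_3}\leq 1$) producing exactly the $n\rho_n e_{ij_1}+n^2\rho_n^2 e_{ij_1}$ versus lone $\rho_n$ split. Your explicit fourth-moment identity and the remark on the diagonal entry $E_{tj_1j_1}$ make the independence structure slightly more transparent than the paper's terse expansion, but the argument is the same.
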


\begin{proof}
For convenience, we let $\gamma_{\max}$ denote $\max_{j_1,j_2\in[n]}|\gamma_{j_1j_2}|$ and suppress the dependence on $\bz$ in this proof. 
Without loss of generality, we may assume that $j_1\neq j_2$. By definition and Young's inequality for product, 
$\expect b_{tij_1j_2}^4\lesssim \delta_{tj_1j_2}^{(1)} + \delta_{tj_1j_2}^{(4)}$,
where
\begin{align*}
\delta_{tj_1j_2}^{(1)}
& = \expect\bigg[\sum_{a = 1}^{j_1 - 1}E_{taj_2}\left\{e_{ij_1}\gamma_{ia}\mathbbm{1}(a\neq i) + e_{ia}\gamma_{ij_1}\mathbbm{1}(j_1\neq i)\right\}\bigg]^4,\\
\delta_{tj_1j_2}^{(2)}
& = \expect\bigg[\sum_{b = 1}^{j_2 - 1}E_{tbj_1}\left\{e_{ij_2}\gamma_{ib}\mathbbm{1}(b\neq i) + e_{ib}\gamma_{ij_2}\mathbbm{1}(j_2\neq i)\right\}\bigg]^4.
\end{align*}
For $\delta_{tj_1j_2}^{(1)}$, we have
\begin{align*}
\delta_{tj_1j_2}^{(1)}
&\lesssim \sum_{a = 1}^n(e_{ij_1} + e_{ia})\rho_n\gamma_{\max}^4 + \sum_{a_1,a_2\in[n]}(e_{ij_1} + e_{ia_1})(e_{ij_1} + e_{ia_2})\rho_n^2\gamma_{\max}^4\lesssim (n\rho_ne_{ij_1} + n^2\rho_n^2e_{ij_1} + \rho_n)\gamma_{\max}^4
\end{align*}
because the remaining terms in the expansion have zero expected values.
Similarly,
$\delta_{tj_1j_2}^{(2)}\lesssim (n\rho_ne_{ij_2} + n^2\rho_n^2e_{ij_2} + \rho_n)\gamma_{\max}^4$.
Combining the above upper bounds completes the proof.
\end{proof}

\begin{lemma}\label{lemma:btj1j2_cross_fourth_moment}
Suppose Assumption \ref{assumption:eigenvector_delocalization}--\ref{assumption:condition_number} hold. For any fixed $i_1,i_2\in[n]$, denote by $\bar{b}_{tj_1j_2} = b_{ti_1j_1j_2}(\bz_1) + b_{ti_2j_1j_2}(\bz_2)$, where $b_{tij_1j_2}(\bz)$ is defined by \eqref{eqn:btj1j2} and $\bz_1,\bz_2\in\mathbb{R}^d$ with $\|\bz_1\|_2,\|\bz_2\|\leq 1$. Then
\begin{align*}
\sum_{t = 1}^m\sum_{\substack{j_1,j_2,j_3,j_4\in[n]\\j_1\leq j_2,j_3\leq j_4\\\alpha(t, j_1, j_2) < \alpha(t, j_3, j_4)}}\expect(\bar{b}_{tj_1j_2}^2\bar{b}_{tj_3j_4}^2)\sigma_{tj_1j_2}^2\sigma_{tj_3j_4}^2\lesssim (mn^3\rho_n^3 + mn^4\rho_n^4)\max_{\|\bz\|_2\leq 1,j_1,j_2\in[n]}|\gamma_{j_1j_2}(\bz)|^4,
\end{align*}
where $\alpha(t, j_1, j_2)$ is the relabeling function defined in \eqref{eqn:relabeling_function} and $\gamma_{j_1j_2}(\bz)$ is defined in \eqref{eqn:gamma_xi}.
\end{lemma}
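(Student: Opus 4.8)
The plan is to write
\[
\expect(\bar{b}_{tj_1j_2}^2\bar{b}_{tj_3j_4}^2) = \expect(\bar{b}_{tj_1j_2}^2)\expect(\bar{b}_{tj_3j_4}^2) + \mathrm{Cov}(\bar{b}_{tj_1j_2}^2,\bar{b}_{tj_3j_4}^2)
\]
and to bound the two resulting sums separately. A direct application of Cauchy--Schwarz together with Lemma \ref{lemma:btj1j2_fourth_moment} would only give $mn^4\rho_n^3\gamma_{\max}^4$, which is too weak by a factor of $\rho_n^{-1}$, so the split into a factorized part and a covariance part is essential. Here I abbreviate $\gamma_{\max} = \max_{\|\bz\|_2\le1,j_1,j_2\in[n]}|\gamma_{j_1j_2}(\bz)|$, so that every coefficient $\gamma_{i_kj}(\bz_k)$ appearing in $\bar b$ is at most $\gamma_{\max}$ since $\|\bz_1\|_2,\|\bz_2\|_2\le1$.

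The structural observation that drives everything is that the coefficient $e_{ij_1}\gamma_{ij_3}(\bz)+e_{ij_3}\gamma_{ij_1}(\bz)$ multiplying $E_{tj_2j_3}$ in \eqref{eqn:btj1j2} vanishes unless $j_1=i$ or $j_3=i$, and likewise for the second sum. Hence $b_{tij_1j_2}(\bz)$ is a \emph{short} linear form in the noise entries: in the generic regime $j_1,j_2>i$ it collapses to $E_{tj_2i}\gamma_{ij_1}(\bz)+E_{tj_1i}\gamma_{ij_2}(\bz)$, a combination of column-$i$ entries only, whereas when $j_1=i$ it becomes the longer but still explicit sum $\sum_{j_3<i}E_{tj_2j_3}\gamma_{ij_3}(\bz)$ (and symmetrically for $j_2=i$). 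This sparsity makes both the second-moment estimate and the covariance bookkeeping tractable.

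I would first record a second-moment bound. Since $b_{tij_1j_2}$ is a linear form in independent mean-zero entries with variances $\sigma^2_{t\cdot\cdot}\le\rho_n$, one gets $\expect\{b_{tij_1j_2}^2(\bz)\}\lesssim \gamma_{\max}^2\rho_n\{n(e_{ij_1}+e_{ij_2})+1\}$, the factor $n$ arising only in the dense regime $j_1=i$ or $j_2=i$. Applying $(a+b)^2\le2a^2+2b^2$ to $\bar b_{tj_1j_2}=b_{ti_1j_1j_2}(\bz_1)+b_{ti_2j_1j_2}(\bz_2)$ yields $\expect(\bar b_{tj_1j_2}^2)\lesssim\gamma_{\max}^2\rho_n\{n(e_{i_1j_1}+e_{i_1j_2}+e_{i_2j_1}+e_{i_2j_2})+1\}$. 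Summing this against $\sigma^2_{tj_1j_2}\le\rho_n$ over $j_1\le j_2$, and using that each indicator sum is $O(n)$ while $\sum_{j_1\le j_2}1=O(n^2)$, gives $\sum_{j_1\le j_2}\expect(\bar b_{tj_1j_2}^2)\sigma^2_{tj_1j_2}\lesssim\gamma_{\max}^2 n^2\rho_n^2$. Dropping the constraint $\alpha(t,j_1,j_2)<\alpha(t,j_3,j_4)$ (all summands are nonnegative), the factorized part of the target sum factors across the two index pairs, so it is at most $m\bigl(\sum_{j_1\le j_2}\expect(\bar b^2)\sigma^2\bigr)^2\lesssim mn^4\rho_n^4\gamma_{\max}^4$, the second term of the claimed bound.

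It remains to control the covariance sum, which is the main obstacle. The covariance $\mathrm{Cov}(\bar b_{tj_1j_2}^2,\bar b_{tj_3j_4}^2)$ vanishes unless the noise supports of the two factors share a common entry, and the strict inequality $\alpha(t,j_1,j_2)<\alpha(t,j_3,j_4)$ from \eqref{eqn:relabeling_function} excludes $(j_1,j_2)=(j_3,j_4)$, so full coincidence of supports cannot occur. I would classify the surviving configurations by their overlap pattern. In the generic regime a single shared index, say $j_2=j_4$ with $j_1\ne j_3$, forces the correlation to come only from $\gamma_{ij_1}^2\gamma_{ij_3}^2\,\mathrm{Var}(E_{tj_2i}^2)\lesssim\gamma_{\max}^4\rho_n$ (the cross terms vanishing by mean-zero independence); there are $O(n^3)$ such triples, which after the two factors $\sigma^2\le\rho_n$ and the sum over $t$ contribute $\lesssim mn^3\rho_n^3\gamma_{\max}^4$, exactly the first term of the bound. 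The remaining cases — two shared indices, overlaps involving the dense $j=i$ terms, and cross terms between the two anchors $i_1,i_2$ — each tie together enough summation indices to be of the same or lower order, and verifying that none of them exceeds $mn^3\rho_n^3\gamma_{\max}^4+mn^4\rho_n^4\gamma_{\max}^4$ is where the bulk of the case-by-case counting lies. Combining the factorized and covariance bounds completes the proof.
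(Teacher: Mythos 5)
Your proof is correct, but it takes a genuinely different route from the paper's. The paper attacks $\expect(\bar{b}_{tj_1j_2}^2\bar{b}_{tj_3j_4}^2)$ head-on: after a Cauchy--Schwarz step reducing to four products $\vartheta^{(1)},\ldots,\vartheta^{(4)}$, it expands each expectation as a quadruple sum over noise indices, classifies the patterns for which $\expect(E_{ta_1j_2}E_{ta_2j_2}E_{tb_1j_3}E_{tb_2j_3})\neq 0$ (the four factors must collapse to at most two distinct entries), and counts, carrying the indicator algebra $(\bar{e}_{i_1i_2j_1}+\bar{e}_{i_1i_2a})\cdots$ through rather than ever exploiting the sparsity of $b_{tij_1j_2}$ explicitly. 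You instead split $\expect(\bar{b}_{tj_1j_2}^2\bar{b}_{tj_3j_4}^2)$ into $\expect(\bar{b}_{tj_1j_2}^2)\expect(\bar{b}_{tj_3j_4}^2)+\mathrm{Cov}(\bar{b}_{tj_1j_2}^2,\bar{b}_{tj_3j_4}^2)$, which cleanly attributes the $mn^4\rho_n^4\gamma_{\max}^4$ term to the factorized part (your second-moment bound $\expect\bar{b}_{tj_1j_2}^2\lesssim\gamma_{\max}^2\rho_n\{n(e_{i_1j_1}+e_{i_1j_2}+e_{i_2j_1}+e_{i_2j_2})+1\}$ is correct, and dropping the ordering constraint is legitimate by nonnegativity) and the $mn^3\rho_n^3\gamma_{\max}^4$ term to covariances localized on configurations whose noise supports share an entry. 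Your structural observation about \eqref{eqn:btj1j2} — that $e_{ij_1}\gamma_{ij_3}(\bz)+e_{ij_3}\gamma_{ij_1}(\bz)$ vanishes unless $j_1=i$ or $j_3=i$, so $b_{tij_1j_2}(\bz)$ is a two-term linear form in the generic regime and a length-$n$ form only when $j_1$ or $j_2$ hits an anchor — is exactly right and is what makes the support analysis finite. Your diagnosis that plain Cauchy--Schwarz with Lemma \ref{lemma:btj1j2_fourth_moment} only yields $mn^4\rho_n^3\gamma_{\max}^4$, exceeding the target by a factor of $n$ or $\rho_n^{-1}$, is also correct and shows you saw why the joint structure must be used. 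What the paper's approach buys is uniformity (the same pattern-counting machinery handles this lemma and Lemmas \ref{lemma:LOO_preliminary}--\ref{lemma:LOO_preliminary_II}); what yours buys is modularity, at the price of the case enumeration you sketched but did not fully write out.

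Two fine points on that enumeration. First, in the generic single-overlap case (say $j_2=j_4$, $j_1\neq j_3$) there are \emph{two} shared entries, $E_{tj_2i_1}$ and $E_{tj_2i_2}$, and the cross terms do not all vanish: for linear forms $X=\sum_k c_k\xi_k$, $Y=\sum_k d_k\xi_k$ in independent centered $\xi_k$ one has
\begin{align*}
\mathrm{Cov}(X^2,Y^2)=\sum_k c_k^2d_k^2\left\{\expect\xi_k^4-(\expect\xi_k^2)^2\right\}+2\bigg(\sum_k c_kd_k\expect\xi_k^2\bigg)^2-2\sum_k c_k^2d_k^2(\expect\xi_k^2)^2,
\end{align*}
so a non-vanishing term of size $\asymp\gamma_{\max}^4\rho_n^2$ survives from $\expect Z_1^2\expect Z_2^2$; it is dominated by your $\gamma_{\max}^4\rho_n$ bound, so nothing breaks, but ``cross terms vanishing by mean-zero independence'' is slightly imprecise. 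Second, the deferred dense cases do all check out: the worst configuration is two dense sums living in the same row ($j_1=i_1$, $j_3=i_2$, $j_2=j_4$), with up to $n$ shared entries, covariance $\lesssim(n\rho_n+n^2\rho_n^2)\gamma_{\max}^4$ but only $O(n)$ admissible configurations, contributing $\lesssim(mn^2\rho_n^3+mn^3\rho_n^4)\gamma_{\max}^4$; mixed dense--generic overlaps force an extra index coincidence and contribute at most $mn^2\rho_n^3\gamma_{\max}^4$. All are dominated by $mn^3\rho_n^3\gamma_{\max}^4+mn^4\rho_n^4\gamma_{\max}^4$, so your argument closes.
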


\begin{proof}
By definition and Cauchy-Schwarz inequality, for any $j_1,j_2,j_3,j_4\in[n]$,
\begin{align*}
\bar{b}_{tj_1j_2}^2\bar{b}_{tj_3j_4}^2
& \leq 4\bigg\{\bigg(\sum_{a = 1}^{j_1 - 1}E_{tj_2a}\varpi_{i_1i_2j_1a}\bigg)^2
+ 
\bigg(\sum_{a = 1}^{j_2 - 1}E_{tj_1a}\varpi_{i_1i_2j_2a}\bigg)^2\bigg\}\\
&\quad\times\bigg\{\bigg(\sum_{b = 1}^{j_3 - 1}E_{tj_4b}\varpi_{i_1i_2j_3b}\bigg)^2
 + 
\bigg(\sum_{b = 1}^{j_4 - 1}E_{tj_3b}\varpi_{i_1i_2j_4b}\bigg)^2\bigg\}\\
& = 4\vartheta_{tj_1j_2j_3j_4}^{(1)} + 4\vartheta_{tj_1j_2j_3j_4}^{(2)} + 4\vartheta_{tj_1j_2j_3j_4}^{(3)} + 4\vartheta_{tj_1j_2j_3j_4}^{(4)},
\end{align*}
where
\begin{align*}
\vartheta_{tj_1j_2j_3j_4}^{(1)}
& = \bigg(\sum_{a = 1}^{j_1 - 1}E_{tj_2a}\varpi_{i_1i_2j_1a}\bigg)^2
\bigg(\sum_{b = 1}^{j_3 - 1}E_{tj_4b}\varpi_{i_1i_2j_3b}\bigg)^2,\\
\vartheta_{tj_1j_2j_3j_4}^{(2)}
& = \bigg(\sum_{a = 1}^{j_1 - 1}E_{tj_2a}\varpi_{i_1i_2j_1a}\bigg)^2
\bigg(\sum_{b = 1}^{j_4 - 1}E_{tj_3b}\varpi_{i_1i_2j_4b}\bigg)^2,\\
\vartheta_{tj_1j_2j_3j_4}^{(3)}
& = \bigg(\sum_{a = 1}^{j_2 - 1}E_{tj_1a}\varpi_{i_1i_2j_2a}\bigg)^2
\bigg(\sum_{b = 1}^{j_3 - 1}E_{tj_4b}\varpi_{i_1i_2j_3b}\bigg)^2,\\
\vartheta_{tj_1j_2j_3j_4}^{(4)}
& = \bigg(\sum_{a = 1}^{j_2 - 1}E_{tj_1a}\varpi_{i_1i_2j_2a}\bigg)^2\bigg(\sum_{b = 1}^{j_4 - 1}E_{tj_3b}\varpi_{i_1i_2j_4b}\bigg)^2.
\end{align*}
and
\begin{align*}
\varpi_{i_1i_2j_1a}& = e_{i_1j_1}\gamma_{i_1a}(\bz_1)\mathbbm{1}(a\neq i_1) + 
e_{i_1a}\gamma_{i_1j_1}(\bz_1)\mathbbm{1}(j_1\neq i_1) + 
e_{i_2j_1}\gamma_{i_2a}(\bz_2)\mathbbm{1}(a\neq i_2) + 
e_{i_2a}\gamma_{i_2j_1}(\bz_2)\mathbbm{1}(j_1\neq i_2),\\
\varpi_{i_1i_2j_2a}& = e_{i_1j_2}\gamma_{i_1a}(\bz_1)\mathbbm{1}(a\neq i_1) + 
e_{i_1a}\gamma_{i_1j_2}(\bz_1)\mathbbm{1}(j_2\neq i_1)
+
e_{i_2j_2}\gamma_{i_2a}(\bz_2)\mathbbm{1}(a\neq i_2) + 
e_{i_2a}\gamma_{i_2j_2}(\bz_2)\mathbbm{1}(j_2\neq i_2),
\\
\varpi_{i_1i_2j_3b}& = e_{i_1j_3}\gamma_{i_1b}(\bz_1)\mathbbm{1}(b\neq i_1) + 
e_{i_1b}\gamma_{i_1j_3}(\bz_1)\mathbbm{1}(j_3\neq i_1) + 
e_{i_2j_3}\gamma_{i_2b}(\bz_1)\mathbbm{1}(b\neq i_2) + 
e_{i_2b}\gamma_{i_2j_3}(\bz_1)\mathbbm{1}(j_3\neq i_2),
\\
\varpi_{i_1i_2j_4b}& = e_{i_1j_4}\gamma_{i_1b}(\bz_1)\mathbbm{1}(b\neq i_1) + 
e_{i_1b}\gamma_{i_1j_4}(\bz_1)\mathbbm{1}(j_4\neq i_1) + 
e_{i_2j_4}\gamma_{i_2b}(\bz_2)\mathbbm{1}(b\neq i_2) + 
e_{i_2b}\gamma_{i_2j_4}(\bz_2)\mathbbm{1}(j_4\neq i_2).
\end{align*}
The analyses of $\vartheta_{tj_1j_2j_3j_4}^{(1)}$, $\vartheta_{tj_1j_2j_3j_4}^{(2)}$, $\vartheta_{tj_1j_2j_3j_4}^{(3)}$, and $\vartheta_{tj_1j_2j_3j_4}^{(4)}$ are almost identical because $\vartheta_{tj_1j_2j_3j_4}^{(1)} = \vartheta_{tj_1j_2j_4j_3}^{(2)}$, $\vartheta_{tj_1j_2j_3j_4}^{(3)} = \vartheta_{tj_2j_1j_3j_4}^{(2)}$, and $\vartheta_{tj_1j_2j_3j_4}^{(4)} = \vartheta_{tj_2j_1j_4j_3}^{(2)}$. We only present the analysis of $\vartheta_{tj_1j_2j_3j_4}^{(2)}$ here. For convenience, we let $\gamma_{\max}$ denote $\max_{\|\bz\|_2\leq1,j_1,j_2\in[n]}|\gamma_{j_1j_2}(\bz)|$ and $\bar{e}_{i_1i_2j} = e_{i_1j} + e_{i_2j}$. Write
\begin{align*}
  \expect \vartheta_{tj_1j_2j_3j_4}^{(2)}
  & \leq \sum_{a_1,a_2,b_1,b_2\in[n]}|\expect (E_{ta_1j_2}E_{ta_2j_2}E_{tb_1j_3}E_{tb_2j_3})|(\bar{e}_{i_1i_2j_1} + \bar{e}_{i_1i_2a_1})(\bar{e}_{i_1i_2j_1} + \bar{e}_{i_1i_2a_2})\\
  &\quad\quad\times(\bar{e}_{i_1i_2j_4} + \bar{e}_{i_1i_2b_1})(\bar{e}_{i_1i_2j_4} + \bar{e}_{i_1i_2b_2})\mathbbm{1}(a_1,a_2 < j_1,b_1,b_2 < j_4)\gamma_{\max}^4.
  \end{align*}
  Note $\expect (E_{ta_1j_2}E_{ta_2j_2}E_{tb_1j_3}E_{tb_2j_3})\neq 0$ only if the number of random variables in $\{E_{ta_1j_2},E_{ta_2j_2},E_{tb_1j_3},E_{tb_2j_3}\}$ is $1$ or $2$. 
  \begin{enumerate}
    \item If this number is $1$, then one must have $a_1 = a_2$ and $b_1 = b_2$, so that either $j_2 = j_3$, $a_1 = a_2 = b_1 = b_2$ or $a_1 = a_2 = j_3$, $b_1 = b_2 = j_2$. 
    \item If this number is $2$, then there are two cases:
    \begin{itemize}
      \item If $j_2 \neq j_3$, then one of the following cases must occur: i) $a_1 = a_2$, $b_1 = b_2$; ii) $\{a_1,j_2\} = \{b_1,j_3\}$, $\{a_2, j_2\} = \{b_2,j_3\}$, implying that $a_1 = j_3,b_1 = j_2$, $b_2 = j_2$, $a_2 = j_3$; iii) $\{a_1,j_2\} = \{b_2,j_3\}$, $\{a_2,j_2\} = \{b_1,j_3\}$, implying that $a_1 = j_3,b_2 = j_2$, $a_2 = j_3,b_1 = j_2$. However, ii) and iii) occurs exactly when $E_{ta_1j_2} = E_{ta_2j_2} = E_{tb_1j_3} = E_{tb_2j_3} = E_{t_3j_2}$, so it is sufficient to only consider i).
      \item If $j_2 = j_3$, then one of the following three cases must occur: i) $a_1 = a_2\neq b_1 = b_2$; ii) $a_1 = b_1\neq a_2 = b_2$; iii) $a_1 = b_2\neq a_2 = b_1$.
    \end{itemize}
  \end{enumerate}
  Therefore, we are able to further upper bound $\expect \vartheta_{tj_1j_2j_3j_4}^{(2)}$ as follows:
  \begin{align*}
  \expect \vartheta_{tj_1j_2j_3j_4}^{(2)}
  &\lesssim \rho_n(\bar{e}_{i_1i_2j_1} + \bar{e}_{i_1i_2j_3})^2(\bar{e}_{i_1i_2j_4} + \bar{e}_{i_1i_2j_2})^2\gamma_{\max}^4
  \\&\quad
   + \sum_{a = 1}^n\rho_n(\bar{e}_{i_1i_2j_1} + \bar{e}_{i_1i_2a})^2(\bar{e}_{i_1i_2j_4} + \bar{e}_{i_1i_2a})^2\mathbbm{1}(j_2 = j_3)\gamma_{\max}^4
  \\&\quad
   + \sum_{a_1,b_1\in[n]}\rho_n^2(\bar{e}_{i_1i_2j_1} + \bar{e}_{i_1i_2a_1})^2(\bar{e}_{i_1i_2j_4} + \bar{e}_{i_1i_2b_1})^2\gamma_{\max}^4\\
  &\quad + \sum_{a_1,a_2\in[n]}\rho_n^2(\bar{e}_{i_1i_2j_1} + \bar{e}_{i_1i_2a_1})(\bar{e}_{i_1i_2j_1} + \bar{e}_{i_1i_2a_2})
  (\bar{e}_{i_1i_2j_4} + \bar{e}_{i_1i_2a_1})(\bar{e}_{i_1i_2j_4} + \bar{e}_{i_1i_2a_2})\gamma_{\max}^4\\
  &\lesssim (\bar{e}_{i_1i_2j_1}\bar{e}_{i_1i_2j_2} + \bar{e}_{i_1i_2j_3}\bar{e}_{i_1i_2j_2} + \bar{e}_{i_1i_2j_1}\bar{e}_{i_1i_2j_4} + \bar{e}_{i_1i_2j_3}\bar{e}_{i_1i_2j_4})\rho_n\gamma_{\max}^4\\
  &\quad + (n\bar{e}_{i_1i_2j_1}\bar{e}_{i_1i_2j_4} + \bar{e}_{i_1i_2j_1} + \bar{e}_{i_1i_2j_4} + 1)\mathbbm{1}(j_2 = j_3)\rho_n\gamma_{\max}^4
  \\&\quad
   + (n^2\bar{e}_{i_1i_2j_1}\bar{e}_{i_1i_2j_4} + n\bar{e}_{i_1i_2j_1} + n\bar{e}_{i_1i_2j_4} + 1)\rho_n^2\gamma_{\max}^4.
  \end{align*}
  This entails that $\sum_{t = 1}^m\sum_{j_1,j_2,j_3,j_4\in[n]}\expect \vartheta_{tj_1j_2j_3j_4}^{(2)}\sigma_{tj_1j_2}^2\sigma_{tj_3j_4}^2\lesssim (mn^3\rho_n^3 + mn^4\rho_n^4)\gamma_{\max}^4$. Therefore, 
  \[
  \sum_{t = 1}^m\sum_{j_1,j_2,j_3,j_4\in[n]}\left(\expect \vartheta_{tj_1j_2j_3j_4}^{(1)} + \expect \vartheta_{tj_1j_2j_3j_4}^{(3)} + \expect \vartheta_{tj_1j_2j_3j_4}^{(2)} + \expect \vartheta_{tj_1j_2j_3j_4}^{(4)}\right)\sigma_{tj_1j_2}^2\sigma_{tj_3j_4}^2\lesssim (mn^3\rho_n^3 + mn^4\rho_n^4)\gamma_{\max}^4
  \]
  because $\vartheta_{tj_1j_2j_3j_4}^{(1)} = \vartheta_{tj_1j_2j_4j_3}^{(2)}$, $\vartheta_{tj_1j_2j_3j_4}^{(3)} = \vartheta_{tj_2j_1j_3j_4}^{(2)}$, and $\vartheta_{tj_1j_2j_3j_4}^{(4)} = \vartheta_{tj_2j_1j_4j_3}^{(2)}$. The proof is thus completed. 
\end{proof}

\subsection{Proof of Theorem \ref{thm:Rowwise_CLT}}
\label{sub:proof_of_eigenvector_CLT}
This subsection completes the proof of Theorem \ref{thm:Rowwise_CLT}. We first argue that the remainder $\bT$ has uniformly negligible maximum row norms, and then invoke the martingale central limit theorem to establish the desired asymptotic normality. Note that Lemma \ref{lemma:Rowwise_CLT_remainder} is essentially the same as Lemma \ref{lemma:sharpened_remainder}. 
\begin{lemma}\label{lemma:Rowwise_CLT_remainder}
Suppose Assumptions \ref{assumption:eigenvector_delocalization}--\ref{assumption:condition_number} hold, $m^{1/2}(n\rho_n)^{3/2} = \omega(\theta_n(\log n)^2)$, $mn\rho_n = \omega(\theta_n(\log n)^2)$, and $m^{1/2}n\rho_n = \omega(\theta_n(\log n)^{3/2})$, where $\theta_n = (n\rho_n)^{1/2}\wedge 1$. Then
\begin{align*}
&\max_{i\in[n]}\frac{\|\be_i\transpose\{\calH(\bA\bA\transpose) - \bP\bP\transpose - \bM\}(\widehat{\bU}_{RS}^{(i)}\bH_{RS}^{(i)} - \bV)\|_2}{\lambda_d(\bP\bP\transpose)} = \optilde\bigg(\frac{1}{m^{1/2}n\rho_n^{1/2}\theta_n}\bigg) + \Optilde\bigg(\frac{1}{n^{S + 3/2}} + \frac{1}{n^{R + 1/2}}\bigg),\\
&\|\bT^{(RS)}\|_{2\to\infty} = \optilde\bigg(\frac{1}{m^{1/2}n\rho_n^{1/2}\theta_n}\bigg) + \Optilde\bigg(\frac{1}{n^{S + 3/2}} + \frac{1}{n^{R + 1/2}}\bigg).
\end{align*}
\end{lemma}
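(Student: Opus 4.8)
The plan is to re-run the leave-one-out and leave-two-out analysis behind Lemma \ref{lemma:R1_remainder}, feeding in the sharp two-to-infinity bound of Theorem \ref{thm:entrywise_eigenvector_perturbation_bound} at every place where the earlier argument used the crude estimate $\|\widehat{\bU}_{RS}\|_{2\to\infty} = \Optilde(n^{-1/2})$. Writing $\delta_n = \varepsilon_n^{(\mathsf{op})} + \varepsilon_n^{(\mathsf{bc})} = o(1)$, Theorem \ref{thm:entrywise_eigenvector_perturbation_bound} gives $\|\widehat{\bU}_{RS}\bW_{RS} - \bV\|_{2\to\infty} = \Optilde(\delta_n/\sqrt n)$, and, combined with the leave-one-out and leave-two-out closeness estimates of Lemma \ref{lemma:Utilde_two_to_infinity_norm_II}, this upgrades the perturbations to $\max_i\|\widehat{\bU}_{RS}^{(i)}\bH_{RS}^{(i)} - \bV\|_{2\to\infty} = \Optilde(\delta_n/\sqrt n)$ and $\max_{i\neq j}\|\widehat{\bU}_{RS}^{(i,j)}\bH_{RS}^{(i,j)} - \bV\|_{2\to\infty} = \Optilde(\delta_n/\sqrt n)$. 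The three additional $\omega$-conditions of the lemma are precisely what convert the resulting log-inflated $\Optilde$ estimates into the required $\optilde$.

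First I would prove the first display, which is the sharpened counterpart of the second assertion of Lemma \ref{lemma:LOO_Rowwise_concentration}, retaining the split $q_1 + q_2 + q_3 + q_4$ of \eqref{eqn:q1_q2_q3_q4}. The linear-in-$\bE$ pieces $q_2, q_3, q_4$ are immediate: wherever Lemma \ref{lemma:LOO_Rowwise_concentration} carried a factor $\|\widehat{\bU}_{RS}\|_{2\to\infty}$, I would substitute the sharpened $\Optilde(\delta_n/\sqrt n)$. For instance $q_2$ becomes $\Optilde\{(\log n)^{1/2}/(mn\rho_n)^{1/2}\}\cdot\Optilde(\delta_n/\sqrt n)$, whose ratio to the target $1/(m^{1/2}n\rho_n^{1/2}\theta_n)$ is $(\log n)^{1/2}\theta_n\delta_n$; expanding $\delta_n$ this is killed by $m^{1/2}n\rho_n = \omega(\theta_n(\log n)^{3/2})$ and $mn\rho_n = \omega(\theta_n(\log n)^2)$, with $q_3, q_4$ handled in the same way and no larger.

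The quadratic piece $q_1$ is the main obstacle and needs the full leave-two-out machinery. Here I would revisit the first assertion of Lemma \ref{lemma:LOO_Rowwise_concentration}, keeping the split of $\be_i\transpose\calH(\bE\bE\transpose)(\widehat{\bU}_{RS}^{(i)}\bH_{RS}^{(i)} - \bV)$ into the bilinear term $\sum_t\be_i\transpose\bE_t\bE_t^{(i)}(\cdots)$ and the diagonal term $\sum_t E_{tii}(\be_i\transpose\bE_t - E_{tii}\be_i\transpose)(\cdots)$, and the subsequent decoupling down to the leave-two-out matrices. The only change is to carry $\max_{i\neq j}\|\widehat{\bU}_{RS}^{(i,j)}\bH_{RS}^{(i,j)} - \bV\|_{2\to\infty} = \Optilde(\delta_n/\sqrt n)$ through each Bernstein step, so that the previously dominant $(\log n)^2\|\widehat{\bU}_{RS}\|_{2\to\infty}$ term of $q_n$ picks up an extra $\delta_n$; after dividing by $\lambda_d(\bP\bP\transpose)\asymp mn^2\rho_n^2$ its ratio to the target is $\delta_n\cdot(\log n)^2\theta_n/\{m^{1/2}(n\rho_n)^{3/2}\}$, which the third condition $m^{1/2}(n\rho_n)^{3/2} = \omega(\theta_n(\log n)^2)$ drives to zero. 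The delicate part is the bookkeeping: one must verify that every surviving power of $\log n$ is beaten by one of the three conditions, and check this separately in the dense regime $n\rho_n\gtrsim 1$ (where $\theta_n = 1$) and the sparse regime $n\rho_n = O(1)$ (where $\theta_n = (n\rho_n)^{1/2}$), since the binding condition differs between them; the $\|\bM - \widehat{\bM}_{RS}\|_2$-dependent contribution to $q_1$ is routed through the recursion of Lemma \ref{lemma:recursive_two_to_infinity_error_bound} and supplies the bias-correction terms.

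Finally, I would assemble $\|\bT^{(RS)}\|_{2\to\infty}$ exactly as in the joint proof of Lemma \ref{lemma:recursive_two_to_infinity_error_bound} and Lemma \ref{lemma:R1_remainder}. The remainders $\bR_2^{(RS)}$ through $\bR_7^{(RS)}$ are controlled by Lemmas \ref{lemma:remainder_II}, \ref{lemma:remainder_IV}, and \ref{lemma:remainder_III} in terms of $\zeta_{\mathsf{op}}$ and $\|\bM - \widehat{\bM}_{RS}\|_2$; substituting the sharp $\Optilde(\delta_n/\sqrt n)$ perturbation bound into the recursive estimate of Lemma \ref{lemma:recursive_two_to_infinity_error_bound} sharpens $\|\bM - \widehat{\bM}_{RS}\|_2$, and iterating over the $R$ outer and $S$ inner steps yields exactly $\varepsilon_n^{(\mathsf{bc})}/\sqrt n = \Optilde(n^{-(S + 3/2)} + n^{-(R + 1/2)})$. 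For $\bR_1^{(RS)}$ I would use the split $r_1^{(1)},\ldots,r_1^{(5)}$ of \eqref{eqn:r11_r12_r13_r14}, whose leading piece $r_1^{(5)}$ equals $\max_i\|\be_i\transpose\{\calH(\bA\bA\transpose) - \bP\bP\transpose - \bM\}(\widehat{\bU}_{RS}^{(i)}\bH_{RS}^{(i)} - \bV)\|_2\|\widehat{\bS}_{RS}^{-1}\|_2$ and is therefore dispatched by the first display; the pieces $r_1^{(1)},\ldots,r_1^{(4)}$ are lower order by the sharp leave-one-out closeness and the $\sin\Theta$ control already in hand. Collecting all seven remainders completes both displays.
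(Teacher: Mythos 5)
Your proposal is correct and, apart from the handling of $q_1$, coincides with the paper's own proof: the paper likewise splits the leave-one-out row term as $q_1+q_2+q_3+q_4$ via \eqref{eqn:q1_q2_q3_q4}, substitutes the sharpened estimate $\max_{i\in[n]}\|\widehat{\bU}_{RS}^{(i)}\bH_{RS}^{(i)} - \bV\|_2 = \Optilde(\varepsilon_n^{(\mathsf{op})} + \varepsilon_n^{(\mathsf{bc})})$ (from Theorem \ref{thm:entrywise_eigenvector_perturbation_bound} together with Lemma \ref{lemma:Utilde_two_to_infinity_norm_II}) into $q_2,q_3,q_4$, assembles $\|\bT^{(RS)}\|_{2\to\infty}$ from the split $r_1^{(1)},\ldots,r_1^{(5)}$ of \eqref{eqn:r11_r12_r13_r14} with $r_1^{(2)}\vee r_1^{(5)}$ dispatched by the first display and $r_1^{(1)}\vee r_1^{(4)}$, $r_1^{(3)}$ controlled by spectral-norm closeness and the $\sin\Theta$ bound, and routes $\|\bM-\widehat{\bM}_{RS}\|_2$ through Lemma \ref{lemma:recursive_two_to_infinity_error_bound} and Theorem \ref{thm:entrywise_eigenvector_perturbation_bound} to produce exactly $\Optilde(n^{-(S+3/2)} + n^{-(R+1/2)})$, as you outline. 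Where you diverge is $q_1$: you propose re-running the leave-two-out Bernstein steps carrying the upgraded bound $\max_{i\neq j}\|\widehat{\bU}_{RS}^{(i,j)}\bH_{RS}^{(i,j)} - \bV\|_{2\to\infty} = \Optilde((\varepsilon_n^{(\mathsf{op})}+\varepsilon_n^{(\mathsf{bc})})/\sqrt n)$ to gain an extra factor $\delta_n$ on the $(\log n)^2$ term, whereas the paper does no re-derivation at all — it reuses the already-proved bound of Lemma \ref{lemma:LOO_Rowwise_concentration} (in the form of Lemma \ref{lemma:Utilde_two_to_infinity_norm_II}, with the crude $n^{-1/2}$ factor) and simply observes that the three strengthened conditions are calibrated so that $q_n/(mn^{5/2}\rho_n^2) = o\{1/(m^{1/2}n\rho_n^{1/2}\theta_n)\}$ and $\zeta_{\mathsf{op}}\log n/(m^{3/2}n^{7/2}\rho_n^3) = o\{1/(m^{1/2}n\rho_n^{1/2}\theta_n)\}$ already, with no $\delta_n$ needed. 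Your sharpening is legitimate (the leave-one-out and leave-two-out estimators do inherit the Theorem \ref{thm:entrywise_eigenvector_perturbation_bound} rate, since $\bE^{(i)}$ and $\bE^{(i,j)}$ satisfy Assumption \ref{assumption:condition_number}, combined with the spectral-norm closeness in Lemma \ref{lemma:Utilde_two_to_infinity_norm_II}), but it buys nothing and costs a nontrivial re-verification of the decoupling and moment-bound steps of Appendix \ref{sec:LOO_analysis}; likewise your dense/sparse regime split is unnecessary, since each ratio factors as the target times $\theta_n(\log n)^a/\{m^{1/2}(n\rho_n)^b\}$ and the $\omega$-conditions kill these uniformly in $\theta_n$.
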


\begin{proof}
The proof is similar to that of Lemma \ref{lemma:LOO_Rowwise_concentration}, except that we take advantage of the error bound established in Theorem \ref{thm:entrywise_eigenvector_perturbation_bound} to obtain refinement. 
By Theorem \ref{thm:entrywise_eigenvector_perturbation_bound}, we immediately obtain
$\|\widehat{\bU}\bW - \bV\|_2 = \Optilde(\varepsilon_n^{(\mathsf{op})} + \varepsilon_n^{(\mathsf{bc})})$.
By Lemma \ref{lemma:Utilde_two_to_infinity_norm_II}, $\max_{i\in[n]}\|\widehat{\bU}^{(i)}\bH^{(i)} - \bV\|_2 = \Optilde(\varepsilon_n^{(\mathsf{op})} + \varepsilon_n^{(\mathsf{bc})})$, where $\widehat{\bU}^{(i)}$ denotes $\widehat{\bU}_{RS}^{(i)}$ and $\bW^{(i)} = \mathrm{sgn}((\widehat{\bU}^{(i)})\transpose\bV)$.
By triangle inequality, we have
\begin{align}\label{eqn:LOO_spectral_norm_error_bound}
\max_{i\in[n]}\frac{\|\be_i\transpose\{\calH(\bA\bA\transpose) - \bP\bP\transpose - \bM\}(\widehat{\bU}^{(i)}\bH^{(i)} - \bV)\|_2}{\lambda_d(\bP\bP\transpose)}\leq q_1 + q_2 + q_3 + q_4,
\end{align}
where $q_1,q_2,q_3,q_4$ are defined in \eqref{eqn:q1_q2_q3_q4}. Observe that
\begin{align*}
&\frac{q_n}{mn^{5/2}\rho_n^2}
 = \frac{1}{m^{1/2}n\rho_n^{1/2}\theta_n}\bigg\{\frac{\theta_n(\log n)^2}{m^{1/2}(n\rho_n)^{3/2}}\bigg\} = o\bigg(\frac{1}{m^{1/2}n\rho_n^{1/2}\theta_n}\bigg),\\
&\frac{\zeta_{\mathsf{op}}\log n}{m^{3/2}n^{7/2}\rho_n^3}
 = \frac{1}{m^{1/2}n\rho_n^{1/2}\theta_n}\bigg\{
\frac{\theta_n(\log n)^2}{m^{1/2}(n\rho_n)^{3/2}} + 
\frac{\theta_n(\log n)^{3/2}}{m^{1/2}n\rho_n}\bigg\} = o\bigg(\frac{1}{m^{1/2}n\rho_n^{1/2}\theta_n}\bigg).
\end{align*}
Then by Lemma \ref{lemma:Utilde_two_to_infinity_norm_II}, Lemma \ref{lemma:recursive_two_to_infinity_error_bound}, and Theorem \ref{thm:entrywise_eigenvector_perturbation_bound}, we have
\begin{align*}
q_1 & = \optilde\bigg(\frac{1}{m^{1/2}n\rho_n^{1/2}\theta_n}\bigg)
 + \Optilde\bigg(\frac{\log n}{m^{3/2}n^{7/2}\rho_n^3}\bigg)
\|\bM - \widehat{\bM}_{RS}\|_2
 = \optilde\bigg(\frac{1}{m^{1/2}n\rho_n^{1/2}\theta_n}\bigg) + \Optilde\bigg(\frac{1}{n^{S + 3/2}} + \frac{1}{n^{R + 1/2}}\bigg).
\end{align*}
For $q_2$, we apply the above spectral norm error bound and Lemma \ref{lemma:rowwise_concentration_linear_term} to obtain
\begin{align*}
q_2& = \frac{1}{m\Delta_n^2}\bigg\|\sum_{t = 1}^m\bP_t\bE_t\bigg\|_{2\to\infty}\max_{i\in[n]}\|\widehat{\bU}^{(i)}\bH^{(i)} - \bV\|_2
\leq \frac{1}{m\Delta_n^2}\|\bU\|_{2\to\infty}\bigg\|\sum_{t = 1}^m\bB_t\bU\transpose\bE_t\bigg\|_2\max_{i\in[n]}\|\widehat{\bU}^{(i)}\bH^{(i)} - \bV\|_2\\
& = \frac{(\mu d)^{1/2}}{mn^{1/2}\Delta_n^2}\bigg\|\sum_{t = 1}^m\bE_t\bU\bB_t\bigg\|_2\max_{i\in[n]}\|\widehat{\bU}^{(i)}\bH^{(i)} - \bV\|_2 = \Optilde\bigg\{\frac{(\log n)^{1/2}}{m^{1/2}n\rho_n^{1/2}}(\varepsilon_n^{(\mathsf{op})} + \varepsilon_n^{(\mathsf{bc})})\bigg\} = \optilde\bigg(\frac{1}{m^{1/2}n\rho_n^{1/2}\theta_n}\bigg).
\end{align*}
By Bernstein's inequality and \eqref{eqn:LOO_spectral_norm_error_bound}, we obtain similarly that
\begin{align*}
q_3&\leq \Optilde\bigg\{\frac{(\log n)^{1/2}}{m^{1/2}(n\rho_n)^{3/2}}\bigg\}
\|\bP\|_{2\to\infty}\max_{i\in[n]}\|\widehat{\bU}^{(i)}\bH^{(i)} - \bV\|_2 = \Optilde\bigg\{\frac{(\log n)^{1/2}}{m^{1/2}n\rho_n^{1/2}}(\varepsilon_n^{(\mathsf{op})} + \varepsilon_n^{(\mathsf{bc})})\bigg\}\\
& = \optilde\bigg(\frac{1}{m^{1/2}n\rho_n^{1/2}\theta_n}\bigg),\\
q_4&\leq \Optilde\bigg\{\frac{(\log n)^{1/2}}{m^{1/2}(n\rho_n)^{3/2}}\bigg\}\|\bP\|_{\max}\max_{i\in[n]}\|\widehat{\bU}^{(i)}\bH^{(i)} - \bV\|_2 = \Optilde\bigg\{\frac{(\log n)^{1/2}}{m^{1/2}n^{3/2}\rho_n^{1/2}}(\varepsilon_n^{(\mathsf{op})} + \varepsilon_n^{(\mathsf{bc})})\bigg\}\\
& = \optilde\bigg(\frac{1}{m^{1/2}n\rho_n^{1/2}\theta_n}\bigg).
\end{align*}
The proof of the first assertion is then completed by combining the above error bounds for $q_1$, $q_2$, $q_3$, and $q_4$. We next work with $\|\bT^{(RS)}\|_{2\to\infty}$. 
For $\bR_2^{(RS)}$, we immediately have 
\begin{align*}
\|\bR_2^{(RS)}\|_{2\to\infty}
&\leq \|\bM - \widehat{\bM}_{RS}\|_\infty\|\bV\|_{2\to\infty}\|\bS^{-1}\|_2
= \|\bM - \widehat{\bM}_{RS}\|_2\|\bV\|_{2\to\infty}\|\bS^{-1}\|_2\\
& = O\left(\frac{1}{mn^{5/2}\rho_n^2}\right)\|\bM - \widehat{\bM}_{RS}\|_2.
\end{align*}
For $\bR_3^{(RS)}$, by Lemma \ref{lemma:Utilde_two_to_infinity_norm_II}, we have a similar error bound:
\begin{align*}
\|\bR_3^{(RS)}\|_{2\to\infty}
&\leq \|\bM - \widehat{\bM}_{RS}\|_2(\|\widehat{\bU}\bW\|_{2\to\infty} + \|\bV\|_{2\to\infty})\|\widehat{\bS}_{RS}^{-1}\|_2 = \Optilde\left(\frac{1}{mn^{5/2}\rho_n^2}\right)\|\bM - \widehat{\bM}_{RS}\|_2.
\end{align*}
By Lemma \ref{lemma:remainder_II}, Lemma \ref{lemma:remainder_IV}, and \ref{lemma:remainder_III}, we obtain
\begin{align*}
\|\bR_4^{(RS)} + \bR_5^{(RS)} + \bR_6^{(RS)} + \bR_7^{(RS)}\|_{2\to\infty}& = \optilde\bigg(\frac{1}{m^{1/2}n\rho_n^{1/2}\theta_n}\bigg) + \Optilde\bigg(\frac{1}{mn^{5/2}\rho_n^2}\bigg)\|\bM - \widehat{\bM}_{RS}\|_2.
\end{align*}
Now we work with $\|\bR_1^{(RS)}\|_{2\to\infty}$. Recall in the proof of Theorem \ref{thm:entrywise_eigenvector_perturbation_bound},
$\|\bR_1^{(RS)}\|_{2\to\infty} = r_1^{(1)} + r_1^{(2)} + r_1^{(3)} + r_1^{(4)} + r_1^{(5)}$,
where $r_1^{(1)}$, $r_1^{(2)}$, $r_1^{(3)}$, $r_1^{(4)}$, and $r_1^{(5)}$ are defined in \eqref{eqn:r11_r12_r13_r14}. By Lemma \ref{lemma:spectral_norm_concentration_noise} and Lemma \ref{lemma:Utilde_two_to_infinity_norm_II}, we have
\begin{align*}
r_1^{(1)}\vee r_1^{(4)} & \leq 2\|\calH(\bA\bA\transpose) - \bP\bP\transpose - \bM\|_2\max_{i\in[n]}\|\widehat{\bU}\bH - \widehat{\bU}^{(i)}\bH^{(i)}\|_2\|\widehat{\bS}_{RS}^{-1}\|_2 = \Optilde\bigg(\frac{\zeta_{\mathsf{op}}^2}{m^2n^{9/2}\rho_n^4}\bigg)\\
& = \optilde\bigg(\frac{1}{m^{1/2}n\rho_n^{1/2}\theta_n}\bigg).
\end{align*}
For $r_1^{(2)}$ and $r_1^{(5)}$, instead of using Lemma \ref{lemma:Utilde_two_to_infinity_norm_II}, we invoke the refined error bound in the first assertion and obtain
\begin{align*}
r_1^{(2)}\vee r_1^{(5)} 
&\lesssim \max_{i\in[n]}\|\be_i\transpose\{\calH(\bA\bA\transpose) - \bP\bP\transpose - \bM)(\widehat{\bU}^{(i)}\bH^{(i)} - \bV)\|_2\|\widehat{\bS}_{RS}^{-1}\|_2\\
& = \optilde\bigg(\frac{1}{m^{1/2}n\rho_n^{1/2}\theta_n}\bigg) + \Optilde\bigg(\frac{1}{n^{S + 3/2}} + \frac{1}{n^{R + 1/2}}\bigg).
\end{align*}
For $r_1^{(3)}$, by Lemma \ref{lemma:rowwise_concentration_noise}, Lemma \ref{lemma:Utilde_two_to_infinity_norm_II}, and Lemma \ref{lemma:remainder_II}, we have
\begin{align*}
r_1^{(3)}&\leq 2\max_{i\in[n]}\|\be_i\transpose\{\calH(\bA\bA\transpose) - \bP\bP\transpose - \bM\}\bV\|_2\|\sin\Theta(\widehat{\bU}_{RS},\bV)\|_2^2\|\widehat{\bS}^{-1}_{RS}\|_2
\\&
 = \optilde\bigg(\frac{1}{m^{1/2}n\rho_n^{1/2}\theta_n}\bigg) + \Optilde\bigg(\frac{1}{mn^{5/2}\rho_n^2}\bigg)\|\bM - \widehat{\bM}_{RS}\|_2.
\end{align*}
Combining the error bounds for $r_1^{(1)}$ through $r_1^{(5)}$ yields
\begin{align*}
\|\bR_1^{(RS)}\|_{2\to\infty}
& = \optilde\bigg(\frac{1}{m^{1/2}n\rho_n^{1/2}\theta_n}\bigg) + \Optilde\bigg(\frac{1}{n^{S + 3/2}} + \frac{1}{n^{R + 1/2}}\bigg) + \Optilde\bigg(\frac{1}{mn^{5/2}\rho_n^2}\bigg)\|\bM - \widehat{\bM}_{RS}\|_2.
\end{align*}
Hence, we combine the error bounds for $\|\bR_1\|_{2\to\infty}$ through $\|\bR_7\|_{2\to\infty}$ to obtain
\begin{align*}
\|\bT^{(RS)}\|_{2\to\infty}
&\leq \optilde\bigg(\frac{1}{m^{1/2}n\rho_n^{1/2}\theta_n}\bigg) + \Optilde\bigg(\frac{1}{n^{S + 3/2}} + \frac{1}{n^{R + 1/2}}\bigg) + \Optilde\bigg(\frac{1}{mn^{5/2}\rho_n^2}\bigg)\|\bM - \widehat{\bM}_{RS}\|_2.
\end{align*}
By Lemma \ref{lemma:recursive_two_to_infinity_error_bound} and Theorem \ref{thm:entrywise_eigenvector_perturbation_bound}, we have
\begin{align*}
\Optilde\bigg(\frac{1}{mn^{5/2}\rho_n^2}\bigg)\|\widehat{\bM}_{RS} - \bM\|_2
& = \Optilde\bigg(\frac{\varepsilon_n^{(\mathsf{op})}}{n^{3/2}} + \frac{\varepsilon_n^{(\mathsf{bc})}}{n^{1/2}}\bigg) = \optilde\bigg(\frac{1}{m^{1/2}n\rho_n^{1/2}\theta_n}\bigg) + \Optilde\bigg(\frac{1}{n^{S + 3/2}} + \frac{1}{n^{R + 1/2}}\bigg).
\end{align*}
The proof is completed by combining the above error bounds.
\end{proof}

\begin{proof}[\bf Proof of Theorem \ref{thm:Rowwise_CLT}]
By Assumption \ref{assumption:condition_number} and the condition of Theorem \ref{thm:Rowwise_CLT}, we know that $\lambda_k(\bB\bB\transpose) = \lambda_k(\bS) = \Theta(mn^2\rho_n^2)$, $\lambda_k(\bF_i) = \Theta(mn^2\rho_n^3)$, $\lambda_k(\bG_i) = \Theta(mn\rho_n^2)$ for any $k\in[d]$, $i\in[n]$, so that $\lambda_k(\bGamma_i) = \Theta(1/(mn^2\rho_n\theta_n^2))$ and $\|\bGamma_i^{-1/2}\|_2 = \Omega(m^{1/2}n\rho_n^{1/2}\theta_n)$, where $\theta_n^2 = (n\rho_n)\wedge 1$.
By Lemma \ref{lemma:Rowwise_CLT_remainder}, we have 
\[
\be_i\transpose(\widehat{\bU}\bW - \bV)\bQ_1\transpose\bGamma_i^{-1/2} = \be_i\transpose\left\{\calH(\bA\bA\transpose) - \bP\bP\transpose - \bM\right\}\bV\bS^{-1}\bQ_1\transpose\bGamma_i^{-1/2} + \optilde(1).
\]
It is sufficient to show
$\be_i\transpose\left\{\calH(\bA\bA\transpose) - \bP\bP\transpose - \bM\right\}\bV\bS^{-1}\bQ_1\transpose\bGamma_i^{-1/2}\bz\overset{\calL}{\to}\mathrm{N}(0, 1)$.
By definition, we have
\begin{align*}
&\be_i\transpose\left\{\calH(\bA\bA\transpose) - \bP\bP\transpose - \bM\right\}\bV\bS^{-1}\bQ_1\transpose\bGamma_i^{-1/2}\bz\\
&\quad = \be_i\transpose\calH(\bE\bE\transpose)\bV\bS^{-1}\bQ_1\transpose\bGamma_i^{-1/2}\bz + \sum_{t = 1}^m\sum_{j = 1}^nE_{tij}(\be_j\transpose\bP_t\bV\bS^{-1}\bQ_1\transpose\bGamma_i^{-1/2}\bz)\\
&\quad\quad + \sum_{t = 1}^m\be_i\transpose\bP_t\bE_t\bV\bS^{-1}\bQ_1\transpose\bGamma_i^{-1/2}\bz - 2\sum_{t = 1}^m\sum_{j = 1}^nP_{tij}E_{tij}(\be_i\transpose\bV\bS^{-1}\bQ_1\transpose\bGamma_i^{-1/2}\bz).
\end{align*}
By Lemma \ref{lemma:quadratic_form_Et}, the third term satisfies
\begin{align*}
\bigg|\sum_{t = 1}^m\be_i\transpose\bP_t\bE_t\bV\bS^{-1}\bQ_1\transpose\bGamma_i^{-1/2}\bz\bigg|&\leq \|\bU\|_{2\to\infty}\bigg\|\sum_{t = 1}^m\bB_t\bU\transpose\bE_t\bV\bS^{-1}\bQ_1\transpose\bGamma_i^{-1/2}\bz\bigg\|_2
 = \Optilde\left\{\frac{(\log n)^{1/2}}{n^{1/2}}\right\} = \optilde(1).
\end{align*}
By Bernstein's inequality, 
\[
2\sum_{t = 1}^m\sum_{j = 1}^nP_{tij}E_{tij}(\be_i\transpose\bV\bS^{-1}\bQ_1\transpose\bGamma_i^{-1/2}\bz) = \optilde\bigg\{\frac{(\log n)^{1/2}}{n}\bigg\} = \optilde(1)
.
\] 
These two concentration results imply
$\be_i\transpose\left\{\calH(\bA\bA\transpose) - \bP\bP\transpose - \bM\right\}\bV\bS^{-1}\bQ_1\transpose\bGamma_i^{-1/2}\bz = X_{in}(\bz) + \optilde(1)$,
where $X_{in}(\bz)$ is defined in \eqref{eqn:CLT_dominating_term}. As introduced in Lemma \ref{lemma:martingale_construction}, we rewrite $X_{in}$ as a mean-zero martingale and apply the martingale central limit theorem, which we review here (see, for example, Theorem 35.12 in \cite{billingsley1995probability}). Suppose that for each $n$, $(X_{n\alpha})_{\alpha = 0}^{N_n}$ is a martingale with respect to the filtration $(\calF_{n\alpha})_{\alpha = 0}^{N_n} = (\sigma(X_{n0},\ldots,X_{n\alpha}))_{\alpha = 0}^{N_n}$ with $X_{n0} = 0$, $\calF_{n0} = \{\varnothing, \Omega\}$, and  martingale difference sequence $(Y_{n\alpha})_{\alpha = 1}^{N_n} = (X_{n\alpha} - X_{n(\alpha - 1)})_{\alpha = 1}^{N_n}$. If the following conditions are satisfied:
\begin{enumerate}
   \item[(a)] $\sum_{\alpha = 1}^{N_n}\expect (Y_{n\alpha}^2\mid F_{n(\alpha - 1)})\overset{\prob}{\to} \sigma^2$ for some constant $\sigma^2 > 0$;
   \item[(b)] $\sum_{\alpha = 1}^{N_n}\expect \{Y_{n\alpha}^2\mathbbm{1}(|Y_{n\alpha}| \geq \eps)\}\to 0$ for any $\eps > 0$.
\end{enumerate} 
Then $\sum_{\alpha = 1}^{N_n}Y_{n\alpha}\overset{\calL}{\to}\mathrm{N}(0, \sigma^2)$. Specialized to our setup in Lemma \ref{lemma:martingale_construction}, given the one-to-one relabeling function $\alpha = \alpha(t, j_1, j_2)$ defined in \eqref{eqn:relabeling_function}, we will verify the above conditions with $N_n = (1/2)mn(n + 1)$, $Y_{n\alpha} = E_{tj_1j_2}(b_{tij_1j_2} + c_{tij_1j_2})$, $\calF_{n\alpha} = \sigma(\{E_{tj_1j_2}:\alpha(t, j_1, j_2)\leq \alpha\})$, $Y_{n0} = 0$, and $X_{in}(\bz) = \sum_{\alpha = 1}^{N_n}Y_{n\alpha}$, where $b_{tij_1j_2} = b_{tij_1j_2}(\bz)$ is defined in \eqref{eqn:btj1j2} and $c_{tij_1j_2} = c_{tij_1j_2}(\bz)$ is defined in \eqref{eqn:ctj1j2}. In particular, we have $\max_{j\in[n]}|\gamma_{ij}| = O\{\theta_n/(m^{1/2}n^{3/2}\rho_n^{3/2})\}$ and $\max_{t\in[m],j\in[n]}|\xi_{tij}| = O\{\theta_n/(mn\rho_n)^{1/2}\}$.

\noindent $\blacksquare$ \textbf{Condition (a) for martingale central limit theorem.}
For any $\alpha\in[N_n]$, let $t(\alpha)\in[m],j_1(\alpha),j_2(\alpha)$, $j_1(\alpha)\leq j_2(\alpha)$ be the unique indices such that $\alpha(t(\alpha),j_1(\alpha),j_2(\alpha)) = \alpha$. This enables us to write
\begin{equation}
\label{eqn:martingale_CLT_condition_a}
\begin{aligned}
&\sum_{\alpha = 1}^{N_n}\{\expect(Y_{n\alpha}^2\mid\calF_{n(\alpha - 1)}) - \expect(Y_{n\alpha}^2)\} = \sum_{t = 1}^m\sum_{j_1\leq j_2}(b_{tij_1j_2}^2 - \expect b_{tij_1j_2}^2)\sigma_{tj_1j_2}^2 + 2\sum_{t = 1}^m\sum_{j_1\leq j_2}b_{tij_1j_2}c_{tij_1j_2}\sigma_{tj_1j_2}^2.
\end{aligned}
\end{equation}
We work with the two terms separately. For the second term in \eqref{eqn:martingale_CLT_condition_a}, we have
\begin{align*}
&2\sum_{t = 1}^m\sum_{j_1,j_2\in[n]:j_1\leq j_2}b_{tij_1j_2}c_{tij_1j_2}\sigma_{tj_1j_2}^2\\
&\quad = 2\sum_{t = 1}^m\sum_{j_1,j_2\in[n]:j_1 \leq j_2}\iota_{j_1j_2}\sum_{j_3 = 1}^{j_1 - 1}E_{tj_2j_3}\{e_{ij_1}\gamma_{ij_3}\mathbbm{1}(j_3\neq i) + e_{ij_3}\gamma_{ij_1}\mathbbm{1}(j_1\neq i)\}c_{tij_1j_2}\sigma_{tj_1j_2}^2\\
&\quad\quad + 2\sum_{t = 1}^m\sum_{j_1,j_2\in[n]:j_1 \leq j_2}\iota_{j_1j_2}\sum_{j_3 = 1}^{j_2 - 1}E_{tj_1j_3}\{e_{ij_2}\gamma_{ij_3}\mathbbm{1}(j_3\neq i) + e_{ij_3}\gamma_{ij_2}\mathbbm{1}(j_2\neq i)\}c_{tij_1j_2}\sigma_{tj_1j_2}^2.
\end{align*}
The two terms above are sums of independent mean-zero random variables with variances $O(\theta_n^4(mn^2\rho_n)^{-1})$.
Therefore, the second term in \eqref{eqn:martingale_CLT_condition_a} converges to $0$ in probability by Chebyshev's inequality. For the first term \eqref{eqn:martingale_CLT_condition_a}, we compute the second moment
\begin{align*}
&\expect\bigg\{\sum_{t = 1}^m\sum_{j_1,j_2\in[n]:j_1\leq j_2}(b_{tij_1j_2}^2 - \expect b_{tij_1j_2}^2)\sigma_{tj_1j_2}^2\bigg\}^2 \leq \sum_{t = 1}^m\sum_{\substack{j_1,j_2,j_3,j_4\in[n],j_1\leq j_2, j_3\leq j_4}}\expect(b_{tij_1j_2}^2b_{tij_3j_4}^2)\sigma_{tj_1j_2}^2\sigma_{tj_3j_4}^2\\
&\quad = 2\sum_{t = 1}^m\sum_{\substack{j_1\leq j_2, j_3\leq j_4,\alpha(t, j_1, j_2) < \alpha(t, j_3, j_4)}}\expect(b_{tij_1j_2}^2b_{tij_3j_4}^2)\sigma_{tj_1j_2}^2\sigma_{tj_3j_4}^2 + \sum_{t = 1}^m\sum_{j_1,j_2\in[n],j_1\leq j_2}\expect(b_{tij_1j_2}^4)\sigma_{t_1j_1j_2}^4.
\end{align*}
By Lemma \ref{lemma:btj1j2_cross_fourth_moment} with $i_1 = i_2 = i$, the first term above converges to $0$. 
By Lemma \ref{lemma:btj1j2_fourth_moment}, the second term above also converges to $0$.
This shows that the second moment of the first term in \eqref{eqn:martingale_CLT_condition_a} goes to $0$, and hence, the first term in \eqref{eqn:martingale_CLT_condition_a} is $o_p(1)$ by Chebyshev's inequality. Therefore, we have shown that \eqref{eqn:martingale_CLT_condition_a} converges to $0$ in probability.
Next, we show that $\sum_{\alpha = 1}^{N_n}\expect Y_{n\alpha}^2\to \|\bz\|_2^2$ for any deterministic vector $\bz\in\mathbb{R}^d$. To this end, first observe that for any $j_1,j_2\in[n]$, $j_1 < j_2$, $(E_{tj_2j_3})_{j_3 = 1}^{j_1 - 1}$ and $(E_{tj_1j_3})_{j_3 = 1}^{j_2 - 1}$ form a collection of independent mean-zero random variables, in which case
\begin{align*}
\expect b_{tij_1j_2}^2 &= \sum_{j_3 = 1}^{j_1 - 1}\sigma_{tj_2j_3}^2\{e_{ij_1}\gamma_{ij_3}\mathbbm{1}(j_3\neq i) + e_{ij_3}\gamma_{ij_1}\mathbbm{1}(j_1\neq i)\}^2
 + \sum_{j_3 = 1}^{j_2 - 1}\sigma_{tj_1j_3}^2\{e_{ij_2}\gamma_{ij_3}\mathbbm{1}(j_3\neq i) + e_{ij_3}\gamma_{ij_2}\mathbbm{1}(j_2\neq i)\}^2,
\end{align*}
and for the case where $j_1 = j_2$, we also have
$\expect b_{tij_1j_1}^2 = \sum_{j_3 = 1}^{j_1 - 1}\sigma_{tj_1j_3}^2\{e_{ij_1}\gamma_{ij_3}\mathbbm{1}(j_3\neq i) + e_{ij_3}\gamma_{ij_1}\mathbbm{1}(j_1\neq i)\}^2$.
Therefore, we proceed to compute
\begin{align*}
\sum_{\alpha = 1}^{N_n}\expect Y_{n\alpha}^2
& = \sum_{t = 1}^m\sum_{j_1 = 1}^{n - 1}\sum_{j_2 = j_1 + 1}^n\sum_{j_3 = 1}^{j_1 - 1}\sigma_{tj_1j_2}^2\sigma_{tj_2j_3}^2\{e_{ij_1}\gamma_{ij_3}\mathbbm{1}(j_3\neq i) + e_{ij_3}\gamma_{ij_1}\mathbbm{1}(j_1\neq i)\}^2\\
&\quad + \sum_{t = 1}^m\sum_{j_1 = 1}^{n - 1}\sum_{j_2 = j_1 + 1}^n\sum_{j_3 = 1}^{j_2 - 1}\sigma_{tj_1j_2}^2\sigma_{tj_1j_3}^2\{e_{ij_2}\gamma_{ij_3}\mathbbm{1}(j_3\neq i) + e_{ij_3}\gamma_{ij_2}\mathbbm{1}(j_2\neq i)\}^2\\
&\quad + \sum_{t = 1}^m\sum_{j_1 = 1}^n\sum_{j_3 = 1}^{j_1 - 1}\sigma_{tj_1j_1}^2\sigma_{tj_1j_3}^2\{e_{ij_1}\gamma_{ij_3}\mathbbm{1}(j_3\neq i) + e_{ij_3}\gamma_{ij_1}\mathbbm{1}(j_1\neq i)\}^2\\
&\quad + \sum_{t = 1}^m\sum_{j_1 = 1}^{n - 1}\sum_{j_2 = j_1 + 1}^n\sigma_{tj_1j_2}^2(e_{ij_1}\xi_{tij_2} + e_{ij_2}\xi_{tij_1})^2 + \sum_{t = 1}^m\sum_{j_1 = 1}^n\sigma_{tj_1j_1}^2e_{ij_1}\xi_{tij_1}^2
\end{align*}
Using the fact that $e_{ij_1}e_{ij_2} = 0$ when $j_1 < j_2$, switching the roles between $j_1$ and $j_2$ in the second summation, and rearranging, we further obtain
\begin{align*}
\sum_{\alpha = 1}^{N_n}\expect Y_{n\alpha}^2
& = \sum_{t = 1}^m\sum_{j_2 = 1}^n\sum_{j_3 = 1}^n\sigma_{tij_2}^2\sigma_{tj_2j_3}^2\gamma_{ij_3}^2 + \sum_{t = 1}^m\sum_{j_2 = 1}^n\sigma_{tij_2}^2\xi_{tij_2}^2 + O\left(\frac{\theta_n^2}{n^2\rho_n}\right)\to\|\bz\|_2^2,
\end{align*}
thereby establishing condition (a) for martingale central limit theorem.

\noindent
\textbf{Condition (b) for martingale central limit theorem}. It is sufficient to show that $\sum_{\alpha = 1}^{N_n}\expect Y_{n\alpha}^4\to 0$. By definition, we have
\begin{align*}
\sum_{\alpha = 1}^{N_n}\expect Y_{n\alpha}^4
& = \sum_{t = 1}^m\sum_{j_1\leq j_2}(\expect E_{tj_1j_2}^4)(\expect b_{tij_1j_2}^4 + 4\expect b_{tij_1j_2}^3c_{tij_1j_2} + 6\expect b_{tij_1j_2}^2c_{tij_1j_2}^2 + c_{tij_1j_2}^4)\\
&\lesssim \rho_n\sum_{t = 1}^m\sum_{j_1\leq j_2}(\expect b_{tij_1j_2}^4 + c_{tij_1j_2}^4).
\end{align*}
It follows from Lemma \ref{lemma:btj1j2_fourth_moment} that $\sum_{\alpha = 1}^{N_n}\expect Y_{n\alpha}^4 = O\{\theta_n^4(mn^4\rho_n^4)^{-1} + \theta_n^4(mn^3\rho_n^3)^{-1} + \theta_n^4(mn\rho_n)^{-1}\}\to 0$. 
Therefore, condition (b) for martingale central limit theorem also holds, so that
$X_{in}(\bz)\overset{\calL}{\to}\mathrm{N}(0, 1)$. 
The proof is thereby completed.
\end{proof}

\section{Proof of Theorem \ref{thm:testing_membership_profiles}}
\label{sec:proof_of_testing_membership_profiles}
Let $\bU = \bZ(\bZ\transpose\bZ)^{-1/2}$ and $\bB_t = (\bZ\transpose\bZ)^{1/2}\bC_t(\bZ\transpose\bZ)^{1/2}$. By Rayleigh-Ritz theorem, 
\begin{align*}
\min_{t\in[m]}\sigma_d(\bB_t)& = \min_{t\in[m]}\lambda_d^{1/2}(\bB_t^2)
 = \min_{t\in[m]}\min_{\|\bx\|_2 = 1}(\bx\transpose\bB_t^2\bx)^{1/2}\\
 &\geq \min_{t\in[m]}\min_{\|\bx\|_2 = 1}\|(\bZ\transpose\bZ)^{1/2}\bx\|_2\lambda_d^{1/2}(\bZ\transpose\bZ)\lambda_d(\bC_t) = \Omega(n\rho_n).
\end{align*}
Clearly, $\max_{t\in[m]}\sigma_1(\bB_t)/\sigma_d(\bB_t) = O(1)$ and $\|\bU\|_{2\to\infty}\leq \|\bZ\|_{2\to\infty}\lambda_d^{-1/2}(\bZ\transpose\bZ) = O(n^{-1/2})$. Given the conditions of Theorem \ref{thm:testing_membership_profiles}, it is straightforward to obtain $\lambda_k(\bF_i) = \Theta(mn^2\rho_n^3)$ and $\lambda_k(\bG_i) = \Theta(mn\rho_n^2)$.
Hence, the conditions of Theorem \ref{thm:Rowwise_CLT} hold, implying that for any fixed vector $\bz\in\mathbb{R}^d$,
$\bz\transpose\bGamma_i^{-1/2}(\bQ_1\bW\transpose\widehat{\btheta}_i - \btheta_i) = X_{in}(\bz) + \optilde(1)$,
where $\widehat{\btheta}_i$ and $\btheta_i$ denote the $i$th row of $\widehat{\bU}$ and $\bU$, respectively, $X_{in}(\bz)$ is defined in \eqref{eqn:CLT_dominating_term}, and $\bW = \mathrm{sgn}(\widehat{\bU}\transpose\bV)$. 

\noindent
$\blacksquare$ \textbf{Asymptotic normality of $\widehat{\btheta}_{i_1} - \widehat{\btheta}_{i_2}$.} This part is similar to the proof of Theorem \ref{thm:Rowwise_CLT}. Recall the notations  in Lemma \ref{lemma:martingale_construction}. For any deterministic vectors $\bz_1,\bz_2\in\mathbb{R}^d$, let $v_{i_1i_2n} = X_{i_1n}(\bz_1) + X_{i_2n}(\bz_2)$. Note that 
\[
v_{i_1i_2n}
 = \sum_{t = 1}^m\sum_{\substack{j_1 \leq j_2}}E_{tj_1j_2}\{b_{ti_1j_1j_2}(\bz_1) + b_{ti_2j_1j_2}(\bz_2) + c_{ti_1j_1j_2}(\bz_1) + c_{ti_2j_1j_2}(\bz_2)\}
\]
and $b_{ti_1j_1j_2}(\bz_1) + b_{ti_2j_1j_2}(\bz_2) + c_{ti_1j_1j_2}(\bz_1) + c_{ti_2j_1j_2}(\bz_2)$ is $\calF_{n(\alpha(t, j_1, j_2) - 1)}$ measurable. For any $\alpha\in N_n = (1/2)mn(n + 1)$, let $(t(\alpha),j_1(\alpha),j_2(\alpha))\in[m]\times[n]\times[n]$ be the unique indices such that $\alpha(t(\alpha),j_1(\alpha),j_2(\alpha)) = \alpha$. Let 
 \[
 Y_{n\alpha} = E_{t(\alpha)j_1(\alpha)j_2(\alpha)}\{b_{t(\alpha)i_1j_1(\alpha)j_2(\alpha)}(\bz_1) + b_{t(\alpha)i_2j_1(\alpha)j_2(\alpha)}(\bz_2) + c_{t(\alpha)i_1j_1(\alpha)j_2(\alpha)}(\bz_1) + c_{t(\alpha)i_2j_1(\alpha)j_2(\alpha)}(\bz_2)\}
 \]
 and $\calF_{n\alpha} = \sigma(\{E_{tj_1j_2}:t\in[m],j_1,j_2\in[n],j_1\leq j_2,\alpha(t, j_1, j_2)\leq \alpha\})$, where $\alpha(\cdot)$ is the relabeling function defined in \eqref{eqn:relabeling_function}. Set $\calF_{n0} = \varnothing$. Clearly, $v_{i_1i_2n} = \sum_{\alpha = 1}^{N_n}Y_{n\alpha}$, where $(Y_{n\alpha})_{\alpha = 1}^{N_n}$ forms a martingale difference sequence with regard to the filtration $(\calF_{n\alpha})_{\alpha = 0}^{N_n - 1}$. We now argue the asymptotic normality of $v_{i_1i_2n}$ by applying the martingale central limit theorem.
 We first verify condition (a) for martingale central limit theorem.
Write
\begin{equation}
\label{eqn:martingale_CLT_condition_a_membership_profiles_testing}
\begin{aligned}
&\sum_{\alpha = 1}^{N_n}\expect(Y_{n\alpha}^2\mid\calF_{n(\alpha - 1)}) - \sum_{\alpha = 1}^{N_n}\expect(Y_{n\alpha}^2)\\
&\quad = \sum_{t = 1}^m\sum_{j_1,j_2\in[n]:j_1\leq j_2}
\left(\bar{b}_{tj_1j_2}^2 - \expect\bar{b}_{tj_1j_2}^2\right)
\sigma_{tj_1j_2}^2
\\&\quad\quad
 + 2\sum_{i,i'\in\{i_1,i_2\}}\sum_{\bz,\bz'\in\{\bz_1,\bz_2\}}\sum_{t = 1}^m\sum_{j_1,j_2\in[n]:j_1\leq j_2}b_{tij_1j_2}(\bz)c_{ti'j_1j_2}(\bz')\sigma_{tj_1j_2}^2,
\end{aligned}
\end{equation}
where $\bar{b}_{tj_1j_2} := b_{ti_1j_1j_2}(\bz_1) + b_{ti_2j_1j_2}(\bz_2)$.
We work with the two terms on the right-hand side of \eqref{eqn:martingale_CLT_condition_a_membership_profiles_testing} separately. For the second term in \eqref{eqn:martingale_CLT_condition_a_membership_profiles_testing}, for any $i,i'\in\{i_1,i_2\}$ and any $\bz,\bz'\in\{\bz_1,\bz_2\}$, we have
\begin{align*}
&2\sum_{t = 1}^m\sum_{j_1,j_2\in[n]:j_1\leq j_2}b_{tij_1j_2}(\bz)c_{ti'j_1j_2}(\bz')\sigma_{tj_1j_2}^2\\
&\quad = 2\sum_{t = 1}^m\sum_{j_1,j_2\in[n]:j_1 \leq j_2}\sum_{j_3 = 1}^{j_1 - 1}E_{tj_2j_3}\{e_{ij_1}\gamma_{ij_3}(\bz)\mathbbm{1}(j_3\neq i) + e_{ij_3}\gamma_{ij_1}(\bz)\mathbbm{1}(j_1\neq i)\}c_{ti'j_1j_2}(\bz')\sigma_{tj_1j_2}^2\\
&\quad\quad + 2\sum_{t = 1}^m\sum_{j_1,j_2\in[n]:j_1 < j_2}\sum_{j_3 = 1}^{j_2 - 1}E_{tj_1j_3}\{e_{ij_2}\gamma_{ij_3}(\bz)\mathbbm{1}(j_3\neq i) + e_{ij_3}\gamma_{ij_2}(\bz)\mathbbm{1}(j_2\neq i)\}c_{ti'j_1j_2}(\bz')\sigma_{tj_1j_2}^2.
\end{align*}
The two terms above are sums of independent mean-zero random variables with variances $O(\theta_n^4(mn^2\rho_n)^{-1})$. Therefore, the second term in \eqref{eqn:martingale_CLT_condition_a_membership_profiles_testing} is $o_p(1)$ by Chebyshev's inequality. For the first on the right-hand side of \eqref{eqn:martingale_CLT_condition_a_membership_profiles_testing}, we compute the second moment
\begin{align*}
&\expect\bigg\{\sum_{t = 1}^m\sum_{j_1,j_2\in[n]:j_1\leq j_2}\left(\bar{b}_{tj_1j_2}^2 - \expect \bar{b}_{tj_1j_2}^2\right)\sigma_{tj_1j_2}^2\bigg\}^2
\leq \sum_{t = 1}^m\sum_{\substack{j_1,j_2,j_3,j_4\in[n],j_1\leq j_2, j_3\leq j_4}}\expect\bar{b}_{tj_1j_2}^2\bar{b}_{tj_3j_4}^2\sigma_{tj_1j_2}^2\sigma_{tj_3j_4}^2\\
&\quad = 2\sum_{t = 1}^m\sum_{\substack{j_1\leq j_2, j_3\leq j_4,\alpha(t, j_1, j_2) < \alpha(t, j_3, j_4)}}\expect\bar{b}_{tj_1j_2}^2\bar{b}_{tj_3j_4}^2\sigma_{tj_1j_2}^2\sigma_{tj_3j_4}^2 + \sum_{t = 1}^m\sum_{j_1,j_2\in[n],j_1\leq j_2}\expect\bar{b}_{tj_1j_2}^4\sigma_{t_1j_1j_2}^4.
\end{align*}
By Lemma \ref{lemma:btj1j2_cross_fourth_moment} and Lemma \ref{lemma:btj1j2_fourth_moment}, the two terms above converge to $0$. 
This shows that the second moments of the first and second terms in \eqref{eqn:martingale_CLT_condition_a_membership_profiles_testing} go to $0$, and hence, the first and second terms on the right-hand side of \eqref{eqn:martingale_CLT_condition_a_membership_profiles_testing} is $o_p(1)$ by Chebyshev's inequality. Therefore, we have shown that \eqref{eqn:martingale_CLT_condition_a_membership_profiles_testing} is $o_p(1)$.
Next, we show that $\sum_{\alpha = 1}^{N_n}\expect Y_{n\alpha}^2\to \|\bz_1\|_2^2 + \|\bz_2\|_2^2$ for any deterministic vectors $\bz_1,\bz_2\in\mathbb{R}^d$. To this end, denote by $\gamma_{\max} = \max_{i,j\in[n]}\{|\gamma_{ij}(\bz_1)|\vee|\gamma_{ij}(\bz_2)|\}$, $\xi_{\max} = \max_{t\in[m],i,j\in[n]}\{|\xi_{tij}(\bz_1)|\vee|\xi_{tij}(\bz_2)|\}$, and observe that 
\begin{align*}
&\sum_{t = 1}^m\sum_{j_1\leq j_2}\sigma_{tj_1j_2}^2\expect\{b_{ti_1j_1j_2}(\bz_1)b_{ti_2j_1j_2}(\bz_2)\}\\
&\quad\leq \sum_{t = 1}^m\sum_{j_1\leq j_2}\sum_{a,b\in[n]}\{|\expect E_{tj_2a}E_{tj_2b}|(e_{i_1j_1} + e_{i_1a})(e_{i_2j_1} + e_{i_2b})
 + |\expect E_{tj_2a}E_{tj_1b}|(e_{i_1j_1} + e_{i_1a})(e_{i_2j_2} + e_{i_2b})\}\rho_n\gamma_{\max}^2\\
&\quad\quad + \sum_{t = 1}^m\sum_{j_1\leq j_2}\sum_{a,b\in[n]}\{|\expect E_{tj_1a}E_{tj_2b}|(e_{i_1j_2} + e_{i_1a})(e_{i_2j_1} + e_{i_2b}) + |\expect E_{tj_1a}E_{tj_1b}|(e_{i_1j_2} + e_{i_1a})(e_{i_2j_2} + e_{i_2b})\}\rho_n\gamma_{\max}^2\\
&\quad\lesssim \sum_{t = 1}^m\sum_{j_1,j_2\in[n]}\frac{(e_{i_1j_1} + e_{i_1j_2} + e_{i_2j_1} + e_{i_2j_2})\theta_n^2}{mn^3\rho_n^2} = O\bigg(\frac{\theta_n^2}{n^2\rho_n}\bigg),\\
&\sum_{t = 1}^m\sum_{j_1\leq j_2}\sigma_{tj_1j_2}^2c_{ti_1j_1j_2}(\bz_1)c_{ti_2j_1j_2}(\bz_2)
\leq \sum_{t = 1}^m\sum_{j_1\leq j_2}\rho_n(e_{i_1j_1} + e_{i_1j_2})(e_{i_2j_1} + e_{i_2j_2})\xi_{\max}^2 = O\bigg(\frac{\theta_n^2}{n}\bigg),
\end{align*}
where we have used the fact that $e_{i_1j}e_{i_2j} = 0$ for any $j\in[n]$ since $i_1\neq i_2$. 
Therefore, by the proof of Theorem \ref{thm:Rowwise_CLT}, we obtain
\begin{align*}
\sum_{\alpha = 1}^{N_n}\expect Y_{n\alpha}^2
& = \sum_{t = 1}^m\sum_{j_1\leq j_2}\sigma_{tj_1j_2}^2\{\expect b_{ti_1j_1j_2}^2(\bz_1) + c_{ti_1j_1j_2}^2(\bz_1)\}
 + \sum_{t = 1}^m\sum_{j_1\leq j_2}\sigma_{tj_1j_2}^2\{\expect b_{ti_2j_1j_2}^2(\bz_2) + c_{ti_2j_1j_2}^2(\bz_2)\}\\
&\quad + 2\sum_{t = 1}^m\sum_{j_1\leq j_2}\sigma_{tj_1j_2}^2\expect\{b_{ti_1j_1j_2}(\bz_1)b_{ti_2j_1j_2}(\bz_2)\} + 2\sum_{t = 1}^m\sum_{j_1\leq j_2}\sigma_{tj_1j_2}^2c_{ti_1j_1j_2}(\bz_1)c_{ti_2j_1j_2}(\bz_2)\}\\
&
 = \|\bz_1\|_2^2 + \|\bz_2\|_2^2 + o(1).
\end{align*}
This establishes condition (a) for martingale central limit theorem with $\sigma^2 = \|\bz_1\|_2^2 + \|\bz_2\|_2^2$. For condition (b), it is sufficient to show that $\sum_{\alpha = 1}^{N_n}\expect Y_{n\alpha}^4\to 0$. By definition, we have
\begin{align*}
\sum_{\alpha = 1}^{N_n}\expect Y_{n\alpha}^4
& = \sum_{t = 1}^m\sum_{j_1,j_2\in[n]:j_1\leq j_2}(\expect E_{tj_1j_2}^4)\expect(\bar{b}_{tj_1j_2} + \bar{c}_{tij_1j_2})^4\\
&\lesssim \rho_n\sum_{t = 1}^m\sum_{j_1,j_2\in[n]:j_1\leq j_2}\{\expect b_{ti_1j_1j_2}^4(\bz_1) + \expect b_{ti_2j_1j_2}^4(\bz_2) + c_{ti_1j_1j_2}^4(\bz_1) + c_{ti_2j_1j_2}^4(\bz_2)\}.
\end{align*}
It follows from Lemma \ref{lemma:btj1j2_fourth_moment} that $\sum_{\alpha = 1}^{N_n}\expect Y_{n\alpha}^4 = O(\theta_n^4(mn^4\rho_n^4)^{-1} + \theta_n^4(mn^3\rho_n^3)^{-1} + (mn\rho_n)^{-1})\to 0$. Therefore, condition (b) for martingale central limit theorem also holds, so that
$v_{i_1i_2n}\overset{\calL}{\to}\mathrm{N}(0, \|\bz_1\|_2^2 + \|\bz_2\|_2^2)$. By Cram\'er-Wold device and Theorem \ref{thm:Rowwise_CLT}, this further implies that $[\bGamma_{i_1}^{-1/2}\{\be_{i_1}\transpose(\widehat{\bU}\bW - \bV)\bQ_1\transpose\}\transpose,
\bGamma_{i_2}^{-1/2}\{\be_{i_2}\transpose(\widehat{\bU}\bW - \bV)\bQ_1\transpose\}\transpose]\overset{\calL}{\to}\mathrm{N}_{2d}(\zero_{2d}, \eye_{2d})$.
Namely, under the null hypothesis $H_0$, we have $(\bGamma_{i_1} + \bGamma_{i_2})^{-1/2}\bQ_1\bW\transpose(\widehat{\btheta}_{i_1} - \widehat{\btheta}_{i_2})\overset{\calL}{\to}\mathrm{N}_{d}(\zero_{d}, \eye_{d})$ and $\|\widehat{\btheta}_{i_1} - \widehat{\btheta}_{i_2}\|_2 = O_p((m^{1/2}n\rho_n^{1/2}\theta_n)^{-1})$. Similarly, under $H_A$, we have $(\bGamma_{i_1} + \bGamma_{i_2})^{-1/2}\{\bQ_1\bW\transpose(\widehat{\btheta}_{i_1} - \widehat{\btheta}_{i_2}) - (\btheta_{i_1} - \btheta_{i_2})\}\overset{\calL}{\to}\mathrm{N}_{d}(\zero_{d}, \eye_{d})$ and $\|\bQ_1\bW\transpose(\widehat{\btheta}_{i_1} - \widehat{\btheta}_{i_2}) - (\btheta_{i_1} - \btheta_{i_2})\|_2 = O_p((m^{1/2}n\rho_n^{1/2}\theta_n)^{-1})$.

\noindent$\blacksquare$ \textbf{Convergence of $\widehat{\bGamma}_i$}. Denote by $\widehat{\bW} = \bW\bQ_1\transpose$ By Theorem \ref{thm:entrywise_eigenvector_perturbation_bound}, we know that 
\[
\|\widehat{\bU}\widehat{\bW} - \bU\|_{2\to\infty} = \Optilde\bigg\{\frac{1}{\sqrt{n}}(\varepsilon_n^{(\mathsf{op})} + \varepsilon_n^{(\mathsf{bc})})\bigg\},\quad\|\widehat{\bU}\|_{2\to\infty} = \Optilde(n^{-1/2}),\quad\|\widehat{\bU}\widehat{\bW} - \bU\|_2 = \Optilde(\varepsilon_n^{(\mathsf{op})} + \varepsilon_n^{(\mathsf{bc})}).
\]
By Lemma \ref{lemma:norm_concentration_Et} and a union bound over $t\in[m]$, we know that $\max_{t\in[m]}\|\bE_t\|_2 = \Optilde\{(\log n)^{1/2} + (n\rho_n)^{1/2}\}$ and $\max_{t\in[m]}\|\bA_t\|_2 = \Optilde\{n\rho_n + (\log n)^{1/2} + (n\rho_n)^{1/2}\}$. 
Then we obtain
\begin{align*}
\max_{t\in[m]}\|\widehat{\bP}_t - \bP_t\|_{\max}
&\leq \max_{t\in[m]}\|\widehat{\bU}\widehat{\bW} - \bU\|_{2\to\infty}\|\bA_t\|_2\|\widehat{\bU}\|_{2\to\infty}
 + \max_{t\in[m]}\|\bU\|_{2\to\infty}\|\widehat{\bU}\widehat{\bW} - \bU\|_2\|\bA_t\|_2\|\widehat{\bU}\|_{2\to\infty}\\
&\quad + \max_{t\in[m]}\|\bU\|_{2\to\infty}\|\bE_t\|_2\|\widehat{\bU}\widehat{\bW} - \bU\|_2\|\widehat{\bU}\|_{2\to\infty} + \max_{t\in[m]}\|\bU\|_{2\to\infty}\|\bU\transpose\bE_t\bU\|_2\|\|\widehat{\bU}\|_{2\to\infty}\\
&\quad + \max_{t\in[m]}\|\bU\|_{2\to\infty}\|\bP_t\|_2\|\widehat{\bU}\widehat{\bW} - \bU\|_2\|\widehat{\bU}\|_{2\to\infty} + \max_{t\in[m]}\|\bU\|_{2\to\infty}\|\bB_t\|_2\|\widehat{\bU}\widehat{\bW} - \bU\|_{2\to\infty}\\
& = \optilde(\rho_n)
 .
\end{align*}
Denote by $P_{tij} = \bz_i\transpose\bC_t\bz_j$. Note that $\max_{t\in[m]}\|\bP_t\|_{\max}\leq \|\bZ\|_{2\to\infty}\max_{t\in[m]}\|\bC_t\|_2 = O(\rho_n)$.
This further implies the following estimation error bounds for $\widehat{P}_{tij}$'s: and $\widehat{\sigma}_{tij}$'s:
\begin{align*}
\min_{t\in[m],i,j\in[n]}|\widehat{P}_{tij}|&\geq \min_{t\in[m],i,j\in[n]}P_{tij} - \max_{t\in[m]}\|\widehat{\bP}_t - \bP_t\|_{\max} = \Omega(\rho_n) - |\optilde(\rho_n)|,\\
\max_{t\in[m],i,j\in[n]}|\widehat{P}_{tij}|&\leq \max_{t\in[m],i,j\in[n]}P_{tij} + \max_{t\in[m]}\|\widehat{\bP}_t - \bP_t\|_{\max} = O(\rho_n) + |\optilde(\rho_n)|,\\
\max_{t\in[m],i,j\in[n]}|\widehat{\sigma}_{tij}^2 - \sigma_{tij}^2|
&\leq \max_{t\in[m],i,j\in[n]}|\widehat{P}_{tij} - P_{tij}|\left(2 + |\widehat{P}_{tij}|\right) = \optilde(\rho_n)\\
\max_{t\in[m],i,j,l\in[n]}|\widehat{\sigma}_{tij}^2\widehat{\sigma}_{tjl}^2 - \sigma_{tij}^2\sigma_{tjl}^2|
&\leq  \max_{t\in[m],i,j,l\in[n]}\left\{|(\widehat{\sigma}_{tij}^2 - \sigma_{tij}^2)\widehat{\sigma}_{tjl}^2| + |\sigma_{tij}^2(\widehat{\sigma}_{tjl}^2 - \sigma_{tjl}^2)|\right\} = \optilde(\rho_n^2).
\end{align*}
Similarly, we also obtain the following estimation error bounds for $\widehat{\bB}_t$ and $\widehat{\bB}$:
\begin{align*}
\max_{t\in[m]}\|\widehat{\bW}\transpose\widehat{\bB}_t\widehat{\bW} - \bB_t\|_{\mathrm{F}}
&\leq d^{1/2}\|\widehat{\bU}\widehat{\bW} - \bU\|_{2\to\infty}\max_{t\in[m]}(\|\bA_t\|_2\|\widehat{\bU}\|_{2\to\infty} + \|\bP_t\|_2\|\bU\|_{2\to\infty})
\\&\quad
 + d^{1/2}\max_{t\in[m]}\|\bU\transpose\bE_t\bU\|_2
 + d^{1/2}\max_{t\in[m]}\|\bU\|_{2\to\infty}\|\bE_t\|_2\|\widehat{\bU}\widehat{\bW} - \bU\|_{2\to\infty} = \optilde(n\rho_n),
 \\
\max_{t\in[m]}\|\widehat{\bB}_t\|_2
&\leq \max_{t\in[m]}\|\widehat{\bW}\transpose\widehat{\bB}_t\widehat{\bW} - \bB_t\|_2 + \max_{t\in[m]}\|\bB_t\|_2 = O(n\rho_n) + \optilde(n\rho_n),\\
\max_{t\in[m]}\|\widehat{\bW}\transpose\widehat{\bB}_t^2\widehat{\bW} - \bB_t^2\|_2
&\leq \max_{t\in[m]}\|(\widehat{\bW}\transpose\widehat{\bB}_t\widehat{\bW} - \bB_t)\widehat{\bW}\widehat{\bB}_t\widehat{\bW}\|_2 + \max_{t\in[m]}\|\bB_t(\widehat{\bW}\transpose\widehat{\bB}_t\widehat{\bW} - \bB_t)\|_2 = \optilde\{(n\rho_n)^2\},\\
\min_{t\in[m]}\sigma_d(\widehat{\bB}_t)
&\geq \min_{t\in[m]}\{\lambda_d(\bB_t^2) - \|\widehat{\bW}\transpose\widehat{\bB}_t^2\widehat{\bW} - \lambda_d(\bB_t^2)\|_2\}^{1/2} = \Omega(n\rho_n) - |\optilde(n\rho_n)|,\\
\|\widehat{\bW}\transpose\widehat{\bB}\widehat{\bB}\transpose\widehat{\bW} - \bB\bB\transpose\|_2
&\leq \sum_{t = 1}^m\|\widehat{\bW}\transpose\widehat{\bB}_t^2\widehat{\bW} - \bB_t^2\| = \optilde\{m(n\rho_n)^2\}\\
\|\widehat{\bW}\transpose\widehat{\bB}\widehat{\bB}\transpose\widehat{\bW}\|_2&
\leq \|\widehat{\bW}\transpose\widehat{\bB}\widehat{\bB}\transpose\widehat{\bW} - \bB\bB\transpose\|_2 + \|\bB\bB\transpose\|_2 = O(mn^2\rho_n^2) + \optilde(mn^2\rho_n^2),\\
\lambda_d(\widehat{\bB}\widehat{\bB}\transpose)
&\geq \lambda_d(\bB\bB\transpose) - \|\widehat{\bW}\transpose\widehat{\bB}\widehat{\bB}\transpose\widehat{\bW} - \bB\bB\transpose\|_2 = \Omega(mn^2\rho_n^2) - |\optilde(mn^2\rho_n^2)|.
\end{align*}
Therefore, we can further obtain the following estimation error bounds for $\widehat{\bF}_i$, $\widehat{\bG}_i$:
\begin{align*}
\|\widehat{\bW}\transpose\widehat{\bF}_i\widehat{\bW} - \bF_i\|_2
&\leq\sum_{t = 1}^m\|\widehat{\bW}\transpose\widehat{\bB}_t\widehat{\bW} - \bB_t\|_2\max_{j\in[n]}\widehat{P}_{tij}\|\widehat{\bB}_t\|_2
 + \sum_{t = 1}^m\|\bB_t\|_2\|\widehat{\bU}\widehat{\bW} - \bU\|_2\max_{j\in[n]}\widehat{P}_{tij}\|\widehat{\bB}_t\|_2\\
&\quad + \sum_{t = 1}^m\|\bB_t\|_2\max_{j\in[n]}|\widehat{\sigma}_{tij}^2 - \sigma_{tij}^2|\|\widehat{\bB}_t\|_2 + \sum_{t = 1}^m\|\bB_t\|_2\max_{j\in[n]}\sigma_{tij}^2\|\widehat{\bU}\widehat{\bW} - \bU\|_2\|\widehat{\bB}_t\|_2
\\&\quad
 + \sum_{t = 1}^m\|\bB_t\|_2\max_{j\in[n]}\sigma_{tij}^2\|\widehat{\bW}\transpose\widehat{\bB}_t\widehat{\bW} - \bB_t\|_2
 = \optilde(mn^2\rho_n^3),
\end{align*}
so that $\|\widehat{\bF}_i\|_2 \leq \|\widehat{\bW}\transpose\widehat{\bF}_i\widehat{\bW} - \bF_i\|_2 + \|\bF_i\|_2 = O(mn^2\rho_n^3) + \optilde(mn^2\rho_n^3)$
and
\begin{align*}
&\|\widehat{\bW}\transpose\widehat{\bG}_i\widehat{\bW} - \bG_i\|\\
&\quad\leq \|\widehat{\bU}\widehat{\bW} - \bU\|_2\max_{l\in[n]}\sum_{t = 1}^m\sum_{j = 1}^n\widehat{\sigma}_{til}^2\widehat{\sigma}_{tjl}^2
 + \max_{l\in[n]}\sum_{t = 1}^m\sum_{j = 1}^n|\widehat{\sigma}_{til}^2\widehat{\sigma}_{tjl}^2 - \sigma^2_{til}\sigma^2_{tjl}|
 + \max_{l\in[n]}\sum_{t = 1}^m\sum_{j = 1}^n\sigma^2_{til}\sigma^2_{tjl}\|\widehat{\bU}\widehat{\bW} - \bU\|_2\\
 & = \optilde(mn\rho_n^2),
\end{align*}
so that $\|\widehat{\bG}_i\|_2\leq \|\widehat{\bW}\transpose\widehat{\bG}_i\widehat{\bW} - \bG_i\|_2 + \|\bG_i\| = O(mn\rho_n^2) + \optilde(mn\rho_n^2)$.
Also, observe that
\begin{align*}
\|(\widehat{\bW}\transpose\widehat{\bB}\widehat{\bB}\transpose\widehat{\bW})^{-1} - (\bB\bB\transpose)^{-1}\|_2
&\leq \|(\widehat{\bW}\transpose\widehat{\bB}\widehat{\bB}\transpose\widehat{\bW})^{-1}\|_2\|_2\|\widehat{\bW}\transpose\widehat{\bB}\widehat{\bB}\transpose\widehat{\bW} - \bB\bB\transpose\|_2\|(\bB\bB\transpose)^{-1}\|_2\\
& = \optilde\left((mn^2\rho_n^2)^{-1}\right).
\end{align*}
Hence, we can finally provide the error bound for $\widehat{\bGamma}_i$:
\begin{align*}
\|\widehat{\bW}\transpose\widehat{\bGamma}_i\widehat{\bW} - \bGamma_i\|
&\leq \|(\widehat{\bW}\transpose\widehat{\bB}\widehat{\bB}\transpose\widehat{\bW})^{-1} - (\bB\bB\transpose)^{-1}\|_2(\|\widehat{\bF}_i\|_2 + \|\widehat{\bG}_i\|_2)\|(\widehat{\bB}\widehat{\bB}\transpose)^{-1}\|_2\\
&\quad + \|(\bB\bB\transpose)^{-1}\|_2(\|\widehat{\bW}\transpose\widehat{\bF}_i\widehat{\bW} - \bF_i\|_2 + \|\widehat{\bW}\transpose\widehat{\bG}_i\widehat{\bW} - \bG_i\|_2)\|(\widehat{\bW}\transpose\widehat{\bB}\widehat{\bB}\transpose\widehat{\bW})^{-1}\|_2\\
&\quad + \|(\bB\bB\transpose)^{-1}\|_2(\|\bF_i\|_2 + \|\bG_i\|_2)\|(\widehat{\bW}\transpose\widehat{\bB}\widehat{\bB}\transpose\widehat{\bW})^{-1} - (\bB\bB\transpose)^{-1}\|_2 = \optilde\left(\frac{1}{mn^2\rho_n\theta_n^2}\right).
\end{align*}
In particular, this implies that $\|\widehat{\bGamma}_i\|_2 = \Optilde((mn^2\rho_n\theta_n^2)^{-1})$ and $\lambda_d(\widehat{\bGamma}_i)\geq \Omega((mn^2\rho_n\theta_n^2)^{-1}) - |\optilde((mn^2\rho_n\theta_n^2)^{-1})|$.

\noindent$\blacksquare$ \textbf{Asymptotic null distribution of $T_{i_1i_2}$.}
By definition, we have
\begin{align*}
T_{i_1i_2}& = (\widehat{\btheta}_{i_1} - \widehat{\btheta}_{i_2})\transpose(\widehat{\bW}\bGamma_{i_1}\widehat{\bW}\transpose + \widehat{\bW}{\bGamma}_{i_2}\widehat{\bW}\transpose)^{-1}(\widehat{\btheta}_{i_1} - \widehat{\btheta}_{i_2})\\
&\quad + (\widehat{\btheta}_{i_1} - \widehat{\btheta}_{i_2})\transpose\widehat{\bW}\{(\widehat{\bW}\transpose\widehat{\bGamma}_{i_1}\widehat{\bW} + \widehat{\bW}\transpose\widehat{\bGamma}_{i_2}\widehat{\bW})^{-1} - ({\bGamma}_{i_1} + {\bGamma}_{i_2})^{-1}\}\widehat{\bW}\transpose(\widehat{\btheta}_{i_1} - \widehat{\btheta}_{i_2})
\end{align*}
Observe that
\begin{align*}
&|(\widehat{\btheta}_{i_1} - \widehat{\btheta}_{i_2})\transpose\widehat{\bW}\{(\widehat{\bW}\transpose\widehat{\bGamma}_{i_1}\widehat{\bW} + \widehat{\bW}\transpose\widehat{\bGamma}_{i_2}\widehat{\bW})^{-1} - (\bGamma_{i_1} + \bGamma_{i_2})^{-1}\}\widehat{\bW}\transpose(\widehat{\btheta}_{i_1} - \widehat{\btheta}_{i_2})|\\
&\quad \leq\|\widehat{\btheta}_{i_1} - \widehat{\btheta}_{i_2}\|_2^2\|(\widehat{\bW}\transpose\widehat{\bGamma}_{i_1}\widehat{\bW} + \widehat{\bW}\transpose\widehat{\bGamma}_{i_2}\widehat{\bW})^{-1}\|_2(\|\widehat{\bW}\transpose\widehat{\bGamma}_{i_1}\widehat{\bW} - \bGamma_{i_1}\|_2 + \|\widehat{\bW}\transpose\widehat{\bGamma}_{i_2}\widehat{\bW} - \bGamma_{i_2}\|_2)\|(\bGamma_{i_1} + \bGamma_{i_2})^{-1}\|_2\\
&\quad = o_p(1).
\end{align*}
It follows from Slutsky's theorem that
$T_{i_1i_2} = (\widehat{\btheta}_{i_1} - \widehat{\btheta}_{i_2})\transpose(\widehat{\bW}{\bGamma}_{i_1}\widehat{\bW}\transpose + \widehat{\bW}{\bGamma}_{i_2}\widehat{\bW}\transpose)^{-1}(\widehat{\btheta}_{i_1} - \widehat{\btheta}_{i_2}) + o_p(1)\overset{\calL}{\to}\chi^2_d$.

\noindent$\blacksquare$ \textbf{Asymptotic distribution of $T_{i_1i_2}$ under the alternative.} 
Let $M_n = (mn^2\rho_n)^{1/2}\theta_n\|\btheta_{i_1} - \btheta_{i_2}\|_2$. Clearly, we know that $M_n\to\infty$ because $\|\btheta_{i_1} - \btheta_{i_2}\|_2 = \Theta(n^{-1/2}\|\bz_{i_1} - \bz_{i_2}\|_2)$, and 
\begin{align*}\prob\{(mn^2\rho_n)^{1/2}\theta_n\|\widehat{\bW}\transpose(\widehat{\btheta}_{i_1} - \widehat{\btheta}_{i_2}) - (\btheta_{i_1} - \btheta_{i_2})\|_2\geq M_n/2\}\to 0,
\end{align*} 
so that
\begin{align*}
\prob\{(mn^2\rho_n)^{1/2}\theta_n\|\widehat{\btheta}_{i_1} - \widehat{\btheta}_{i_2}\|_2 > M_n/2\}
&\geq \prob\{(mn^2\rho_n)^{1/2}\theta_n\|\widehat{\bW}\transpose(\widehat{\btheta}_{i_1} - \widehat{\btheta}_{i_2}) - (\btheta_{i_1} - \btheta_{i_2})\|_2 \leq M_n/2\}\to 1
\end{align*}
and there exists some constant $c_0 > 0$, such that
\begin{align*}
\prob(T_{i_1i_2}\geq C)&\geq \prob\{T_{i_1i_2}\geq M_n^2/(4c_0)\} = \prob\{(\widehat{\btheta}_{i_1} - \widehat{\btheta}_{i_2})\transpose(\widehat{\bGamma}_{i_1} + \widehat{\bGamma}_{i_2})^{-1}(\widehat{\btheta}_{i_1} - \widehat{\btheta}_{i_2})\geq M_n^2/(4c_0)\}\\
&\geq \prob[\lambda_d\{(\widehat{\bGamma}_{i_1} + \widehat{\bGamma}_{i_2})^{-1}\}\|\widehat{\btheta}_{i_1} - \widehat{\btheta}_{i_2}\|_2^2\geq M_n^2/(4c_0),\lambda_d\{(\widehat{\bGamma}_{i_1} + \widehat{\bGamma}_{i_2})^{-1}\}\geq mn^2\rho_n\theta_n^2/c_0]\\
&\geq \prob[mn^2\rho_n\theta_n^2\|\widehat{\btheta}_{i_1} - \widehat{\btheta}_{i_2}\|_2^2\geq M_n^2/4,\lambda_d\{(\widehat{\bGamma}_{i_1} + \widehat{\bGamma}_{i_2})^{-1}\}\geq mn^2\rho_n\theta_n^2/c_0]\to 1.
\end{align*}
On the other hand, under the alternative $H_A:\bz_{i_1}\neq\bz_{i_2}$ but $(\bGamma_{i_1} + \bGamma_{i_2})^{-1/2}(\bZ\transpose\bZ)^{-1/2}(\bz_{i_1} - \bz_{i_2})\to\bmu$ for some $\bmu\in\mathbb{R}^d$, we have $(\bGamma_{i_1} + \bGamma_{i_2})^{-1/2}\{\widehat{\bW}\transpose(\widehat{\btheta}_{i_1} - \widehat{\btheta}_{i_2})\}\overset{\calL}{\to}\mathrm{N}_d(\bmu, \eye_d)$.
Similarly, we also have $\|\widehat{\btheta}_{i_1} - \widehat{\btheta}_{i_2}\|_2^2 = O_p((mn^2\rho_n\theta_n^2)^{-1})$. It follows that
\begin{align*}
T_{i_1i_2}
& = (\widehat{\btheta}_{i_1} - \widehat{\btheta}_{i_2})\transpose(\widehat{\bW}\bGamma_{i_1}\widehat{\bW}\transpose + \widehat{\bW}{\bGamma}_{i_2}\widehat{\bW}\transpose)^{-1}(\widehat{\btheta}_{i_1} - \widehat{\btheta}_{i_2})\\
&\quad + (\widehat{\btheta}_{i_1} - \widehat{\btheta}_{i_2})\transpose\widehat{\bW}\{(\widehat{\bW}\transpose\widehat{\bGamma}_{i_1}\widehat{\bW} + \widehat{\bW}\transpose\widehat{\bGamma}_{i_2}\widehat{\bW})^{-1} - ({\bGamma}_{i_1} + {\bGamma}_{i_2})^{-1}\}\widehat{\bW}\transpose(\widehat{\btheta}_{i_1} - \widehat{\btheta}_{i_2})\\
& = (\widehat{\btheta}_{i_1} - \widehat{\btheta}_{i_2})\transpose(\widehat{\bW}\bGamma_{i_1}\widehat{\bW}\transpose + \widehat{\bW}{\bGamma}_{i_2}\widehat{\bW}\transpose)^{-1}(\widehat{\btheta}_{i_1} - \widehat{\btheta}_{i_2}) + o_p(1)\overset{\calL}{\to}\chi_d^2(\|\bmu\|_2^2).
\end{align*}
The proof is therefore completed.
}
 
\bibliographystyle{acm}
\bibliography{reference,reference1,reference2,reference_RMT,reference_MG}

\end{document}